\documentclass[11pt,reqno,oneside]{amsart}
\usepackage{amsmath,amsthm,amsfonts,amssymb,mathrsfs,extarrows,mathtools}
\usepackage{MnSymbol}
\usepackage{dsfont}

\usepackage[top=1in,bottom=1in,a4paper]{geometry}
\usepackage{xcolor}
\definecolor{monokaiPink}{HTML}{F92672}
\definecolor{monokaiGreen}{HTML}{A6E22E}
\definecolor{monokaiOrange}{HTML}{FD971F}
\definecolor{monokaiCyan}{HTML}{66D9EF}
\definecolor{monokaiPurple}{HTML}{AE81FF}
\definecolor{monokaiYellow}{HTML}{E6DB74}
\usepackage{hyperref}
\hypersetup{colorlinks=true,
	linktoc=page,
	linkcolor=cyan,
	citecolor=cyan,
	urlcolor=monokaiCyan,
	hidelinks
}
\usepackage{xurl}

\usepackage[shortlabels]{enumitem}
\setlist[enumerate]{leftmargin=*}
\setlist[itemize]{label={--}}
\usepackage[nameinlink]{cleveref}
\usepackage{tikz-cd}
\tikzcdset{arrow style=math font}
\usepackage[indent]{parskip}

\title[Monodromy Rank and Semisimple MTC for HK varieties]{Monodromy Rank and the Semisimple Mumford--Tate Conjecture for Hyper-Kähler Varieties}

\date{Aug 2026}
\author{Zhichao Tang}
\address{Shanghai Center for Mathematical Science \\ Fudan University \\ Shanghai \\ 200438 \\ China}
\email{\href{mailto:zctang19@fudan.edu.cn}{zctang19@fudan.edu.cn}}

\author{Haitao Zou}
\address{Universität Bielefeld \\ Universitätsstraße 25 \\ 33615 Bielefeld \\Germany}
\email{\href{mailto:hzou@math.uni-bielefeld.de}{hzou@math.uni-bielefeld.de}}

\newcommand{\ZZ}{\mathds{Z}}
\newcommand{\QQ}{\mathds{Q}}
\newcommand{\RR}{\mathds{R}}
\newcommand{\CC}{\mathds{C}}
\newcommand{\FF}{\mathds{F}}
\newcommand{\GG}{\mathbb{G}}

\newcommand{\VV}{\mathbb{V}}
\newcommand{\DTorus}{\mathbb{S}}

\newcommand{\bC}{\mathbf{C}}
\newcommand{\bG}{\mathbf{G}}

\newcommand{\bL}{\mathbf{L}}
\newcommand{\bM}{\mathbf{M}}
\newcommand{\bP}{\mathbf{P}}
\newcommand{\bR}{\mathbf{R}}

\newcommand{\cM}{\mathcal{M}}

\newcommand{\cO}{\mathcal{O}}

\newcommand{\fg}{\mathfrak{g}}
\newcommand{\fh}{\mathfrak{h}}

\newcommand{\fX}{\mathfrak{X}}

\newcommand{\sG}{\mathscr{G}}

\newcommand{\locc}{{\itshape loc.~cit.}}

\DeclareMathOperator{\rH}{H}

\DeclareMathOperator{\SH}{SH}
\DeclareMathOperator{\LLV}{LLV}
\DeclareMathOperator{\Image}{Im}
\DeclareMathOperator{\prim}{pr}
\DeclareMathOperator{\Sym}{Sym}
\DeclareMathOperator{\MT}{\mathbf{MT}}
\DeclareMathOperator{\HS}{\mathbf{HS}}
\DeclareMathOperator{\Vect}{\mathbf{Vect}}
\DeclareMathOperator{\mt}{\mathfrak{mt}}

\DeclareMathOperator{\alg}{alg}
\DeclareMathOperator{\Ad}{Ad}
\DeclareMathOperator{\GL}{GL}

\DeclareMathOperator{\SO}{SO}
\DeclareMathOperator{\OO}{O}

\DeclareMathOperator{\Spin}{Spin}
\DeclareMathOperator{\GSpin}{GSpin}

\DeclareMathOperator{\Lie}{Lie}
\DeclareMathOperator{\Aut}{Aut}
\DeclareMathOperator{\End}{End}
\DeclareMathOperator{\Spec}{Spec}
\DeclareMathOperator{\Gal}{Gal}

\DeclareMathOperator{\red}{red}
\DeclareMathOperator{\rk}{rk}
\DeclareMathOperator{\semisimple}{ss}
\DeclareMathOperator{\tw}{tw}
\DeclareMathOperator{\der}{der}

\DeclareMathOperator{\cha}{char}
\DeclareMathOperator{\Res}{Res}

\DeclareMathOperator{\Tr}{Tr}

\DeclareMathOperator{\fsl}{\mathfrak{sl}}
\DeclareMathOperator{\gl}{\mathfrak{gl}}
\DeclareMathOperator{\fso}{\mathfrak{so}}

\DeclareMathOperator{\id}{\mathds{1}}
\DeclareMathOperator{\mot}{mot}
\DeclareMathOperator{\AM}{\mathbf{Mot}}

\DeclareMathOperator{\et}{\Acute{e}t}
\DeclareMathOperator{\dR}{dR}
\DeclareMathOperator{\an}{an}

\DeclareMathOperator{\Hdg}{Hdg}

\DeclareMathOperator{\pst}{pst}
\DeclareMathOperator{\conn}{conn}

\DeclareMathOperator{\Hom}{Hom}
\DeclareMathOperator{\MTC}{MTC}
\DeclareMathOperator{\SMTC}{SMTC}
\DeclareMathOperator{\sAb}{\mathscr{A}}
\DeclareMathOperator{\Kthree}{K3}
\DeclareMathOperator{\Kum}{Kum}
\DeclareMathOperator{\OG}{OG}

\DeclareMathOperator{\cl}{cl}
\DeclareMathOperator{\CH}{CH}
\DeclareMathOperator{\Frob}{Frob}

\DeclareMathOperator{\Zar}{Zar}
\DeclareMathOperator{\gr}{gr}
\DeclareMathOperator{\Gr}{\mathbf{Gr}}

\DeclareMathOperator{\Exc}{Exc}
\DeclareMathOperator{\specialize}{sp}
\DeclareMathOperator{\VHS}{VHS}
\DeclareMathOperator{\simplyconnected}{sc}

\DeclarePairedDelimiterX\Set[1]\lbrace\rbrace{\def\given{\;\delimsize\vert\;}#1}
\newcommand{\isomto}{\stackrel{\textstyle\sim}{\smash{\longrightarrow}\rule{0pt}{0.4ex}}}
\newtheorem{theorem}[subsubsection]{Theorem}
\newtheorem{mainthm}{Theorem}

\newtheorem{prop}[subsubsection]{Proposition}
\newtheorem{cor}[subsubsection]{Corollary}
\newtheorem{lemma}[subsubsection]{Lemma}

\theoremstyle{definition}
\newtheorem{definition}[subsubsection]{Definition}
\newtheorem{notation}[subsubsection]{Notation}
\newtheorem{example}[subsubsection]{Example}
\newtheorem{remark}[subsubsection]{Remark}
\newtheorem{conj}[subsubsection]{Conjecture}

\newtheorem*{MainConj}{Conjecture}
\numberwithin{equation}{subsection}

\AddToHook{env/theorem/begin}{\crefalias{subsubsection}{theorem}}
\AddToHook{env/prop/begin}{\crefalias{subsubsection}{prop}}
\AddToHook{env/cor/begin}{\crefalias{subsubsection}{cor}}
\AddToHook{env/lemma/begin}{\crefalias{subsubsection}{lemma}}
\AddToHook{env/definition/begin}{\crefalias{subsubsection}{definition}}
\AddToHook{env/notation/begin}{\crefalias{subsubsection}{notation}}
\AddToHook{env/example/begin}{\crefalias{subsubsection}{example}}
\AddToHook{env/remark/begin}{\crefalias{subsubsection}{remark}}
\AddToHook{env/conj/begin}{\crefalias{subsubsection}{conj}}
\AddToHook{env/hypothesis/begin}{\crefalias{subsubsection}{hypothesis}}

\crefname{mainthm}{Theorem}{Theorems}
\Crefname{mainthm}{Theorem}{Theorems}
\crefname{prop}{Proposition}{Propositions}
\Crefname{prop}{Proposition}{Propositions}
\crefname{cor}{Corollary}{Corollaries}
\Crefname{cor}{Corollary}{Corollaries}
\crefname{lemma}{Lemma}{Lemmas}
\Crefname{lemma}{Lemma}{Lemmas}
\crefname{notation}{Notation}{Notations}
\Crefname{notation}{Notation}{Notations}
\crefname{conj}{Conjecture}{Conjectures}
\Crefname{conj}{Conjecture}{Conjectures}
\crefname{hypothesis}{Hypothesis}{Hypotheses}
\Crefname{hypothesis}{Hypothesis}{Hypotheses}

\begin{document}

\begin{abstract}
We study the Mumford--Tate conjecture for hyper-K\"ahler varieties. We identify the Mumford--Tate group with a Levi factor of the connected total $\ell$-adic monodromy group. It follows that the Mumford--Tate conjecture holds after semisimplification in every cohomological degree. We call this the semisimple Mumford--Tate conjecture. As applications, we derive a Hodge-to-Tate implication for powers, prove deformation invariance of the Mumford--Tate conjecture, establish the $\ell$-adic Nagai conjecture for Type~I reduction, and extend Hui--Larsen's hyperspecial maximality theorem from degree two to total cohomology. The proof combines Pink's generation theorem for weak Hodge cocharacters with a multiplicity-weighted direct-sum construction and a rigidity argument for the graded cohomology algebra.
\end{abstract}

\keywords{Mumford--Tate conjecture, Hyper-Kähler variety, Algebraic monodromy groups, weak Hodge cocharacter}

\subjclass[2020]{14J20, 14J42, 14F20}

\maketitle

\tableofcontents

\section{Introduction}
Mumford and Tate formulated the following conjecture in their study of Galois representations attached to abelian varieties \cite{Mum66}; see also \cite{Ser64}. Let $X$ be a smooth projective variety over a finitely generated extension $K/\QQ$, and fix an embedding $K\hookrightarrow\CC$.
\begin{MainConj}[$\MTC_i$]\label{conj:Mumford--Tate conjecture}
Let $\overline K$ be the algebraic closure of $K$ in $\CC$. For every prime $\ell$, the Artin comparison isomorphism identifies the ambient general linear groups and, under this identification, one has
		\[
		\bG_{\ell,i}(X)^{\circ} =\MT_{i}(X_{\CC})_{\QQ_{\ell}}\,.
		\]
		Here $\bG_{\ell,i}(X)$ is the \emph{$\ell$-adic algebraic monodromy group} of $\rH^i_{\et}(X_{\overline K},\QQ_\ell)$, and $\MT_i(X_\CC)$ is the \emph{Mumford--Tate group} of the Hodge structure on $\rH^i_B(X_\CC,\QQ)$.
\end{MainConj}

We develop an approach to the Mumford--Tate conjecture in arbitrary cohomological degree for \emph{hyper-Kähler varieties}; see \Cref{def:HK}. We prove that the conjectural equality holds after semisimplification and that $(\MTC_i)$ is invariant under deformation.

\subsection{The abelian case and two difficulties}
We first recall the case of abelian varieties. Let $A/K$ be an abelian variety. Deligne proved that every Hodge cycle on $A$ is absolute Hodge \cite[I,~2.11]{DMOS82}. It follows that there is a natural inclusion
\begin{equation}\label{eq:Hodge vs absolute Hodge for abelain varieties}
\bG_{\ell,i}(A)^{\circ} \hookrightarrow \MT_{i}(A_{\CC})_{\QQ_{\ell}}
\end{equation}
for every $0\leq i\leq2\dim A$. Given this inclusion, the Mumford--Tate conjecture is equivalent to equality of dimensions. It remains open in general, even for abelian fourfolds; see \cite{MZ95,Pink98}.

The same argument applies more generally to \emph{abelian motives}, a class containing, for example, the motives of curves, Fermat hypersurfaces, and K3 surfaces \cite[II, Proposition~6.26]{DMOS82}. Here ``motive'' may mean a Chow motive or a motive in the sense of Deligne \cite[II,~\S6]{DMOS82} or André \cite{Andre96b}.

The motivic formulation also extends to Shimura varieties, viewed as moduli spaces of Hodge structures of abelian motives with additional structure. Deligne's theory of canonical models supplies the relevant Galois representations, and hence a Mumford--Tate conjecture at generic points of special subvarieties; see, for example, \cite{UY13,Vasiu08}.

Two difficulties arise beyond the abelian setting. First, the absolute Hodge conjecture is not known for a general smooth projective variety, so the analogue of \eqref{eq:Hodge vs absolute Hodge for abelain varieties} is unavailable. An abstract isomorphism $\bG_{\ell,i}(X)^\circ\cong\MT_i(X_\CC)_{\QQ_\ell}$ would not by itself identify these groups inside
\[
\GL\bigl(\rH^i_B(X_\CC,\QQ)\otimes_\QQ\QQ_\ell\bigr)
\]
under comparison.

Second, the expected semisimplicity of geometric Galois representations is open in general. The Mumford--Tate conjecture implies that $\bG_{\ell,i}(X)^\circ$ is reductive because Mumford--Tate groups are reductive. For abelian varieties over finitely generated extensions of $\QQ$, this reductivity follows from Faltings's theorem on endomorphisms; see \cite[IV,~\S1, 1.1 and 1.4]{RationalPoints}.

Reductivity of $\bG_{\ell,i}(X)^\circ$ is equivalent to semisimplicity of $\rH^i_{\et}(X_{\overline K},\QQ_\ell)$ as a $G_K$-representation. It should not be confused with Frobenius semisimplicity: Chebotarev density does not promote semisimplicity of almost all Frobenius operators to semisimplicity of the global representation. We refer to \cite{Tate65,Moonen19} for the related forms of the Tate conjecture. We call reductivity of $\bG_\ell(X)^\circ$ the \emph{semisimplicity conjecture} for $X$; it is known in only a few cases.

\subsection{Hyper-Kähler varieties and the semisimple Mumford--Tate conjecture}
We now specialize to hyper-Kähler varieties.
\begin{definition}\label{def:HK}
		A \emph{hyper-Kähler variety} over a field $K$ of characteristic zero is a smooth projective geometrically connected variety $X/K$ such that
	\[
	\pi_1^{\et}(X_{\overline K})=1
	\qquad\text{and}\qquad
	\rH^0(X,\Omega^2_{X/K})=K\sigma
	\]
		for a nondegenerate $2$-form $\sigma\colon\cO_X\to\Omega^2_{X/K}$.
\end{definition}

The symplectic form forces $\dim X=2n$. In dimension two, the hyper-Kähler varieties are precisely the smooth projective K3 surfaces.

In higher dimensions, Beauville \cite{Bea83} constructed two series of deformation types:
\begin{itemize}
    \item varieties of $\Kthree^{[n]}$-type, and
    \item varieties of $\Kum_n$-type.
\end{itemize}
O'Grady constructed two further deformation types in dimensions $6$ and $10$ \cite{OG6,OG10}, denoted by $\OG6$ and $\OG10$. These four series comprise all deformation types currently known.

In degree two, the Mumford--Tate conjecture for $b_2\geq4$ is due to Tankeev \cite{Tankeev90,Tankeev95} in dimension two and to André \cite{Andre96} in higher dimension. The only remaining value is $b_2=3$: the $(2,0)$- and $(0,2)$-summands and an ample $(1,1)$-class are linearly independent.

\begin{theorem}
		\label{thm:Mumford--Tate conjecture HK degree 2}
		Let $X$ be a hyper-Kähler variety over a finitely generated extension $K/\QQ$. Then $(\MTC_2)$ holds for $X$.
\end{theorem}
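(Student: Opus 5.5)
We reduce to the case $b_2=3$, since for $b_2\geq 4$ the statement is due to Tankeev and André, as recalled above. Assume $b_2=3$ and write $V=\rH^2_B(X_\CC,\QQ)$ with the Beauville--Bogomolov form $q$. The Hodge structure on $V$ has $V^{2,0}\oplus V^{0,2}$ of dimension $2$. After twisting, the ample class $h$ spans a rational $(1,1)$-line, and the transcendental lattice is $T=h^{\perp}$, of dimension $2$. By Zarhin's description, $\MT_2(X_\CC)$ is $\GG_m\times\mathrm{U}(T)$, where the unitary group is taken over $E=\End_{\Hdg}(T)$. Since $\dim T=2$ and $T$ is of K3 type, $E$ is an imaginary quadratic field, so $T$ has CM. Hence $\MT_2(X_\CC)$ is a torus: scalars times $\SO(T)\cong\Res_{E/\QQ}\GG_m^{\mathrm{Nm}=1}$.

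Step 1 brings the situation into an abelian setting, using the Kuga--Satake construction, which André applied to hyper-Kähler varieties. André shows that after a finite extension of $K$ the motive $\rH^2(X)$ lies in the Tannakian category of motives for absolute Hodge cycles generated by an abelian variety. In our case this is the CM Kuga--Satake abelian variety $A$, with $\rH^2(X)(1)\subset \End(\rH^1(A))$. His argument is a deformation argument in the family of polarized hyper-Kähler varieties together with the Kuga--Satake family over the orthogonal Shimura variety. It uses only $b_2\geq 3$ for the period map and the family, though I expect to check that nothing in \cite{Andre96} requires $b_2\geq4$. As a consequence, Hodge cycles on $V$ and its tensor powers are absolute Hodge, so we obtain the inclusion $\bG_{\ell,2}(X)^\circ\subseteq\MT_2(X_\CC)_{\QQ_\ell}$, as in \eqref{eq:Hodge vs absolute Hodge for abelain varieties}.

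Step 2 proves the reverse inclusion. Since $A$ is of CM type, the Mumford--Tate conjecture for $A$ is known, by Shimura--Taniyama and Serre's theory of CM abelian varieties. Therefore $\bG_\ell(A)^\circ=\MT(A)_{\QQ_\ell}$, and the images of these groups in $\GL(V_{\QQ_\ell})$ agree as well. The comparison is compatible because it is realized by an absolute Hodge (motivated) correspondence. The image of $\MT(A)$ in $\GL(V)$ is $\MT_2(X_\CC)$, and the image of $\bG_\ell(A)^\circ$ is $\bG_{\ell,2}(X)^\circ$. This uses the fact that, after a finite extension, the Galois action on $V(1)$ factors through that on $\rH^1(A)^{\otimes}$. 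Equivalently, one can argue directly by class field theory: both groups are tori, and the Galois character on the CM part $T_{\QQ_\ell}$ is given by an algebraic Hecke character of $E$ with infinity type matching the Hodge cocharacter.

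The main obstacle is Step 1 in the case $b_2=3$. There the moduli of polarized deformations is zero-dimensional in the relevant sense: the period domain for $h^\perp$ of signature $(2,0)$ is a point. So André's deformation to a point where the Kuga--Satake correspondence is algebraic is unavailable. My first attempt would be to deform unpolarized inside the $1$-dimensional twistor-type family of all deformations, where $h^\perp$ varies in $V$ of signature $(3,0)$. Alternatively I would use Markman's or Varesco's results on algebraicity or motivatedness of Kuga--Satake for known types. Failing that, I would fall back on the direct CM Hecke-character argument. That argument needs only that the Galois representation on $T_{\QQ_\ell}$ commutes with $E$, which follows from Tate-type results for CM motives once absolute-Hodgeness of the $E$-action is known.
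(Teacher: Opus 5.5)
\textbf{There is a genuine gap, and it sits where you suspect: Step~1.} Your argument needs the inclusion $\bG_{\ell,2}(X)^\circ\subseteq\MT_2(X_\CC)_{\QQ_\ell}$, obtained from Andr\'e's Kuga--Satake argument. That argument is unavailable when $b_2=3$. The polarized deformation space has dimension $b_2-3=0$, so there is no nontrivial algebraic family along which to spread the correspondence.

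None of your fallbacks closes the gap:
\begin{itemize}
\item Twistor-type deformations are not algebraic. They carry no Galois representations, and the deformation principle for motivated cycles does not apply to them.
\item Markman's and Varesco's results concern the known deformation types. These have $b_2\in\{23,7,8,24\}$, so they say nothing about $b_2=3$.
\item Your Hecke-character fallback is left conditional on absolute Hodgeness of the $E$-action, which is the same missing input.
\end{itemize}
As written, the proposal proves nothing beyond the case $b_2\geq4$ already in the literature. Step~2 (CM theory for the Kuga--Satake variety) is therefore never reached.

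\textbf{The missing idea is that no absolute Hodge input is needed.} Twist to weight zero and work with $\rH^2(1)=\QQ h\oplus T$. Take the ample class $h$ and the Lefschetz form
$\phi_h(x,y)=\int_X x\cup y\cup h^{\dim X-2}$.
Both are built from an algebraic class and cup product. Hence, after a finite extension of $K$, they are Hodge tensors on the Betti side and Galois invariants on $\rH^2_{\et}(X_{\overline K},\QQ_\ell)(1)$.

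It follows that the Mumford--Tate group and the connected monodromy group of the twisted degree-two cohomology both lie in $\SO(T,\phi_h)$, compatibly with comparison. This is a one-dimensional torus. In particular, Galois automatically commutes with $E\otimes\QQ_\ell$, since this torus is the norm-one torus of $E\otimes\QQ_\ell$. So the $E$-equivariance your last fallback worried about comes for free.

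A connected subgroup of a one-dimensional torus is trivial or everything, so only nontriviality remains:
\begin{itemize}
\item On the Hodge side, the Hodge cocharacter on $T$ (types $(1,-1)$ and $(-1,1)$) is nontrivial.
\item On the $\ell$-adic side, over a number field, a potentially trivial action would contradict Faltings's Hodge--Tate weights $-1,1$ on $T_\ell$.
\end{itemize}
This is the paper's proof; it uses neither Kuga--Satake nor CM theory.

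For a general finitely generated $K$, the paper uses the same rigidity you noticed. Because the polarized deformation space is zero-dimensional, $X$ descends, after a finite extension, to a number field with the same Galois image. One then untwists, using invariance of the Mumford--Tate conjecture under Tate twist.
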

The proof, including the case $b_2(X)=3$, is given in \Cref{subsec:degree two MTC}. We ask whether $(\MTC_i)$ in higher degree can be reduced to $(\MTC_2)$. Our method studies the degree-two projection of the total $\ell$-adic algebraic monodromy group $\bG_\ell(X)$.

Let $\MT(X_\CC)$ be the Mumford--Tate group of the full cohomology ring $\rH_B^\bullet(X_\CC,\QQ)$, viewed as a graded polarizable Hodge structure. Verbitsky's theorem (\Cref{thm:MT in LLV}) implies that the projection
		\[
		\MT(X_{\CC}) \longrightarrow \MT_2(X_{\CC}),
		\]
is an isogeny of degree at most $2$; see \Cref{lem:project MT group}. We prove the following rank analogue for $\ell$-adic algebraic monodromy groups.
\begin{mainthm}[\Cref{thm:rank of ell algebraic monodromy groups}]\label{mainthm:ranks are all equal}
Let $X$ be a hyper-Kähler variety over a finitely generated extension $K/\QQ$. For every prime $\ell$,
\[
 \rk\bG_{\ell}(X)
 =\rk\bG_{\ell,+}(X)
 =\rk \bG_{\ell,2}(X)\,.
\]
\end{mainthm}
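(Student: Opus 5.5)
The three groups are linked by the natural projections
\[
\bG_\ell(X)\twoheadrightarrow \bG_{\ell,+}(X)\twoheadrightarrow \bG_{\ell,2}(X),
\]
where $\bG_{\ell,+}(X)$ is the monodromy group of the even cohomology. Ranks can only drop under surjections, so
\[
\rk\bG_\ell(X)\ge\rk\bG_{\ell,+}(X)\ge\rk\bG_{\ell,2}(X).
\]
It therefore suffices to prove $\rk\bG_\ell(X)\le\rk\bG_{\ell,2}(X)$. Ranks are insensitive to unipotent radicals, so throughout I would work with reductive quotients: a maximal torus of $\bG_\ell(X)^\circ$ maps isomorphically onto a maximal torus of its Levi quotient. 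By \Cref{thm:Mumford--Tate conjecture HK degree 2}, $\bG_{\ell,2}(X)^\circ=\MT_2(X_\CC)_{\QQ_\ell}$. By \Cref{lem:project MT group}, the latter has the same rank as $\MT(X_\CC)$. So the target inequality becomes $\rk\bG_\ell(X)\le\rk\MT(X_\CC)$.

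To bound the rank from above I would use Frobenius tori. After spreading $X$ out over a finitely generated $\ZZ$-algebra and reducing to a number field $K$ by a Hilbert irreducibility/specialization argument (which preserves $\bG_\ell$ for suitable specializations), Serre's theorem provides a density-one set of places $v$ for which the Zariski closure $T_v$ of the group generated by $\Frob_v^{\semisimple}$ acting on total cohomology is a maximal torus of $\bG_\ell(X)^\circ$. I would then control $T_v$ via the LLV structure, using Verbitsky's theorem (\Cref{thm:MT in LLV}) in its $\ell$-adic form.
\begin{itemize}
\item The total cohomology is generated, as a module over the LLV algebra, by degree-two data: the subalgebra generated by $\rH^2$ (the Verbitsky component) is $\Sym^\bullet\rH^2$ modulo relations, and the remaining LLV-isotypic pieces are determined by the $\fso(\rH^2\oplus U)$-action.
\item Frobenius commutes, up to Tate twist, with cup product by classes in $\rH^2$ and with the $\fsl_2$ attached to an ample class (hard Lefschetz is Galois-equivariant).
\item Hence $\Frob_v$ normalizes the $\ell$-adic LLV algebra $\fg_{\ell}\cong\fso(\rH^2_{\et}\oplus U)$, up to the weight cocharacter.
\end{itemize}
Consequently $\bG_\ell(X)^\circ$ lies in the normalizer of the LLV group, whose identity component is $\GG_m\cdot\Spin(\rH^2\oplus U)$ times a centralizer.

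The crux is to show that the Frobenius torus acts on the LLV-multiplicity spaces, i.e.\ the centralizer part, through a finite group. Then $\rk T_v\le\rk(\text{image in }\GO(\rH^2))+1$, with the $+1$ for the weight. This matches $\rk\bG_{\ell,2}$ once one observes that $\bG_{\ell,2}$ already contains the homotheties, since it contains the image of the weight. I would prove the finiteness by a rigidity argument for the graded algebra: the multiplicity spaces are cut out by cup product and degree, and an automorphism of the graded algebra $\rH^\bullet$ that acts trivially, up to scalars of the correct weight, on $\rH^2$ and commutes with the $\fso$-action lies in a finite group. This is the classical fact that the kernel of $\Aut(\rH^\bullet)\to\OO(\rH^2)$ is finite for hyper-Kähler varieties, obtained from the LLV decomposition together with the finiteness of the centralizer for the known types. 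Such an automorphism then has finite order on the complement of the Verbitsky component. Applying this to the ratio of $\Frob_v$ and a suitable lift of its $\rH^2$-action into $\Spin$ (the lift exists up to $\pm1$) bounds $\rk T_v$ as required.

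I expect this last step to be the main obstacle. It requires showing that the centralizer of the LLV action inside the Galois-equivariant algebra automorphisms contributes no torus. One must also handle the sign/spin ambiguity coming from the degree-$2$ isogeny in \Cref{lem:project MT group}, and avoid assuming semisimplicity. If the general finiteness of the kernel of $\Aut(\rH^\bullet)\to\OO(\rH^2)$ is unavailable, I would instead argue Lie-algebraically. The Lie algebra of $\bG_\ell(X)^\circ$ commutes with the $\ell$-adic LLV $\fsl_2$-triples coming from Galois-invariant ample classes and with cup product by $\rH^2$. Any such derivation of the graded algebra vanishing on $\rH^2$ vanishes on the subalgebra $\Sym$-generated by $\rH^2$, hence is nilpotent on the whole, and so does not contribute to the rank.
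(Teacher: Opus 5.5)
\textbf{There is a genuine gap, and it sits exactly at the step you call the crux.} Showing that the Frobenius torus meets the ``centralizer part'' only in a finite group is the same as showing that the connected kernel $\bP_\ell^\circ$ of the degree-two projection contains no torus. That is the entire content of the theorem, and neither of your justifications works for an arbitrary hyper-Kähler variety.

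First, finiteness of $\bC=\ker\bigl(\Aut_{\alg}^{\gr}(\rH^\bullet)\to\GL(\rH^2)\bigr)$ is not a known fact. It can be checked in special cases such as $\OG10$, via multiplicity one, but that says nothing about a general $X$. If it held in general, then $\bP_\ell\subseteq\bC_{\QQ_\ell}$ (\Cref{lem:defect group commutes with twisted LLV}) would be finite. Then $\bG_\ell(X)^\circ$ would be reductive, and semisimplicity and $(\MTC_i)$ in every degree would follow for all hyper-Kähler varieties, which is open.

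Second, your Lie-algebraic fallback is a non sequitur. A derivation that kills $\rH^2$ does kill $\SH(X)$. On the complement, however, it can act by a nonzero semisimple element of the orthogonal or symplectic Lie algebra of an LLV multiplicity space, compatibly with the Lefschetz pairings. \Cref{lem:trace orthogonality} only gives $\Tr(B|_{\rH^d})=0$, never nilpotence. Note also that $\fg_\ell(X)$ does not commute with $L_x$: one has $[A,L_x]=L_{Ax}$, so only $\Lie\bP_\ell$ commutes with cup product by $\rH^2$.

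\textbf{The missing input is arithmetic rather than graded-algebraic.} Archimedean Weil bounds cannot close the gap. After a finite extension, split $\Frob_v^{\semisimple}=rc$ in $\bR\cdot\bC$ with $r,c$ commuting. The bounds only show that the eigenvalues of $c$ are algebraic numbers of absolute value $1$ at every archimedean place and units away from $p$. That does not make them roots of unity; consider $(1+2i)/(1-2i)$ with $p=5$. You need $p$-adic information.

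The paper gets it from Pink's theorem: the semisimplified monodromy over a number field is generated by weak Hodge cocharacters. These are upgraded to cocharacters of degreewise Hodge type via the $M^i$-weighted direct sum and uniqueness of base-$M$ expansions. The argument for each such generator $\lambda$ then runs as follows.
\begin{itemize}
\item $(\MTC_2)$ puts $\lambda_2$ into $\bR_2$.
\item \Cref{lem:degree two Hodge cocharacter rigidity} lets you conjugate within $\bR$ so that $\lambda_2=\mu_{H,2}$.
\item Put $A=d\lambda(1)$, $H=d\mu_H(1)$ and $B=A-H$. Then $B\in\mathfrak c$, $B$ commutes with $H$, and $\Tr(HB)=0$ by trace orthogonality.
\item $\Tr(A^2)=\Tr(H^2)$ because the degreewise weights agree, so $\Tr(B^2)=0$.
\item $B$ is semisimple with integer eigenvalues, so $B=0$.
\end{itemize}
Hence every Levi factor lies in $\bR$. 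Since $\bR\to\bR_2$ has finite kernel, its rank equals that of $\bG_{\ell,2}(X)$. The general finitely generated case then follows by specialization, as you planned. Your Frobenius-torus route could only be repaired by supplying such a $p$-adic cocharacter inside $T_v$, which essentially means reproving Pink's theorem.
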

Here $\bG_\ell(X)$ is the $\ell$-adic algebraic monodromy group of $\rH^\bullet_{\et}(X_{\overline K},\QQ_\ell)$, and $\rk G=\rk G^\circ$ denotes the dimension of a maximal torus of the identity component of its geometric form.

Together with the Levi identification below, the rank theorem compares reductive monodromy and Mumford--Tate groups without assuming that the motive $\fh(X)$ is abelian.

Choose a degree-preserving semisimplification of the total cohomology representation, and write
\(
\bG_{\ell}^{\mathrm{ssm}}(X)
\)
for its connected algebraic monodromy group. It is well defined up to degree-preserving conjugacy. A compatible total Levi subgroup $\bL_\ell$ realizes this semisimplification, and we set $\bG_{\ell,i}^{\red}(X)=\pi_{\ell,i}(\bL_\ell)$ in degree $i$.

The rank theorem (\Cref{mainthm:ranks are all equal}) shows that the connected kernel of the degree-two projection is unipotent. Combining this with $(\MTC_2)$ and the specialization argument of \S\ref{sec:Mumford--Tate conjecture for HK varieties}, we obtain an isomorphism
	\begin{equation}\label{eq:MTC in Lie algebras}
	\fg_{\ell}^{\red}(X) \cong \mt(X_{\CC})_{\QQ_{\ell}}
	\end{equation}
		of $\QQ_\ell$-Lie algebras. This is the Lie-algebra form of Mumford's original conjecture for abelian varieties \cite{Mum66}. We prove the stronger group-level statement under the comparison isomorphism.
\begin{mainthm}[\Cref{thm:semisimple Mumford--Tate conjecture even part}]
\label{mainthm:semisimple Mumford-Tate conjecture}
Let $X$ be a hyper-Kähler variety over a finitely generated extension $K/\QQ$. For every prime $\ell$, there is a compatible total Levi subgroup $\bL_\ell\subseteq\bG_\ell(X)^\circ$. On setting $\bG_{\ell,i}^{\red}(X)=\pi_{\ell,i}(\bL_\ell)$, comparison gives an identification
\[
\bG_{\ell,i}^{\red}(X)= \MT_{i}(X_{\CC})_{\QQ_{\ell}}
\]
for every embedding $K\hookrightarrow\CC$ and every $0\leq i\leq2\dim X$. In fact, the compatible total Levi is unique, and its images give the unique degreewise Levi subgroups in the chosen motivic realization.
\end{mainthm}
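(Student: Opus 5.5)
Given Theorem~1 and $(\MTC_2)$ (\Cref{thm:Mumford--Tate conjecture HK degree 2}), I would argue as follows.

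\textbf{Step 1: a Levi subgroup and its degree-two projection.} Consider the total connected monodromy group $\bG_\ell(X)^\circ$ acting on $\rH^\bullet_{\et}(X_{\overline K},\QQ_\ell)$, with unipotent radical $R_u$. In characteristic zero it has a Levi decomposition $\bG_\ell(X)^\circ=\bL_\ell\ltimes R_u$. By Mostow, Levi subgroups are all $R_u(\QQ_\ell)$-conjugate. The projection $\pi_{\ell,2}$ to degree two kills $R_u$, because $\bG_{\ell,2}(X)^\circ=\MT_2(X_\CC)_{\QQ_\ell}$ is reductive by $(\MTC_2)$. Hence $\pi_{\ell,2}(\bL_\ell)=\bG_{\ell,2}(X)^\circ$. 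By \Cref{mainthm:ranks are all equal}, $\rk\bL_\ell=\rk\bG_{\ell,2}(X)$, so the kernel of $\bL_\ell\to\bG_{\ell,2}(X)^\circ$ is a reductive group of rank zero. It is therefore finite, and $\bL_\ell\to\MT_2(X_\CC)_{\QQ_\ell}$ is an isogeny. Combined with \Cref{lem:project MT group}, this gives the Lie algebra isomorphism \eqref{eq:MTC in Lie algebras}, but only abstractly.

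\textbf{Step 2: identifying $\bL_\ell$ inside $\GL(\rH^\bullet_B\otimes\QQ_\ell)$.} This is the main obstacle, since we cannot invoke absolute Hodge cycles. I would use the LLV structure. The total cohomology carries the Looijenga--Lunts--Verbitsky Lie algebra $\fg_{\LLV}\cong\fso(\rH^2\oplus U)$, which is generated by Lefschetz operators of degree-two classes and their duals. Its algebraic $\ell$-adic counterpart is obtained by transporting via comparison; this is legitimate because cup product is compatible with comparison and the dual Lefschetz operators are determined algebraically.

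Verbitsky (\Cref{thm:MT in LLV}) places $\MT(X_\CC)$ inside the centralizer structure $\SO(\rH^2)\subset$ (LLV group), acting through a specified representation. Using the Galois equivariance of cup product, I would show that $\bL_\ell$ normalizes the $\ell$-adic LLV algebra and preserves the grading. It then acts through the degree-zero part $\fso(\rH^2)\times\GG_m$, as a subgroup determined by its degree-two image.

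Here I would use rigidity of the graded algebra: an automorphism of $\rH^\bullet$ preserving cup product and grading, acting as a given element on $\rH^2$, is determined up to the finite ambiguity $\pm1$ on odd pieces and on the parts not generated by $\rH^2$. This is handled by the LLV decomposition, in which the Verbitsky component is generated by $\rH^2$ and the other isotypic components are $\fso(\rH^2)$-modules. Consequently both $\bL_\ell$ and $\MT(X_\CC)_{\QQ_\ell}$ are the connected subgroup of $\mathrm{GSpin}/\mathrm{SO}$ acting via the LLV representation lifting the same degree-two subgroup $\MT_2$. Equality at the Lie algebra level inside $\gl$ then follows, and connectedness gives equality of groups.

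To get a uniform statement over the Galois side, I would also use Pink's theorem on weak Hodge cocharacters: these generate $\bG_\ell$ up to conjugacy. They match the Hodge cocharacter via the Hodge--Tate comparison, which pins down the central torus and the action in each degree, via the multiplicity-weighted direct sum trick to handle all degrees at once.

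\textbf{Step 3: degreewise statements and uniqueness.} Projecting the total equality $\bL_\ell=\MT(X_\CC)_{\QQ_\ell}$ to degree $i$ gives $\bG^{\red}_{\ell,i}(X)=\MT_i(X_\CC)_{\QQ_\ell}$. For uniqueness, any compatible total Levi equals $\MT(X_\CC)_{\QQ_\ell}$ by the argument above, since it is pinned down inside the fixed ambient group by comparison. Hence it is unique, although a priori Levis are only conjugate. Independence of the embedding $K\hookrightarrow\CC$ follows because the argument used only comparison for the chosen embedding.
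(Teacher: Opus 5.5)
\textbf{There is a genuine gap in Step~2, and Step~2 carries the whole content of the theorem.} You need to show that a Levi subgroup $\bL_\ell$ lies in the twisted LLV image $\bR=\rho^{\tw}(\GSpin)$. Neither of your arguments establishes this.

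Galois equivariance of cup product only places $\bG_\ell(X)^\circ$ inside the group $\Aut_{\alg}^{\gr}$ of graded algebra automorphisms, so it normalizes the LLV algebra. It does not make $\bL_\ell$ act through $\fg_0$. Besides $\bR$, the group $\Aut_{\alg}^{\gr}$ contains the kernel $\bC$ of restriction to $\rH^2$, which centralizes $\bR$ (\Cref{lem:defect group commutes with twisted LLV}). So subgroups of $\Aut_{\alg}^{\gr}$ are not determined by their degree-two images.

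Your rigidity claim, that a graded automorphism is determined by its $\rH^2$-action up to a finite ambiguity, is unproved. It also cannot be extracted from the LLV decomposition. $\bC$ acts on the multiplicity spaces of the non-Verbitsky isotypic components, and their being $\fso(\rH^2)$-modules is exactly what allows this. If $\bC$ were finite, then $\bP_\ell\subseteq\bC_{\QQ_\ell}$ would be finite, forcing $R_u(\bG_\ell(X)^\circ)=1$. That is the open semisimplicity conjecture for every hyper-K\"ahler variety.

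Concretely, Step~1 and the rank theorem only show that $\bL_\ell\cap\bC_{\QQ_\ell}$ is finite. They do not exclude that $\bL_\ell=\{r\,\phi(r)\}$ is the twisted graph of a nontrivial homomorphism $\phi\colon\MT(X_\CC)_{\QQ_\ell}\to\bC_{\QQ_\ell}$. Such a group is reductive, has the right rank, and has the same degree-two image as $\MT(X_\CC)_{\QQ_\ell}$, yet differs from it.

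\textbf{What excludes such a $\phi$.} It is the degreewise weight information of Hodge-type generators together with the trace rigidity of \Cref{prop:defect rigidity weak Hodge}, which you invoke only in passing and not accurately.
\begin{itemize}
\item Pink's theorem needs a number field, and it generates the Levi (the semisimplified monodromy), not $\bG_\ell$.
\item The weighted sum makes the generators $\lambda$ of degreewise Hodge type. But equal weight multiplicities only fix a $\GL$-conjugacy class; they do not put $\lambda$ in $\bR$.
\end{itemize}
The missing argument runs as follows.
\begin{enumerate}
\item By $(\MTC_2)$ and \Cref{lem:degree two Hodge cocharacter rigidity}, conjugate by $\bR(\overline{\QQ}_\ell)$ so that $\lambda_2=\mu_{H,2}$.
\item Then $B=d\lambda(1)-d\mu_H(1)\in\mathfrak c$ commutes with $H=d\mu_H(1)$.
\item $\Tr(HB)=0$ by \Cref{lem:trace orthogonality}, and equal degreewise weights then give $\Tr(B^2)=0$.
\item Integrality of the eigenvalues of $B$ forces $B=0$.
\end{enumerate}
This gives $\bL_\ell\subseteq\bR_{\QQ_\ell}$ over number fields (\Cref{thm:Levi rigidity number field}).

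\textbf{Passing to finitely generated $K$.} You still have to do this. The paper uses \Cref{lem:b2=3 descends to number field} when $b_2=3$. For $b_2>3$ it uses a universal polarized family:
\begin{itemize}
\item the generic Levi is pinned down by Galois-generic fibres, maximal monodromy, and Zarhin's one-dimensional center;
\item it is unique because the generic unipotent radical centralizes $\bR$;
\item the special-fibre Levi is then placed inside it by Mostow and \Cref{lem:LLV representation is deformation invariant}.
\end{itemize}

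Only once $\bL_\ell\subseteq\bR_{\QQ_\ell}$ is known does your Step~3 go through, via the finite kernel of $\bR\to\bR_2$ as in \Cref{prop:Levi equals MT}. Degreewise uniqueness additionally needs the observation that the image of the unipotent radical centralizes the image of the total Levi.
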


We call this equality the \emph{semisimple Mumford--Tate conjecture} in degree $i$. If the degree-$i$ Galois representation is semisimple, then $\bG_{\ell,i}^{\red}(X)=\bG_{\ell,i}(X)^\circ$, so the usual Mumford--Tate conjecture follows.

The semisimple statement also relates Hodge and Tate classes. In particular, \Cref{mainthm:semisimple Mumford-Tate conjecture} gives
\begin{equation}\label{eq:MT in monodromy}
\MT(X_{\CC})_{\QQ_{\ell}} \subseteq \bG_{\ell}(X)^{\circ}
\end{equation}
for every prime $\ell$. Thus every $G_K$-invariant tensor is Mumford--Tate invariant. Under comparison, it therefore lies in the $\QQ_\ell$-span of rational Hodge tensors.

Together with the Künneth formula, this yields the following implication.
\begin{cor}\label{cor:Hodge implies Tate}
		Let $X$ be a hyper-Kähler variety over a finitely generated extension $K/\QQ$. If the Hodge conjecture holds in codimension $k$ for $X_\CC^{\times m}$, then, for every prime $\ell$, the cycle-class map
	\[
	\CH^k(X^{\times m})\otimes_\QQ\QQ_\ell \longrightarrow \rH^{2k}_{\et}(X_{\overline K}^{\times m},\QQ_\ell(k))^{G_K}
	\]
	is surjective.
\end{cor}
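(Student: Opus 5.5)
The plan is to deduce the corollary from the inclusion \eqref{eq:MT in monodromy}, transported to the power $Y=X^{\times m}$ via the Künneth formula. We then show that Tate classes are $\QQ_\ell$-linear combinations of Hodge classes, and conclude with the Hodge conjecture and compatibility of cycle class maps under Artin comparison.

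First I will set up the Künneth decomposition. Write
\[
\rH^{2k}(Y)(k)=\bigoplus_{i_1+\dots+i_m=2k}\rH^{i_1}(X)\otimes\cdots\otimes\rH^{i_m}(X)\otimes\QQ(k),
\]
both in Betti and in étale cohomology, compatibly with the Artin comparison. The $G_K$-action on the right is through $\bG_\ell(X)$ together with the cyclotomic character. The Hodge structure is governed by $\MT(X_\CC)$, which contains the weight cocharacter, together with the Tate twist. So it is convenient to work with the total Mumford--Tate group $\MT(X_\CC\sqcup\mathrm{pt})$ including $\QQ(1)$. Equivalently, since the grading is respected, I will treat twists by letting the weight homothety act on $\QQ(k)$.

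The key step is the inclusion
\[
\MT(X_\CC)_{\QQ_\ell}\subseteq\bG_\ell(X)^\circ
\]
from \Cref{mainthm:semisimple Mumford-Tate conjecture}. We must check that it is compatible with the Tate twist. Both groups map to $\GG_m$: the cyclotomic character on the Galois side, and the weight/norm character on the Hodge side. These characters should agree on the image of $\MT$. The reason is that the homothety by $t$ in degree $i$ acts as $t^{i}$ on both sides; in degree $0$, and in degree $2$ through the polarization class, the twist characters coincide.

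Given this, let $v$ be a $G_K$-invariant class in $\rH^{2k}_{\et}(Y_{\overline K},\QQ_\ell(k))$. Then $v$ is invariant under an open subgroup, hence under $\bG_\ell(X)^\circ$ extended by the twist. By the inclusion it is fixed by $\MT(X_\CC)_{\QQ_\ell}$, acting through its twisted action. The invariants of $\MT_{\QQ_\ell}$ equal $(\text{Hodge classes})\otimes\QQ_\ell$, because taking invariants of an algebraic group commutes with flat base change. So $v$ lies in $\Hdg^{k}(Y_\CC)\otimes\QQ_\ell$ under comparison.

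By the assumed Hodge conjecture in codimension $k$ for $X_\CC^{\times m}$, each Hodge class is the class of a cycle on $Y_\CC$. I then need the cycle to be defined over $K$ up to $\QQ_\ell$-span. The cycle is defined over a finite extension $K'/K$, and averaging over $\Gal(K'/K)$ (a norm/trace pushforward) gives a cycle over $K$ whose class is the average of the conjugates. Applied to a $G_K$-invariant $v$, this recovers $v$. Here we use that the étale cycle class commutes with comparison and with Galois conjugation.

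The main obstacle I expect is the twist bookkeeping. The inclusion \eqref{eq:MT in monodromy} concerns groups acting on untwisted cohomology, and the invariant $v$ is only invariant for the twisted action. I would handle this by noting that in $\rH^{2k}(Y)$ the weight cocharacter of $\MT$ acts by $t^{2k}$, so it has to be matched with the cyclotomic scalar. For this I will use that $\bG_\ell(X)^\circ$ contains the homotheties, via Deligne's weight argument and the Frobenius eigenvalues, and that the twisted invariants are exactly the invariants of the kernel of the combined character. Invariance under a finite-index subgroup is also harmless, since invariants of $\bG^\circ$ contain those of $\bG$.
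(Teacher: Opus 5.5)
This is essentially the paper's argument: the paper deduces the corollary from the inclusion $\MT(X_\CC)_{\QQ_\ell}\subseteq\bG_\ell(X)^\circ$ together with the Künneth formula, and then cites Moonen's standard Hodge-to-Tate argument, which you reconstruct. For the twist, the line spanned by an ample class defined over $K$ realizes $\QQ(-1)$ compatibly on both sides, so your weight-homothety discussion is optional. The one step to tighten is the descent of cycles: a cycle on $Y_\CC$ is a priori defined only over a finitely generated extension of $K$, so before the Galois averaging you must spread it out and specialize to a $\overline K$-point (cycle classes are locally constant in the constant family $Y\times S\to S$) to get a cycle with the same class over a finite extension of $K$.
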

\begin{proof}
By \eqref{eq:MT in monodromy}, comparison carries every $G_K$-invariant class into the $\QQ_\ell$-span of rational Hodge classes. The standard Hodge-to-Tate argument now applies; see \cite[Proposition~2.3.2 and Remark~2.2.6(ii)]{Moonen17b}.
\end{proof}

\subsection{Mumford--Tate conjecture in a family}
Floccari--Fu--Zhang \cite{FFZ21} and Soldatenkov \cite{Sol22} used the deformation principle for motivated cycles to prove that abelianity of André motives attached to hyper-Kähler varieties is deformation invariant. Their result implies the corresponding invariance of the motivic Mumford--Tate conjecture.

We instead work directly with the $\ell$-adic local systems in a family. We prove that semisimplicity is invariant under deformation and then use \Cref{mainthm:semisimple Mumford-Tate conjecture} to obtain the same conclusion for $(\MTC_i)$ in every degree.

Recall that two varieties $X/K$ and $Y/K'$ are \emph{deformation equivalent} if there is a smooth projective family $f \colon \fX \to S$ over a geometrically connected smooth variety $S$ (defined over a common finitely generated subfield $K_0 \subseteq K \cap K'$), such that $X \cong \fX_s$ and $Y \cong \fX_{s'}$ for some $s \in S(K)$ and $s' \in S(K')$.

\begin{mainthm}[\Cref{cor:MTC in families}]\label{mainthm:Semisimplicity in family}
    Let $X$ be a hyper-Kähler variety over a finitely generated extension $K/\QQ$. For every $0\leq i\leq2\dim X$, the following statements are equivalent.
    \begin{enumerate}[(a)]
        \item $(\MTC_{i})$ holds for $X$.
        \item For every prime $\ell$, the $G_K$-representation $\rH^{i}_{\et}(X_{\overline K},\QQ_\ell)$ is semisimple.
        \item For every hyper-Kähler variety $Y/K'$ over a finitely generated extension $K'/\QQ$ that is deformation equivalent to $X$, $(\MTC_i)$ holds for $Y$.
    \end{enumerate}
\end{mainthm}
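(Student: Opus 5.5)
We prove (a)$\Leftrightarrow$(b) for a single $X$ using \Cref{mainthm:semisimple Mumford-Tate conjecture}, and then deduce (c) by spreading semisimplicity over a family. Since (c) contains (a) (take $Y=X$), it suffices to show (a)$\Leftrightarrow$(b) and then (b)$\Rightarrow$(c).

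\emph{(a)$\Rightarrow$(b).} Suppose $\bG_{\ell,i}(X)^\circ=\MT_i(X_\CC)_{\QQ_\ell}$. Then $\bG_{\ell,i}(X)^\circ$ is reductive, so $\rH^i_{\et}$ is semisimple as a representation of $\bG_{\ell,i}(X)^\circ$, hence of an open subgroup of $G_K$. Semisimplicity passes from a finite-index subgroup to the whole group: average a $G_K$-equivariant projection over the finite quotient, using characteristic zero. This gives (b).

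\emph{(b)$\Rightarrow$(a).} If $\rH^i_{\et}$ is semisimple, then $\bG_{\ell,i}(X)^\circ$ is reductive and has no unipotent radical. Its Levi subgroup $\pi_{\ell,i}(\bL_\ell)$ is therefore the whole group. \Cref{mainthm:semisimple Mumford-Tate conjecture} then gives $\bG_{\ell,i}(X)^\circ=\bG^{\red}_{\ell,i}(X)=\MT_i(X_\CC)_{\QQ_\ell}$ under comparison, for every embedding.

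\emph{(b)$\Rightarrow$(c).} Let $f\colon\fX\to S$ be a family over $K_0$ connecting $X=\fX_s$ and $Y=\fX_{s'}$. Let $\VV=R^if_*\QQ_\ell$, a representation of $\pi_1^{\et}(S)$. The plan is:
\begin{enumerate}
\item Relate the geometric monodromy of $\VV$ to each fibre. The geometric monodromy group $\bG^{\mathrm{geo}}$ is semisimple by Deligne (Weil II). For each fibre $\fX_t$ over a finitely generated field, the group $\bG_{\ell,i}(\fX_t)^\circ$ is contained in the arithmetic monodromy group $\bG_S^\circ$ of $\VV$. This holds after base change of $S$ and choosing a point of $S$ over the same field; pulling back to $\Spec K$ or $\Spec K'$ keeps everything defined over a common finitely generated field.
\item Show that the unipotent radical of $\bG_{\ell,i}(\fX_t)^\circ$ is controlled by that of $\bG_S^\circ$, uniformly in $t$. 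I would reduce to the degree-two statement via \Cref{mainthm:ranks are all equal}. The connected kernel of the projection to degree two is unipotent. In degree two, (MTC$_2$) by \Cref{thm:Mumford--Tate conjecture HK degree 2} gives semisimplicity in degree two for every fibre. So for every fibre the semisimplification defect sits in a unipotent normal subgroup $U_t$, namely the connected kernel of the map $\bG_{\ell}(\fX_t)^\circ\to\bG_{\ell,2}(\fX_t)^\circ$.
\item Show that vanishing of $U_t$ (in the degree-$i$ image) is deformation invariant. Here I use (a)$\Leftrightarrow$(b) for each fibre and dimension counting. Semisimplicity in degree $i$ is equivalent to $\dim\bG_{\ell,i}(\fX_t)^\circ=\dim\MT_i(\fX_{t,\CC})$. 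The Mumford--Tate group is $\MT_i$ of a fibre, determined through Verbitsky's theorem (\Cref{thm:MT in LLV}) by $\MT_2$. The monodromy side is also governed by the degree-two data. Concretely, I would show that the unipotent part $U_t$ equals the unipotent radical of $\bG_S^\circ$ intersected appropriately, independent of $t$. The idea is to compare with a Hodge-generic fibre, where the Galois image is open in the arithmetic monodromy (Cadoret--Tamagawa type). That reduces everything to showing that $\bG_S^\circ$ acting on $\VV$ is semisimple iff some (equivalently every) fibre is.
\end{enumerate}

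The main obstacle is step 3: transporting non-semisimplicity between special and generic fibres. Semisimplicity can a priori fail only at special points, where the Galois image is smaller. My first attempt would use the unipotent radical $R_u$ of $\bG_{\ell,i}(\fX_t)^\circ$ together with the Levi $\bL_\ell$ from \Cref{mainthm:semisimple Mumford-Tate conjecture}. $R_u$ acts trivially on the graded pieces of the socle filtration, so it corresponds to extension classes of $G_K$-modules within $\rH^\bullet$. Because the cup-product algebra structure is rigid (the LLV algebra acts and is generated in degree two), any such extension would force a nontrivial extension already visible in degree two. That contradicts semisimplicity in degree two from (MTC$_2$) at every point. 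If this rigidity argument works, it actually yields (b) at every fibre uniformly, and (c) follows from (b)$\Rightarrow$(a) applied to $Y$.
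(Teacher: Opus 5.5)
Your argument for (a)$\Leftrightarrow$(b) is correct and matches the paper (\Cref{lem:semisimple Mumford--Tate conjecture}). The gap is in (b)$\Rightarrow$(c). Step~3 is the entire content of that implication, and you never carry it out. Your reformulation ``$U_t$ is independent of $t$'' is not true in general: the defect radical can shrink at special points. Comparing with a Galois-generic fibre also does not help, because the given $X$ may itself be special.

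The rigidity mechanism you fall back on cannot work. As you note, it would give (b) at every fibre with no hypothesis, i.e.\ semisimplicity in all degrees for every hyper-Kähler variety, and that is open. The faulty step is the claim that an extension in degree $i$ must show up in degree two. The algebra $\rH^\bullet$ is not generated by $\rH^2$; only the Verbitsky component $\SH(X)$ is. The group $\bP_\ell^\circ$ acts trivially on $\rH^2$, hence on $\SH(X)$, and it centralizes the whole twisted LLV image (\Cref{lem:defect group commutes with twisted LLV}). Nothing stops it from acting by nontrivial unipotents on LLV-isotypic pieces of multiplicity $>1$ outside $\SH(X)$. Excluding exactly this is why \Cref{prop:semisimple if multiplicity one} needs its hypothesis.

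The missing idea is a transport in both directions through the generic point $\eta$ of the base, as in \Cref{thm:reductivity in family}.

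\emph{Upward step.} Reductivity of $\bG_{\ell,i}(X)^\circ$ forces reductivity of $\bG_{\ell,i,\eta}^\circ$. The paper uses \cite{Cad15} here. Alternatively, once $X$ is a rational point, $\bG_{\ell,i,\eta}^\circ$ is generated by the normal, semisimple geometric monodromy and $\bG_{\ell,i}(X)^\circ$. Its unipotent radical then maps trivially to the quotient by the geometric part, which is a quotient of $\bG_{\ell,i}(X)^\circ$ and hence reductive. So the radical lies in a semisimple normal subgroup and is trivial.

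\emph{Downward step.} Apply the rank theorem at both $\eta$ and $Y$: each unipotent radical is the connected kernel of the projection to degree two. Cospecialization $\bG_\ell(Y)\hookrightarrow\bG_{\ell,\eta}$ (after passing to scheme-theoretic images) is compatible with that projection, so $\bP_{\ell,Y}^\circ\hookrightarrow\bP_{\ell,\eta}^\circ$. By \Cref{lem:finite of P implies semisimple}, $\bP_{\ell,\eta}^\circ$ acts trivially on $\rH^i$, so $\bP_{\ell,Y}^\circ$ does too. This gives (b) for $Y$, and then (a) for $Y$.

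The essential input is the inclusion of kernels $\bP_{\ell,Y}^\circ\subseteq\bP_{\ell,\eta}^\circ$. A bare inclusion $\bG_\ell(Y)^\circ\subseteq\bG_{\ell,\eta}^\circ$ would not suffice, since subgroups of reductive groups need not be reductive.
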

\begin{proof}
\begin{itemize}
    \item The equivalence (a) $\Longleftrightarrow$ (b) is \Cref{lem:semisimple Mumford--Tate conjecture}.
    \item For (a) $\implies$ (c), choose a family realizing the deformation equivalence. The two fibers are finite-type fibers, so \Cref{cor:MTC in families} applies.
    \item The implication (c) $\implies$ (a) follows by taking $Y=X$. \qedhere
\end{itemize}
\end{proof}

The following corollary was proved in \cite{FFZ21,Sol22}; \Cref{mainthm:Semisimplicity in family} gives an alternative proof.
\begin{cor}\label{cor:MumfordTate for known types}
If $X$ has deformation type $\Kthree^{[n]}$, $\Kum_n$, $\OG6$, or $\OG10$, then $(\MTC_i)$ holds for every prime $\ell$ and every $0\leq i\leq2\dim X$.
\end{cor}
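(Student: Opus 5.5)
The plan is to use \Cref{mainthm:Semisimplicity in family}: since $(\MTC_i)$ is invariant under deformation, it suffices to exhibit, for each of the four known deformation types, one hyper-Kähler variety $Y$ over a finitely generated field $K'/\QQ$ for which $(\MTC_i)$ holds in every degree. Equivalently, by the implication (b) $\Rightarrow$ (a), it is enough to find a representative whose $\ell$-adic cohomology $\rH^i_{\et}(Y_{\overline{K'}},\QQ_\ell)$ is semisimple for every $i$ and every $\ell$. Then (a) $\Rightarrow$ (c), applied to $Y$, transfers the conclusion to every $X$ of the same deformation type.

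For the representatives I would take the standard models.
\begin{itemize}
\item For $\Kthree^{[n]}$-type, take $Y=S^{[n]}$ for a K3 surface $S$ over a number field.
\item For $\Kum_n$-type, take the generalized Kummer $K_n(A)$ of an abelian surface $A$.
\item For $\OG6$ and $\OG10$, take O'Grady's symplectic resolutions of moduli spaces of sheaves on an abelian surface and on a K3 surface, respectively.
\end{itemize}
In the first two cases, the Göttsche--Soergel / de Cataldo--Migliorini decomposition expresses the motive, and hence the cohomology compatibly with the Galois action, as a direct sum of Tate twists of cohomology of symmetric products of $S$ (resp.\ of $A$ and of $A^{k}$). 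Semisimplicity then follows from Faltings' theorem for abelian varieties. For a K3 surface $S$, one uses the Kuga--Satake construction, which embeds $\rH^2(S)$ Galois-equivariantly (after finite extension) into $\End\rH^1$ of an abelian variety; Deligne and André supply this. Tensor products of semisimple representations in characteristic zero remain semisimple, because the monodromy group stays reductive. A finite base extension does not affect the identity component, so semisimplicity over $K'$ follows.

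For $\OG6$ and $\OG10$, I would use the known descriptions of their motives. The results of Mongardi--Rapagnetta--Saccà and Floccari for $\OG6$, and of Floccari--Fu--Zhang for $\OG10$ (via Lagrangian fibrations or the Laza--Saccà--Voisin construction), show that a suitable representative has abelian André motive. The $\ell$-adic realization of an abelian motive is semisimple, since it is a summand of tensor constructions on abelian varieties.

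I expect the main obstacle to be the $\OG6$ and $\OG10$ cases. There one must make sure that the motivic decomposition is available over a finitely generated field and that it is compatible with the Galois action. Alternatively, one can bypass semisimplicity and directly invoke that abelian motives satisfy $\MT_i\supseteq\bG_{\ell,i}^\circ$ plus equality of ranks from \Cref{mainthm:ranks are all equal}. If necessary, I would fall back on the fact that the motive lies in the tensor category generated by an abelian variety's motive, and then apply Faltings.
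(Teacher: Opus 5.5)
Your argument is correct and has the same architecture as the paper's proof. Via \Cref{mainthm:Semisimplicity in family}, one only needs, for each deformation type, one model with semisimple $\ell$-adic cohomology; this is exactly how \S\ref{subsec:furtherremarks} proceeds.

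For $\Kthree^{[n]}$ and $\Kum_n$ your inputs essentially coincide with the paper's. For $S^{[n]}$ the paper uses the Nakajima--Grojnowski Heisenberg action plus Deligne, and mentions the de~Cataldo--Migliorini decomposition as an alternative. For $K_n(A)$ it uses the motivic decomposition. For $\OG6$, both routes rest on the Mongardi--Rapagnetta--Sacc\`a map from a $\Kthree^{[3]}$-type variety.

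The real difference is $\OG10$. You import abelianity of the André motive, whereas the paper uses its internal multiplicity-one criterion, \Cref{prop:semisimple if multiplicity one}:
\begin{itemize}
\item by \Cref{thm:rank of ell algebraic monodromy groups}, the unipotent radical of $\bG_\ell(X)^\circ$ is $\bP_\ell^\circ$;
\item $\bP_\ell^\circ$ centralizes the LLV algebra by \Cref{lem:defect group commutes with twisted LLV};
\item since $\rH(X)=V_{(5)}\oplus V_{(2,2)}$ is multiplicity free, Schur's lemma makes $\bP_\ell^\circ$ act by unipotent scalars, hence trivially.
\end{itemize}
That argument applies to every $\OG10$ variety at once. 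It needs no geometric model, no deformation step and no LSV-type input.

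Your route is valid, but it leans on abelianity results that the literature proves for all varieties of these types. Those results already yield $(\MTC_i)$; this is how \cite{FFZ21,Sol22} obtain the corollary. So for $\OG6$ and $\OG10$ your argument reproduces the known proof rather than giving an alternative. The paper's version shows that, for $\OG10$, $(\MTC_2)$, the rank theorem and the LLV structure suffice.

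Two smaller points.

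\begin{itemize}
\item Your fallback ``$\bG_{\ell,i}(X)^\circ\subseteq\MT_i(X_\CC)_{\QQ_\ell}$ plus equality of ranks'' does not work as stated. A connected subgroup of full rank, such as a maximal torus, need not be the whole group. Instead, argue as follows. The map $\bG_\ell(X)^\circ\to\bG_{\ell,2}(X)^\circ=\MT_2(X_\CC)_{\QQ_\ell}$ is surjective, and $\MT(X_\CC)\to\MT_2(X_\CC)$ has finite kernel by \Cref{lem:project MT group}. A dimension count then forces $\bG_\ell(X)^\circ=\MT(X_\CC)_{\QQ_\ell}$, and you can project to degree $i$. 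Alternatively, simply combine Faltings' reductivity with \Cref{thm:semisimple Mumford--Tate conjecture even part}.
\item Both you and the paper tacitly use that an arbitrary projective $X$ of the given (complex) deformation type is linked to the chosen model by a finite chain of smooth projective families over finitely generated fields, as in the definition preceding \Cref{mainthm:Semisimplicity in family}. Since (a)$\Rightarrow$(c) can be iterated along such a chain this suffices, but the step deserves a sentence.
\end{itemize}
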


\subsection{Applications}

Although semisimplification discards extension data, the semisimple Mumford--Tate conjecture suffices for several arithmetic applications.
\subsubsection{Arithmetic Nagai conjecture}
The Nagai conjecture asks to what extent monodromy in higher degree is determined by monodromy in degree two. We use its $\ell$-adic formulation in terms of local monodromy operators \cite{IIKTZ25}.

In \S\ref{subsec:Nagai}, we prove the $\ell$-adic Nagai conjecture for hyper-Kähler varieties with Type I reduction over a $p$-adic local field. The result was previously known only for the four known deformation types \cite{IIKTZ25}.

\begin{mainthm}[\Cref{thm:Nagai for Type I}]\label{mainthm:Nagai for Type I}
	Let $X$ be a hyper-Kähler variety over a $p$-adic local field $K_v$. Suppose $X$ has Type I reduction, i.e. $\rH^2_{\et}(X_{\overline{K}_v},\QQ_{\ell'})$ is potentially unramified for some prime $\ell'\neq p$. Then $\rH^{i}_{\et}(X_{\overline{K}_v},\QQ_{\ell})$ is potentially unramified for all $0\leq i\leq4n$ and all primes $\ell\neq p$.
\end{mainthm}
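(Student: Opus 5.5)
We argue with the inertia group $I_v\subseteq G_{K_v}$ acting on the total cohomology $\rH^\bullet_{\et}(X_{\overline K_v},\QQ_\ell)$, and reduce everything to degree two via \Cref{mainthm:ranks are all equal} and the Levi structure of \Cref{mainthm:semisimple Mumford-Tate conjecture}.

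\textbf{Step 1: reduce to unipotence of inertia.} By Grothendieck's quasi-unipotent monodromy theorem, after a finite extension of $K_v$ the inertia $I_v$ acts on every $\rH^i_{\et}(X_{\overline K_v},\QQ_\ell)$ through $\exp(t_\ell(\sigma)N)$, with $N$ nilpotent. So potential unramifiedness in all degrees is equivalent to $N=0$ on $\rH^\bullet$. The hypothesis gives this for $\rH^2$ at $\ell'$. By independence of $\ell$ of the monodromy operator on $\rH^2$ (Ochiai for the trace of Weil--Deligne representations, or weight–monodromy arguments known in degree two for hyper-Kähler varieties via the Kuga--Satake construction), $N_2=0$ on $\rH^2$ for every $\ell\neq p$. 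I would fix $\ell$ from now on.

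\textbf{Step 2: globalize to use the monodromy group.} Spread $X$ out: choose a finitely generated field $K\subseteq K_v$, dense in a suitable sense, and a model $X_K$ with $X=X_K\otimes K_v$, so that $G_{K_v}\subseteq G_K$ (decomposition group). Then $N$ lies in $\Lie\bG_\ell(X_K)^\circ$, since $\exp(t_\ell(\sigma)N)$ is the image of an element of $I_v$ and, after the finite extension, lies in the identity component. Its image in $\Lie\bG_{\ell,2}$ is $N_2=0$. Hence $N\in\Lie\ker(\bG_\ell(X_K)^\circ\to\bG_{\ell,2}(X_K))$.

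\textbf{Step 3: kill the kernel.} By \Cref{mainthm:ranks are all equal}, this connected kernel has rank zero, hence is unipotent, so it lies in the unipotent radical $R_u$. Write $\bG_\ell(X_K)^\circ=\bL_\ell\ltimes R_u$ with $\bL_\ell$ the compatible total Levi. On each $\rH^i$, $R_u$ acts trivially on the graded pieces of a $G_K$-stable filtration with semisimple graded quotients. This alone does not give $N=0$, so an extra input is needed. I would use the weight–monodromy filtration. Restricted to $G_{K_v}$, the operator $N$ is nilpotent and satisfies $N\rho(\varphi)=q^{-1}\rho(\varphi)N$ for a Frobenius lift $\varphi$. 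Because $N\in\Lie R_u$, the operator $N$ commutes with the torus generated by the semisimple part of $\rho(\varphi)$ projected to $\bL_\ell$, up to the $q$-twist.

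To finish, I would use the cup-product structure, which is rigid. Under the LLV/Verbitsky description (\Cref{thm:MT in LLV}), $\rH^\bullet$ in the Verbitsky component is generated by $\rH^2$, and $N$ acts as a derivation for cup product because $I_v$ acts by ring automorphisms. Since $N$ kills $\rH^2$, it kills the subalgebra generated by $\rH^2$. For the remaining LLV-isotypic pieces, the key point is that $\bG_\ell(X_K)^\circ$ acts through the image of the (Galois-twisted) LLV group $\mathfrak{g}_{\mathrm{tot}}$, which is reductive and determined by $\rH^2$. The unipotent radical must then act by tensors commuting with this structure; I would argue that $R_u$ is trivial on any representation on which the group acts through an $\fso(\rH^2)$-type action. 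This last step is the main obstacle, and I would first try to show $R_u$ normalizes, hence centralizes, the $\ell$-adic LLV algebra acting on $\rH^\bullet$ (built from Lefschetz operators of $G_K$-invariant classes), forcing $N$ to be an LLV-equivariant nilpotent endomorphism of weight zero preserved by Frobenius-twisting, which vanishes.
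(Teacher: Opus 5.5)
Your Steps 1 and 2 match the paper's setup. After a finite extension, $N$ lies in $\Lie\bP_\ell$, where $\bP_\ell=\ker(\bG_\ell(X_0)\to\bG_{\ell,2}(X_0))$ is the global defect group of a finitely generated model $X_0$. One minor point: Ochiai's $\ell$-independence of traces does not see $N$, so it cannot give $N_2=0$ at $\ell$. The paper cites \cite[Corollary~5.3]{IIKTZ25} for $b_2>3$, and for $b_2=3$ notes that the relevant orthogonal Lie algebra has no nonzero nilpotents.

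The genuine gap is in Step 3. Saying that $N$ ``commutes with the Frobenius torus because $N\in\Lie R_u$'' is not a valid inference, and the route you then adopt cannot close. Asserting that $\bG_\ell(X_0)^\circ$ acts through the twisted LLV image, or that $R_u$ acts trivially wherever the action has $\fso(\rH^2)$-type, amounts to the semisimplicity conjecture (SS), which is open. Only a Levi factor lies in $\rho^{\tw}(\GSpin_\ell)$; the defect radical merely centralizes it (\Cref{lem:defect group commutes with twisted LLV}). Moreover, an LLV-equivariant, degree-preserving nilpotent endomorphism need not vanish when some LLV summand has multiplicity $>1$, as for $\OG6$: it can act on the multiplicity spaces. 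The relation $\Ad(\rho(\Phi))N=q^{-1}N$ then gives nothing until you know how Frobenius acts there. So your concluding ``which vanishes'' is exactly the unproved point.

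The missing idea is to place Frobenius, rather than $N$, inside the LLV image. Let $s_v$ be the semisimple part of a geometric Frobenius lift, and let $T_v=\overline{\langle s_v^m\rangle}$ be the associated torus. By \Cref{lem:Mostow containment}, $T_v$ lies in a Levi subgroup of the global connected monodromy group. By \Cref{prop:Levi equals MT}, every such Levi equals $\MT((X_0)_\CC)_{\QQ_\ell}\subseteq\rho^{\tw}(\GSpin_\ell)$, which centralizes $\bP_\ell$. Hence $\Ad(s_v^m)N=N$. The Weil--Deligne relation gives $\Ad(s_v^m)N=q_v^{-m}N$, so $N=0$, and Grothendieck's theorem finishes.

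Your ``projection to $\bL_\ell$'' idea also works once this input is available. Write $\rho(\Phi)=ul$ with $l\in\bL_\ell$ acting trivially on $\Lie\bP_\ell$ and $u$ unipotent. Then $\Ad(u)N=q^{-1}N$ forces $N=0$. Either way, the essential ingredient is the identification of every Levi factor with the Mumford--Tate group inside the twisted LLV image, not just the rank theorem you invoke.
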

The proof places a Levi factor containing the semisimple part of Frobenius inside the global Mumford--Tate group and uses the fact that this Levi factor centralizes the defect group.

\subsubsection{Maximality of Galois action (after Hui--Larsen)}\label{sss:Maximality}
Serre \cite{Ser94} asked whether compatible systems attached to maximal motives have maximal Galois image.

Larsen \cite[\S0]{Lar95} formulated the following version for compatible systems arising from arbitrary smooth projective varieties. Let $\bG_{\ell}^{\circ} \to \bG_\ell^{\red}$ be the reductive quotient of $\bG_\ell^\circ$ and put
\[
 \bG_\ell^{\mathrm{ss}}=\bG_\ell^{\red}/Z(\bG_\ell^{\red})^\circ.
\]
Let $\bG_\ell^{\simplyconnected}\to\bG_\ell^{\mathrm{ss}}$ be its simply connected covering. If $\Gamma_\ell=\rho_\ell(G_K)$, define $\Gamma_\ell^{\simplyconnected}$, as in Hui--Larsen, to be the inverse image in $\bG_\ell^{\simplyconnected}(\QQ_\ell)$ of the image of $\Gamma_\ell\cap\bG_\ell^\circ(\QQ_\ell)$ in $\bG_\ell^{\mathrm{ss}}(\QQ_\ell)$.

\begin{conj}[Larsen]
		\label{conj:maximality of Galois action}
		Let
		\[
		\{\rho_\ell\colon G_K\to
		\GL(\rH_{\et}(X_{\overline K},\QQ_\ell))\}_{\ell\in\mathcal P}
		\]
		be a $\QQ$-compatible system arising from a smooth projective variety $X$ over a finitely generated extension $K/\QQ$. For all sufficiently large $\ell$, the group $\Gamma_\ell^{\simplyconnected}$ is a maximal compact subgroup of $\bG_\ell^{\simplyconnected}(\QQ_\ell)$. Moreover, it is \emph{hyperspecial}: there is a smooth affine group scheme $\mathscr G_\ell^{\simplyconnected}$ over $\ZZ_\ell$ such that
		\[
		\mathscr G_\ell^{\simplyconnected}(\ZZ_\ell)
		=\Gamma_\ell^{\simplyconnected}.
		\]
\end{conj}

Hui--Larsen proved Larsen's conjecture in degree two for hyper-Kähler varieties \cite[Theorem~1.3]{HL20}. The rank theorem extends their result to the full cohomology.

\begin{mainthm}[\Cref{thm:maximality of Galois action}]\label{mainthm:maximality of Galois action}
			Let $X$ be a hyper-Kähler variety over a finitely generated extension $K/\QQ$. Then $\Gamma_{\ell}^{\simplyconnected}$ is a hyperspecial maximal compact subgroup of $\bG_{\ell}(X)^{\simplyconnected}(\QQ_{\ell})$ for all sufficiently large primes $\ell$.
\end{mainthm}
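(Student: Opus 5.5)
The plan is to reduce the maximality statement for the total cohomology to Hui--Larsen's degree-two theorem \cite[Theorem~1.3]{HL20}. The bridge is the rank theorem (\Cref{mainthm:ranks are all equal}) together with the semisimple Mumford--Tate conjecture (\Cref{mainthm:semisimple Mumford-Tate conjecture}).

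First, I identify the simply connected groups. Let $\pi_{\ell,2}\colon \bG_\ell(X)^\circ\to\bG_{\ell,2}(X)^\circ$ be the degree-two projection. By \Cref{mainthm:ranks are all equal}, its connected kernel is unipotent, so the induced map on reductive quotients $\bG_\ell^{\red}\to\bG_{\ell,2}^{\red}$ is an isogeny. By \Cref{mainthm:semisimple Mumford-Tate conjecture}, $\bG_\ell^{\red}\cong\MT(X_\CC)_{\QQ_\ell}$ and $\bG^{\red}_{\ell,2}\cong\MT_2(X_\CC)_{\QQ_\ell}$. Under these identifications the map is the projection of \Cref{lem:project MT group}, an isogeny of degree at most $2$. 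Passing to adjoint quotients and then to simply connected covers gives a canonical isomorphism
\[
\bG_\ell(X)^{\simplyconnected}\isomto\bG_{\ell,2}(X)^{\simplyconnected},
\]
which is compatible with the maps from $\bG_\ell^\circ(\QQ_\ell)$ and $\bG_{\ell,2}^\circ(\QQ_\ell)$ to the respective $\bG^{\mathrm{ss}}(\QQ_\ell)$. Here the $\mathrm{ss}$ quotients may differ by a central isogeny, but the simply connected covers agree.

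Next, I compare the $\Gamma^{\simplyconnected}$. Let $\Gamma_\ell$ be the total Galois image and $\Gamma_{\ell,2}$ its image in degree two. The image of $\Gamma_\ell\cap\bG_\ell^\circ(\QQ_\ell)$ in $\bG_{\ell,2}^\circ(\QQ_\ell)$ is an open subgroup of $\Gamma_{\ell,2}\cap\bG_{\ell,2}^\circ(\QQ_\ell)$. In general it has finite index, bounded independently of $\ell$ after a finite extension of $K$. The hard point is to show that the two groups $\Gamma^{\simplyconnected}_\ell$ and $\Gamma^{\simplyconnected}_{\ell,2}$, viewed inside the common group $\bG^{\simplyconnected}(\QQ_\ell)$, coincide for $\ell\gg0$, and not merely up to finite index.

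I would handle this in three steps.
\begin{itemize}
\item By Serre's theorem, after a finite extension of $K$ independent of $\ell$, the groups $\bG_\ell$ and $\bG_{\ell,2}$ are connected for all $\ell$. Each $\Gamma^{\simplyconnected}$ is then defined from the full Galois image, and the natural map sends $\Gamma^{\simplyconnected}_\ell$ into $\Gamma^{\simplyconnected}_{\ell,2}$.
\item Both groups contain the preimage of the image of the commutator subgroup. For compact open subgroups of $\bG^{\simplyconnected}(\QQ_\ell)$, the commutator data together with the finite central kernel determine the group up to a subgroup of the center.
\item The kernel of $\bG^{\simplyconnected}\to\bG^{\mathrm{ss}}$ is central, of order bounded independently of $\ell$, and is automatically contained in both preimages by definition.
\end{itemize}
Concretely, I would argue as follows. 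Both $\Gamma^{\simplyconnected}_\ell$ and $\Gamma^{\simplyconnected}_{\ell,2}$ are compact and contain the full central kernel. Their images in $\bG^{\mathrm{ss}}_{2}(\QQ_\ell)$ differ by a subgroup of abelian-by-bounded index. By Hui--Larsen, $\Gamma^{\simplyconnected}_{\ell,2}$ is hyperspecial for $\ell\gg0$, hence equal to its own commutator subgroup up to the center for $\ell\ge5$, since hyperspecial groups of simply connected type are perfect for large $\ell$. It therefore suffices to show that $\Gamma^{\simplyconnected}_\ell$ surjects onto $[\Gamma^{\simplyconnected}_{\ell,2},\Gamma^{\simplyconnected}_{\ell,2}]$. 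This holds because commutators lift: the map $\Gamma_\ell\to\Gamma_{\ell,2}$ is surjective, and the commutator map on simply connected covers factors through the adjoint quotient.

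Since $\Gamma_\ell^{\simplyconnected}$ is contained in $\Gamma^{\simplyconnected}_{\ell,2}$ and contains its perfect core, the two groups are equal, so $\Gamma^{\simplyconnected}_\ell$ is hyperspecial maximal compact. I expect the main obstacle to be the careful bookkeeping of these central and finite-index discrepancies, uniformly in $\ell$, between the total and the degree-two $\mathrm{ss}$ quotients. The degree-$\le 2$ isogeny of \Cref{lem:project MT group} is what makes that discrepancy bounded.
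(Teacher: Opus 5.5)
Your proof is correct, but it compares $\Gamma_\ell^{\simplyconnected}$ with the degree-two group by a different mechanism than the paper.

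\textbf{The paper's route.} The paper uses the same identification $\pi_{\ell,2}^{\simplyconnected}\colon\bG_\ell^{\simplyconnected}\isomto\bG_{\ell,2}^{\simplyconnected}$, coming from the isogeny of \Cref{cor:Levi subgroup isogenous to G2}, and the same uniform extension $K'$. It then simply asserts $\Gamma_\ell^{\simplyconnected}=(\pi_{\ell,2}^{\simplyconnected})^{-1}(\Gamma_{\ell,2}^{\simplyconnected})$ and transports \cite[Theorem~1.3(b)]{HL20}. The case $b_2(X)=3$, where $\bG_\ell^{\simplyconnected}$ is trivial, is treated separately.

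That exact equality holds for the following reason. The kernel of $\MT(X_\CC)\to\MT_2(X_\CC)$ is generated by $\delta=\omega(-1)$, which lies in the connected center. Hence the induced map $\bG_\ell^{\mathrm{ss}}\to\bG_{\ell,2}^{\mathrm{ss}}$ is an isomorphism, not merely a central isogeny. So the images of $\Gamma_\ell$ and $\Gamma_{\ell,2}$ in the $\mathrm{ss}$ quotients correspond exactly. The ``hard point'' you identify therefore does not actually arise here.

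\textbf{Your route.} You do not need this fact. You prove $\pi_{\ell,2}^{\simplyconnected}(\Gamma_\ell^{\simplyconnected})\subseteq\Gamma_{\ell,2}^{\simplyconnected}$. You show the image contains every commutator, using the $\QQ_\ell$-morphism $\bG^{\mathrm{ad}}\times\bG^{\mathrm{ad}}\to\bG^{\simplyconnected}$ together with surjectivity of $\Gamma_\ell\to\Gamma_{\ell,2}$. You then conclude from perfectness of hyperspecial subgroups of simply connected groups. This works for an arbitrary central isogeny of reductive quotients. The cost is an extra group-theoretic input that the paper avoids.

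\textbf{Two points to tighten.}
\begin{enumerate}
\item Drop ``up to the center.'' You need genuine perfectness of $\sG(\ZZ_\ell)$. The reason is that $Z(\bG^{\simplyconnected})$ can be strictly larger than $\ker(\bG^{\simplyconnected}\to\bG^{\mathrm{ss}})$, so containing the latter would not recover any missing central elements. Genuine perfectness does hold for all sufficiently large $\ell$, which is all you need.
\item Note that $\pi_{\ell,2}^{\simplyconnected}(\Gamma_\ell^{\simplyconnected})$ is compact, hence closed. It therefore contains the closed commutator subgroup, not only the abstract one.
\end{enumerate}
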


\subsection{Strategy of proof}
The proof of our main results has three steps.

\subsubsection{From Pink generators to rank comparison}
Over a number field, a Levi factor realizes the semisimplification of the total monodromy group. Pink's theorem (\Cref{thm:Pink weak Hodge generation}) generates this group by weak Hodge cocharacters. Since total cohomology mixes the cohomological degrees, we apply Pink's theorem to a multiplicity-weighted direct sum. Uniqueness of base-$M$ expansion then recovers the Hodge multiplicities degree by degree.

Let $\bR$ be the image of the rational twisted LLV representation. Its projection to degree two has finite kernel, while a graded-algebra automorphism acting trivially in degree two centralizes $\bR$. Once $(\MTC_2)$ places the degree-two projections in $\bR_2$, a trace-rigidity argument forces the weak Hodge cocharacters into $\bR$. The finite degree-two projection then identifies every Levi factor over a number field with the Mumford--Tate group. Specialization proves \Cref{mainthm:ranks are all equal} over every finitely generated extension of $\QQ$.

The decisive observation is that this rigidity is intrinsic to the graded cohomology algebra (\Cref{prop:defect rigidity weak Hodge}) and remains valid when $b_2=3$.

\subsubsection{From rank comparison to a canonical Levi factor}
For $b_2>3$, we place $X$ in a polarized family with maximal degree-two monodromy. At a Galois-generic fiber, degree two determines the derived subgroup of a Levi factor; the weight torus determines its center. Specialization and deformation invariance of the twisted LLV representation then place every Levi factor in $\bR$. Since $\bR\to\bR_2$ has finite kernel, $(\MTC_2)$ identifies that Levi factor with the Mumford--Tate group. For $b_2=3$, the descent of \Cref{lem:b2=3 descends to number field} reduces directly to the number-field case. Thus the Levi factor is canonical, not merely abstractly isomorphic to the Mumford--Tate group.

\subsubsection{Semisimplicity and deformation}
The canonical Levi factor identifies the reductive quotient in every degree with the corresponding Mumford--Tate group. Hence $(\MTC_i)$ is equivalent to semisimplicity of the degree-$i$ Galois representation. Cadoret's specialization results \cite{Cad15}, together with the behavior of the unipotent radical under cospecialization, show that this semisimplicity is constant in a connected family. This gives \Cref{mainthm:semisimple Mumford-Tate conjecture,mainthm:Semisimplicity in family}.

\subsection*{Outline}
Section~\ref{sec:Tannakian formalism and motivic Galois group} recalls Andr\'e motives and compatible systems, records the reduction to number fields used throughout the paper, and proves the degree-two Mumford--Tate conjecture (\Cref{thm:Mumford--Tate conjecture HK degree 2}). Section~\ref{sec:LLV representations of hyper-Kahler varieties} develops the LLV and twisted LLV representations used in the main arguments.

Section~\ref{sec:Rank estimation} proves the rank theorem (\Cref{mainthm:ranks are all equal}) and identifies Levi factors over number fields. Section~\ref{sec:Mumford--Tate conjecture for HK varieties} constructs the canonical Levi factor, proves \Cref{mainthm:semisimple Mumford-Tate conjecture}, and establishes deformation invariance in \Cref{mainthm:Semisimplicity in family}. It also recovers the Mumford--Tate conjecture for the four known deformation types.

Section~\ref{sec:applications} proves the arithmetic applications: the arithmetic Nagai conjecture for Type I reduction and the maximality theorem for the Galois action.
\subsection*{Notation}
\begin{itemize}
    \item A variety over $K$ is an integral separated scheme of finite type over $K$.
    \item We write $X^{\cl}$ for the set of closed points of $X$. This set is often denoted by $|X|$; we reserve $|-|$ for cardinality.
    \item If an object $Y$ is defined over $E$ and $F/E$ is a field extension, then $Y_F$ denotes its base change to $F$.
    \item For an algebraic group $\bG$, we write $\bG^{\circ}$ for its identity component and $\bG^{\der}$ for its derived subgroup.
\end{itemize}
\subsection*{Acknowledgments}
We are grateful to Salvatore Floccari, Lie Fu, and Kazuhiro Ito for helpful discussions. We also thank Zhiyuan Li and Ziquan Yang for useful suggestions.

Z.~Tang is supported by the NSFC grant (No.~12121001). H.~Zou is supported by the Deutsche Forschungsgemeinschaft (DFG, German Research Foundation), Project-ID 491392403, TRR 358.
\section{Motivic Galois groups and compatible systems}\label{sec:Tannakian formalism and motivic Galois group}
In this section, we summarize the elementary facts concerning André motives and the algebraic groups arising from them. We also fix the notation that will be frequently used in the subsequent discussion.
\subsection{André motives and realizations}
\label{sec:motivated cycle}
The theory of motivated cycles, developed by Y. André \cite{Andre96b}, gives rise to the category of André motives and offers a practical framework for analyzing algebraic groups arising from various cohomological realizations.
\subsubsection{}
Fix a field embedding $K \subseteq \CC$. Let $X$ be a smooth projective variety over $K$. Recall that a class $\alpha \in \rH^{2i}(X_{\CC},\QQ)$ is \emph{motivated} if there is a smooth projective variety $Y$ over $K$ and algebraic classes $\beta$, $\gamma \in \rH^{\bullet}((X\times_K Y)_{\CC},\QQ)$ such that $\alpha = p_*(\gamma \cup \star \beta)$ (with $p \colon X \times_K Y \to X$ the projection, and $\star$ the Hodge star operator). The category $\AM_K$ of André motives consists of objects $(X,\pi,m)$ where
\begin{itemize}
	\item $X$ is a smooth projective variety over $K$,
	\item $\pi$ is a motivated cycle on $X \times_KX$ such that $\pi \circ \pi = \pi$, and
	\item $m$ is an integer.
\end{itemize}
Writing $A^r_{\mot}(X\times_KY)$ for the motivated cycles of codimension $r$, the morphisms in $\AM_K$ are
\[
 \Hom_{\AM_K}\bigl((X,\pi_X,m),(Y,\pi_Y,n)\bigr)
 =\pi_Y\circ A^{\dim X-m+n}_{\mot}(X\times_KY)\circ\pi_X.
\]
Denote by $\fh(X)$ the André motive $(X,\Delta_X,0)$ of a smooth projective variety over $K$. The category $\AM_K$ satisfies the following properties (see \cite[Theorem 0.4]{Andre96b}):
\begin{itemize}[--]
	\item It has a natural tensor product structure and is a graded Tannakian category over $\QQ$;
	\item It is semisimple and polarized.
\end{itemize}
The grading of $\AM_K$ induces the Chow--Künneth decomposition for each smooth projective variety $X$ over $K$:
\[
\fh(X) \cong \bigoplus_{i=0}^{2\dim X} \fh^i(X)\,,
\]
where $\fh^i(X) = (X, \Delta_i,0)$, since the Künneth factor $\Delta_i \in \rH^{2\dim X}((X \times_K X)_{\CC},\QQ)$ is motivated (see \cite[Proposition 2.2.]{Andre96b}).

\subsubsection{} There is a \emph{Betti realization functor}
\[
\rH_B \colon \AM_K \longrightarrow \Vect_{\QQ}\,,
\]
such that $\rH_B(\fh(X)) = \rH^{\bullet}(X_{\CC},\QQ)$ is the graded $\QQ$-algebra of Betti cohomology for any smooth projective variety $X$ over $K$.

On the other hand, one may also consider the $\ell$-adic realization
\[
\rH_{\ell} \colon \AM_K \longrightarrow \Vect_{\QQ_{\ell}}\,,
\]
such that $\rH_{\ell}(\fh(X)) = \rH^{\bullet}_{\et}(X_{\overline{K}},\QQ_{\ell})$ is the $\ell$-adic étale cohomology, a graded $\QQ_{\ell}$-algebra endowed with a natural $G_K$-action, where $G_K$ is the absolute Galois group of $K$. The Artin comparison provides a natural equivalence from the $\ell$-adic realization $\rH_{\ell}$ to the composition of the following functors
\[
\rH_{B,\QQ_{\ell}} \colon \AM_K \xlongrightarrow{\rH_B} \Vect_{\QQ} \xlongrightarrow{- \otimes_{\QQ} \QQ_{\ell}} \Vect_{\QQ_{\ell}}
\]
for any prime $\ell$.
\subsection{Motivic Galois groups and Mumford--Tate conjecture}
As before, we fix a subfield $K \subseteq \CC$. We denote by $\bG_{\mot}$ the motivic Galois group of $\AM_K$, i.e., the automorphism group of the fiber functor $\rH_B$, which is a reductive pro-algebraic group over $\QQ$. For a single smooth projective variety $X/K$, we can similarly define its motivic Galois group as follows.
\begin{definition}\label{def:motivic Galois group}
Let $X$ be a smooth projective variety over $K$. The \emph{motivic Galois group} $\bG_{\mot}(X)$ of $X$ is the automorphism group of the restriction, as a fiber functor, of the Betti realization functor $\rH_B$ to the Tannakian subcategory generated by the Andr\'e motive $\fh(X)$ and its dual $\fh(X)^{\vee}$ in $\AM_{K}$.
\end{definition}
\subsubsection{}\label{sssec:Grading of motive}
For any $\cM\in\AM_K$, let $\bG_{\mot}(\cM)$ denote its motivic Galois group. Its identity component is reductive because $\AM_K$ is semisimple. This group depends on the chosen embedding $K\hookrightarrow\CC$, but only up to an inner twist; see \cite[Remarks on p.~25]{Andre96b}.
\begin{notation}\label{notation:group with degree}
For a smooth projective variety $X/K$, set
\[
\fh^\bullet(X)=\fh(X),\qquad
\fh^+(X)=\bigoplus_{k=0}^{\dim X}\fh^{2k}(X),\qquad
\fh^-(X)=\bigoplus_{k=0}^{\dim X-1}\fh^{2k+1}(X).
\]
For $\square\in\{\bullet,+,-,0,\ldots,2\dim X\}$, the subscript $\square$ indicates the corresponding total, even, odd, or degreewise realization. For example, $\bG_{\mot,\square}(X)$ is the motivic Galois group of $\fh^\square(X)$. We usually omit $\bullet$. We write $\pi_i$ for the degree-$i$ projection on motivic or Mumford--Tate groups and $\pi_{\ell,i}$ for the corresponding $\ell$-adic projection.
\end{notation}
\subsubsection{}
As mentioned above, $\AM_K$ has a grading defined by the Betti realization functor; namely, $\rH_B$ factors as
\[
\AM_K \longrightarrow \Gr\Vect_{\QQ} \xlongrightarrow{\text{for.}} \Vect_{\QQ}\,.
\]
The Tannakian group of the forgetful functor of the category of graded $\QQ$-vector spaces is just $\GG_m$, the multiplicative group over $\QQ$. The Tannakian formalism induces a homomorphism
 \[\GG_m \longrightarrow \bG_{\mot}\,,\]
denoted by $\omega$. For any smooth projective variety $X$ over $K$, the action of $\GG_m$ on $\fh^{i}(X) \subseteq \fh(X)$ is given by $ z \mapsto z^i$, coinciding with the cohomology degree of $X$.
\subsubsection{}\label{sec:Mumford--Tate group}
Let $\phi \colon \DTorus \to \GL_{\RR}(V_{\RR})$ be a pure Hodge structure on a $\QQ$-vector space $V$, i.e., an algebraic homomorphism from the Deligne torus $\DTorus \coloneqq \Res_{\CC/\RR} \GG_{m,\CC}$. The \emph{Mumford--Tate group $\MT(V,\phi)$} associated to $(V,\phi)$ is the smallest $\QQ$-algebraic subgroup of $\GL(V)$ such that $\phi(\DTorus) \subseteq \MT(V,\phi)(\RR)$. From the definition, it is clear that $\MT(V,\phi)$ is \emph{connected}. When $(V,\phi)$ is polarizable, the Mumford--Tate group $\MT(V,\phi)$ is, moreover, \emph{reductive}. Denote by $\HS_{\QQ}$ the Tannakian category generated by polarizable $\QQ$-Hodge structures.

Let $\cM\in \AM_K$. The Betti realization $\rH_B(\cM)$ of $\cM$ is naturally a polarizable $\QQ$-Hodge structure. For a smooth projective variety $X$ over $K$, we denote by $\MT(X_{\CC})$ the Mumford--Tate group of the graded $\QQ$-Hodge structure on $\rH_B(\fh(X)) = \rH^{\bullet}(X_{\CC},\QQ)$.

It is also convenient to describe Mumford--Tate groups via the Tannakian formalism. The forgetful functor $\text{for.} \colon \HS_{\QQ} \to \Vect_{\QQ}$ is a fiber functor on $\HS_{\QQ}$. Then the Mumford--Tate group of a pure $\QQ$-Hodge structure $V$ is isomorphic to
\[
	\Aut^{\otimes}(\text{for.}|_{\langle V \rangle})\,,
\]
the automorphism group of the restricted fiber functor on the sub-Tannakian category $\langle V \rangle$ generated by $V$ (see \cite[Lemma 2 \& Remark]{Andre92}). In general, for any object $V \in \HS_{\QQ}$, i.e., a subquotient of a direct sum $\oplus_{m} V_m$ in which $V_m$ is a polarizable pure $\QQ$-Hodge structure of weight $m$, we can define its Mumford--Tate group in the same way. In summary, via the Tannakian formalism,
\begin{enumerate}[label={(\arabic*)},leftmargin=*]
		\item We can view the Mumford--Tate group as the \emph{maximal} subgroup of $\GL_{\QQ}(V)$ whose induced action on $V^{\otimes}$ fixes all Hodge tensors of type $(0,0)$.
	\item The Betti realization factors as
	\[
	\begin{tikzcd}[row sep =small, column sep = small]
		\langle \fh(X) \rangle^{\otimes} \ar[rd] \ar[rr,"\rH_B"] & &\Vect_{\QQ} \mathrlap{\,.} \\
		& \HS_{\QQ} \ar[ru,"\text{for.}"']
	\end{tikzcd}
	\]
\end{enumerate}
Also note that the Betti realization $\rH_B$ (with respect to the fixed field embedding $K \subseteq \CC$) induces an injective homomorphism
	\begin{equation}\label{eq:MT in motivic Galois group}
		\MT(X_{\CC}) \hookrightarrow \bG_{\mot}(X)^{\circ}\,,
	\end{equation}
since the motivated cycles are Hodge tensors. Here $\MT(X_{\CC})$ is the Mumford--Tate group of the (graded) polarizable $\QQ$-Hodge structure on $\rH^{\bullet}(X_{\CC},\QQ)$.
\begin{itemize}
	\item The Tannakian category $\HS_{\QQ}$ has a natural $\ZZ$-grading; in particular, there is a homomorphism
	\[
	\omega_{\phi} \colon \GG_{m,\RR} \longrightarrow \MT(V,\phi)_{\RR}
	\]
	This homomorphism is defined over $\QQ$ for any Hodge structure $(V,\phi)$ and is called the \emph{weight cocharacter} of $(V,\phi)$.
	\item If $V = \rH_B(\fh(X))$, then the composition of $\omega_{\phi}$ with the injective homomorphism $\MT(X_{\CC}) \hookrightarrow \bG_{\mot}(X)$ is the homomorphism $\omega \colon \GG_m \to \bG_{\mot}(X)$ in \Cref{sssec:Grading of motive}. Clearly, $\omega_{\phi}$ is nontrivial if and only if $V$ has nonzero weights, and its image lies in the center: $\omega_{\phi}(\GG_{m}) \subseteq Z(\MT(V,\phi))$.
\end{itemize}

\subsubsection{}
Let $K$ be a field of characteristic zero and let $G_K \coloneqq \Gal(\overline{K}/K)$ be its absolute Galois group. For simplicity, we assume that there is a field embedding $K \hookrightarrow \CC$, and we fix one.
\begin{definition}
Let $\rho_{\ell} \colon G_K \to \GL_{\QQ_{\ell}}(V)$ be a continuous finite-dimensional representation of $G_K$. The \emph{$\ell$-adic algebraic monodromy group} of $V$ is the Zariski closure of the image $\Image(\rho_{\ell}) \subseteq \GL_{\QQ_{\ell}}(V)$, denoted by $\bG_{\ell}(V)$.

Via the Tannakian formalism, we can identify the group $\bG_{\ell}(V)$ as the Tannakian group of the restriction of the forgetful functor from the category of $G_K$-representations to the category of $\QQ_{\ell}$-vector spaces $\Vect_{\QQ_{\ell}}$.
\end{definition}

\subsubsection{}
For any $\cM \in \AM_K$, the Artin comparison induces an isomorphism of $\QQ_{\ell}$-algebraic groups
\[
\Aut^{\otimes}\left( \rH_{\ell}|_{\langle\cM \rangle^{\otimes }} \right) \cong \Aut^{\otimes}\left( \rH_{B,\QQ_{\ell}}|_{\langle \cM \rangle^{\otimes}}\right) = \bG_{\mot}(\cM)_{\QQ_{\ell}}
\]
for any prime $\ell$. Therefore, via the Tannakian duality, there is an injective homomorphism
\begin{equation}\label{eq:algebraic monodromy group in MT}
\bG_{\ell}(\cM)^{\circ} \hookrightarrow \bG_{\mot}(\cM)_{\QQ_{\ell}}\,.
\end{equation}
Thus the Mumford--Tate conjecture holds for $\cM$ if and only if \[\bG_{\ell}(\cM)^{\circ} = \MT(\cM_{\CC})_{\QQ_{\ell}}\] as $\QQ_{\ell}$-subgroups of $\bG_{\mot}(\cM)_{\QQ_{\ell}}$.
\subsubsection{}
Keep the convention of \Cref{notation:group with degree}. For a smooth projective variety $X/K$, put
\[
V_{\ell,i}(X)\coloneqq \rH^i_{\et}(X_{\overline K},\QQ_\ell),\quad V_{\ell,\bullet}(X)\coloneqq \bigoplus_iV_{\ell,i}(X),
\]
and let $V_{\ell,+}(X)$ (resp.~$V_{\ell,-}(X)$) be the even-degree (resp.~odd-degree) part of $V_{\ell,\bullet}$. We suppress $(X)$ from these symbols when $X$ is fixed and write
\[
\bG_{\ell,\square}(X)\coloneqq
\bG_\ell\bigl(V_{\ell,\square}(X)\bigr),
\qquad
\bG_{\ell,\bullet}(X)=\bG_\ell(X).
\]
For a chosen Levi subgroup, we use $\bL_{\ell,\square}$; once uniqueness is established, $\bG_{\ell,\square}^{\red}(X)$ denotes the canonical Levi. The corresponding Lie algebras are denoted by
\[
\fg_\ell(X)=\Lie\bG_\ell(X),
\qquad
\fg_{\ell,i}(X)=\Lie\bG_{\ell,i}(X).
\]

We will use the following general consequence of the purity of Frobenius eigenvalues.
\begin{lemma}\label{lem:homotheties in algebraic monodromy group}
Let $X/K$ be a smooth projective variety over a finitely generated field $K/\QQ$. Then the base change of the motivic weight cocharacter factors through the connected algebraic monodromy group, and its image is central:
\begin{equation}\label{eq:homotheties in algebraic monodromy group}
\omega(\GG_m)_{\QQ_\ell}
\subseteq Z\bigl(\bG_\ell(X)^\circ\bigr)
\subseteq\bG_{\mot}(X)_{\QQ_\ell}.
\end{equation}
\end{lemma}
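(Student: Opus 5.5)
The plan is to reduce to a number field and then produce the homotheties directly from a single Frobenius element. There are three steps: reduction, a torus argument, and centrality.

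\textbf{Reduction to a number field.} Spread $X$ out to a smooth projective family over a smooth variety $S$ over a number field. By the specialization results recalled in the paper (Cadoret, or Serre's argument for Hilbertian fields), there is a closed point $s$ whose Galois image is open in the image of $G_K$. Then $\bG_\ell(X_s)^\circ=\bG_\ell(X)^\circ$ inside the same $\GL$. The motivic weight cocharacter $\omega$ is compatible with this comparison, since it just acts by $z^i$ on degree $i$. So it suffices to treat $K$ a number field, which I expect to be routine.

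\textbf{The torus argument.} Choose a place $v$ of good reduction, not dividing $\ell$, and let $F=\rho_\ell(\Frob_v)$. After replacing $F$ by a power, it lies in $\bG_\ell(X)^\circ(\QQ_\ell)$. Let $F_s$ be its semisimple part, which lies in $\bG_\ell(X)^\circ$ by the Jordan decomposition in algebraic groups. Let $T$ be the Zariski closure of the group generated by $F_s$; after one more power we may take $T$ to be a torus in $\bG_\ell(X)^\circ$.

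By Deligne's purity theorem, every eigenvalue of $F$ on $V_{\ell,i}$ is a $q_v$-Weil number of weight $i$. Hence the character group $X^*(T)$ contains the quotient of the lattice generated by the eigenvalues. Consider the homomorphism "absolute value under every complex embedding", taken in $\log_{q_v}$ form. This gives a homomorphism from the eigenvalue lattice to $\ZZ$ that sends an eigenvalue on $V_{\ell,i}$ to $i/2$. Dually, a character relation among eigenvalues is a multiplicative relation $\prod\alpha_j^{n_j}=1$. Taking absolute values gives $\sum n_j w_j=0$, where $w_j$ is the weight. So the cocharacter $z\mapsto z^{w_j}$ on the eigenline of $\alpha_j$ pairs to zero with every relation. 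It is therefore a cocharacter of $T$ (after base change to $\overline\QQ_\ell$), and it is exactly $\omega$ restricted to $V_{\ell,\bullet}$. Thus $\omega(\GG_m)_{\overline\QQ_\ell}\subseteq T\subseteq\bG_\ell(X)^\circ$. Since $\omega$ is defined over $\QQ$, descent to $\QQ_\ell$ is automatic.

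I expect the main obstacle to be this step. Making the character-lattice duality precise is tricky because eigenvalues may repeat across degrees, which forces us to work with the image torus rather than with the diagonal torus.

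\textbf{Centrality.} This part is easy. $\omega(z)$ acts by the scalar $z^i$ on each $V_{\ell,i}$. Every element of $\bG_\ell(X)$ preserves the degree grading, because Galois acts degreewise. Hence $\omega(z)$ commutes with all of $\bG_\ell(X)$, and in particular lies in $Z(\bG_\ell(X)^\circ)$. The final inclusion into $\bG_{\mot}(X)_{\QQ_\ell}$ is \eqref{eq:algebraic monodromy group in MT}.
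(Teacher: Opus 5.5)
Your proposal is correct and follows essentially the same route as the paper: a Frobenius torus $T_v$ at a place of good reduction, and the Weil bounds, which show that every multiplicative relation among the characters of $T_v$ respects the degree assignment, so that this assignment descends to a cocharacter acting by $z^d$ on $\rH^d$. Centrality then follows from degree-preservation, and spreading out handles the finitely generated case. The differences are cosmetic. The paper only needs the containment $\bG_\ell(\fX_s)^\circ\subseteq\bG_\ell(X)^\circ$ at an arbitrary closed point, not a Galois-generic one. Your worry about eigenvalues repeating across degrees dissolves, because purity forces each eigenvalue (equivalently, each character) to determine its degree; this is exactly why the paper indexes the characters as $\chi_{d,j}$.
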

\begin{proof}
Assume first that $K$ is a number field. After a finite extension, the total monodromy group is connected. Choose a place $v$ of good reduction with residue cardinality $q_v$ and residue characteristic different from $\ell$. Let $t_v$ be the semisimple part of a Frobenius lift, replacing it by a positive power so that the Zariski closure $T_v$ of $\langle t_v\rangle$ is connected. Thus $T_v\subseteq\bG_\ell(X)^\circ$ is a torus.

Over $\overline{\QQ}_\ell$, write $\chi_{d,j}$ for the characters of $T_v$ occurring on $\rH^d_{\et}(X_{\overline K},\QQ_\ell)$. If $\prod_{d,j}\chi_{d,j}^{n_{d,j}}=1$, evaluation at $t_v$ and the Weil bounds give
\[
 1=\left|\prod_{d,j}\chi_{d,j}(t_v)^{n_{d,j}}\right|
   =q_v^{a\sum_{d,j}n_{d,j}d/2},
\]
where $a\geq1$ is the power used to define $t_v$. Hence $\sum_{d,j}n_{d,j}d=0$. Because the total representation of $T_v$ is faithful, the characters $\chi_{d,j}$ generate $X^*(T_v)$. The assignment $\chi_{d,j}\mapsto d$ therefore defines a homomorphism $X^*(T_v)\to\ZZ$, and hence a cocharacter
\[
 \omega_\ell\colon\GG_{m,\overline{\QQ}_\ell}
 \longrightarrow T_{v,\overline{\QQ}_\ell}
\]
that acts as $z^d$ on $\rH^d_{\et}(X_{\overline K},\QQ_\ell)$. This action is defined over $\QQ_\ell$, so faithfulness descends $\omega_\ell$ to $\QQ_\ell$; it is the base change of the motivic weight cocharacter. Since the total monodromy group preserves cohomological degree, this scalar degreewise action commutes with it and is therefore central. The required Weil bounds and Frobenius-torus construction are recalled in \cite[Theorems 3.2--3.3 and the discussion preceding (3.4)]{Pink98}.

For general finitely generated $K/\QQ$, use the spreading-out $\fX\to B$ from \Cref{reduction to number field} and choose any closed point $s$ after shrinking $B$ so that the fiber is smooth and projective. Along an étale path, smooth proper base change identifies the cohomology of $\fX_s$ with that of $X$, and the specialized Galois image is contained in the generic algebraic monodromy group. The number-field argument puts the common degreewise weight cocharacter in $\bG_\ell(\fX_s)^\circ$, hence in $\bG_\ell(X)^\circ$. Its scalar degreewise action again makes it central.
\end{proof}

\subsection{Compatibility of systems of Galois representations}\label{subsec:algebraic monodromys and compatible systems}

In this section, we focus on the case where $K$ is a \emph{number field} and on \emph{$\QQ$-compatibility} for a system of $G_K$-representations.
\subsubsection{}
\label{definition of compatibility over number field}
Consider a profinite group $\mathscr{G}$ and a system of continuous $n$-dimensional representations
\[
\rho_{\bullet} = \Set*{\rho_{\ell}\colon \mathscr{G} \longrightarrow \mathrm{GL}_n(\QQ_{\ell})}_{\ell \in \mathcal P}\,,
\]
indexed by a set $\mathcal P$ of rational primes.
Suppose that $\mathscr{G}$ is endowed with a \emph{dense} subset of ``Frobenius elements'', \[\{F_{v}\}_{v\in \Sigma} \subset \mathscr{G}\,.\]
For example, if $\mathscr{G} = G_K$ is the absolute Galois group of a number field $K$ and $F_{v} = \Frob_v$ is a Frobenius representative at a place $v$ of $K$, then the Frobenius conjugacy classes over all finite places form a dense subset by the Chebotarev density theorem.

The system $\rho_{\bullet}$ is called a \emph{$\QQ$-compatible system} of $\ell$-adic representations if there is a subset $\mathscr X \subset \Sigma \times \mathcal P$ such that
\begin{enumerate}
	\item For every $v\in \Sigma$, $(v,\ell)\in \mathscr{X}$ for all but at most finitely many $\ell \in \mathcal P$.
	\item For any primes $\ell_1,\dots,\ell_m\in\mathcal P$, the set $\{F_{v}\;|\; (v,\ell_i)\in\mathscr{X}\;\text{for all }i=1,\dots,m\}$ is dense in $\mathscr{G}$.
	\item For every $(v,\ell)\in\mathscr X$, the characteristic polynomial of $\rho_{\ell}(F_{v})$ lies in $\QQ[t]$ and is independent of $\ell$.
\end{enumerate}
\begin{example}\label{compatibiliy over number fields}
In the geometric context, we consider the following situation. Let $K$ be a number field.
\begin{itemize}[leftmargin=*]
    \item Let $\cM = \fh^{\square}(X)$ be a motive associated to a smooth proper variety $X$ over $K$.
    \item $\Sigma$ is the following subset of the places of $K$:
\begin{equation}\label{eq:good reduction place}
	\Set*{\parbox{17em}{non-archimedean places $v$ of $K$ that are unramified over $\QQ$ and at which $X$ has good reduction.}}\,.
\end{equation}
    \item Let $F_v=\Frob_v\in G_K$ be any lift of an arithmetic Frobenius element. Its conjugacy class modulo inertia is canonical, and its image under every representation unramified at $v$ is therefore well defined up to conjugacy.
\end{itemize}
Since $X$ has good reduction at $v$, the system of Galois representations
	\[
	\rho_{\bullet} = \Set*{\rho_{\ell} \colon G_K \longrightarrow \GL_{\QQ_{\ell}}\left(\rH_{\ell}(\cM)\right)}_{\ell\in \mathcal P}
	\]
	is unramified at all $v \in \Sigma$ by smooth proper base change when $\ell \neq \cha(k_v)$. The system $\rho_{\bullet}$ is $\QQ$-compatible because condition (3) follows from \cite{KatzMessing}.
\end{example}

\begin{remark}
    In general, it is not clear whether the system of $\ell$-adic realizations of an André motive $\cM$ is $\QQ$-compatible or not, except when $\cM \subseteq \fh^{\square}(X)$ is a submotive cut out by an algebraic cycle.
\end{remark}

\subsubsection{} Let $\rho_{\bullet}$ be a $\QQ$-compatible system of representations of $G_K$.
For simplicity, denote by $\bG_{\ell}$ the $\ell$-adic algebraic monodromy group of $\rho_{\ell}(G_K)$ for each prime $\ell \in \mathcal P$. As before, $\bG_{\ell}^{\circ}$ is the connected component of the identity. The set of connected components $\bG_{\ell}/\bG_{\ell}^{\circ}$ is finite, so there is a finite extension $K^{\conn}$ of $K$, corresponding to the finite-index subgroup $\sG=\rho_{\ell}^{-1}(\bG^{\circ}_{\ell}(\QQ_{\ell})) \subseteq G_K$, such that
\[\rho_{\ell}(G_{K^{\conn}}) \subset \bG_{\ell}^{\circ}(\QQ_{\ell})\] is Zariski dense. A priori, it is unclear whether there exists a single finite extension that works uniformly for all $\ell \in \mathcal P$ when $\mathcal P$ is infinite. Nevertheless, we have the following theorem of Serre (see \cite{RibetLetter}, or alternatively Larsen--Pink \cite[Proposition 6.14]{LarsenPink92}).
\begin{theorem}\label{thm:independence of connected components}
		The open subgroup of finite index
		\[\sG\subset G_K\] is independent of $\ell$.
		In particular, the groups $\bG_{\ell}/\bG^{\circ}_{\ell}$ for different $\ell$ are canonically isomorphic; if $\bG_{\ell}$ is connected for some $\ell$, then it is so for all $\ell$.
\end{theorem}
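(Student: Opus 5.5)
The plan is to characterize the open subgroup $\sG_\ell=\rho_\ell^{-1}(\bG_\ell^\circ(\QQ_\ell))$ by a property of Frobenius elements that depends only on their characteristic polynomials, which are $\ell$-independent by $\QQ$-compatibility. The relevant property is the following. For a Frobenius $F_v$ with $(v,\ell)\in\mathscr X$, let $\Lambda_v\subset\overline{\QQ}^\times$ be the subgroup generated by the roots of the characteristic polynomial of $\rho_\ell(F_v)$. Call $F_v$ \emph{neat} if $\Lambda_v$ is torsion-free. By condition (3) of compatibility, neatness does not depend on $\ell$.

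First I would make a group-theoretic observation. Let $s$ be the semisimple part of $\rho_\ell(F_v)$. The Zariski closure $T$ of $\langle s\rangle$ is a diagonalizable group whose character group is the image of $X^*$ of the diagonal torus, namely $\Lambda_v$. Hence $T$ is connected if and only if $F_v$ is neat. Since $s$ lies in the same component of $\bG_\ell$ as $\rho_\ell(F_v)$ (the unipotent part lies in $\bG_\ell^\circ$), the following holds. If $\rho_\ell(F_v)$ lies in a non-identity component of order $m>1$ in $\bG_\ell/\bG_\ell^\circ$, then $T$ meets that component and is not connected. Therefore $F_v\notin\sG_\ell$ implies that $F_v$ is not neat.

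Second, I would prove a converse up to density zero: almost all Frobenii in $\sG_\ell$ are neat. Here I use that the characteristic polynomials lie in $\QQ[t]$ and have degree $n$. The roots of unity that can occur as ratios $\lambda_i/\lambda_j$, or more generally as torsion elements in $\Lambda_v$, then have order bounded by some $N=N(n)$. So non-neat elements of $\bG_\ell^\circ(\QQ_\ell)$ with $\QQ$-rational characteristic polynomial lie in a finite union of subsets cut out by multiplicative relations $\prod\lambda_i^{a_i}=\zeta$, with $\zeta\ne1$ of bounded order and exponents constrained by the bound. Following Larsen--Pink, each such relation defines a proper Zariski-closed, conjugation-invariant subset of $\bG_\ell^\circ$. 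Properness holds because a relation with $\zeta\neq 1$ cannot hold on a maximal torus of the connected group $\bG_\ell^\circ$, as it fails at the identity of the torus. A proper closed subset has Haar measure zero in the compact group $\rho_\ell(\sG_\ell)$, which is Zariski dense in $\bG_\ell^\circ$. By Chebotarev (in the equidistribution form on the image), the non-neat Frobenii in $\sG_\ell$ therefore have density zero.

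Finally, I would compare two primes $\ell,\ell'$. Up to a set of Frobenii of density zero, and after discarding the finitely many bad $v$ allowed by conditions (1) and (2), both $\sG_\ell$ and $\sG_{\ell'}$ contain exactly the neat Frobenii. Each coset of the open subgroup $\sG_\ell\cap\sG_{\ell'}$ contains a set of Frobenii of positive density. So if $\sG_\ell\ne\sG_{\ell'}$, some coset lying in one group but not the other would carry a positive-density set of Frobenii that are neat and non-neat at the same time, a contradiction. Hence $\sG_\ell=\sG_{\ell'}$. The isomorphism $\bG_\ell/\bG_\ell^\circ\cong G_K/\sG\cong\bG_{\ell'}/\bG_{\ell'}^\circ$ and the connectedness statement follow immediately.

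I expect the main obstacle to be the second step: proving that the locus of elements with a torsion multiplicative relation among eigenvalues is a proper closed subset of $\bG_\ell^\circ$ and has Haar measure zero on $\rho_\ell(\sG_\ell)$. This requires care because $\bG_\ell^\circ$ need not be reductive. I would pass to a Levi quotient, which does not change eigenvalues, and argue on a maximal torus there, using the uniform bound on orders of roots of unity.
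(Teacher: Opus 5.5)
The paper does not prove this statement; it cites Serre's letter to Ribet and Larsen--Pink, Proposition~6.14. Your first step is correct. If $\rho_\ell(F_v)\notin\bG_\ell^\circ$, then $\overline{\langle s\rangle}$ is disconnected, so $\Lambda_v$ has torsion, and this depends only on the characteristic polynomial.

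\textbf{The gap is in your second step.} Your bound $N(n)$ on the order of the torsion of $\Lambda_v$ is right, but it does \emph{not} bound the exponents in a relation $\prod\lambda_i^{a_i}=\zeta$.
\begin{itemize}
\item[(a)] In the diagonal torus $\GG_m^2\subset\GL_2$, the element $\mathrm{diag}(2,-2^k)$ has characteristic polynomial in $\QQ[t]$ and $-1\in\Lambda$. Yet any relation $2^a(-2^k)^b=\zeta\neq1$ forces $b$ odd and $a=-kb$, so the exponents are unbounded.
\item[(b)] Worse, the elements $\mathrm{diag}(c,-c^k)$ with $c\in\QQ^\times$ and $k\geq1$ are all non-neat with rational characteristic polynomial, and they are Zariski dense in $\GG_m^2$. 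So the non-neat locus lies in no proper Zariski-closed subset. A fortiori it lies in no finite union of sets $\{\prod\lambda_i^{a_i}=\zeta\}$.
\end{itemize}
What you actually have is a countable union of proper closed conjugation-invariant sets. Each one is Haar-null and so carries a density-zero set of Frobenii. But natural density is only finitely additive, so nothing follows for the union. Nor does Haar-nullity of the union tell you anything about the countably many Frobenius elements.

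For geometric systems, purity can supply the missing finiteness, as in Serre's Frobenius-torus argument. That is input beyond $\QQ$-compatibility. Also, conditions (1)--(2) do not say that only finitely many $v$ are bad for a fixed $\ell$. They give only topological density of the good Frobenii, so natural-density arguments are not available for the abstract statement at all.

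\textbf{A way around this} is to reverse the roles and use a relation that is uniform on a whole component. Let $V$ be the underlying space and $C$ a component of $\bG_{\ell'}$ with nontrivial image $c\in\pi_0(\bG_{\ell'})$.
\begin{itemize}
\item[(i)] Inflate a faithful representation of the finite group $\pi_0(\bG_{\ell'})$ to $\bG_{\ell'}$. Since $V$ is faithful, it is a subquotient of some $T=\bigoplus_j V^{\otimes a_j}\otimes (V^\vee)^{\otimes b_j}$.
\item[(ii)] The element $c$ acts on it with an eigenvalue $\zeta\neq1$, of some order $m>1$. Hence every $g\in C$ has $\zeta$ as an eigenvalue on $T$.
\item[(iii)] Put $F(g)=\operatorname{res}\bigl(\Phi_m(x),\det(x-g\mid T)\bigr)$, where $\Phi_m$ is the $m$-th cyclotomic polynomial. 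This is a conjugation-invariant regular function on $\GL(V)$ defined over $\QQ$: it is a polynomial in the coefficients of the characteristic polynomial on $V$ and in $\det^{-1}$. It vanishes on $C$, while $F(1)=\pm\Phi_m(1)^{\dim T}\neq0$.
\item[(iv)] Let $U=\{\gamma:\rho_{\ell'}(\gamma)\in C\}$, an open set. By (3), $F(\rho_\ell(F_v))=F(\rho_{\ell'}(F_v))=0$ for the good Frobenii in $U$. By (2) and continuity, $F\circ\rho_\ell$ vanishes on all of $U$.
\item[(v)] Suppose $U\cap\sG_\ell\neq\emptyset$. Then it contains a coset $\gamma N$ with $\gamma\in\sG_\ell$ and $N$ open normal. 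Its image $\rho_\ell(\gamma N)$ is Zariski dense in the connected group $\bG_\ell^\circ$, which forces $F(1)=0$, a contradiction.
\end{itemize}
Hence $\sG_\ell\subseteq\sG_{\ell'}$, and symmetry gives equality. This argument uses only conditions (2) and (3), with no neatness or density-zero statement.
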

\begin{remark}
Therefore, given a $\QQ$-compatible system of representations $\rho_{\bullet}$ (even when $\mathcal P$ is infinite), we can always assume that $K^{\conn}=K$ after a uniform finite extension by \Cref{thm:independence of connected components}.
\end{remark}
\begin{remark}
    Let $\bG_{\ell}$ be the $\ell$-adic algebraic monodromy group of the motive $\fh(X)$. For any submotive $\cM \subseteq \fh(X)$, e.g., $\fh^i(X)$, there is a surjection $\bG_{\ell} \twoheadrightarrow \bG_{\ell}(\cM)$. Therefore, if $K = K^{\conn}$, then $\bG_{\ell}(\cM)$ is also connected.
\end{remark}

\subsubsection{}\label{reduction to number field}
We record explicitly the spreading-out argument used below. Let $K$ be a finitely generated field over $\QQ$, and let $k_0$ be the algebraic closure of $\QQ$ in $K$; thus $k_0$ is a number field. After localizing a finitely generated $k_0$-subalgebra of $K$, we obtain
\[
 B=\Spec A,\qquad B/k_0\text{ is smooth and geometrically integral},
 \qquad k(B)=K.
\]
After shrinking $B$, a smooth projective variety $X/K$, together with a chosen polarization, extends to a smooth projective morphism $f\colon\fX\to B$. Every closed point $s\in B$ has residue field $k(s)$ finite over $\QQ$, hence is a number-field point.

For a fixed finite set of primes, the Frattini--Hilbert irreducibility argument for $\ell$-adic local systems gives infinitely many closed points $s\in B$ for which, after identifying the geometric cohomology of the generic and special fibers along an étale path, the corresponding algebraic monodromy groups agree. In particular, for any fixed $\ell$ we may choose $s$ such that
\[
 \bG_{\ell}(\fX_s)^{\circ}=\bG_{\ell}(X)^{\circ}.
\]
See \cite[\S 3.1.1]{Cad15}. Consequently, an assertion about a fixed connected $\ell$-adic monodromy group that is invariant under this identification may be checked on a suitable number-field fiber. This is the only form of ``reduction to a number field'' used below.

\subsection{The degree-two Mumford--Tate conjecture}
\label{subsec:degree two MTC}
We now prove \Cref{thm:Mumford--Tate conjecture HK degree 2}. The only case not covered by the results of Tankeev and André is $b_2=3$. We first record the descent to a number field used in that case.

\begin{lemma}
\label{lem:b2=3 descends to number field}
Let $X$ be a hyper-Kähler variety over a finitely generated field $K/\QQ$ with $b_2(X)=3$. After a finite extension $K'/K$, there are a number field $E\subseteq K'$ and a hyper-Kähler variety $X_0/E$ such that $E$ is algebraically closed in $K'$ and
\[
 X_{K'}\cong X_0\times_E K'.
\]
For compatible embeddings into $\CC$, the Mumford--Tate groups of $X$ and $X_0$ are identified. The $\ell$-adic representations of $X_{K'}$ and $X_0$ have the same image; consequently, the connected algebraic monodromy groups of $X$ and $X_0$ are identified.
\end{lemma}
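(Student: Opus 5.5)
The idea is rigidity. When $b_2(X)=3$, the lattice $\rH^2$ has signature $(3,0)$ for the BBF form restricted appropriately: $(2,0)\oplus(0,2)$ together with an ample class span everything. So $\rH^{1,1}$ is one-dimensional, spanned by the polarization class, and $X$ has Picard rank one with $h^{1,1}=1$. The period domain for polarized deformations in degree two is then a point. Hence the polarized hyper-Kähler variety $(X,L)$ admits no nontrivial deformations as a polarized variety. Concretely, $\rH^1(X,T_X)\cong\rH^1(X,\Omega^1_X)$ via $\sigma$, and this is one-dimensional. The polarized deformation space is the kernel of cupping with $c_1(L)$ into $\rH^2(\cO_X)$, which is zero.

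I will make this arithmetic as follows. Spread out $(X,L)$ over $B/k_0$ as in \Cref{reduction to number field}, giving $f\colon\fX\to B$ with relative polarization. The classifying map of $B$ to the moduli of polarized hyper-Kähler varieties of this fixed numerical type has zero differential by the vanishing above. Equivalently, the Kodaira--Spencer map of $f$ vanishes. Since the relevant moduli stack is Deligne--Mumford (via Matsusaka--Mumford and finiteness of polarized automorphisms), it is étale-locally a point in the $b_2=3$ case. Therefore the family is isotrivial: after a finite étale cover $B'\to B$, trivialize it. Concretely, the Isom-scheme $\mathrm{Isom}_B((\fX,\mathcal L),(\fX_s\times B,\mathcal L_s))$ for a closed point $s$ is finite unramified over $B$ and surjective. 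After passing to a component and finite base change, it yields $\fX_{B'}\cong X_0\times_E B'$, where $E=k(s)$ enlarged and $X_0=\fX_s$. Taking $K'$ to be the function field of $B'$ (made geometrically integral over $E$ by enlarging $E$ to the algebraic closure in $K'$), we get $X_{K'}\cong X_0\times_E K'$. This is the step I expect to be the main obstacle. It needs care that the Isom-scheme is finite étale, using that $\Aut(X,L)$ is finite and reduced in characteristic zero, and that the vanishing of Kodaira--Spencer really forces isotriviality rather than just infinitesimal constancy. For the latter I would use that a connected family with constant point in a DM stack with trivial tangent space factors through a residual gerbe.

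The comparison statements then follow formally. For an embedding $K'\hookrightarrow\CC$ compatible with $E\hookrightarrow\CC$, we have $X_{\CC}\cong(X_0)_{\CC}$, so the Hodge structures and hence the Mumford--Tate groups coincide. The Mumford--Tate group does not change under finite extension $K'/K$. For Galois images, $E$ algebraically closed in $K'$ makes the restriction $G_{K'}\to G_E$ surjective. Since $X_{K'}$ is a base change from $E$, the $G_{K'}$-action on $\rH^\bullet_{\et}$ factors through this surjection via the $G_E$-action on $X_0$. So the images agree, and the Zariski closures agree. Finally, passing from $K$ to finite $K'$ does not change $\bG_\ell^\circ$, so the connected monodromy groups of $X$ and $X_0$ are identified under the comparison isomorphisms.
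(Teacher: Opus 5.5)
Your proposal is correct and follows essentially the same route as the paper. The polarized deformation space is zero-dimensional, so the point on the Deligne--Mumford moduli stack is a residual gerbe over a number field that becomes trivial after finite extensions; then enlarging $E$ to its relative algebraic closure in $K'$ makes $G_{K'}\to G_E$ surjective. The only cosmetic difference is that you spread out over $B$ and trivialize with an Isom-scheme, whereas the paper works directly with the residual gerbe at the $K$-point.
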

\begin{proof}
After a finite extension, choose a polarization of $X$. By local Torelli, the deformation space of the polarized pair has dimension $b_2(X)-3=0$. Its point on the finite-type Deligne--Mumford moduli stack of polarized hyper-Kähler varieties therefore has residual gerbe over a number field. After a finite extension of that number field the gerbe is neutral; after a further finite extension of $K$, the residue gerbe can be trivialized. This gives a model $X_0/E$ and the required isomorphism. This is the rank-three polarized-descent argument; compare the moduli construction in \cite[Theorem~4.5.2]{Bindt21}.

Enlarge $E$ to its relative algebraic closure in $K'$, which is still a number field. Then $K'/E$ is regular, so the restriction map $G_{K'}\to G_E$ is surjective. The pulled-back $\ell$-adic representations consequently have the same image. The assertion for Mumford--Tate groups follows from the displayed base-change isomorphism and a compatible complex embedding.
\end{proof}

\begin{proof}[Proof of \Cref{thm:Mumford--Tate conjecture HK degree 2}]
The case $b_2(X)\geq4$ is due to Tankeev \cite{Tankeev90,Tankeev95} when $\dim X=2$ and to André \cite{Andre96} in higher dimension. Suppose that $b_2(X)=3$. We first assume that $K$ is a number field. After a finite extension, choose an ample class $h\in\rH^2_{\et}(X_{\overline K},\QQ_\ell(1))^{G_K}$. Since $h^{2,0}(X)=h^{0,2}(X)=1$, one has $h^{1,1}(X)=1$. On the weight-zero twist put
\[
 \rH_B=\rH^2_B(X,\QQ)(1)=\QQ h\oplus T_B,
 \qquad \dim_\QQ T_B=2,
\]
where $T_B$ is the orthogonal complement of $h$ for the Lefschetz form
\[
 \phi_h(x,y)=\int_Xx\cup y\cup h^{\dim X-2}.
\]
The Hodge types of $T_B$ are $(1,-1)$ and $(-1,1)$. The class $h$ and the Lefschetz form $\phi_h$ are rational Hodge tensors, so $\MT(\rH_B)\subseteq\SO(T_B,\phi_h)$. The Hodge cocharacter is nontrivial; hence $\MT(\rH_B)$ is a nontrivial connected subgroup of this one-dimensional torus, and therefore
\[
 \MT(\rH_B)=\SO(T_B,\phi_h).
\]

The same Lefschetz form is Galois invariant on $\rH_\ell=\rH^2_{\et}(X_{\overline K},\QQ_\ell)(1)$: the twists of the two inputs, $h^{\dim X-2}$, and the trace map add up to the top-degree twist. Consequently, writing $\bG_\ell(\rH_\ell)$ for the algebraic monodromy group of this representation,
\[
 \bG_\ell(\rH_\ell)^\circ\subseteq
 \SO(T_\ell,\phi_{h,\ell}),
 \qquad T_\ell=(\QQ_\ell h)^\perp.
\]
This connected group is nontrivial. Indeed, otherwise its action on $T_\ell$ would become trivial after a finite extension, whereas smooth proper $p$-adic Hodge theory gives the two Hodge--Tate weights $-1$ and $1$ on $T_\ell$; see \cite{Fal88}. Thus it is the full one-dimensional torus. Comparison identifies it with $\MT(\rH_B)_{\QQ_\ell}$. The Mumford--Tate conjecture is invariant under Tate twist by \cite[Remark~1.8(ii)]{Moonen17a}, so $(\MTC_2)$ follows. This is the rank-two argument used in the proof of \cite[Corollary~9.3]{Moonen17a}.

For a general finitely generated $K/\QQ$, apply \Cref{lem:b2=3 descends to number field}. The number-field argument and the equality of Galois images prove the twisted statement over $K$. Untwisting completes the proof.
\end{proof}

\section{LLV representations of hyper-Kähler varieties}
\label{sec:LLV representations of hyper-Kahler varieties}

\subsection{Looijenga--Lunts--Verbitsky Lie algebra}
We now briefly review the basic properties of the \emph{Looijenga--Lunts--Verbitsky Lie algebra} (LLV algebra) introduced by Verbitsky \cite{Ver96} and Looijenga--Lunts \cite{LL97}.
\subsubsection{}
Let $K \subseteq \CC$ be a subfield, and let $X$ be a smooth projective variety over $K$ as before. Let $\rH(X) \coloneqq \rH^{\bullet}_{\square}(X)$ denote a cohomological realization of $\fh(X)$, with $\square\in\{B,\ell,\dR,\pst\}$. Let $E$ be the coefficient field of $\rH(-)$.

The LLV algebra $\fg$ for $X/K$ and the associated LLV decomposition of $\rH(X)$ generalize the usual Hard Lefschetz $\fsl_2$-decomposition of the cohomology for an ample class $\omega$ on $X$. Recall that $\omega$ defines two operators on cohomology: the Lefschetz operator $L_\omega=\omega\cup -$, given by cup product, and the dual Lefschetz operator $\Lambda_\omega = \star^{-1} L_{\omega} \star$, where $\star$ is the Hodge star operator. The operators $L_\omega$ and $\Lambda_\omega$ generate an $\fsl_2\subset \mathfrak{gl}(\rH(X))$ acting on $\rH(X)$, and the Hard Lefschetz theorem is equivalent to the existence of the $\fsl_2$-decomposition of cohomology. A more formal framework that avoids the use of the Hodge star operator was formulated in \cite{LL97}, where the key observation is the following identity
\[[L_\omega,\Lambda_\omega]=h\,,\]
with $h$ the shifted degree operator
\begin{equation} \label{eq_def_h}
	h \colon \rH (X) \longrightarrow \rH (X),\;\; x \longmapsto (k - \dim X) x , \;\; \text{for $x \in \rH^k (X)$}\,.
\end{equation}
It is then clear that $\{L_\omega, h,\Lambda_\omega\}$ forms an $\fsl_2$-triple. Moreover, the operator $L_x = x \cup -$ is well defined for any cohomology class $x \in \rH^2(X)$, while $h$ is independent of the choice of $x$. By the Jacobson--Morozov theorem, the existence (and thus the uniqueness) of an operator $\Lambda_x$ that completes the pair $\{L_x, h\}$ into an $\fsl_2$-triple is an open algebraic condition on the classes $x \in \rH^2(X)$. In other words, the dual Lefschetz operator $\Lambda_x$ can be defined for almost all classes $x$, which may lie beyond the ample (or even K\"ahler) classes, and is in fact independent of the complex structure of $X$. Therefore, one can define a Lie algebra $\fg$ over $\QQ$ containing all these operators that is a diffeomorphism invariant of $X(\CC)$. The action of these operators on $\rH(X)$ extends to the whole Lie algebra $\fg$; therefore, by definition, any $\fsl_2$-Lefschetz decomposition factors through $\fg$. Note that the projectivity assumption on $X$ is required only to ensure that the set of $x \in \rH^2(X)$ for which $\Lambda_x$ is defined is a \emph{nonempty} (and thus Zariski-dense) subset.

More generally, we formulate the definition over an arbitrary field $E$ as follows.
\begin{definition}\label{def:LLV}
		Let $X$ be a smooth projective variety over $K$. The \emph{Looijenga--Lunts--Verbitsky (LLV) Lie algebra} $\mathfrak g(X)_{E}$ of $X$ is the smallest $E$-Lie subalgebra of $\mathfrak{gl}(\rH(X))$ generated by all $\fsl_2$-triples
    \[
    \{(L_x, h, \Lambda_x)\}_x\,,
    \]
    where $x\in \rH^2(X)$ is any element satisfying the Hard Lefschetz property.
\end{definition}
\begin{remark}
    The definition of the LLV Lie algebra is valid for any graded Frobenius--Lefschetz algebra, as stated in \cite[Definitions~1.1 and~1.2]{Ver96}. Thus one may also consider the LLV Lie algebra for the Betti cohomology ring of a Kähler manifold.
\end{remark}

\subsubsection{}
The LLV algebra $\fg(X)_{\QQ}$ for the Betti realization $\rH^{\bullet}_{B}(X)$ is a semisimple Lie algebra defined over $\QQ$ (cf.~\cite[(1.9)]{LL97}). Moreover, we have a comparison isomorphism
\[
\fg(X)_{\QQ} \otimes_{\QQ} E \cong \fg(X)_{E}
\]
given in \cite[Example~3.4]{IIKTZ25}. The case where $X$ is a hyper-K\"ahler variety is of greatest interest to us and will be assumed throughout the remainder of this paper. For such a variety, the real form satisfies
\[
\fg(X)_{\QQ} \otimes \RR \cong \fso(4,b_2-2)\,,
\]
by \Cref{prop:ll_isom} below. Since $\fg(X)_{E}$ is invariant under every field embedding $K \hookrightarrow \CC$ when $E = \QQ_{\ell}$, it follows that the $\QQ$-form $\fg(X)_{\QQ}$ is independent of this choice (see \cite[Chapter 5, Theorem 1]{Kne69}).

For simplicity, we abbreviate the notation to $\fg=\fg(X)_E$ when there is no risk of confusion.

\subsubsection{}
The adjoint action of the shifted degree operator $h\in \fg$ induces an eigenspace decomposition of $\fg$.
In the case of hyper-K\"ahler varieties, $\rH(X)$ is of Jordan type as a graded Frobenius--Lefschetz algebra by \cite[Lemma~4.2 and Proposition~4.4]{LL97}, i.e., the eigenspace decomposition for $h$ is of the form \[\fg = \fg_{2} \oplus \fg_{0} \oplus \fg_{-2} \,.\]
In particular, the $0$-eigenspace $\fg_{0}$ is a reductive subalgebra of $\fg$ and admits a decomposition
\[\fg_0=\overline{\fg} \oplus E\cdot h \,,\]
where $\overline{\fg}$ is the semisimple part of $\fg_0$ and satisfies $\overline{\fg} = [\fg_{0},\fg_{0}]$, while the center $\mathfrak z(\fg_0)$ is one-dimensional and spanned by the shifted degree operator $h$. The Lie subalgebra $\overline{\fg}$ is called the \emph{reduced LLV algebra} of $X$.
\subsubsection{}
Note that $\overline{\fg} \subset \fg_{0}$ consists of degree-$0$ operators, and thus the induced $\overline{\mathfrak{g}}$-action on $\rH(X)$ preserves the cohomological degree. In other words, for any integer $0 \leq k \leq 2\dim X$, there is an associated representation
\[\rho_k^{\LLV} \colon \overline{\fg} \longrightarrow \End(\rH^k (X))\,.\]
In particular, the action of $\overline{\fg}$ on $\rH^2(X)$ is obtained by restricting that of $\fg$. Moreover, it acts as a derivation with respect to the cup product:
\begin{equation} \label{eq:ll_respects_cup}
	e\cdot (x \cup y) = (e\cdot x) \cup y + x \cup (e\cdot y), \qquad \text{for} \; e \in \overline{\fg}, \; x, y \in \rH(X)\,.
\end{equation}
One can also see that the action of $\overline{\fg}$ respects the Beauville--Bogomolov--Fujiki form $\overline{q}$ on the second cohomology $\rH^2(X)$, and therefore there is an inclusion
\begin{equation}\label{eq:LLV in so} \overline{\fg}\subset \fso(\rH^2(X),\bar q) \,.
\end{equation}
In fact, the above inclusion \eqref{eq:LLV in so} is an equality so that $\rH^2(X)$ is the standard representation of $\overline{{\mathfrak{g}}}$. More specifically, the following result combines \cite[Theorem~2.3]{Ver96} and \cite[Theorem~4.5]{LL97}.

\begin{theorem}[{\cite[Theorem~2.3]{Ver96}}, {\cite[Theorem~4.5]{LL97}}]\label{prop:ll_isom}
	Let $X$ be a hyper-Kähler variety over $K$. Consider the quadratic space
\begin{equation*}
	\widetilde{\rH}(X) = \rH^2(X) \oplus E^{\oplus 2}, \qquad q = \bar q \oplus \begin{psmallmatrix} 0 & 1 \\ 1 & 0 \end{psmallmatrix} \,,
\end{equation*}
which is called the \emph{Mukai extension} of $\rH^2(X)$ associated with the cohomology of $X$. Then the LLV and reduced LLV Lie algebras of $X$ are as follows:
	\begin{equation}\label{eq:ll_isom_Q}
		\overline{\fg} \cong \fso (\rH^2(X), \overline{q})\,, \quad \fg \cong \fso(\widetilde{\rH}(X), q)\,.
	\end{equation}
    Moreover, let $r = \lfloor b_2(X) / 2 \rfloor$. Then the LLV algebra $\mathfrak{g}$ has type $\mathrm{B}_{r+1}$ or $\mathrm{D}_{r+1}$, depending on the parity of $b_2(X)$, and the reduced algebra $\overline{\mathfrak g}$ has type $\mathrm{B}_r$ or $\mathrm{D}_r$. These algebras are simple apart from the familiar low-rank orthogonal exceptions (in particular, $\mathfrak{so}_4\cong\mathfrak{sl}_2\oplus\mathfrak{sl}_2$). Finally,
	\begin{equation} \label{eq:ll_isom_R}
		\bar{\mathfrak{g}}_{\mathbb{R}} \cong \mathfrak{so}(3, b_2(X) - 3) \,,\quad
        \mathfrak{g}_{\mathbb{R}} \cong \mathfrak{so}(4, b_2(X) - 2)\,.
	\end{equation}
\end{theorem}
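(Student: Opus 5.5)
The theorem is quoted from Verbitsky and Looijenga--Lunts, so the plan is to reduce it to their work over $\CC$ and then descend to an arbitrary coefficient field $E$ using the comparison $\fg(X)_\QQ\otimes_\QQ E\cong\fg(X)_E$.

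\textbf{Step 1: construct the map to the Mukai extension.} Work first with Betti cohomology, where $\rH^2(X)$ carries the Beauville--Bogomolov--Fujiki form $\bar q$. Each $x\in\rH^2(X)$ satisfying Hard Lefschetz defines a Lefschetz operator $L_x$. I would verify the following structure.
\begin{itemize}
\item The span of the $L_x$ is an abelian subalgebra $\fg_2$.
\item By the Jordan-type property cited above, the dual operators $\Lambda_y$ span $\fg_{-2}$, and $\fg=\fg_{-2}\oplus\fg_0\oplus\fg_2$.
\item Fujiki's relation $\int x^{2n}=c\,\bar q(x)^n$ and the uniqueness of $\Lambda_x$ in an $\fsl_2$-triple give $[L_x,\Lambda_y]\in\fg_0$, with a formula involving $\bar q(x,y)h$ plus an element of $\fso(\rH^2,\bar q)$. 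This is Looijenga--Lunts' computation for Frobenius--Lefschetz algebras of Jordan type.
\end{itemize}
This three-graded structure matches the grading of $\fso(\widetilde{\rH},q)$ by the hyperbolic plane $E^{\oplus2}$. Concretely, send $L_x\mapsto x\wedge e$, $\Lambda_y\mapsto y\wedge f$ (suitably normalized), $h\mapsto e\wedge f$ up to scalar, and $\fg_0\supseteq\overline{\fg}\to\fso(\rH^2)$. Checking the bracket relations then produces a surjection from a Lie algebra presented by these generators onto $\fg$.

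\textbf{Step 2: show the map is an isomorphism.} I expect this to be the main obstacle. We need $\overline{\fg}$ to be all of $\fso(\rH^2,\bar q)$, not a proper subalgebra. The inclusion \eqref{eq:LLV in so} follows from the derivation property \eqref{eq:ll_respects_cup} together with Fujiki's relation.
\begin{itemize}
\item For the reverse inclusion I would first try the commutators $[L_x,\Lambda_y]-[L_y,\Lambda_x]$. These act on $\rH^2$ by $x\wedge y$ under the identification $\wedge^2\rH^2\cong\fso(\rH^2)$, and such elements span $\fso$.
\item The action of $\fso(\widetilde{\rH})$ on $\rH(X)$ must be faithful, so it is not just its image. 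I would argue this by simplicity of $\fso(\widetilde{\rH})$ outside the low-rank cases, which are checked directly. A nonzero ideal of $\fso(\widetilde{\rH})$ contains $\fg_2$, and $\fg_2$ acts nontrivially on $\rH^0$.
\end{itemize}

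\textbf{Step 3: types and real forms.} The Cartan types follow from $\dim\rH^2=b_2$ and $\dim\widetilde{\rH}=b_2+2$: the reduced algebra has rank $\lfloor b_2/2\rfloor=r$ and the full algebra has rank $r+1$, with type B or D according to the parity of $b_2$.

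The real forms follow from the signature of $\bar q$, which is $(3,b_2-3)$ by the Hodge index type theorem for the BBF form: the positive directions are $\operatorname{Re}\sigma$, $\operatorname{Im}\sigma$, and a Kähler class. Adding the hyperbolic plane contributes one further positive and one further negative direction, giving $(4,b_2-2)$.

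Finally, the statement over an arbitrary $E$ follows by base change, since both sides of \eqref{eq:ll_isom_Q} commute with extension of scalars from $\QQ$.
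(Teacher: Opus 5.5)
Your proposal is correct and takes the paper's route. The paper gives no independent proof: it quotes Verbitsky and Looijenga--Lunts and passes to other coefficient fields via $\fg(X)_{\QQ}\otimes_{\QQ}E\cong\fg(X)_{E}$, and your Steps 1--3 accurately outline the Jordan-type three-graded argument in those references, together with the signature $(3,b_2-3)$ of $\bar q$. One simplification: since $b_2(X)\geq3$ forces $\dim\widetilde{\rH}(X)\geq5$, the algebra $\fso(\widetilde{\rH}(X),q)$ is always simple, so no low-rank case needs checking for injectivity; the only non-simple algebra in the statement is $\overline{\fg}$ when $b_2(X)=4$.
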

\subsubsection{}
Let $\mt(X_{\CC})$ be the Mumford--Tate algebra of the complex projective variety $X_{\CC}$, i.e., the Lie algebra of the Mumford--Tate group $\MT(X_{\CC})$. We write $\overline{\MT}(X_{\CC})$ for the connected special (or Hodge) Mumford--Tate group generated by the restriction of the Hodge morphism to the norm-one torus, and $\overline{\mt}(X_{\CC})$ for its Lie algebra. It is important to note that the Weil operator lies in the LLV Lie algebra $\fg$, as observed by Verbitsky in \cite{Ver96}. Here we recall a refined version given in \cite{GKLR22}.
\begin{theorem}[Verbitsky]
\label{thm:MT in LLV}
	Let $X$ be a hyper-Kähler variety over $K$. Then \[\overline{\mt}(X_{\CC}) \subseteq \overline{\fg}_{\QQ}\] in $\End_{\QQ}(\rH_B(X))$.
\end{theorem}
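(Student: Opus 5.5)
The plan is to reduce the statement to one claim: the infinitesimal generator of the Hodge circle action lies in the real reduced LLV algebra $\overline{\fg}_{\RR}$. An algebraicity argument then descends this to $\QQ$.

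Let $\phi\colon\DTorus\to\GL(\rH_B(X)_\RR)$ be the Hodge morphism, and restrict it to the norm-one torus $U(1)\subset\DTorus$, where $z$ acts by $z^{p-q}$ on $\rH^{p,q}$. Its differential at the identity is the real operator $W=\sqrt{-1}(p-q)$, the infinitesimal Weil operator. By definition, $\overline{\MT}(X_\CC)$ is the smallest $\QQ$-algebraic subgroup whose real points contain $\phi(U(1))$. Since $\phi(U(1))$ is a connected one-parameter subgroup, it suffices to produce a connected $\QQ$-algebraic subgroup $\bG\subseteq\GL(\rH_B(X))$ with $\Lie\bG=\overline{\fg}_\QQ$ and $W\in\overline{\fg}_\RR$.

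For the group $\bG$, note that $\overline{\fg}_\QQ\cong\fso(\rH^2,\bar q)$ is semisimple by \Cref{prop:ll_isom}. Semisimple Lie subalgebras of $\gl(V)$ are algebraic, being equal to their own derived algebra. Hence there is a unique connected $\QQ$-subgroup $\bG$ with Lie algebra $\overline{\fg}_\QQ$. Once $W\in\Lie\bG_\RR$, we get $\phi(U(1))\subseteq\bG(\RR)$, so $\overline{\MT}(X_\CC)\subseteq\bG$, and passing to Lie algebras gives the theorem.

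It remains to show $W\in\overline{\fg}_\RR$. The first step is to show $W\in\fg_\RR$, and this is the main obstacle. Following Verbitsky, I would fix a hyper-Kähler (Calabi--Yau) metric on $X(\CC)$ with Kähler class $\omega_I$. This gives complex structures $I,J,K$ with Kähler forms $\omega_I,\omega_J,\omega_K$, where $\omega_J+\sqrt{-1}\omega_K$ is a multiple of $\sigma$. All three classes satisfy Hard Lefschetz, so $L_{\omega_\bullet}$ and $\Lambda_{\omega_\bullet}$ lie in $\fg_\RR$. The key identity, obtained from the Kähler identities on harmonic forms for the quaternionic structure, is that the commutator $[L_{\omega_J},\Lambda_{\omega_K}]$ acts on harmonic forms as a nonzero constant multiple of the Weil operator of $I$. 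Together with $[L_{\omega_I},\Lambda_{\omega_I}]=h$, these span an $\fso(4,1)$-type subalgebra of $\fg_\RR$.

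Since harmonic representatives identify cohomology with harmonic forms compatibly with cup product and the Hodge decomposition, this yields $W\in\fg_\RR$. The delicate point is the sign and normalization check in this commutator computation. If the check proved troublesome, I would instead cite the refined form in \cite{GKLR22}, or argue on $\rH^2$ by realizing $W$ there as an element of $\fso(\rH^2)$ via the Mukai extension.

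The last step puts $W$ in $\overline{\fg}_\RR$. Since $W$ preserves cohomological degree, it commutes with $h$, so $W\in\fg_0=\overline{\fg}\oplus E\,h$; write $W=e+c\,h$ with $e\in\overline{\fg}_\RR$. On $\rH^0$, $W$ acts by $0$ because $\rH^0$ has type $(0,0)$. Also $e$ acts by $0$ on $\rH^0$, since by \eqref{eq:ll_respects_cup} $e\cdot 1=e\cdot(1\cup1)=2\,e\cdot1$. On the other hand, $h$ acts on $\rH^0$ by $-\dim X\neq0$. Hence $c=0$ and $W=e\in\overline{\fg}_\RR$, which completes the proof.
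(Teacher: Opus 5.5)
Your proposal is correct and follows the paper's route. The paper's proof consists of placing the infinitesimal Weil operator in $\overline{\fg}_{\RR}$ by citing \cite[Proposition~2.24]{GKLR22} (your stated fallback) and then invoking minimality of the special Mumford--Tate group; your sketch of Verbitsky's identity $[L_{\omega_J},\Lambda_{\omega_K}]\in\RR^{\times}W$, your removal of the $h$-component by evaluating on $\rH^0$, and your explicit use of the algebraicity of the semisimple algebra $\overline{\fg}_{\QQ}$ only supply details that the paper leaves to that citation and to its phrase ``smallest $\QQ$-Lie algebra''. The one item to drop is the vague alternative of arguing on $\rH^2$ alone: knowing $W|_{\rH^2}\in\fso(\rH^2)$ does not by itself show that $W$ acts through the LLV representation in higher degrees, because a degree-preserving derivation is not determined by its restriction to $\rH^2$ when the cohomology is not generated by $\rH^2$.
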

\begin{proof}
According to \cite[Proposition~2.24]{GKLR22}, the Weil operator for the Hodge structure on $\rH^{\bullet}(X_{\CC},\QQ)$ lies in the real form of the reduced LLV Lie algebra $\overline{\fg}_{\RR}$. By definition, the special Mumford--Tate algebra $\overline{\mt}(X_{\CC})$ is the smallest $\QQ$-Lie algebra whose $\CC$-form contains the Weil operator. Therefore, $\overline{\mt}(X_{\CC}) \subseteq \overline{\fg}_{\QQ}$ as $\QQ$-Lie algebras.
\end{proof}

\begin{example}\label{ex:llv_k3}
	If $X$ is a complex K3 surface, the full cohomology $\rH^{\bullet}(X,\QQ)$ is naturally endowed with the Mukai pairing, which is isomorphic to the Mukai extension $\widetilde{\rH}(X,\QQ)$ of $\rH^2(X,\QQ)$. In this case, the representation-theoretic explanation is clear: $\rH^2(X,\QQ)$ is the standard representation of $\overline{\fg}_{\QQ}$, and $\rH^{\bullet}(X,\QQ)$ is the standard representation of $\fg(X)_{\QQ}$. Also note that $\overline{\fg}_{\QQ}$ can be realized as the special Mumford--Tate algebra of a general nonalgebraic K3 surface, i.e.,
    \[
    \overline{\mt}(X) = \overline{\fg}_{\QQ}\,.
    \]
    The equality also holds when $X$ is a general hyper-Kähler manifold.
\end{example}

\subsection{(Twisted) LLV representations}
Let $X$ be a hyper-Kähler variety over $K$ of dimension $2n$, and let $\fg$ be the LLV Lie algebra of the cohomological realization $\rH(X)$.

\subsubsection{}\label{LLV representation and MT}
Here we recall the integrated forms of the LLV representations introduced by Floccari \cite[\S 2.1]{Flo22b}. The connected algebraic groups over $E$ corresponding to the Lie algebras $\overline{\fg} \subset \fg_0 \subset \fg$ are the Spin groups
\[
\Spin(\rH^2(X),\overline{q}) \subset \GSpin(\rH^2(X),\overline{q}) \subset \Spin(\widetilde{\rH}(X),q)\,.
\]
In what follows, we focus mainly on the action of $\fg_0$, or of its reduced part $\overline{\fg}$, which preserves cohomological degrees.
\begin{notation}\label{notation:Spin groups}
	For simplicity of notation, we set
	\begin{itemize}
		\item $\GSpin = \GSpin(\rH^2(X),\overline{q})$;
		\item $\Spin = \Spin(\rH^2(X),\overline{q})$; and
		\item $\SO = \SO(\rH^2(X),\overline{q})$.
	\end{itemize}
    In the remainder of this paper, we also use the subscript ``$\ell$" for a prime to denote the base change of these groups to the corresponding completion $\QQ_{\ell}$.
 \end{notation}
Under the identifications in \eqref{eq:ll_isom_Q}, the LLV representation \[\rho_{i}^{\LLV} \colon \fg_0 \longrightarrow \End_{E}(\rH^i(X))\] integrates to a group homomorphism
\[
\rho_{i}^{\LLV} \colon \Spin \longrightarrow \GL_{E}\left(\rH^{i}(X)\right).
\]
The kernel $\{1,\delta\}=\mu_2 \subset \Spin$ of the universal covering $\Spin \to \SO$ acts on $\rH^{i}(X)$ via $\rho^{\LLV}_i(\delta) = (-1)^{i}|_{\rH^i(X)}$ (see \cite[Corollary 8.2]{Ver99} and \cite[Theorem A.10.]{KSV19}). When $i = 2k$, the LLV representation $\rho_{2k}^{\LLV}$ factors through the orthogonal group as a representation
\[
\SO \longrightarrow \GL_E(\rH^{2k}(X))\,, \quad \forall 0 \leq k \leq 2n \,,
\]
which is the standard representation of $\SO$ when $k =1$. The full $\SO$-representation need not be faithful in every individual degree. We still denote the resulting representation of $\SO$ by $\rho_{2k}^{\LLV}$.

\subsubsection{}\label{subsub:twisted LLV representation}
In the usual setting of the LLV representation, the degree operator $h \in \fg_0$ acts on $\rH^i(X)$ as the scalar $i - 2n$, which is a shift of the usual cohomological degree by the dimension of $X$. To study the interaction between the LLV representation and various cohomological realizations, it is also convenient to consider the \emph{twisted LLV representation}
\[
\rho^{\tw}_{i}\colon \fg_0^{\tw} \coloneqq \overline{\fg} \oplus E h^{\deg} \longrightarrow \gl_E(\rH^{i}(X))\,
\]
where $h^{\deg} = h + \dim(X)\id$ is the standard degree operator, i.e., $h^{\deg}|_{\rH^i(X)} = i$. As $E$-Lie algebras, $\fg_0^{\tw} \cong \fg_0$, and the integrated representation is
\begin{equation}
	\rho_{i}^{\tw} \colon \GSpin \longrightarrow \GL_E(\rH^i(X))\,,
\end{equation}
such that
\begin{itemize}
	\item $\rho_{i}^{\tw}(z)(x) = z^{i}\cdot x$ for any $z$ in the central torus $\GG_m \subset \GSpin$; and
	\item $\rho_i^{\tw}|_{\Spin} = \rho^{\LLV}_i$.
\end{itemize}
This implies that $\rho_{2k+1}^{\tw}$ is faithful when $\rH^{2k+1}(X) \neq 0$ (see \cite[Lemma~2.6]{Flo22b} and its proof for details).

  \begin{remark}\label{rmk:MT inside GSpin group}
	For a complex hyper-Kähler variety $X$, \Cref{thm:MT in LLV} implies that
	\[
	\MT(X) \subseteq \rho^{\tw}\left(\GSpin(\rH^2(X,\QQ),\overline{q})\right)\,.
	\]
 \end{remark}
The following is a restatement of Verbitsky's theorem (\Cref{thm:MT in LLV}) at the algebraic-group level.
 \begin{lemma}\label{lem:project MT group}
	Suppose that $X$ is a complex hyper-Kähler variety of dimension $2n$.
	\begin{enumerate}
		\item Projection to degree two induces an isomorphism
		\[
		 \pi_2\colon\MT_+(X)\xrightarrow{\sim}\MT_2(X).
		\]
		For $1\leq k\leq2n-1$, projection also induces an isomorphism on special Mumford--Tate groups. Equivalently,
		\[
		\begin{tikzcd}
			\overline{\MT}_+(X) \ar[d, "\simeq"',"\pi_{2k}"]
			  \ar[r]& \SO(\rH^2(X),\overline q)
			  \ar[d,"\rho_{2k}^{\LLV}"]\\
			\overline{\MT}_{2k}(X) \ar[r,hook]&
			  \GL_{\QQ}(\rH^{2k}(X_{\CC},\QQ))
		\end{tikzcd}
		\]
		commutes. The endpoint representations in degrees $0$ and $4n$ are one-dimensional and are deliberately excluded from the latter assertion.
		\item If $\rH^-(X)\neq0$, then
		\[
		 \pi_2\colon\MT(X)\longrightarrow\MT_2(X)
		\]
		is a central isogeny of degree $2$, with kernel $\rho^{\tw}(\mu_2)$, where $\mu_2$ is the central subgroup of $\Spin(\rH^2(X),\overline q)$ acting by parity on total cohomology.
	\end{enumerate}
 \end{lemma}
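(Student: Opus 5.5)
Everything follows from \Cref{rmk:MT inside GSpin group}: $\MT(X)$ lies inside $\rho^{\tw}(\GSpin)$, where $\GSpin=\GSpin(\rH^2(X,\QQ),\overline q)$. So the plan is to understand the degreewise projections of $\rho^{\tw}(\GSpin)$ and then restrict them to $\MT(X)$.

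Write $\rho^{\tw}_+=\bigoplus_k\rho^{\tw}_{2k}$ and $\rho^{\tw}=\rho^{\tw}_+\oplus\rho^{\tw}_-$. The central $\mu_2=\{1,\delta\}\subset\Spin$ acts by $(-1)^i$ in degree $i$. Hence $\rho^{\tw}_+$ factors through $\GSpin/\mu_2$, and $\rho^{\tw}_2$ is the standard representation of $\SO$ together with the homothety $z\mapsto z^2$ on $\GG_m$. So $\rho^{\tw}_2$ realizes $\GSpin/\mu_2\cong\GG_m\times\SO$ faithfully on $\rH^2$. In terms of Lie algebras, $\fg_0^{\tw}=\overline{\fg}\oplus Eh^{\deg}$ acts faithfully on $\rH^2$, because $\overline{\fg}=\fso(\rH^2)$ and $h^{\deg}$ acts on $\rH^2$ by $2$.

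For part (1), consider $\pi_2\colon\rho^{\tw}_+(\GSpin)\to\rho^{\tw}_2(\GSpin)$. Its kernel is the image of $\ker\rho^{\tw}_2$. The main obstacle is computing $\ker\rho^{\tw}_2$ exactly inside $\GSpin$.

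I would first identify the kernel of $\GSpin\to\GL(\rH^2)$, with $\GSpin$ acting through $\SO\times\GG_m$ with the twisted weight. An element $g=z\cdot s$, with $z\in\GG_m$ and $s\in\Spin$, acts on $\rH^2$ as $z^2\,\mathrm{std}(s)$. This is trivial exactly when $\mathrm{std}(s)=z^{-2}$. Since $\mathrm{std}(s)\in\SO$ is a scalar, it equals $\pm1$. The case $-1$ occurs only in even rank, and there it forces $z^2=-1$.

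Next I would check that every such $g$ acts trivially on all even degrees. Here I would use that $\rho^{\tw}_{2k}$ is a subquotient of $(\rH^2)^{\otimes k}$ as a $\GSpin$-representation. This holds for the even cohomology of a hyper-Kähler variety: the Verbitsky component is $\Sym$ of $\rH^2$, and the other LLV summands also arise from $\SO$-representations. The central weight is consistent, since $\rho^{\tw}_{2k}(z)=z^{2k}=(z^2)^k$.

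Granting this, $\ker\rho^{\tw}_2\subseteq\ker\rho^{\tw}_{2k}$ for all $k$. Therefore $\pi_2$ is an isomorphism on $\rho^{\tw}_+(\GSpin)$, and by restriction $\pi_2\colon\MT_+(X)\to\MT_2(X)$ is an isomorphism, since it is a closed embedding with the right image. For the special groups and $1\le k\le 2n-1$, I would note that $\overline{\MT}_+$ lies in $\rho^{\tw}_+(\Spin)=\SO$. The restriction of $\rho_{2k}^{\LLV}$ to $\SO$ is injective for $1\le k\le2n-1$: its kernel is a normal subgroup, hence either central ($\pm1$ in even rank) or everything, and the representation is nontrivial and contains a copy of $\rH^2\otimes$ (top class) or $\Sym^k$-type pieces. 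One then checks that $-1$ acts nontrivially, except possibly on exactly the pieces that make the diagram still injective on $\overline{\MT}$. This step also needs care, and I would fall back on the Lie-algebra statement plus connectedness where needed. The commutativity of the diagram is just the definition.

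For part (2), include the odd degrees. The kernel of $\MT(X)\to\MT_2(X)$ is contained in $\rho^{\tw}(\ker\rho^{\tw}_2)$, which by the above is the image of $\mu_2$ (modulo the even-rank scalar subtlety, absorbed by $\GG_m$). The element $\rho^{\tw}(\delta)$ acts as $-1$ on $\rH^-\neq0$ and trivially on $\rH^+$, so it is nontrivial. It lies in $\MT(X)$ because $\MT(X)$ contains the weight cocharacter $\omega(-1)=(-1)^i$, which is exactly $\rho^{\tw}(\delta)$. It is central, so $\pi_2$ is a central isogeny of degree exactly $2$ with kernel $\rho^{\tw}(\mu_2)$.
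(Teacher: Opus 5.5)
\textbf{There is a genuine gap.} Your reduction to the twisted LLV image is the same as the paper's. The failing step is ``every element of $\ker\rho^{\tw}_2$ acts trivially in all even degrees'', together with its proposed reason that $\rho^{\tw}_{2k}$ is a subquotient of $(\rH^2)^{\otimes k}$; both are false.

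The obstruction is exactly the even-rank element you found. Suppose $b_2$ is even, $\tilde s\in\Spin$ lifts $-1\in\SO$, and $\zeta^2=-1$. Then $g=\zeta\tilde s$ lies in $\ker\rho^{\tw}_2$ and acts on $\rH^{2k}$ by $(-1)^k\rho^{\LLV}_{2k}(\tilde s)$. This is trivial only if $-1\in\SO$ acts on $\rH^{2k}$ by $(-1)^k$. Knowing that the LLV summands are $\SO$-modules says nothing about this parity.

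For an $\OG10$-variety it fails. Here $b_2=24$, and the LLV summand $V_{(2,2)}$ gives $\rH^6\cong\Sym^3\rH^2\oplus\Sym^2_0\rH^2$ as $\SO$-modules (traceless part; $b_6=2899=2600+299$). The element $-1$ acts trivially on $\Sym^2_0\rH^2$, so $g$ acts by $-1$ there. Hence $\rho^{\tw}_+(\GSpin)\to\rho^{\tw}_2(\GSpin)$ has a kernel of order $2$.

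The same defect affects the rest of your plan:
\begin{itemize}
\item In part (2), ``absorbed by $\GG_m$'' skips exactly these elements; they do not lie in $\rho^{\tw}(\mu_2)$.
\item Your individual-degree step also fails. For $\OG10$ one has $\rH^4=\Sym^2\rH^2$, so $\rho^{\LLV}_4$ kills $-1\in\SO$. Lie algebras plus connectedness only give an isogeny.
\end{itemize}
No argument internal to $\GSpin$ can succeed. For a very general non-projective manifold of $\OG10$-type, Verbitsky's theorem gives $\overline{\MT}=\rho^{\LLV}(\Spin)$. Then $\omega(\zeta)\rho^{\LLV}(\tilde s)$ is a nontrivial element of $\ker(\MT_+\to\MT_2)$.

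\textbf{The missing input is projectivity.} An ample class $h\in\rH^2(X,\QQ)$ is a Hodge class, so $\overline{\MT}(X)$ fixes it; in particular $-1\notin\overline{\MT}_2(X)$.

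The paper uses this for the individual degrees. For $1\le k\le 2n-1$, hard Lefschetz makes $x\mapsto xh^{k-1}$ an $\overline{\MT}$-equivariant injection $\rH^2\hookrightarrow\rH^{2k}$. So an element of $\overline{\MT}_+$ that is trivial in degree $2k$ is trivial in degree $2$. It is then trivial, because $\overline{\MT}_+\subseteq\rho^{\LLV}_+(\SO)$ maps isomorphically to degree two.

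The same observation repairs (1) and (2):
\begin{enumerate}
\item Write $g\in\ker(\pi_2|_{\MT(X)})$ as $\omega(z)m$ with $m\in\overline{\MT}(X)\subseteq\rho^{\LLV}(\Spin)$.
\item Then $\pi_2(m)=z^{-2}$ is a scalar fixing $h$, so $z=\pm1$ and $\pi_2(m)=1$.
\item Hence $m\in\rho^{\LLV}(\mu_2)$ and $g\in\{1,\rho^{\tw}(\delta)\}$.
\end{enumerate}
This gives injectivity on $\MT_+(X)$. Combined with your observation $\rho^{\tw}(\delta)=\omega(-1)\in\MT(X)$, it gives kernel exactly $\rho^{\tw}(\mu_2)$ when $\rH^-\neq0$.

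The paper's own first paragraph asserts the same coincidence of kernels inside $\GSpin$. For (2) it likewise asserts a parity kernel for the faithful total representation. On this point your route and the paper's share the defect; what actually makes the lemma true is the ample class.
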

 \begin{proof}
	By \Cref{rmk:MT inside GSpin group}, the total Mumford--Tate group is contained in the twisted LLV image. In the presentation of $\GSpin$ as the quotient of $\GG_m\times\Spin$ identifying $-1\in\GG_m$ with the parity element $\delta\in\Spin$, the kernels of the twisted representations on even cohomology and on degree two coincide. Consequently, projection from the even twisted LLV image to its degree-two image is an isomorphism. Its restriction to $\MT_+(X)$ is also surjective by the definition of $\MT_2(X)$, proving the first displayed isomorphism. If odd cohomology is nonzero, the total twisted representation is faithful, and projection from its image to degree two has kernel generated by the parity action $\delta|_{\rH^i}=(-1)^i$. This element belongs to $\MT(X)$ as the value at $-1$ of the weight cocharacter. This proves the asserted parity kernel. This is also the purely Hodge-theoretic argument in the proof of \cite[Proposition~6.4 and Lemma~6.5]{FFZ21}; it does not use the motivic splitting or the restriction $b_2\ne3$.

	It remains to justify the individual-degree assertion. Fix an ample class $h\in\rH^2(X,\QQ)$. The special Mumford--Tate group fixes $h$, and for $1\leq k\leq2n-1$ the map
	\[
	 \rH^2(X,\QQ)\longrightarrow\rH^{2k}(X,\QQ),
	 \qquad x\longmapsto x\,h^{k-1},
	\]
	is injective by hard Lefschetz and is equivariant for the special Mumford--Tate group. Hence an element of $\overline\MT_+(X)$ acting trivially in degree $2k$ acts trivially in degree two. Since $\MT_+(X)\to\MT_2(X)$ is an isomorphism, the restriction to special groups is therefore both surjective and injective.
 \end{proof}
\subsubsection{}
From the definition of the LLV Lie algebra $\fg$, it follows immediately that $\fg$ is invariant under deformation, since it is completely determined by the algebra structure of $\rH(X)$. The twisted LLV representation is likewise preserved under deformation.
\begin{lemma}\label{lem:LLV representation is deformation invariant}
	Let $\fX \to S$ be a smooth family of hyper-Kähler varieties over a smooth connected variety $S$. Let $\eta \rightsquigarrow s$ be an étale path of points in $S$. We have the following commutative diagram
	\[
	\begin{tikzcd}[row sep = small]
		& \GL_{\QQ_{\ell}}(\rH_{\ell}^{\bullet}(\fX_{\overline{s}}))\ar[dd,"\specialize_{\eta,s}^*","\simeq"']\\
		\GSpin_{\ell} \ar[ru,"\rho^{\tw}_s"] \ar[rd,"\rho^{\tw}_{\eta}"'] & \\
		& \GL_{\QQ_{\ell}}(\rH_{\ell}^{\bullet}(\fX_{\overline{\eta}}))\mathrlap{\,.}
	\end{tikzcd}
	\]
\end{lemma}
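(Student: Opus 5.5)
The plan is to show that the twisted LLV representation is built entirely out of structure that smooth proper base change preserves, and then to check that the two twisted LLV representations agree on a set of generators of $\GSpin_\ell$.

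First, fix the étale path $\eta\rightsquigarrow s$ and note that the cospecialization isomorphism
\[
\specialize_{\eta,s}^*\colon \rH^\bullet_\ell(\fX_{\overline s})\isomto\rH^\bullet_\ell(\fX_{\overline\eta}),
\]
coming from smooth proper base change applied to $R f_*\QQ_\ell$, has the following properties:
\begin{itemize}
\item it is an isomorphism of graded $\QQ_\ell$-algebras, since it is compatible with cup product;
\item it is compatible with the trace map in top degree;
\item in particular, its degree-two part identifies $\rH^2_\ell(\fX_{\overline s})$ with $\rH^2_\ell(\fX_{\overline\eta})$.
\end{itemize}
The Beauville--Bogomolov--Fujiki form is determined by the graded algebra together with the trace, for instance through the Fujiki relation $\int x^{2n}=c\,\bar q(x)^n$ with a topological constant $c$. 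So this degree-two isomorphism is an isometry for $\bar q$. It therefore induces an isomorphism of the groups $\GSpin_\ell$, $\Spin_\ell$ and $\SO_\ell$ attached to the two fibers, and this is the identification implicit in the diagram. (Equivalently, one may define $\GSpin_\ell$ from the $\rH^2$ of the local system, which is canonically identified on both fibers.)

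Second, I would show equality on Lie algebras. By \Cref{def:LLV}, $\fg(\fX_{\overline s})_{\QQ_\ell}$ is generated by the triples $(L_x,h,\Lambda_x)$, and all of these are determined by the graded algebra:
\begin{itemize}
\item $L_x$ is cup product with $x$;
\item $h$ is the shifted degree operator;
\item $\Lambda_x$ is the unique operator completing $(L_x,h)$ to an $\fsl_2$-triple, when it exists.
\end{itemize}
Conjugation by $\specialize^*_{\eta,s}$ therefore carries $L_x$ to $L_{\specialize^*x}$ and $h$ to $h$. By uniqueness in Jacobson--Morozov it carries $\Lambda_x$ to $\Lambda_{\specialize^*x}$, and the Hard Lefschetz condition on $x$ is preserved. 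Hence $\specialize^*$ conjugates $\fg_s$ onto $\fg_\eta$, and likewise $\fg_0$, $\overline\fg$ and $h^{\deg}$. The identification $\overline\fg\cong\fso(\rH^2,\bar q)$ of \Cref{prop:ll_isom} is given by restriction to $\rH^2$, so it is compatible with the isometry from the first step. This gives $\specialize^*\circ d\rho_s^{\tw}=d\rho^{\tw}_\eta$ as Lie algebra maps on $\overline\fg\oplus\QQ_\ell h^{\deg}$.

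Third, I would integrate to the group level. The group $\GSpin_\ell$ is connected, so two algebraic representations with equal differentials coincide, and the diagram commutes.

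The main obstacle is not the Lie algebra comparison but making sure both sides are representations of the same group $\GSpin_\ell$ in a compatible way. The concern is that the integrated representation $\rho^{\tw}$ of Floccari is normalized by its values on the central $\GG_m$ and on the parity element $\delta$. I would handle this directly:
\begin{itemize}
\item $\GG_m$ acts by $z^i$ on $\rH^i$ on both sides, which $\specialize^*$ preserves because it respects the grading;
\item $\delta$ acts by $(-1)^i$ on both sides, for the same reason.
\end{itemize}
Connectedness then leaves no ambiguity. If one prefers to avoid choosing $\bar q$ on each fiber, one can instead work with the lisse sheaf $R^2f_*\QQ_\ell$, use the fact that the BBF form is a global section of $\Sym^2$ of its dual up to scalar, and transport along the path.
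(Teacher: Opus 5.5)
Your proposal is correct and follows the same approach as the paper. The paper's proof is just the observation that the smooth--proper base-change specialization map is an isomorphism of graded $\mathbb{Q}_\ell$-algebras, and hence is equivariant for $\mathfrak{g}_0$ and its twisted version. You supply the details the paper leaves implicit: the identification of the two $\mathrm{GSpin}_\ell$ groups via the degree-two isometry, the transport of the $\mathfrak{sl}_2$-triples by uniqueness, and the passage from Lie algebras to groups using connectedness.
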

\begin{proof}
	It is sufficient to assume that $\eta \rightsquigarrow s$ is a specialization of points in $S$. The smooth-proper base-change theorem gives a specialization isomorphism
	\[
	\specialize_{\eta,s}\colon
	\rH_{\ell}^{\bullet}(\fX_{\overline{s}})
	\xrightarrow{\sim}
	\rH_{\ell}^{\bullet}(\fX_{\overline{\eta}})
	\]
	of graded $\QQ_\ell$-algebras. Thus it is equivariant for the actions of $\fg_0$ and $\fg_0^{\tw}$.
\end{proof}

\subsection{Motivic lifting of the LLV representation}
In this subsection, we collect some facts on defect groups of hyper-Kähler varieties, which are ingredients in \cite{Flo22,Flo22b,FFZ21}.
\subsubsection{}
For any integer $0 < i < 2\dim X$, there is a natural surjective homomorphism
\[
\pi_i \colon \bG_{\mot}(X) \twoheadrightarrow \bG_{\mot,i}(X)\,.
\]
induced by the inclusion $\fh^i(X) \hookrightarrow \fh(X)$ in $\AM_K$. Under the degree-two projection, there are short exact sequences of motivic Galois groups:
\begin{align}
	\label{eq:exact sequence of motivic Galois groups}
	1 \longrightarrow \bP \longrightarrow &\bG_{\mot}(X) \longrightarrow \bG_{\mot,2}(X) \longrightarrow 1\,, \\
	\label{eq:exact sequence of even motivic Galois groups}
	1 \longrightarrow \bP_+ \longrightarrow &\bG_{\mot,+}(X) \longrightarrow \bG_{\mot,2}(X) \longrightarrow 1\,.
\end{align}
The kernel $\bP$ (resp.~$\bP_+$) is called the \emph{defect group} (resp.~\emph{even defect group}) of $X$.

\begin{lemma}\label{lem:defect group commutes with twisted LLV}
Let $V_E=\rH_B^\bullet(X)\otimes_\QQ E$, let $\Aut_{\alg}^{\gr}(V_E)$ be its algebraic group of $E$-linear graded algebra automorphisms, and put
\[
 \bC_E=\ker\bigl(\Aut_{\alg}^{\gr}(V_E)
 \longrightarrow\GL_E(\rH_B^2(X)\otimes_\QQ E)\bigr).
\]
Then $\bP_E\subseteq\bC_E$, and $\bC_E$ centralizes $\rho^{\tw}(\GSpin_E)$. The analogous statement holds on even cohomology for $\bP_{+,E}$ and
\[
 \bC_{+,E}=\ker\bigl(\Aut_{\alg}^{\gr}(\rH_B^+(X)\otimes_\QQ E)
 \longrightarrow\GL_E(\rH_B^2(X)\otimes_\QQ E)\bigr).
\]
After a Betti--étale comparison identification, the $\ell$-adic kernels $\bP_\ell$ and $\bP_{\ell,+}$ lie in $\bC_{\QQ_\ell}$ and $\bC_{+,\QQ_\ell}$, respectively, and centralize the corresponding twisted LLV images.
\end{lemma}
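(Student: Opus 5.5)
The argument has three parts: the motivic kernel sits inside $\bC_E$, $\bC_E$ centralizes the twisted LLV image, and the $\ell$-adic statement follows by comparison. The degree-zero grading is handled throughout by the weight cocharacter.

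\textbf{Step 1: $\bP_E\subseteq\bC_E$.} The motivic Galois group $\bG_{\mot}(X)$ is the stabilizer of all motivated tensors on $\rH_B^\bullet(X)$. The cup product $\fh(X)\otimes\fh(X)\to\fh(X)$ is given by the small diagonal, which is algebraic. The Künneth projectors $\Delta_i$ are motivated. Hence every element of $\bG_{\mot}(X)(E)$ acts on $V_E$ as a degree-preserving algebra automorphism, up to the twist of the Tate structure. To remove the twist, I would use the weight cocharacter: the cup product is equivariant for $\bG_{\mot}(X)$ once the target degree $i+j$ is read with the weight grading. Since the grading is degree-preserving, elements of $\bG_{\mot}(X)$ are honest graded algebra automorphisms. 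Thus $\bG_{\mot}(X)_E\subseteq\Aut^{\gr}_{\alg}(V_E)$. By definition $\bP$ is the kernel of the degree-two projection, so $\bP_E\subseteq\bC_E$.

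\textbf{Step 2: $\bC_E$ centralizes $\rho^{\tw}(\GSpin_E)$.} This is the main step. Let $g\in\bC_E(R)$ for an $E$-algebra $R$. Because $g$ is a graded algebra automorphism, it commutes with $h^{\deg}$. Because $g$ fixes $\rH^2$ pointwise, for every $x\in\rH^2$ we get
\[
gL_xg^{-1}=L_{gx}=L_x.
\]
Conjugation by $g$ preserves $h$. By the uniqueness in Jacobson--Morozov for the pair $(L_x,h)$, it also sends $\Lambda_x$ to $\Lambda_x$. These triples generate $\fg$ by \Cref{def:LLV}, so $g$ centralizes $\fg$, and in particular $\fg_0^{\tw}=\overline{\fg}\oplus Eh^{\deg}$. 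The group $\rho^{\tw}(\GSpin_E)$ is connected with Lie algebra $\rho^{\tw}(\fg_0^{\tw})$. The centralizer of a connected group equals the centralizer of its Lie algebra, so $\bC_E$ centralizes $\rho^{\tw}(\GSpin_E)$.

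The obstacle I expect is doing this scheme-theoretically for $R$-points. One has to check that uniqueness of $\Lambda_x$ holds over $R$ for $x$ in the Hard Lefschetz locus. I would argue on $E$-points of the generic fibre: $\Lambda_x$ is a rational function of $x$ on the Zariski-dense Hard Lefschetz locus. The identity $g\Lambda_xg^{-1}=\Lambda_x$ is then an identity of regular functions, which extends to all $R$. The even case is identical with $\rH^+$, since $\fg$ preserves parity and the triples act on $\rH^+$.

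\textbf{Step 3: the $\ell$-adic statement.} By \eqref{eq:algebraic monodromy group in MT}, comparison embeds $\bG_\ell(X)^\circ$ into $\bG_{\mot}(X)_{\QQ_\ell}$ compatibly with the degree projections. Reading $\bP_\ell$ as the kernel of the degree-two projection on the $\ell$-adic side, I would argue directly rather than through the motivic group. Galois acts on $\rH^\bullet_{\et}$ by graded algebra automorphisms, after untwisting by the cyclotomic character via the central weight cocharacter of \Cref{lem:homotheties in algebraic monodromy group}. Hence $\bG_\ell(X)\subseteq\Aut^{\gr}_{\alg}$, and its degree-two kernel lies in $\bC_{\QQ_\ell}$. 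The comparison isomorphism is an isomorphism of graded algebras, so $\bC_{\QQ_\ell}$ is identified on the Betti and étale sides. Step 2, applied with $E=\QQ_\ell$, then gives the centralizing statement, and the even case $\bP_{\ell,+}$ follows the same way.
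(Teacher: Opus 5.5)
Your proposal is correct and follows the paper's proof. Graded cup-algebra automorphisms that fix $\rH^2$ fix each $L_x$ and the grading operator, hence each $\Lambda_x$ by uniqueness of the $\fsl_2$-triple, hence the LLV algebra and $h^{\deg}$; the $\ell$-adic kernels are then handled through the Galois action on the cup-product algebra and comparison.

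The ``untwisting'' you introduce is superfluous. The small diagonal is a degree-zero correspondence $\fh(X)\otimes\fh(X)\to\fh(X)$, and cup product $\rH^i_{\et}\otimes\rH^j_{\et}\to\rH^{i+j}_{\et}$ is Galois-equivariant with no Tate twist. Your scheme-theoretic worry in Step 2 is settled, as in the paper, by checking $\overline{E}$-points, since these groups are reduced in characteristic zero.
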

\begin{proof}
It is enough to work over $\QQ$. The motivic defect group acts by graded algebra automorphisms and trivially in degree two, so it lies in $\bC$. Let $c\in\bC(\overline\QQ)$. For every Lefschetz class $x\in\rH_B^2(X,\overline\QQ)$, one has $cL_xc^{-1}=L_x$. Since $c$ preserves the grading, it commutes with the grading operator; uniqueness of the $\mathfrak{sl}_2$-triple containing $L_x$ then gives $c\Lambda_xc^{-1}=\Lambda_x$. The operators $L_x$ and $\Lambda_x$ generate the LLV algebra, so $c$ centralizes the LLV action. It also commutes with degreewise homotheties and hence with the twisted LLV image. This is the direct argument behind \cite[Lemma~6.8]{FFZ21}. The same proof applies to even cohomology. Finally, Galois acts by graded cup-algebra automorphisms, so its Zariski closure does as well; an element in either $\ell$-adic kernel acts trivially in degree two. The comparison identification therefore gives the final assertions.
\end{proof}
\subsubsection{}
When $b_2(X)\ne3$, \cite[Proposition 4.1]{Flo22} gives a decomposition of motivic Galois groups
\begin{equation}\label{eq:motivic decomposition}
	\bG_{\mot,+}(X) = \bG_{\mot,2}(X) \times \bP_{+}
\end{equation}
given by a splitting $\sigma \colon \bG_{\mot,2}(X) \to \bG_{\mot,+}(X)$ such that $\sigma(\bG_{\mot,2}(X)) = \MT_+(X)$, and $\bG_{\mot,2}(X)$ commutes with $\bP_+$ in $\bG_{\mot,+}(X)$. This implies that the connected component $\bP_+^{\circ}$ is reductive.

The decomposition \eqref{eq:motivic decomposition} implies that, for any
\[
\cM\in\langle\fh^+(X)\rangle^{\otimes},
\]
the invariant part $\cM^{\bP_+}$ belongs to $\langle\fh^2(X)\rangle^{\otimes}$; the invariant object exists because $\bP_+$ is a direct factor of the reductive motivic Galois group. In other words, there is an injective homomorphism in $\AM_K$:
\begin{equation}\label{eq:embedd motive into tensors of H2}
\cM^{\bP_+} \hookrightarrow \bigoplus_{m,n} \fh^2(X)^{\otimes m} \otimes \fh^{2}(X)^{\vee,\otimes n}\,.
\end{equation}
If $\bP_+$ is trivial, then we can take $\cM = \fh^{2k}(X)$, and in particular Conjecture~(Ab) holds for $\fh^{2k}(X)$ by André's theorem.

Moreover, the embedding \eqref{eq:embedd motive into tensors of H2} can be chosen to be compatible with the LLV representation. By Tannakian duality for $\langle \fh^2(X) \rangle^{\otimes}$, \eqref{eq:embedd motive into tensors of H2} gives an injective homomorphism of finite-dimensional $\bG_{\mot,2}(X)$-representations on Betti realizations. By definition of the splitting $\sigma$, the $\bG_{\mot,2}(X)$-action on $\rH_B(\cM^{\bP_{+}})$ is that of $\MT_+(X_{\CC})$, which factors through the twisted LLV representation. Hence we can choose \eqref{eq:embedd motive into tensors of H2} as an injective homomorphism of motives corresponding to an injective homomorphism of $\GSpin$-representations
\[
\rH_B(\cM^{\bP_+}) \hookrightarrow \bigoplus_{m,n} \rH^2_B(X)^{\otimes m} \otimes \rH^2_B(X)^{\vee, \otimes n}\,.
\]

\subsubsection{}\label{sec:almost direct product motivic Galois group}
When $b_2(X)\ne3$, \cite[Theorem~6.9]{FFZ21}\footnote{The published version of \cite{FFZ21} incorrectly claims that $\MT(X_{\CC})$ has trivial intersection with $\bP$ inside $\bG_{\mot}(X)$.} identifies the motivic Galois group of $\fh(X)$ with an almost-direct product of the Mumford--Tate group $\MT(X)$ and the defect group $\bP$:
\[
\bG_{\mot}(X) = \MT(X_{\CC}) \cdot \bP\,,
\]
where $\bP \cap \MT(X_{\CC}) = \{1,\delta\} \subset Z(\bG_{\mot}(X))$ if $X$ has non-vanishing odd-degree cohomology, and $\delta|_{\rH^{i}} = (-1)^i$; otherwise it is truly a direct product.

\begin{lemma}\label{lem:non-trivial center in odd cohomology}
	The element $\delta$ belongs to $Z(\bG_{\ell}(X))\cap \bP_{\QQ_\ell}$ inside $\bG_{\mot}(X)_{\QQ_\ell}$. If $X$ has non-vanishing odd-degree cohomology, then $\delta\neq\id$.
\end{lemma}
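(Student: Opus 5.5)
The plan is to identify $\delta$ with an explicit element of the weight cocharacter's image, namely $\delta=\omega(-1)$, where $\omega\colon\GG_m\to\bG_{\mot}(X)$ is the motivic weight cocharacter of \Cref{sssec:Grading of motive}. By definition $\omega(z)$ acts on $\fh^i(X)$ by $z^i$, so $\omega(-1)$ acts on $\rH^i$ by $(-1)^i$. This is exactly the action of $\delta$ recorded in \Cref{sec:almost direct product motivic Galois group} and in the description of $\rho^{\LLV}_i(\delta)$. Both elements lie in $\bG_{\mot}(X)$, which acts faithfully on $\rH_B^\bullet(X)$, so they coincide as elements of $\bG_{\mot}(X)(\QQ)$.

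Next I apply \Cref{lem:homotheties in algebraic monodromy group}. It gives $\omega(\GG_m)_{\QQ_\ell}\subseteq Z(\bG_\ell(X)^\circ)$, so in particular $\delta=\omega(-1)\in\bG_\ell(X)^\circ\subseteq\bG_\ell(X)$. That lemma only gives centrality in the identity component, but centrality in all of $\bG_\ell(X)$ is direct. Every element of $\bG_\ell(X)$ preserves the cohomological grading, since Galois acts degreewise and so does its Zariski closure. The element $\delta$ acts by a scalar on each graded piece, so it commutes with every graded linear automorphism. Hence $\delta\in Z(\bG_\ell(X))$.

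For membership in $\bP_{\QQ_\ell}$, observe that $\delta$ acts on $\rH^2$ by $(-1)^2=1$. Its image under $\pi_2\colon\bG_{\mot}(X)\to\bG_{\mot,2}(X)$ is therefore trivial, because $\bG_{\mot,2}(X)$ acts faithfully on $\rH_B^2$. By the exact sequence \eqref{eq:exact sequence of motivic Galois groups}, $\delta\in\bP(\QQ)$, and hence $\delta\in\bP_{\QQ_\ell}$. This argument does not use the splitting of \cite{FFZ21}, so it also covers $b_2=3$. Finally, if some odd-degree group $\rH^{2k+1}\neq0$, then $\delta$ acts there by $-1\neq1$, so $\delta\neq\id$.

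There is no real obstacle. The only point needing care is the identification $\delta=\omega(-1)$ inside $\bG_{\mot}(X)_{\QQ_\ell}$ under the comparison embedding \eqref{eq:algebraic monodromy group in MT}. This follows because both elements are determined by their action on the realization, and \Cref{lem:homotheties in algebraic monodromy group} already asserts that the $\ell$-adic weight cocharacter is the base change of the motivic one.
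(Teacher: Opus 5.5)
Your proof is correct and follows the paper's route: you identify $\delta$ with $\omega(-1)$, invoke \Cref{lem:homotheties in algebraic monodromy group}, and read off nontriviality from the $(-1)$-action in odd degrees. You also make explicit two points that the paper's short proof leaves implicit, namely centrality in all of $\bG_\ell(X)$ (because the action preserves degree) and membership in $\bP$ (because the action on $\rH^2$ is trivial), and both arguments are sound.
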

\begin{proof}
		By \Cref{lem:homotheties in algebraic monodromy group}, the image of the weight torus lies in $Z(\bG_\ell(X)^\circ)$. Its value at $-1$ acts on $\rH^{i}_{\et}(X_{\overline K},\QQ_\ell)$ as $(-1)^i$, and is therefore $\delta$. If $b_{2k+1}(X)\neq0$, this action is nontrivial on $\rH^{2k+1}_{\et}(X_{\overline K},\QQ_\ell)$.
\end{proof}
\begin{remark}
	If $b_2(X)\ne3$ and the defect group $\bP$ is finite, the Mumford--Tate conjecture holds for $\fh(X)$ (see \cite[Proposition~7.6]{FFZ21}).
\end{remark}
\subsection{The LLV--centralizer trace form}
Let \[ V \coloneqq \rH^{\bullet}_B(X), \quad \bR \coloneqq \rho^{\tw}(\GSpin) \subset \GL_{\QQ}(V). \]
Let $\mathfrak{r} \coloneqq \Lie(\bR)$ and $\mathfrak{c} \coloneqq \Lie(\bC)$. Recall that under the twisted LLV representation, \[ \mathfrak{r} = \overline{\fg} \oplus \QQ h^{\deg},\]
where $\overline{\fg}$ is the reduced LLV Lie algebra and $h^{\deg}$ acts by multiplication by $d$ on $\rH^d_B(X)$.
\begin{lemma}\label{lem:trace orthogonality}
For every field extension $E/\QQ$, $A \in \mathfrak{r}_E$, and $B \in \mathfrak{c}_E$, one has $\Tr_{V_E}(AB)=0$.
\end{lemma}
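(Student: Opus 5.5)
The idea is to exploit two facts: every $B\in\mathfrak c_E$ commutes with the whole LLV action, and it is a derivation of the cup product. With these, $\Tr_{V_E}(AB)$ can be computed isotypic component by isotypic component, and the only nonsemisimple piece, $h^{\deg}$, can be handled by Poincaré duality. Since both $\mathfrak r$ and $\mathfrak c$ are defined over $\QQ$ and the trace is bilinear, I would extend scalars to an algebraically closed $E$ and work there.

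\textbf{Step 1: properties of $B$.} An element $B\in\mathfrak c_E$ is a degree-preserving derivation of the graded algebra $V_E$ that vanishes on $\rH^2$. By \Cref{lem:defect group commutes with twisted LLV}, $\bC_E$ centralizes the LLV action. I would differentiate this statement, or rerun its proof at the Lie-algebra level, to get three facts:
\begin{itemize}
\item $[B,L_x]=L_{Bx}=0$ for every $x\in\rH^2$;
\item $[B,h]=0$, since $B$ preserves degree;
\item $[B,\Lambda_x]=0$, by uniqueness of the $\mathfrak{sl}_2$-triple completing $(L_x,h)$.
\end{itemize}
Hence $B$ commutes with the whole LLV algebra $\fg_E$, which is semisimple by \Cref{prop:ll_isom}.

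\textbf{Step 2: the semisimple part.} Write $h^{\deg}=h+2n\,\id$. Then every $A\in\mathfrak r_E$ has the form $A=A_0+c\,\id$ with $A_0\in\fg_E$, since $\overline{\fg}\subset\fg$ and $h\in\fg$. Decompose $V_E=\bigoplus_\lambda W_\lambda\otimes M_\lambda$ into $\fg_E$-isotypic components, with $W_\lambda$ irreducible. Because $B$ commutes with $\fg_E$, Schur's lemma shows that $B$ acts as $\id\otimes B_\lambda$. Therefore
\[
\Tr(A_0B)=\sum_\lambda \Tr(A_0|_{W_\lambda})\,\Tr(B_\lambda)=0,
\]
because elements of a semisimple Lie algebra act with trace zero in every finite-dimensional representation.

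\textbf{Step 3: the scalar part.} It remains to show $\Tr_{V_E}(B)=0$. First, $B$ kills $\rH^0$, since a derivation kills $1$. It also kills $\rH^{4n}$: this space is spanned by $x^{2n}$ for any $x\in\rH^2$ with $x^{2n}\neq0$, for instance a Lefschetz class, and $B(x^{2n})=2n\,x^{2n-1}Bx=0$. Applying $B$ to $a\cup b\in\rH^{4n}$ then gives
\[
(Ba)\cup b+a\cup(Bb)=0 .
\]
So under Poincaré duality $B|_{\rH^{4n-d}}=-(B|_{\rH^d})^{t}$, and $\Tr(B|_{\rH^{4n-d}})=-\Tr(B|_{\rH^d})$. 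Summing over all $d$ gives $\Tr(B)=-\Tr(B)$, hence $\Tr(B)=0$. Alternatively, $[B,L_x]=0$ and hard Lefschetz give $\Tr(B|_{\rH^d})=\Tr(B|_{\rH^{4n-d}})$, so each degreewise trace vanishes.

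\textbf{Main obstacle.} The delicate point is Step 1: passing from the group-level centralization in \Cref{lem:defect group commutes with twisted LLV} to the statement that $\mathfrak c_E$ commutes with all of $\fg_E$, including the operators $\Lambda_x$, which are not degree-preserving. I would do this directly at the Lie-algebra level. The operator $e^{tB}$ is not needed; it suffices to use that $[B,\Lambda_x]$ is an operator of degree $-2$ satisfying the identities forced by the $\mathfrak{sl}_2$-relations with $L_x$ and $h$. The Jacobson–Morozov uniqueness argument then forces $[B,\Lambda_x]=0$.
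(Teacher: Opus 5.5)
Your proof is correct, but it handles the degree operator by a different route than the paper. The paper splits $\mathfrak r=\overline{\fg}\oplus\QQ h^{\deg}$.
For $A\in\overline{\fg}$ it uses only that $B$ commutes with the \emph{degree-preserving} operators, which is exactly what \Cref{lem:defect group commutes with twisted LLV} states, together with perfectness of $\overline{\fg}$ via $\Tr([A_1,A_2]B)=0$.
For $A=h^{\deg}$ it proves the stronger degreewise vanishing $\Tr(B|_{\rH^d})=0$. For $d\le 2n$, $B$ preserves the Lefschetz form $Q_d$, which is symmetric or alternating, so $B|_{\rH^d}$ lies in an orthogonal or symplectic algebra; hard Lefschetz then transports this to $d>2n$.

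You instead write $h^{\deg}=h+2n\,\id$ and absorb $h$ into the full LLV algebra $\fg$. This costs you one extra input: $B$ must also centralize the non-degree-preserving operators $\Lambda_x$. Your $\mathfrak{sl}_2$ argument supplies this correctly: $D=[B,\Lambda_x]$ is killed by $\mathrm{ad}\,L_x$ and has $\mathrm{ad}\,h$-weight $-2$, so $D=0$.
In return you only need the total trace $\Tr(B)=0$. Your Poincar\'e-duality argument for it is shorter than the paper's Lefschetz-form argument and involves no parity considerations: a derivation vanishing on $\rH^2$ kills $x^{2n}$, hence is skew for $\rH^d\times\rH^{4n-d}\to\rH^{4n}$.

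Two small remarks. First, the Schur-lemma isotypic decomposition in Step 2 is more than you need. Once $B$ centralizes $\fg$, the commutator identity and perfectness of $\fg$ give $\Tr(A_0B)=0$ over any field; indeed $h=[L_x,\Lambda_x]$ is itself a commutator.
Second, the hard Lefschetz remark at the end of Step 3 is not an alternative proof on its own. It only gives $\Tr(B|_{\rH^d})=\Tr(B|_{\rH^{4n-d}})$, and yields degreewise vanishing only when combined with the Poincar\'e-duality sign.
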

\begin{proof}
It is enough to prove the assertion over $\QQ$, since it is preserved by scalar extension. We first take $A \in \overline{\fg}$. By \Cref{lem:defect group commutes with twisted LLV}, one has $[A,B] = 0$. For fixed $B$, the functional $A \mapsto \Tr_V(AB)$ vanishes on commutators: cyclicity of the trace together with the Jacobi identity gives $\Tr_V([A_1,A_2]B)=0$. The Lie algebra $\overline{\fg}$ is semisimple and hence perfect (this includes $\mathfrak{so}_3$ when $b_2(X)=3$ and the semisimple, nonsimple algebra $\mathfrak{so}_4$ when $b_2(X)=4$). Therefore, $\Tr_V(AB)=0$ for all $A \in \overline{\fg}$.

It remains to treat $A=h^{\deg}$. Write $\dim X=2n$ and fix an ample class $\eta\in\rH^2_B(X)$. Since $\bC$ acts trivially on $\rH^2_B(X)$, it fixes $\eta$. Since it acts by graded algebra automorphisms, it fixes $\eta^{2n}$ and hence acts trivially on the one-dimensional top cohomology. For $d\leq2n$, consider the nondegenerate Lefschetz pairing
\[
Q_d(x,y)\coloneqq\int_Xx\cup y\cup\eta^{2n-d},
\qquad x,y\in\rH^d_B(X).
\]
The group $\bC$ preserves $Q_d$. The pairing is symmetric for even $d$ and alternating for odd $d$, so the infinitesimal $\bC$-action on $\rH^d_B(X)$ lies respectively in an orthogonal or symplectic Lie algebra. In particular,
\[ \Tr\left(B|_{\rH^d_B(X)}\right) =0 \quad \forall 0 \leq d \leq 2n. \]
For $d >2n$, Hard Lefschetz gives the $\bC$-equivariant isomorphism $L_\eta^{d-2n} \colon \rH^{4n-d}_B(X) \xrightarrow{\sim} \rH^d_B(X)$, so the same trace vanishing holds in every degree. Consequently,
\[ \Tr_V(h^{\deg}B) = \sum_d d \Tr\left(B|_{\rH^d_B(X)}\right) =0.\]
Together with the first part, this proves the assertion.
\end{proof}
\section{Rank estimation of algebraic monodromy groups}
\label{sec:Rank estimation}

Throughout this section, let $X$ be a hyper-Kähler variety over a finitely generated field $K/\QQ$, and fix an embedding $K\hookrightarrow\CC$. We prove the rank comparison using Pink's generation theorem by weak Hodge cocharacters and the graded-algebra centralizer of the twisted LLV representation.
\subsection{Semisimplified monodromy and weak Hodge cocharacters}
\label{subsec:weak Hodge cocharacters}

For each $0\leq i\leq2\dim X$, choose a semisimplification $V_{\ell,i}^{\semisimple}$ of the $G_K$-representation $\rH^i_{\et}(X_{\overline K},\QQ_\ell)$, and put
\[
V_\ell^{\semisimple}\coloneqq
\bigoplus_{i=0}^{2\dim X}V_{\ell,i}^{\semisimple}.
\]
Let $\bG_\ell^{\mathrm{ssm}}(X)$ be the identity component of the Zariski closure of the image of $G_K$ in $\GL_{\QQ_\ell}(V_\ell^{\semisimple})$. This is a connected reductive $\QQ_\ell$-group. The semisimplification is unique up to $G_K$-equivariant isomorphism, so any two choices give linearly conjugate groups. After choosing identifications $V_{\ell,i}^{\semisimple}\simeq\rH^i_{\et}(X_{\overline K},\QQ_\ell)$ degree by degree, we may regard the semisimplified representation as acting on the original graded vector space. All properties used below are invariant under changing these identifications.

\begin{notation}
For a field extension $E/\QQ$ and an algebraic group $\bG/E$, write
\[
X_*(\bG)\coloneqq
\Hom\bigl(\GG_{m,\overline E},\bG_{\overline E}\bigr)
\]
for its set of geometric cocharacters.
\end{notation}

\begin{definition}
\label{def:weak Hodge cocharacter}
Let $E/\QQ$ be a field extension, let $\overline E$ be an algebraic closure of $E$, and let $\rho\colon\bG\to\GL_{\overline E}(V)$ be a finite-dimensional representation of an algebraic group $\bG/\overline E$. Suppose that nonnegative integers $(c_r)_{r\in\ZZ}$ of finite support are prescribed by a Hodge realization, with $\sum_r c_r=\dim_{\overline E}V$. A cocharacter $\lambda\colon\GG_{m,\overline E}\to\bG$ is \emph{weak Hodge} for $(V,(c_r))$ if its weight-$r$ subspace on $V$ has dimension $c_r$ for every $r$. Equivalently, $\rho\circ\lambda$ lies in the $\GL_{\overline E}(V)$-conjugacy class determined by these multiplicities.
\end{definition}

For $V_\ell^{\semisimple}$, the prescribed multiplicity of weight $r$ is
\[
c_r(X)\coloneqq\sum_i h^{r,i-r}(X),
\]
where $h^{p,q}(X)=0$ unless $p,q\geq0$. Thus, following \cite[Definition 3.17(b)]{Pink98}, a cocharacter of $\bG_\ell^{\mathrm{ssm}}(X)_{\overline{\QQ}_\ell}$ is a weak Hodge cocharacter precisely when its weight-$r$ multiplicity on $V_\ell^{\semisimple}\otimes_{\QQ_\ell}\overline{\QQ}_\ell$ is $c_r(X)$ for every $r$. We use Pink's convention: on $\rH_B^i(X)$, weight $p$ occurs with multiplicity $h^{p,i-p}(X)$.

\begin{definition}
\label{def:degreewise weak Hodge cocharacter}
Let $E/\QQ$ be a field extension, let $\overline E$ be an algebraic closure of $E$, and let an algebraic group $\bG/\overline E$ act degree-preservingly on a graded vector space $V=\bigoplus_{i=0}^{2\dim X}V_i$ with $\dim_{\overline E}V_i=b_i(X)$. A cocharacter $\lambda\colon\GG_{m,\overline E}\to\bG$ is \emph{of degreewise Hodge type} with respect to $X$ if, for every $i$ and $r$, the weight-$r$ subspace of $V_i$ has dimension $h^{r,i-r}(X)$.
\end{definition}

The degreewise condition is stronger than Pink's weak Hodge condition on total cohomology, which remembers only the sum of the weight multiplicities over all cohomological degrees. We separate the degrees by a multiplicity-weighted direct sum.

\begin{theorem}[Pink]
\label{thm:Pink weak Hodge generation}
Suppose that $K$ is a number field. Then $\bG_\ell^{\mathrm{ssm}}(X)_{\overline{\QQ}_\ell}$ is generated by the images of its weak Hodge cocharacters, in the sense that it is the smallest closed algebraic subgroup containing those images.
\end{theorem}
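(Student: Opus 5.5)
This statement is Pink's theorem, and the plan is to deduce it from Pink's results in \cite{Pink98} by checking that $V_\ell^{\semisimple}$ satisfies his hypotheses, rather than reproving anything from scratch.

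First, since $K$ is a number field and $X$ has good reduction at almost all places, the total representation $V_\ell=\rH^\bullet_{\et}(X_{\overline K},\QQ_\ell)$ is a direct sum of pure representations, with $\rH^i$ pure of weight $i$ by Deligne. Its semisimplification $V_\ell^{\semisimple}$ is also unramified almost everywhere, and it has the same Frobenius characteristic polynomials. It therefore still satisfies the axioms Pink imposes on a semisimple $\ell$-adic representation of $G_K$ coming from geometry:
\begin{enumerate}
\item rationality of Frobenius characteristic polynomials (\Cref{compatibiliy over number fields});
\item purity of Frobenius eigenvalues;
\item local Hodge--Tate data at places above $\ell$.
\end{enumerate}
After a finite extension of $K$ we may assume $\bG_\ell^{\mathrm{ssm}}(X)$ is connected, using \Cref{thm:independence of connected components}; this does not change the identity component.

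Second, we produce weak Hodge cocharacters. The Hodge--Tate decomposition of $V_\ell\otimes\CC_p$ at a place above $\ell$ (Faltings) gives a Hodge--Tate cocharacter. Its weight-$r$ multiplicity equals $\sum_i h^{r,i-r}(X)$ by the Hodge--Tate comparison with Hodge numbers. Semisimplification preserves Hodge--Tate weights with multiplicity, since these are additive in short exact sequences. By Sen's theorem this cocharacter lies in $\bG_\ell^{\mathrm{ssm}}(X)_{\CC_p}$. Transporting it to $\overline{\QQ}_\ell$ via an abstract field isomorphism, we obtain a weak Hodge cocharacter in the sense of \Cref{def:weak Hodge cocharacter}; this is \cite[Definition~3.17 and Proposition~3.18]{Pink98}.

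Third, we prove generation. Let $H\subseteq\bG_\ell^{\mathrm{ssm}}(X)_{\overline{\QQ}_\ell}$ be the subgroup generated by all weak Hodge cocharacters. It is normal, because the set of weak Hodge cocharacters is stable under conjugation. The main obstacle is showing $H$ is the whole group, and here I would invoke Pink's argument \cite[Theorem~3.18--Proposition~3.19]{Pink98}. That argument combines Frobenius tori, via Serre's construction of tori containing $t_v$ for a density-one set of $v$, with Pink's classification of irreducible representations satisfying the weak Hodge minuscule condition. The point is that Frobenius tori are maximal tori, and their characters are controlled by both purity and the Hodge--Tate cocharacter. This forces the quotient $\bG_\ell^{\mathrm{ssm}}/H$ to be a torus on which all Frobenius tori have trivial image. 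Since Frobenius tori together generate a Zariski-dense subgroup, the quotient must be trivial.

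The only point needing care beyond citing Pink is that he works with a single pure weight, whereas $V_\ell^{\semisimple}$ mixes weights. I would handle this with the central weight cocharacter of \Cref{lem:homotheties in algebraic monodromy group}, which lies in the group and splits the weights. Either Pink's argument then goes through verbatim with weight grading, or one applies it to $V_\ell^{\semisimple}$ with each degree twisted to weight zero and adjoins the central weight torus.
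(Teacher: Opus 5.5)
Your proposal follows essentially the paper's route: both reduce the statement to \cite[Theorem~3.18]{Pink98} applied to $V_\ell^{\semisimple}$, after noting that semisimplification preserves the Frobenius characteristic polynomials and the local $p$-adic (crystalline, Hodge--Tate) data. The mixed-weight issue you flag is settled by Pink's own remark at the start of \cite[\S3]{Pink98} that his results hold for the semisimplification of a direct sum of cohomology groups, so no twisting is needed (Tate twists cannot bring odd degrees to weight zero anyway). Two side remarks also need correcting: no field isomorphism $\CC_\ell\cong\overline{\QQ}_\ell$ exists, so transport the Hodge--Tate cocharacter through the conjugacy classes $X_*(T)/W$ instead, and the minuscule classification plays no role for general Hodge weights.
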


\begin{proof}
This is \cite[Theorem 3.18]{Pink98}, together with Pink's remark at the beginning of \cite[\S3]{Pink98} that the results of that section remain valid after replacing one cohomological degree by the semisimplification of a direct sum of cohomology groups. Semisimplification preserves the Frobenius characteristic polynomials of the compatible system, while crystallinity is preserved under subquotients and finite direct sums. Hence Pink's theorem applies to $V_\ell^{\semisimple}$.
\end{proof}

\subsection{A multiplicity-weighted direct sum}
\label{subsec:weighted direct sum}

Choose an integer $M>\max_i b_i(X)$ and consider the semisimple compatible system
\begin{equation}\label{eq:weighted direct sum}
W_{\ell,M}\coloneqq
\bigoplus_{i=0}^{2\dim X}
\left(V_{\ell,i}^{\semisimple}\right)^{\oplus M^i}.
\end{equation}

\begin{lemma}
\label{lem:weighted-direct-sum-monodromy}
Under the diagonal repetition representation
\[
\Delta_M\colon
\bG_\ell^{\mathrm{ssm}}(X)\longrightarrow
\GL_{\QQ_\ell}(W_{\ell,M}),
\]
the connected algebraic monodromy group of $W_{\ell,M}$ is $\Delta_M\bigl(\bG_\ell^{\mathrm{ssm}}(X)\bigr)$. In particular, it is naturally isomorphic to $\bG_\ell^{\mathrm{ssm}}(X)$.
\end{lemma}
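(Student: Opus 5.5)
The plan is to compare the two Galois representations directly through the diagonal map. Let $\rho^{\semisimple}\colon G_K\to\GL_{\QQ_\ell}(V_\ell^{\semisimple})$ be the semisimplified total representation, and let $\widetilde\Delta_M\colon\GL_{\QQ_\ell}(V_\ell^{\semisimple})\to\GL_{\QQ_\ell}(W_{\ell,M})$ be the degree-preserving diagonal repetition homomorphism. This homomorphism sends $g=(g_i)_i$, with $g_i$ the degree-$i$ block, to the block-diagonal element having $M^i$ copies of $g_i$. Here we restrict to elements preserving the grading, which contain the whole Galois image. By construction the representation of $G_K$ on $W_{\ell,M}$ is exactly $\widetilde\Delta_M\circ\rho^{\semisimple}$. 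So the task reduces to a statement about how Zariski closures behave under a homomorphism of algebraic groups.

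First, I will show that $\widetilde\Delta_M$, restricted to the graded subgroup $\prod_i\GL(V_{\ell,i}^{\semisimple})$, is a closed immersion. It is injective, because $M^i\geq1$ means every block appears at least once. It is also a homomorphism of algebraic groups, and an injective homomorphism of affine algebraic groups in characteristic zero is a closed immersion. Alternatively, one can see this directly by projecting onto a single copy of each block.

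Second, I use that the image of a closed subgroup under a closed immersion is closed. Write $\Gamma=\rho^{\semisimple}(G_K)$ and let $\overline\Gamma$ be its Zariski closure. Then $\widetilde\Delta_M(\overline\Gamma)$ is closed and contains $\widetilde\Delta_M(\Gamma)$. It is also contained in the closure of $\widetilde\Delta_M(\Gamma)$, by continuity of $\widetilde\Delta_M$ in the Zariski topology. Hence the Zariski closure of the image of $G_K$ on $W_{\ell,M}$ equals $\widetilde\Delta_M(\overline\Gamma)$.

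Third, the isomorphism $\overline\Gamma\cong\widetilde\Delta_M(\overline\Gamma)$ carries identity components to identity components. The connected monodromy group of $W_{\ell,M}$ is therefore $\Delta_M(\bG_\ell^{\mathrm{ssm}}(X))$, and $\Delta_M$ is an isomorphism onto it. Naturality means that changing the chosen semisimplification conjugates both sides compatibly.

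No step is a real obstacle. The only point needing care is that $\widetilde\Delta_M$ is injective on the graded group, so that closedness of the image and the isomorphism hold. This is immediate from the multiplicities $M^i\ge1$.
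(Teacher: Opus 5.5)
Your proposal is correct and takes the same route as the paper. The diagonal repetition map on $\prod_i\GL_{\QQ_\ell}(V_{\ell,i}^{\semisimple})$ is injective because every $M^i\geq1$, hence a closed immersion; it therefore carries the Zariski closure of $\rho_\ell^{\semisimple}(G_K)$ onto the Zariski closure of the image on $W_{\ell,M}$, and identity components correspond.
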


\begin{proof}
For every $i$, let
\[
\Delta_{M,i}\colon
\GL_{\QQ_\ell}(V_{\ell,i}^{\semisimple})
\longrightarrow
\GL_{\QQ_\ell}\left(
(V_{\ell,i}^{\semisimple})^{\oplus M^i}
\right)
\]
be the diagonal repetition homomorphism. Taking the product over $i$ gives a homomorphism
\[
\Delta_M\colon
\prod_i\GL_{\QQ_\ell}(V_{\ell,i}^{\semisimple})
\longrightarrow
\GL_{\QQ_\ell}(W_{\ell,M}).
\]
Since every $M^i$ is positive, its restriction to $\bG_\ell^{\mathrm{ssm}}(X)$ is faithful: an element acting trivially on $W_{\ell,M}$ acts trivially on every $V_{\ell,i}^{\semisimple}$, hence on $V_\ell^{\semisimple}$. The restriction is therefore a closed immersion.

The representation on $W_{\ell,M}$ is
\[
G_K\xrightarrow{\rho_\ell^{\semisimple}}
\GL_{\QQ_\ell}(V_\ell^{\semisimple})
\xrightarrow{\Delta_M}
\GL_{\QQ_\ell}(W_{\ell,M}).
\]
Because $\Delta_M$ is a closed immersion,
\[
\overline{\Delta_M\bigl(\rho_\ell^{\semisimple}(G_K)\bigr)}^{\,\mathrm{Zar}}
=
\Delta_M\left(
\overline{\rho_\ell^{\semisimple}(G_K)}^{\,\mathrm{Zar}}
\right).
\]
Passing to identity components proves the assertion.
\end{proof}

\begin{prop}
\label{prop:degreewise-Hodge-type-generation}
Suppose that $K$ is a number field. Then $\bG_\ell^{\mathrm{ssm}}(X)_{\overline{\QQ}_\ell}$ is generated by cocharacters of degreewise Hodge type.
\end{prop}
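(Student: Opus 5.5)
We apply Pink's theorem (\Cref{thm:Pink weak Hodge generation}) to the semisimple compatible system $W_{\ell,M}$ of \eqref{eq:weighted direct sum} rather than to $V_\ell^{\semisimple}$. The system $W_{\ell,M}$ is a finite direct sum of semisimplified cohomology groups of $X$; in fact it is a direct summand of the semisimplification of cohomology of a disjoint union of copies of $X$. Pink's remark therefore applies verbatim: its Frobenius polynomials are $\QQ$-rational and independent of $\ell$, and it is crystalline at almost all places. Its prescribed Hodge multiplicities are
\[
c_r^{(M)}(X)=\sum_i M^i\,h^{r,i-r}(X).
\]
By \Cref{lem:weighted-direct-sum-monodromy}, its connected monodromy group is $\Delta_M(\bG_\ell^{\mathrm{ssm}}(X))\cong\bG_\ell^{\mathrm{ssm}}(X)$. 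Pink's theorem thus shows that $\bG_\ell^{\mathrm{ssm}}(X)_{\overline\QQ_\ell}$ is generated by cocharacters $\lambda$ such that $\Delta_M\circ\lambda$ has weight-$r$ multiplicity $c_r^{(M)}(X)$ on $W_{\ell,M}\otimes\overline\QQ_\ell$ for every $r$.

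Next we show that each such $\lambda$ is of degreewise Hodge type. Since $\bG_\ell^{\mathrm{ssm}}(X)$ preserves the grading, $\lambda$ preserves each $V_i=V_{\ell,i}^{\semisimple}\otimes\overline\QQ_\ell$. Write $m_{i,r}$ for the dimension of the weight-$r$ subspace of $V_i$. The weight-$r$ subspace of $W_{\ell,M}$ has dimension $\sum_i M^i m_{i,r}$, so for every $r$
\[
\sum_i M^i m_{i,r}=\sum_i M^i h^{r,i-r}(X).
\]
Both $m_{i,r}$ and $h^{r,i-r}(X)$ lie in $[0,b_i(X)]$, and $b_i(X)<M$ by the choice of $M$. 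Uniqueness of base-$M$ expansions therefore gives $m_{i,r}=h^{r,i-r}(X)$ for all $i,r$, which is exactly \Cref{def:degreewise weak Hodge cocharacter}.

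The main point needing care is the applicability of Pink's theorem to $W_{\ell,M}$. In particular, the weak Hodge multiplicities must be those prescribed by the Hodge numbers of the summands with the repetition counts $M^i$. This holds because Pink's weak Hodge condition comes from Hodge--Tate weights (via Faltings and crystalline comparison), and these are additive in direct sums; $M^i$ copies of $\rH^i$ contribute $M^i h^{r,i-r}$ to the count. The generation statement then transfers back to $\bG_\ell^{\mathrm{ssm}}(X)$ through the closed immersion $\Delta_M$. A subgroup generated by images of cocharacters $\Delta_M\circ\lambda$ is the image under $\Delta_M$ of the subgroup generated by the $\lambda$, so the $\lambda$ generate $\bG_\ell^{\mathrm{ssm}}(X)_{\overline\QQ_\ell}$.
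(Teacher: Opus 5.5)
Your proposal is correct and follows the paper's proof essentially step for step. You apply Pink's Theorem~3.18 to the weighted sum $W_{\ell,M}$, identify its connected monodromy group with $\bG_\ell^{\mathrm{ssm}}(X)$ via the closed immersion $\Delta_M$, and recover $m_{i,r}=h^{r,i-r}(X)$ from $\sum_i M^i m_{i,r}=\sum_i M^i h^{r,i-r}(X)$ by uniqueness of base-$M$ expansion. Your extra remarks make explicit what the paper compresses into the phrase that finite repetitions are a formal instance of Pink's direct-sum construction: realizing $W_{\ell,M}$ inside the cohomology of a disjoint union of copies of $X$, additivity of Hodge--Tate multiplicities, and pulling generation back through $\Delta_M$.
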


\begin{proof}
Apply \cite[Theorem 3.18]{Pink98} directly to $W_{\ell,M}$. Finite repetitions of cohomological summands are a formal instance of the direct-sum construction allowed in \cite[\S3]{Pink98}. By \Cref{lem:weighted-direct-sum-monodromy}, its connected algebraic monodromy group is identified with $\bG_\ell^{\mathrm{ssm}}(X)$ through $\Delta_M$.

Let $\lambda$ be a weak Hodge cocharacter of the monodromy group of $W_{\ell,M}$, and pull it back through $\Delta_M$ to a cocharacter of $\bG_\ell^{\mathrm{ssm}}(X)$. Denote by $m_{i,r}(\lambda)$ the multiplicity of weight $r$ on $V_{\ell,i}^{\semisimple}$. The prescribed Hodge multiplicity of weight $r$ on $W_{\ell,M}$ is $\sum_iM^ih^{r,i-r}(X)$, so
\[
\sum_iM^im_{i,r}(\lambda)
=
\sum_iM^ih^{r,i-r}(X).
\]
Both $m_{i,r}(\lambda)$ and $h^{r,i-r}(X)$ lie in $\{0,\ldots,b_i(X)\}$ and hence are strictly smaller than $M$. Uniqueness of base-$M$ expansion gives
\[
m_{i,r}(\lambda)=h^{r,i-r}(X)
\]
for every $i$ and $r$. Thus the pulled-back generators are of degreewise Hodge type, and \Cref{thm:Pink weak Hodge generation} proves the assertion.
\end{proof}

\begin{remark}
Since $\bG_\ell^{\mathrm{ssm}}(X)$ acts degree-preservingly, every cocharacter with image in this group preserves each $V_{\ell,i}^{\semisimple}$. This alone does not make the cocharacter of degreewise Hodge type: Pink's weak Hodge condition on total cohomology fixes only the total multiplicity $\sum_i m_{i,r}(\lambda)=\sum_i h^{r,i-r}(X)$. The multiplicity-weighted representation is used precisely to rule out a redistribution of the weight-$r$ multiplicities among the cohomological degrees.
\end{remark}

\subsection{Levi realizations of the semisimplification}
\label{subsec:Levi realizes semisimplification}

At this stage, no uniqueness of a Levi subgroup is assumed.

\begin{lemma}[Mostow's containment theorem]
\label{lem:Mostow containment}
Let $k$ be a field of characteristic zero, let $G$ be a connected linear algebraic $k$-group, and put $U=R_u(G)$. Then $G$ has a Levi $k$-subgroup, that is, a connected reductive $k$-subgroup $L\subseteq G$ for which multiplication gives an isomorphism $U\rtimes L\xrightarrow{\sim}G$. Any two Levi $k$-subgroups are conjugate by an element of $U(k)$, and every reductive $k$-subgroup $H\subseteq G$ is contained in a Levi $k$-subgroup. More precisely, if $L\subseteq G$ is any fixed Levi $k$-subgroup, then there is an element $u\in U(k)$ such that
\[
 H\subseteq uLu^{-1}.
\]
In particular, every $k$-torus of $G$ is contained in a Levi $k$-subgroup.
\end{lemma}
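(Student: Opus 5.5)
This is the classical Levi--Mostow theorem, so the plan is to reduce it to standard structural facts rather than reprove them from scratch. Work first at the level of Lie algebras. Put $\mathfrak g=\Lie G$, $\mathfrak u=\Lie U$, and let $\mathfrak n$ be the nilradical and $\mathfrak{rad}$ the solvable radical. Since $G$ is connected and $\cha k=0$, $G/U$ is reductive, so $\mathfrak g/\mathfrak u$ is reductive. Levi's theorem (via Whitehead's second lemma, $H^2(\mathfrak s,-)=0$ for semisimple $\mathfrak s$) gives a semisimple subalgebra $\mathfrak s\subseteq\mathfrak g$ mapping isomorphically onto $[\mathfrak g/\mathfrak u,\mathfrak g/\mathfrak u]$. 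For the central torus part, choose a maximal $k$-torus $T$ of the radical $R(G)$; then $R(G)=U\rtimes T$. Let $S\subseteq G$ be the connected subgroup with Lie algebra $\mathfrak s$, which is algebraic in characteristic zero because semisimple subalgebras are algebraic. Conjugating $T$ within $R(G)$ by $U(k)$, I would arrange that $T$ centralizes $S$: the group $S$ acts on the variety of maximal tori of $R(G)$, which is a $U$-torsor, and a fixed point exists by vanishing of $H^1$ of a reductive group in a unipotent coefficient module (complete reducibility). Then $L=S\cdot T$ is connected reductive with $L\cap U=1$ and $\dim L+\dim U=\dim G$, so $U\rtimes L\to G$ is an isomorphism.

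For conjugacy and containment I would prove the strong statement directly: if $H\subseteq G$ is reductive and $L$ is a fixed Levi subgroup, then there is $u\in U(k)$ with $H\subseteq uLu^{-1}$. Existence of a Levi then gives conjugacy, since for a second Levi $L'$ we get $L'\subseteq uLu^{-1}$ and equality follows by dimension and connectedness. The proof goes by induction on $\dim U$, using the $L$-stable central series of $U$. Choose a nontrivial $G$-stable vector subgroup $A\subseteq Z(U)$, for instance the last nonzero term of the lower central series, viewed as a $k$-vector space. Pass to $\bar G=G/A$, which has unipotent radical $U/A$ and Levi subgroup $\bar L$. By induction, after conjugating by an element of $U(k)$ we may assume $\bar H\subseteq\bar L$, so $H\subseteq A\rtimes L$. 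The projection $H\to L$ is injective on $H$ because $H\cap A$ is a unipotent normal subgroup of a reductive group, hence trivial up to a finite part that must vanish in characteristic zero. So $H$ is the graph of a map $h\mapsto a(h)\in A$, and this map is a $1$-cocycle from the image of $H$ in $L$ to the rational module $A$. Since $H$ is reductive and $\cha k=0$, rational cohomology $H^1(H,A)=0$, so the cocycle is a coboundary $a(h)=b-h\cdot b$. Conjugating by $b\in A(k)\subseteq U(k)$ then puts $H$ inside $L$.

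The final sentence of the statement follows by applying the result to $H=$ the given torus, which is reductive. I expect the main obstacle to be the rationality over a non-closed field $k$, namely obtaining $u\in U(k)$ rather than $u\in U(\bar k)$. This is exactly why the cohomological step should be run with the rational $H^1(H,A)$ of the $k$-group on a $k$-vector group, whose vanishing holds over $k$ by complete reducibility of rational $H$-modules in characteristic zero. Alternatively, I would cite Mostow's theorem or Borel's \emph{Linear Algebraic Groups}, where these statements are proved over any field of characteristic zero.
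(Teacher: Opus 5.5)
Your proposal is correct in substance, but it takes a different route from the paper. The paper gives no argument of its own and simply cites Mostow's theorem \cite[Chapter~VIII, Theorem~4.3]{Hoc81}; you actually sketch a proof. Your ingredients are the standard ones. For existence, you integrate a Levi subalgebra $\mathfrak s$ to a semisimple $k$-subgroup $S$ and multiply it by a maximal torus of $R(G)$ centralized by $S$. For containment, you induct along a $G$-stable central vector subgroup $A\subseteq U$ and reduce to the vanishing of rational $H^1(H,A)$ when $H^\circ$ is reductive in characteristic zero. Deriving conjugacy from containment plus a dimension count is clean.

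What your route buys is that the $k$-rational form is built visibly into the argument: conjugating elements in $U(k)$ and Levi $k$-subgroups. The paper genuinely needs this form for $k=\QQ_\ell$. The citation buys brevity but must be trusted to cover it. In the induction you should add one sentence: the element of $(U/A)(k)$ supplied by the inductive hypothesis lifts to $U(k)$, because $A$ is a vector group and so $H^1(k,A)=0$.

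One claim in the existence half is false as written and should be replaced. The variety of maximal tori of $R(G)$ is the homogeneous space $U/Z_U(T)$, not a $U$-torsor; for $G=\GG_a\times\GG_m$ it is a single point. So the torsor-and-$H^1$ fixed-point argument does not apply verbatim.

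A direct repair avoids fixed points altogether:
\begin{enumerate}[(i)]
\item Since $R(G)/U$ is central in the reductive group $G/U$, one has $[G,R(G)]\subseteq U$, hence $[\mathfrak g,\Lie R(G)]\subseteq\mathfrak u$.
\item $\Lie R(G)$ is completely reducible as an $\mathfrak s$-module, so $\Lie R(G)=\Lie R(G)^{\mathfrak s}\oplus[\mathfrak s,\Lie R(G)]$. By (i) this gives $\Lie R(G)=\Lie R(G)^{\mathfrak s}+\mathfrak u$.
\item In characteristic zero, $\Lie Z_{R(G)}(S)=\Lie R(G)^{\mathfrak s}$. Hence $Z_{R(G)}(S)^\circ\cdot U=R(G)$, and $Z_{R(G)}(S)^\circ$ maps onto the torus $R(G)/U$.
\item Any maximal $k$-torus $T$ of the $k$-group $Z_{R(G)}(S)^\circ$ meets $U$ trivially and maps onto $R(G)/U$, hence isomorphically.
\end{enumerate}
This $T$ is the required $S$-centralized maximal $k$-torus of $R(G)$. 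Your dimension count then shows that $L=S\cdot T$ is a Levi $k$-subgroup.
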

\begin{proof}
This is Mostow's theorem; see \cite[Chapter~VIII, Theorem~4.3]{Hoc81}.
\end{proof}

\begin{lemma}
\label{lem:Levi realizes semisimplification}
Assume that $\bG_\ell(X)$ is connected. Let $\bL_\ell\subseteq\bG_\ell(X)$ be a Levi subgroup and let $q\colon\bG_\ell(X)\to \bL_\ell$ be the associated Levi projection. For every $i$, the $G_K$-representation obtained from the degree-$i$ representation by composing with $q$ is a semisimplification. Consequently, after a degree-preserving change of basis, $\bG_\ell^{\mathrm{ssm}}(X)$ identifies with $\bL_\ell$.
\end{lemma}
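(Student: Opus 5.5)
The plan is the standard argument that a connected linear group acting on a vector space has its unipotent radical acting trivially on the associated graded of any composition series. We then compare the resulting representation with the Levi projection.

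Write $G=\bG_\ell(X)$, $U=R_u(G)$, and $V_i=\rH^i_{\et}(X_{\overline K},\QQ_\ell)$. By \Cref{lem:Mostow containment}, $G=U\rtimes\bL_\ell$, so $q\colon G\to\bL_\ell$ is a surjective homomorphism with kernel $U$. The representation in question is $\rho_i\circ q\circ\rho_\ell\colon G_K\to\GL(V_i)$, where $\rho_i$ is the degree-$i$ projection restricted to $\bL_\ell$. It is continuous, since $q$ is algebraic and $\rho_\ell(G_K)\subseteq G(\QQ_\ell)$.

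Step 1. This representation is semisimple. Its Zariski closure lies in the reductive group $\rho_i(\bL_\ell)$; in fact it equals that group, because $q(\overline{\rho_\ell(G_K)})=\bL_\ell$. In characteristic zero, a representation of a group whose Zariski closure is reductive is semisimple. The $G_K$-stable subspaces are exactly the stable subspaces of the Zariski closure, and representations of reductive groups in characteristic zero are completely reducible.

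Step 2. It is a semisimplification of $V_i$. Choose a $G$-stable composition series $0=F_0\subset\cdots\subset F_m=V_i$, which is the same as a $G_K$-stable one. Its graded pieces are irreducible $G$-modules. The unipotent normal subgroup $U$ acts trivially on each irreducible piece: $V_i^{U}$ is nonzero by Kolchin's theorem and is $G$-stable because $U$ is normal. Hence $\gr^F V_i\cong\bigoplus_j F_j/F_{j-1}$ is a $G$-representation factoring through $G/U\cong\bL_\ell$, and $g$ acts on it as $q(g)$. Because $\bL_\ell\subseteq G$ preserves each $F_j$, it also preserves the filtration. By complete reducibility of $\bL_\ell$, we may choose an $\bL_\ell$-stable splitting $V_i\cong\bigoplus_j F_j/F_{j-1}$. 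This splitting is an isomorphism of $\bL_\ell$-modules, hence of $G_K$-modules when $G_K$ acts through $q$. So $(V_i,\rho_i\circ q\circ\rho_\ell)\cong\gr^F V_i$, which is the semisimplification $V_{\ell,i}^{\semisimple}$. This is the one step needing care: the splitting must be taken $\bL_\ell$-equivariantly, not merely as vector spaces.

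Step 3. Identify the monodromy groups. Take these splittings degree by degree as the identifications $V_{\ell,i}^{\semisimple}\simeq V_i$. The total semisimplified representation is then $\rho_\ell^{\semisimple}=q\circ\rho_\ell$, viewed inside $\GL(\bigoplus_iV_i)$, where $\bL_\ell$ acts faithfully as a subgroup of $G$. Its Zariski closure is $q(G)=\bL_\ell$, which is connected, so $\bG_\ell^{\mathrm{ssm}}(X)=\bL_\ell$. A different choice of degreewise identifications differs by a degree-preserving $G_K$-equivariant change of basis. This gives the conjugacy statement, consistent with the well-definedness remarks in \S\ref{subsec:weak Hodge cocharacters}.

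The main obstacle is modest: making sure the abstract isomorphism with the semisimplification is realized by an honest degree-preserving change of basis on the same space, so that the groups are conjugate inside $\prod_i\GL(V_i)$ and not merely isomorphic. The $\bL_\ell$-equivariant splitting in Step 2 handles exactly this.
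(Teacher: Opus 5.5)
Your proposal is correct and follows essentially the same route as the paper: the unipotent radical acts trivially on the composition factors (Lie--Kolchin plus normality), so the restriction to the reductive Levi $\bL_\ell$ is isomorphic to their direct sum, and Zariski density of $q(\rho_\ell(G_K))$ in $\bL_\ell$ identifies the semisimplified monodromy group with $\bL_\ell$ after a degree-preserving change of basis. Your explicit $\bL_\ell$-equivariant splitting in Step~2 simply spells out what the paper compresses into ``the restriction to $L$ is semisimple and isomorphic to the direct sum of these factors''; Step~1 is harmless but redundant.
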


\begin{proof}
Put $G=\bG_\ell(X)$, $U=R_u(G)$, and $L=\bL_\ell$. We first recall that $U$ acts trivially on every irreducible rational representation of $G$. After extending scalars to $\overline{\QQ}_\ell$, the Lie--Kolchin theorem gives a nonzero $U$-fixed vector in every irreducible $G$-module $S$. Since $U$ is normal in $G$, the subspace $S^U$ is $G$-stable, so irreducibility gives $S^U=S$.

Choose a composition series of the $G$-module $\rH^i_{\et}(X_{\overline K},\QQ_\ell)$. Every composition factor factors through $G/U\simeq L$. Since $L$ is reductive in characteristic zero, the restriction of $\rH^i_{\et}(X_{\overline K},\QQ_\ell)$ to $L$ is semisimple and isomorphic to the direct sum of these factors. Therefore the composite
\[
G_K\xrightarrow{\rho_\ell}G\xrightarrow{q}L
\longrightarrow
\GL_{\QQ_\ell}\bigl(\rH^i_{\et}(X_{\overline K},\QQ_\ell)\bigr)
\]
is a semisimplification of the original degree-$i$ representation.

For each $i$, choose an intertwining isomorphism from $V_{\ell,i}^{\semisimple}$ to this $L$-module and take their direct sum. The resulting isomorphism is degree-preserving. Since $\rho_\ell(G_K)$ is Zariski dense in $G$ and $q$ is surjective, $q(\rho_\ell(G_K))$ is Zariski dense in $L$, so this isomorphism conjugates $\bG_\ell^{\mathrm{ssm}}(X)$ onto $L$.
\end{proof}

\begin{cor}
\label{cor:Levi-generated-degreewise-Hodge-type}
Suppose that $K$ is a number field and $K=K^{\conn}$. For every Levi subgroup $\bL_\ell\subseteq\bG_\ell(X)$, the group $\bL_{\ell,\overline{\QQ}_\ell}$ is generated by cocharacters of degreewise Hodge type for its graded action on $\rH^\bullet_{\et}(X_{\overline K},\QQ_\ell)$.
\end{cor}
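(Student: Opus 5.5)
The plan is to transport \Cref{prop:degreewise-Hodge-type-generation} to $\bL_\ell$ through the identification of \Cref{lem:Levi realizes semisimplification}. Since $K=K^{\conn}$, the group $\bG_\ell(X)$ is connected, so that lemma applies to the given Levi subgroup $\bL_\ell$ with its Levi projection $q\colon\bG_\ell(X)\to\bL_\ell$. It provides, for each degree $i$, a $G_K$-equivariant isomorphism
\[
\phi_i\colon V_{\ell,i}^{\semisimple}\xrightarrow{\sim}\rH^i_{\et}(X_{\overline K},\QQ_\ell),
\]
where the target carries the action through $q\circ\rho_\ell$. Put $\phi=\bigoplus_i\phi_i$. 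This isomorphism is degree-preserving, and conjugation by it carries $\bG_\ell^{\mathrm{ssm}}(X)$ onto $\bL_\ell\subseteq\GL_{\QQ_\ell}(\rH^\bullet_{\et}(X_{\overline K},\QQ_\ell))$.

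The one point I will verify explicitly is that this conjugate is $\bL_\ell$ acting by its given graded action, and not some other action. On the target, $\bL_\ell$ acts through the restriction of the original representation, because $q$ is the identity on $\bL_\ell$. The image of $q\circ\rho_\ell$ is Zariski dense in $\bL_\ell$. So $\phi$ intertwines the Zariski closures, and conjugation by $\phi$ is an isomorphism of algebraic groups $\bG_\ell^{\mathrm{ssm}}(X)\xrightarrow{\sim}\bL_\ell$. This isomorphism is compatible with the two graded representations. Base change to $\overline{\QQ}_\ell$ changes nothing here.

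Next, invoke \Cref{prop:degreewise-Hodge-type-generation}, which applies because $K$ is a number field. It says $\bG_\ell^{\mathrm{ssm}}(X)_{\overline{\QQ}_\ell}$ is generated by cocharacters $\lambda$ of degreewise Hodge type. Take each such generator and compose it with conjugation by $\phi$ to obtain $\phi\lambda\phi^{-1}$. Because $\phi$ preserves degrees, the weight-$r$ subspace of $\phi\lambda\phi^{-1}$ on $\rH^i_{\et}$ is the image under $\phi_i$ of the weight-$r$ subspace of $\lambda$ on $V_{\ell,i}^{\semisimple}$. Its dimension is therefore $h^{r,i-r}(X)$, so the degreewise Hodge type condition is preserved. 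An isomorphism of algebraic groups carries the smallest closed subgroup containing a family of images to the smallest closed subgroup containing the transported images. Hence these conjugated cocharacters generate $\bL_{\ell,\overline{\QQ}_\ell}$.

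The main obstacle is bookkeeping rather than substance. The definition of degreewise Hodge type refers to a graded representation, so I must make sure that the change of basis is degree-preserving and that the action being used is the original one restricted to $\bL_\ell$. Both facts come from the construction in \Cref{lem:Levi realizes semisimplification}.
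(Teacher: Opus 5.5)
Your proposal is correct and is the paper's argument: combine \Cref{prop:degreewise-Hodge-type-generation} with \Cref{lem:Levi realizes semisimplification}, and use that the conjugating isomorphism is degree-preserving, so it preserves the degreewise weight multiplicities. Your extra check is left implicit in the paper's one-line proof: that the transported group is $\bL_\ell$ with the restriction of its original graded action, via Zariski density of $q\circ\rho_\ell(G_K)$.
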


\begin{proof}
Combine \Cref{prop:degreewise-Hodge-type-generation,lem:Levi realizes semisimplification}. The conjugating isomorphism in \Cref{lem:Levi realizes semisimplification} preserves cohomological degree and hence the degreewise weight multiplicities.
\end{proof}

\subsection{Rigidity of (weak) Hodge cocharacters}
\label{subsec:rigidity of (weak) Hodge cocharacters}

Let $\bR_2$ be the image of $\bR=\rho^{\tw}(\GSpin)$ on $\rH_B^2(X,\QQ)$. The group $\bR_2$ acts by orthogonal similitudes and contains the special orthogonal group $\SO(\rH_B^2(X,\QQ),\overline q)$; it also contains the scalar homotheties coming from the central torus of $\GSpin$.

Fix a representative
\[
\mu_H\colon
\GG_{m,\overline{\QQ}}\longrightarrow
\MT(X_{\CC})_{\overline{\QQ}}
\]
of the geometric Hodge-cocharacter class in Pink's convention. On $\rH_B^i(X)\otimes_{\QQ}\overline{\QQ}$, weight $p$ has multiplicity $h^{p,i-p}(X)$. If $E/\QQ$ is a field extension, we use the same symbol $\mu_H$ after base change to an algebraic closure $\overline E$ along a fixed embedding $\overline{\QQ}\hookrightarrow\overline E$, and write $\mu_{H,2}$ for its degree-two projection. Thus $\mu_{H,2}$ has weights $0,1,2$ with multiplicities $1,b_2(X)-2,1$.

\begin{lemma}
\label{lem:R to R2 finite kernel}
The natural homomorphism $\bR\to\bR_2$ has finite central kernel. Consequently, for every field extension $E/\QQ$ and every algebraic closure $\overline E$ of $E$, the induced map
\[
X_*(\bR_{\overline E})\longrightarrow X_*(\bR_{2,\overline E})
\]
is injective.
\end{lemma}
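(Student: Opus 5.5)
The plan is to compute the kernel of $\bR\to\bR_2$ directly from the presentation of $\GSpin$. After that, the statement about cocharacters follows by a general fact about finite kernels.

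Write $\GSpin=(\GG_m\times\Spin)/\langle(-1,\delta)\rangle$, as in the proof of \Cref{lem:project MT group}. The composite $\GSpin\to\bR\to\bR_2$ is the twisted representation $\rho_2^{\tw}$. On $\rH^2$ it sends $(z,g)$ to $z^2\cdot\mathrm{std}(g)$, where $\mathrm{std}\colon\Spin\to\SO$ is the covering map. Its kernel therefore consists of the classes of pairs $(z,g)$ with $\mathrm{std}(g)=z^{-2}$ as a scalar in $\SO$.

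The scalars contained in $\SO(\rH^2,\overline q)$ are $\pm1$, and $-1$ occurs only when $b_2$ is even. Hence $z^2=\pm1$, so $z$ is a fourth root of unity. Also $g$ lies in $\mathrm{std}^{-1}(\{\pm1\})\subseteq Z(\Spin)$, which is finite.

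So $\ker(\rho_2^{\tw})$ is a finite group, and it is central in $\GSpin$: $\GG_m$ is central, and $g\in Z(\Spin)$. The kernel of $\bR\to\bR_2$ is the image of this finite central subgroup under the surjection $\GSpin\to\bR$. It is therefore finite and central in $\bR$.

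I would do this computation over $\overline E$, where the group-theoretic description is clean. Finiteness is preserved by base change, so nothing is lost.

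For the second assertion, let $\lambda,\lambda'\in X_*(\bR_{\overline E})$ have the same image in $X_*(\bR_{2,\overline E})$. The first step is to reduce to a single torus, since the two cocharacters need not commute a priori. Because the kernel $F$ is finite and central, the map $\bR\to\bR_2$ is a central isogeny. The preimage of the torus $\lambda_2(\GG_m)$ is then a group whose identity component is a one-dimensional torus $T$. Both $\lambda$ and $\lambda'$ land in $T$, because $\GG_m$ is connected.

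The map $T\to T_2$ is an isogeny of one-dimensional tori, which is multiplication by some nonzero integer on cocharacter lattices. Since $\ZZ$ is torsion-free, this map is injective, so $\lambda=\lambda'$.

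Equivalently, one can argue without choosing $T$. The map $t\mapsto\lambda(t)\lambda'(t)^{-1}$ is a morphism from $\GG_m$ into $F\cdot\lambda(\GG_m)$. Using centrality of $F$, this morphism is a homomorphism into $F$. Since $\GG_m$ is connected and $F$ is finite, it is trivial.

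The only point needing care is the identification of which scalars lie in $\SO$, together with the parity description of $\delta$. The low-rank cases $b_2=3,4$ are covered by the same formula, because $\mathrm{std}^{-1}(\pm1)$ is finite in every rank. I expect no genuine obstacle.
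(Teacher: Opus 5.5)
Your proof is correct, but for the finiteness of the kernel you take a different route from the paper.

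The paper works only at the Lie algebra level. It notes that $\bR$ is connected and computes the differential of $\bR\to\bR_2$ on $\mathfrak r=\overline{\fg}\oplus\QQ h^{\deg}$ as $A+ah^{\deg}\mapsto A+2a\,\id_{\rH^2}$. Taking traces kills $a$, since elements of $\fso$ are traceless, and then $A=0$. Centrality comes from the general fact that a finite normal subgroup of a connected group is central.

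You instead work at the group level, using the presentation $\GSpin=(\GG_m\times\Spin)/\langle(-1,\delta)\rangle$, the formula $\rho_2^{\tw}(z,g)=z^2\,\mathrm{std}(g)$, and the fact that the only scalars in $\SO$ are $\pm1$. This gives an explicit bound on the kernel, namely the image of $\mu_4\times\mathrm{std}^{-1}(\pm1)$, and proves centrality by hand. The cost is that you rely on the integrated form of the twisted LLV representation rather than only on its Lie algebra.

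For the cocharacter statement you are more careful than the paper. Its one-line remark that ``a cocharacter with image in a finite group is trivial'' glosses over the fact that $X_*$ is only a set. Your map $t\mapsto\lambda(t)\lambda'(t)^{-1}$ is the right fix. You do not even need centrality there: any morphism from the connected $\GG_m$ to the finite kernel is constant, equal to its value $1$ at $t=1$.

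In your torus variant, note the trivial edge case $\lambda_2=1$. There the identity component of the preimage is trivial rather than a one-dimensional torus, though the conclusion $\lambda=\lambda'=1$ still holds.
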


\begin{proof}
The group $\bR$ is connected because it is the image of the connected group $\GSpin$. It is enough to prove that the differential of $\bR\to\bR_2$ is injective. Under the decomposition $\mathfrak r=\overline{\fg}\oplus\QQ h^{\deg}$, the differential is
\[
A+a h^{\deg}\longmapsto
A+2a\,\id_{\rH_B^2(X)}.
\]
If this is zero, taking traces gives $2ab_2(X)=0$, since $A\in\fso(\rH_B^2(X),\overline q)$ has trace zero. Thus $a=0$ and then $A=0$. The kernel is finite; being a finite normal subgroup of the connected group $\bR$, it is central. A cocharacter with image in a finite group is trivial, which gives injectivity on cocharacters after scalar extension.
\end{proof}

\begin{lemma}
\label{lem:degree two Hodge cocharacter rigidity}
Let $E/\QQ$ be a field extension, let $\overline E$ be an algebraic closure of $E$ equipped with an embedding $\overline{\QQ}\hookrightarrow\overline E$, and let $N\geq1$. Suppose that
\[
\mu\colon\GG_{m,\overline E}\longrightarrow\bR_{2,\overline E}
\]
has weights $0,N,2N$ on $\rH_B^2(X,\QQ)\otimes_\QQ\overline E$, with multiplicities $1,b_2(X)-2,1$. Then $\mu$ and $N\mu_{H,2}$ are conjugate under $\bR_2(\overline E)$, where $(N\mu_{H,2})(t)=\mu_{H,2}(t^N)$.
\end{lemma}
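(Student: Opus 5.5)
The plan is to reduce to a classification of cocharacters of the orthogonal similitude group $\bR_{2,\overline E}$ up to conjugacy. Write $V_2=\rH_B^2(X,\QQ)\otimes_\QQ\overline E$ with the extended form $\bar q$, and $b=b_2(X)$. First I identify $\bR_{2,\overline E}$ concretely. By \Cref{prop:ll_isom} and the description of the twisted LLV representation, $\bR_2$ is generated by $\SO(V_2,\bar q)$ and the scalar homotheties, so $\bR_2=\GG_m\cdot\SO(V_2,\bar q)$. It is the connected component $\mathrm{GSO}$ of the orthogonal similitude group when $b$ is even, and $\GG_m\times\SO$ when $b$ is odd. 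Every element of $\bR_2$ scales $\bar q$ by a similitude factor $\nu$.

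Next I decompose $\mu$. Since $\mu$ has weights $0,N,2N$, the cocharacter $\mu'(t)\coloneqq t^{-N}\mu(t)$ has weights $-N,0,N$ with multiplicities $1,b-2,1$. It lies in $\bR_2$ because the homotheties do, and it has similitude factor $\nu(\mu'(t))=t^{-2N}\nu(\mu(t))$. Now $\nu(\mu(t))=t^{c}$ for some integer $c$. Pairing a weight-$0$ vector with a weight-$2N$ vector, which is forced to be nonzero by nondegeneracy (weight spaces $V_a,V_{a'}$ are orthogonal unless $a+a'=c$), gives $c=2N$. Hence $\mu'$ has trivial similitude factor and lies in $\OO(V_2,\bar q)\cap\bR_2=\SO(V_2,\bar q)$, being connected and of trivial similitude. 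The same computation applied to $\mu_{H,2}$ with $N=1$ shows that $N\mu_{H,2}(t)=t^{N}\cdot\mu_H'(t^N)$, where $\mu_H'$ is a cocharacter of $\SO$ with weights $-1,0,1$ and multiplicities $1,b-2,1$.

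It therefore suffices to show that two cocharacters of $\SO(V_2,\bar q)_{\overline E}$ with the same weights $-N,0,N$ and multiplicities $1,b-2,1$ are $\SO(\overline E)$-conjugate, since conjugation commutes with the central homothety factor $t^N$. Such a cocharacter is determined by an isotropic line $\ell_+=V_N$, an isotropic line $\ell_-=V_{-N}$ pairing nondegenerately with $\ell_+$, and the orthogonal complement $V_0=(\ell_+\oplus\ell_-)^\perp$. By Witt's theorem, the group $\OO(V_2)$ acts transitively on such hyperbolic-plane data: choose $e\in\ell_+$ and $f\in\ell_-$ with $\bar q(e,f)=1$, and extend across an isometry of the complements, which exists over the algebraically closed field $\overline E$. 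This gives $\OO$-conjugacy.

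The main obstacle is upgrading $\OO$-conjugacy to $\SO$-conjugacy, which matters for even $b$. The step I would take first is to adjust the witnessing element by a reflection in an anisotropic vector of $V_0$. Such a reflection centralizes the cocharacter and has determinant $-1$. It exists whenever $b-2\geq1$, which holds because $b\geq3$. For odd $b$ this step is unnecessary anyway, since $-\id$ has determinant $-1$ and is central. Composing these steps gives the required $\bR_2(\overline E)$-conjugacy between $\mu$ and $N\mu_{H,2}$.
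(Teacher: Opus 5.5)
Your proof is correct and follows essentially the paper's argument: the similitude-factor computation forcing $c=2N$, Witt extension on the hyperbolic plane spanned by the extreme weight lines, and a reflection in a nonisotropic vector of the middle weight space to move the conjugating element into $\SO(V_2,\bar q)\subseteq\bR_2(\overline E)$. Your preliminary twist by the central homothety $t^{-N}$, which reduces to cocharacters of $\SO$, only repackages the paper's direct treatment of $\mu$ inside the similitude group.
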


\begin{proof}
Put $V_2=\rH_B^2(X,\QQ)\otimes_\QQ\overline E$ and write $V_2=V_0\oplus V_N\oplus V_{2N}$ for the $\mu$-weight decomposition. Let $\nu\colon\bR_{2,\overline E}\to\GG_{m,\overline E}$ be the similitude character and write $(\nu\circ\mu)(t)=t^c$. If $x\in V_a$ and $y\in V_b$, then
\[
\overline q(\mu(t)x,\mu(t)y)
=t^{a+b}\overline q(x,y)
=t^c\overline q(x,y).
\]
Thus $\overline q(V_a,V_b)\neq0$ only if $a+b=c$. Nondegeneracy implies that every nonzero weight space $V_a$ pairs perfectly with $V_{c-a}$; thus $V_{c}$ and $V_{c-2N}$ are both nonzero. The only possibility is $c=2N$.

It follows that $V_0$ and $V_{2N}$ are isotropic lines paired perfectly by $\overline q$, while $V_N=(V_0\oplus V_{2N})^\perp$ is nondegenerate. Choose $0\neq e\in V_0$ and $f\in V_{2N}$ with $\overline q(e,f)=1$. Make analogous choices $e'$ and $f'$ for the weight decomposition of $N\mu_{H,2}$. The map $e\mapsto e'$, $f\mapsto f'$ is an isometry between the two hyperbolic planes, and Witt's extension theorem extends it to an element $g\in\OO(V_2,\overline q)$. Necessarily $g(V_N)=V'_N$, so
\[
g\mu(t)g^{-1}=N\mu_{H,2}(t)
\]
for every $t$.

If $\det(g)=-1$, choose a nonisotropic vector $v\in V'_N$. Such a vector exists because $V'_N$ is nondegenerate and has dimension $b_2(X)-2\geq1$. The orthogonal reflection $s_v$ has determinant $-1$ and commutes with $N\mu_{H,2}$, which acts on $V'_N$ by the scalar $t^N$. Replacing $g$ by $s_vg$, we obtain $g\in\SO(V_2,\overline q)\subseteq\bR_2(\overline E)$.
\end{proof}

\begin{cor}
\label{cor:lift degree two conjugacy}
Let $E/\QQ$ be a field extension and let $\overline E$ be an algebraic closure of $E$. If two cocharacters of $\bR_{\overline E}$ have conjugate images in $\bR_{2,\overline E}$, then they are conjugate under $\bR(\overline E)$.
\end{cor}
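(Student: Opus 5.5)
The plan is to lift the conjugating element from $\bR_2$ to $\bR$ using the surjective isogeny $\pi\colon\bR\to\bR_2$ with finite central kernel from \Cref{lem:R to R2 finite kernel}. Let $\lambda,\lambda'\in X_*(\bR_{\overline E})$ and suppose $g_2\in\bR_2(\overline E)$ satisfies
\[
g_2\,(\pi\circ\lambda)\,g_2^{-1}=\pi\circ\lambda'.
\]

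First, since $\pi$ is a surjective homomorphism of algebraic groups over the algebraically closed field $\overline E$, the map $\bR(\overline E)\to\bR_2(\overline E)$ is surjective. We can therefore choose $g\in\bR(\overline E)$ with $\pi(g)=g_2$. Put $\lambda''\coloneqq g\lambda g^{-1}$, which is again a cocharacter of $\bR_{\overline E}$. It satisfies
\[
\pi\circ\lambda''=g_2(\pi\circ\lambda)g_2^{-1}=\pi\circ\lambda'.
\]

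Second, by the injectivity statement of \Cref{lem:R to R2 finite kernel}, the map $X_*(\bR_{\overline E})\to X_*(\bR_{2,\overline E})$ is injective, so $\lambda''=\lambda'$. If one prefers to see this directly, the map $t\mapsto\lambda''(t)\lambda'(t)^{-1}$ takes values in the finite kernel. It is a morphism from the connected variety $\GG_m$ sending $1\mapsto1$, so it is constant and equal to $1$. Here one should note that $\lambda''$ and $\lambda'$ need not commute a priori. Nonetheless the map is a morphism of varieties from a connected source into a finite set, hence constant, so the argument goes through without any homomorphism property. This gives $\lambda'=g\lambda g^{-1}$, which is the desired conjugacy.

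The only point needing care is the phrase ``conjugate images''. If it means that the images $\pi(\lambda(\GG_m))$ and $\pi(\lambda'(\GG_m))$ are conjugate as subtori rather than as cocharacters, the argument above only yields $g\lambda g^{-1}=\lambda'^{\pm1}$ up to the torus identification. In that case I would read the statement as conjugacy of cocharacters, which is how it is applied after \Cref{lem:degree two Hodge cocharacter rigidity}. I do not expect any step to be a real obstacle; the surjectivity on $\overline E$-points and the finiteness of the kernel carry the whole proof.
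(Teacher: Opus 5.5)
Your proof is correct and follows the paper's argument: you lift the conjugating element along the surjection $\bR(\overline E)\to\bR_2(\overline E)$, and then conclude equality of the cocharacters from the finite kernel of $\bR\to\bR_2$ (\Cref{lem:R to R2 finite kernel}). Your direct justification is a genuine improvement over the lemma's phrasing: the morphism $t\mapsto\lambda''(t)\lambda'(t)^{-1}$ maps the connected group $\GG_m$ into the finite kernel, so it is constant. This avoids treating $X_*(\bR_{\overline E})$ as a group when the two cocharacters need not commute.
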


\begin{proof}
By definition of $\bR_2$, the map $\bR\to\bR_2$ is surjective. It is therefore surjective on $\overline E$-points, so a conjugating element in $\bR_2(\overline E)$ lifts to $\bR(\overline E)$. After this conjugation, the two cocharacters have the same degree-two projection and hence are equal by \Cref{lem:R to R2 finite kernel}.
\end{proof}

The following proposition replaces the motivic defect splitting used in \cite{FFZ21} by a statement internal to the graded cohomology algebra.

\begin{prop}
\label{prop:defect rigidity weak Hodge}
Let $E/\QQ$ be a field extension and let $\overline E$ be an algebraic closure of $E$ equipped with an embedding $\overline{\QQ}\hookrightarrow\overline E$. Let
\[
\lambda\colon\GG_{m,\overline E}\longrightarrow
\Aut_{\alg}^{\gr}(V_{\overline E})
\]
be a cocharacter of degreewise Hodge type. If its degree-two projection factors through $\bR_{2,\overline E}$, then $\lambda$ is conjugate to $\mu_H$ under $\bR(\overline E)$. In particular, it factors through $\bR_{\overline E}$.
\end{prop}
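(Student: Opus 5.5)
The plan is to reduce first to the case where $\lambda$ and $\mu_H$ have the same degree-two projection. We then show that their ``difference'' is a cocharacter of the centralizer group $\bC$ commuting with $\mu_H$, and kill it with the trace orthogonality of \Cref{lem:trace orthogonality}.

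\emph{Step 1 (normalize in degree two).} Since $\lambda$ is of degreewise Hodge type, its degree-two projection $\lambda_2$ has weights $0,1,2$ on $\rH^2_B(X)\otimes\overline E$ with multiplicities $h^{0,2},h^{1,1},h^{2,0}=1,b_2(X)-2,1$. By hypothesis $\lambda_2$ factors through $\bR_{2,\overline E}$. \Cref{lem:degree two Hodge cocharacter rigidity} with $N=1$ gives $g_2\in\bR_2(\overline E)$ with $g_2\lambda_2g_2^{-1}=\mu_{H,2}$. Lift $g_2$ to $g\in\bR(\overline E)$, which is possible because $\bR\to\bR_2$ is surjective. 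Conjugation by $g$ preserves $\Aut^{\gr}_{\alg}(V_{\overline E})$, since $\bR$ acts by graded algebra automorphisms: $\GSpin$ acts through derivations and degree homotheties. It also preserves the degreewise Hodge type. Replacing $\lambda$ by $g\lambda g^{-1}$, we may therefore assume $\lambda_2=\mu_{H,2}$. Note that $\mu_H$ factors through $\bR_{\overline E}$ by \Cref{rmk:MT inside GSpin group}, and in particular through $\Aut^{\gr}_{\alg}$.

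\emph{Step 2 (extract a cocharacter of $\bC$).} Define $c(t)=\mu_H(t)^{-1}\lambda(t)$. It is a graded algebra automorphism acting trivially on $\rH^2$, so $c(t)\in\bC(\overline E)$. By \Cref{lem:defect group commutes with twisted LLV}, $\bC$ centralizes $\bR\supseteq\mu_H(\GG_m)$, so $c(t)$ commutes with every $\mu_H(s)$. A direct computation then shows that $c$ is a homomorphism:
\[
c(st)=\mu_H(t)^{-1}\mu_H(s)^{-1}\lambda(s)\lambda(t)=\mu_H(t)^{-1}c(s)\lambda(t)=c(s)c(t).
\]
Thus $c$ is a cocharacter of $\bC_{\overline E}$ commuting with $\mu_H$, and $\lambda=\mu_H\cdot c$ with commuting factors.

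\emph{Step 3 (trace rigidity).} Differentiate to get $d\lambda=d\mu_H+dc$, with $d\mu_H\in\mathfrak r_{\overline E}$ and $dc\in\mathfrak c_{\overline E}$. Both $\lambda$ and $\mu_H$ are of degreewise Hodge type, so they have identical weight multiplicities on $V$. Hence
\[
\Tr_V\bigl((d\lambda)^2\bigr)=\sum_{i,r}r^2h^{r,i-r}(X)=\Tr_V\bigl((d\mu_H)^2\bigr).
\]
Expanding and using $\Tr_V(d\mu_H\,dc)=0$ from \Cref{lem:trace orthogonality} gives $\Tr_V\bigl((dc)^2\bigr)=0$. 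The operator $dc$ is semisimple with integer eigenvalues, because it is the differential of a cocharacter. So this trace is a sum of squares of integers, and its vanishing forces $dc=0$. Hence $c$ is trivial and $\lambda=\mu_H$ after the conjugation of Step 1, which proves the claim.

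The main obstacle is Step 2. A priori $\lambda$ and $\mu_H$ need not commute, and it is exactly the centralizing property of $\bC$ that makes $\mu_H^{-1}\lambda$ a genuine cocharacter. The trace computation in Step 3 is then formal.
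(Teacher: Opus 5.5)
Your proof is correct and follows the paper's argument. You normalize in degree two via \Cref{lem:degree two Hodge cocharacter rigidity} and lift the conjugating element to $\bR(\overline E)$; then you kill the difference using that $\bC$ centralizes $\bR$ (\Cref{lem:defect group commutes with twisted LLV}), trace orthogonality (\Cref{lem:trace orthogonality}), and the sum-of-squares argument. The only difference is cosmetic: you form the difference $c=\mu_H^{-1}\lambda$ as a cocharacter of $\bC$, so semisimplicity and integrality of $dc$ are immediate, whereas the paper works with $B=d\lambda(1)-d\mu_H(1)\in\mathfrak c_{\overline E}$ and obtains integer eigenvalues by simultaneously diagonalizing the commuting operators $d\lambda(1)$ and $d\mu_H(1)$.
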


\begin{proof}
The degree-two weights of $\lambda$ are $0,1,2$ with multiplicities $1,b_2(X)-2,1$ because it is of degreewise Hodge type. By \Cref{lem:degree two Hodge cocharacter rigidity}, its degree-two projection is conjugate to $\mu_{H,2}$ under $\bR_2(\overline E)$. Lift a conjugating element to $\bR(\overline E)$ and conjugate $\lambda$. We may therefore assume that $\lambda_2=\mu_{H,2}$.

Put
\[
 A=d\lambda(1),\qquad H=d\mu_H(1),\qquad B=A-H
\]
in $\End_{\overline E}(V_{\overline E})$. The equality in degree two gives $B\in\mathfrak c_{\overline E}$. By \Cref{lem:defect group commutes with twisted LLV}, $B$ commutes with $H\in\mathfrak r_{\overline E}$. Moreover, \Cref{lem:trace orthogonality} gives $\Tr_V(HB)=0$.

The cocharacters $\lambda$ and $\mu_H$ have the same weights in every degree, so $\Tr_V(A^2)=\Tr_V(H^2)$. Since $A=H+B$ and $[H,B]=0$, it follows that
\[
 \Tr_V(B^2)=0.
\]
The commuting semisimple endomorphisms $A$ and $H$ are simultaneously diagonalizable. Their eigenvalues are integers, and hence the eigenvalues of $B=A-H$ are integers. Thus $\Tr_V(B^2)$ is a sum of squares of integers with nonnegative multiplicities; it can vanish only when $B=0$. The two cocharacters consequently have the same differential and are equal in $\GL(V_{\overline E})$. Undoing the conjugation proves the claim.
\end{proof}

\subsection{Uniqueness of Levi factors over number fields}
\label{subsec:number field Levi rigidity}

After the uniform finite extension supplied by \Cref{thm:independence of connected components}, we may assume $K=K^{\conn}$.

\begin{theorem}
\label{thm:Levi rigidity number field}
Suppose that $K$ is a number field and $K=K^{\conn}$. Then every Levi subgroup $\bL_\ell\subseteq\bG_\ell(X)$ satisfies
\[
\bL_\ell=\MT(X_{\CC})_{\QQ_\ell}
\]
inside the chosen motivic realization $\bG_{\mot}(X)_{\QQ_\ell}$.
\end{theorem}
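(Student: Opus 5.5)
Fix a Levi subgroup $\bL_\ell\subseteq\bG_\ell(X)$. The goal is to show that it is generated by conjugates of $\mu_H$ lying in $\bR_{\QQ_\ell}$, and then to compare with $\MT(X_\CC)_{\QQ_\ell}$ through the degree-two projection.

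By \Cref{cor:Levi-generated-degreewise-Hodge-type}, $\bL_{\ell,\overline{\QQ}_\ell}$ is generated by cocharacters $\lambda$ of degreewise Hodge type. Since $\bG_\ell(X)$ acts on $\rH^\bullet_{\et}$ by graded algebra automorphisms, each such $\lambda$ lands in $\Aut^{\gr}_{\alg}(V_{\overline{\QQ}_\ell})$. The next task is to check that its degree-two projection lies in $\bR_{2,\overline{\QQ}_\ell}$. By $(\MTC_2)$ (\Cref{thm:Mumford--Tate conjecture HK degree 2}), $\pi_{\ell,2}(\bG_\ell(X))=\bG_{\ell,2}(X)=\MT_2(X_\CC)_{\QQ_\ell}$. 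By \Cref{rmk:MT inside GSpin group}, this group is contained in $\bR_{2,\QQ_\ell}$, so $\pi_{\ell,2}(\lambda)$ factors through $\bR_2$. Now \Cref{prop:defect rigidity weak Hodge} applies: each generator $\lambda$ is $\bR(\overline{\QQ}_\ell)$-conjugate to $\mu_H$, and in particular lies in $\bR_{\overline{\QQ}_\ell}$. Hence $\bL_\ell\subseteq\bR_{\QQ_\ell}$, since the Zariski closure of the generated subgroup is defined over $\QQ_\ell$ and is equal to $\bL_\ell$.

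To conclude, restrict the degree-two projection $\pi_2\colon\bR\to\bR_2$ to $\bL_\ell$ and to $\MT(X_\CC)_{\QQ_\ell}$. Both groups lie in $\bR_{\QQ_\ell}$, and \Cref{lem:R to R2 finite kernel} says $\pi_2$ has finite central kernel there. For the degree-two images:
\[
\pi_2(\bL_\ell)=\pi_2(\bG_\ell(X))=\MT_2(X_\CC)_{\QQ_\ell}=\pi_2(\MT(X_\CC)_{\QQ_\ell}).
\]
The first equality holds because $\bL_\ell$ surjects onto the reductive quotient. Indeed, $\bG_{\ell,2}$ is reductive, being equal to a Mumford--Tate group, so the unipotent radical $R_u$ maps trivially to it. Thus $\bL_\ell$ and $\MT(X_\CC)_{\QQ_\ell}$ are connected subgroups of $\bR_{\QQ_\ell}$ with the same image under an isogeny, and so $\bL_\ell\cdot\ker=\MT\cdot\ker$.

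Equal dimensions alone do not give equality, so a separate argument is needed. The plan is to show that $\MT(X_\CC)_{\QQ_\ell}\subseteq\bL_\ell$; then equality follows from connectedness and equal dimension. Both groups are connected, and $\pi_2^{-1}(\MT_2)\cap\bR$ has identity component equal to each of them. Concretely, the preimage $\pi_2^{-1}(\MT_2(X_\CC)_{\QQ_\ell})\cap\bR_{\QQ_\ell}$ is an algebraic group whose identity component is unique. Both $\bL_\ell$ and $\MT(X_\CC)_{\QQ_\ell}$ are connected subgroups of this preimage of full dimension, since the kernel is finite. Therefore each coincides with that identity component, and hence with each other, inside $\GL(V_{\QQ_\ell})\supseteq\bG_{\mot}(X)_{\QQ_\ell}$ via comparison.

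The main obstacle is the bookkeeping of identifications. \Cref{prop:defect rigidity weak Hodge} is stated for $\bR$ and $\Aut^{\gr}_{\alg}$ built from Betti cohomology, while $\bL_\ell$ lives in étale cohomology. The Artin comparison identifies graded algebras, hence carries $\bR_{\QQ_\ell}$, $\bC_{\QQ_\ell}$ and $\mu_H$ (over $\overline{\QQ}_\ell$) compatibly. It is also important that the generators provided by Pink are cocharacters of $\bL_\ell$ itself rather than of an abstract conjugate. This is ensured by \Cref{cor:Levi-generated-degreewise-Hodge-type}, since its conjugation is degree-preserving and is applied within $\bL_\ell$ directly.
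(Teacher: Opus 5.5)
Your proposal is correct and matches the paper's proof step by step. You generate $\bL_\ell$ by degreewise Hodge cocharacters, use $(\MTC_2)$ together with \Cref{prop:defect rigidity weak Hodge} to get $\bL_\ell\subseteq\bR_{\QQ_\ell}$, and then note that $\bL_\ell$ and $\MT(X_\CC)_{\QQ_\ell}$ are both connected subgroups of full dimension in $\bigl(q^{-1}(\MT_2(X_\CC)_{\QQ_\ell})\bigr)^\circ$ for the finite-kernel map $q\colon\bR_{\QQ_\ell}\to\bR_{2,\QQ_\ell}$, so they coincide. Your announced plan to prove $\MT(X_\CC)_{\QQ_\ell}\subseteq\bL_\ell$ separately is unnecessary: the identity-component argument you then give already yields equality, exactly as in the paper.
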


\begin{proof}
By \Cref{cor:Levi-generated-degreewise-Hodge-type}, the group $\bL_{\ell,\overline{\QQ}_\ell}$ is generated by cocharacters of degreewise Hodge type. The $\ell$-adic monodromy group acts by graded cup-algebra automorphisms, so these cocharacters lie in $\Aut_{\alg}^{\gr}(V_{\overline{\QQ}_\ell})$. The degree-two projection of each generator lies in
\[
 \bG_{\ell,2}(X)_{\overline{\QQ}_\ell}
 =\MT_2(X_{\CC})_{\overline{\QQ}_\ell}
 \subseteq\bR_{2,\overline{\QQ}_\ell},
\]
where the equality is \Cref{thm:Mumford--Tate conjecture HK degree 2}. By \Cref{prop:defect rigidity weak Hodge}, every generator lies in $\bR_{\overline{\QQ}_\ell}$. Hence $\bL_\ell\subseteq\bR_{\QQ_\ell}$ by descent.

Let $U_\ell=R_u(\bG_\ell(X))$. Its image in $\bG_{\ell,2}(X)$ is a connected normal unipotent subgroup. The degree-two Mumford--Tate theorem gives
\[
\bG_{\ell,2}(X)=\MT_2(X_{\CC})_{\QQ_\ell},
\]
which is reductive, so $U_\ell$ maps trivially to degree two. Since $\bG_\ell(X)=U_\ell\rtimes \bL_\ell$ and the degree-two projection is surjective, the restriction $\bL_\ell\to\bG_{\ell,2}(X)$ is surjective.

Let $q\colon\bR_{\QQ_\ell}\to\bR_{2,\QQ_\ell}$ be the degree-two projection. It has finite kernel by \Cref{lem:R to R2 finite kernel}. Both $\bL_\ell$ and $\MT(X_{\CC})_{\QQ_\ell}$ are connected subgroups of $\bR_{\QQ_\ell}$ and map onto $\bG_{\ell,2}(X)=\MT_2(X_{\CC})_{\QQ_\ell}$. They are therefore connected subgroups of
\[
 Q=\bigl(q^{-1}(\bG_{\ell,2}(X))\bigr)^\circ
\]
having the same dimension as $Q$. Thus both equal $Q$.
\end{proof}

\begin{remark}
The theorem does not assume uniqueness of Levi factors. It proves that, over a number field, every Levi subgroup equals the same subgroup $\MT(X_{\CC})_{\QQ_\ell}$ in the chosen motivic realization.
\end{remark}

\subsection{Rank comparison}
\label{subsec:Rank bounds for hyper-Kahler varieties}

For any algebraic group $G$, write $\rk G\coloneqq\rk G^\circ$; equivalently, this is the dimension of a maximal torus of $G^\circ$ after base change to an algebraic closure.

\begin{lemma}
\label{lem:rank under surjection}
Let $f\colon G\to H$ be a surjective homomorphism of connected algebraic groups over a field of characteristic zero. Then $\rk G\geq\rk H$. If
\[
1\longrightarrow Z\longrightarrow G\longrightarrow H\longrightarrow1
\]
is exact, then $\rk G=\rk Z^\circ+\rk H$.
\end{lemma}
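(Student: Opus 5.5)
We reduce both assertions to the structure theory of connected algebraic groups, working over an algebraic closure $\overline k$ of the base field. Ranks are insensitive to this base change. In characteristic zero every homomorphism is separable, so surjectivity of $f$ on the group schemes gives surjectivity on $\overline k$-points.

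For the inequality, pick a maximal torus $T\subseteq G_{\overline k}$. The key input is that the image of a maximal torus under a surjective homomorphism of connected groups is a maximal torus of the target (Borel, \emph{Linear Algebraic Groups}, Proposition~11.14). To see this, choose a Borel subgroup $B\supseteq T$. Then $f(B)$ is a Borel subgroup of $H$, because $H/f(B)$ is a quotient of the complete variety $G/B$ and $f(B)$ is connected solvable. Write $B=T\ltimes B_u$; since $f(B_u)$ is unipotent, $f(B)=f(T)\cdot f(B_u)$ identifies $f(T)$ with a complement to the unipotent radical of $f(B)$, hence with a maximal torus of $f(B)$ and of $H$. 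The image $f(T)$ is a torus of dimension at most $\dim T$, so $\rk H=\dim f(T)\leq\dim T=\rk G$.

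For the exact sequence, let $S$ be a maximal torus of $Z^\circ$ and enlarge it to a maximal torus $T$ of $G$. Then $f(T)$ is a maximal torus of $H$ by the first step. The kernel of $f|_T$ is $T\cap Z$, a diagonalizable group whose identity component $(T\cap Z)^\circ$ is a torus. Counting dimensions for $T\to f(T)$ gives $\dim T=\dim(T\cap Z)^\circ+\rk H$.

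It remains to show $(T\cap Z)^\circ=S$; this is the main point. Clearly $S\subseteq(T\cap Z)^\circ$. Conversely, $(T\cap Z)^\circ$ is a torus in $Z^\circ$ containing the maximal torus $S$, so maximality forces equality. This gives $\rk G=\rk Z^\circ+\rk H$.

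Note that the argument never requires $Z$ to be connected or reductive, and $G$ may be non-reductive. This is exactly the generality needed when the lemma is later applied to $\bG_\ell(X)\to\bG_{\ell,2}(X)$, whose kernel may contain unipotent and finite parts.
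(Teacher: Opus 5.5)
Your proof is correct and is the same argument as the paper's. You pass to $\overline k$, use that a surjection carries a maximal torus of $G$ onto a maximal torus of $H$, extend a maximal torus $S$ of $Z^\circ$ to a maximal torus $T$ of $G$, and count dimensions for $T\to f(T)$ using $(T\cap Z)^\circ=S$. The only difference is that you spell out details the paper leaves implicit: the Borel-subgroup justification that $f(T)$ is maximal, and the maximality argument for $(T\cap Z)^\circ=S$.
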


\begin{proof}
After base change to an algebraic closure, the image of a maximal torus of $G$ is a maximal torus of $H$. If $T_0$ is a maximal torus of $Z^\circ$, choose a maximal torus $T\subseteq G$ containing $T_0$. Then $T_0$ is the identity component of the kernel of $T\to f(T)$, and the dimension formula for tori gives the assertion.
\end{proof}

For the total and even representations, respectively, put
\[
\bP_\ell\coloneqq
\ker\bigl(\bG_\ell(X)\to\bG_{\ell,2}(X)\bigr),
\qquad
\bP_{\ell,+}\coloneqq
\ker\bigl(\bG_{\ell,+}(X)\to\bG_{\ell,2}(X)\bigr).
\]

\begin{theorem}
\label{thm:rank of ell algebraic monodromy groups}
Let $X$ be a hyper-Kähler variety over a finitely generated field $K/\QQ$. For every rational prime $\ell$,
\[
\rk\bG_\ell(X)
=
\rk\bG_{\ell,+}(X)
=
\rk\bG_{\ell,2}(X).
\]
Moreover,
\[
\bP_\ell^\circ=R_u(\bG_\ell(X)^\circ),
\qquad
\bP_{\ell,+}^\circ=R_u(\bG_{\ell,+}(X)^\circ).
\]
\end{theorem}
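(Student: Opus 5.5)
We reduce first to the number-field case, where \Cref{thm:Levi rigidity number field} already does most of the work, and then transport the conclusion to a general finitely generated $K/\QQ$ by spreading out.

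Assume first that $K$ is a number field. By \Cref{thm:independence of connected components} we may pass to a finite extension with $K=K^{\conn}$; ranks and identity components are unchanged. Choose a Levi subgroup $\bL_\ell\subseteq\bG_\ell(X)$, which exists by \Cref{lem:Mostow containment}. By \Cref{thm:Levi rigidity number field}, $\bL_\ell=\MT(X_\CC)_{\QQ_\ell}$. In its proof we also saw that $R_u(\bG_\ell(X))$ maps trivially to degree two, because $\bG_{\ell,2}(X)=\MT_2(X_\CC)_{\QQ_\ell}$ is reductive. Hence $R_u(\bG_\ell(X))\subseteq\bP_\ell$.

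Conversely, $\bP_\ell\cap\bL_\ell$ is the kernel of $\MT(X_\CC)\to\MT_2(X_\CC)$ after base change. By \Cref{lem:project MT group}, this kernel is finite. Since $\bG_\ell(X)=R_u\rtimes\bL_\ell$, the image of $\bP_\ell$ in $\bL_\ell\cong\bG_\ell/R_u$ is this finite group. Therefore $\bP_\ell^\circ\subseteq R_u$, and we get $\bP_\ell^\circ=R_u(\bG_\ell(X))$.

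For the rank statement, apply \Cref{lem:rank under surjection} to $1\to\bP_\ell\to\bG_\ell\to\bG_{\ell,2}\to1$. Since $\bP_\ell^\circ$ is unipotent, $\rk\bP_\ell^\circ=0$, so $\rk\bG_\ell=\rk\bG_{\ell,2}$.

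The even case follows by the same argument using the even parts of \Cref{thm:Levi rigidity number field} and \Cref{lem:project MT group}(1). Alternatively, the surjections $\bG_\ell\to\bG_{\ell,+}\to\bG_{\ell,2}$ give the rank inequalities $\rk\bG_\ell\ge\rk\bG_{\ell,+}\ge\rk\bG_{\ell,2}$, which squeezes the middle term. For the identification $\bP_{\ell,+}^\circ=R_u(\bG_{\ell,+}^\circ)$: the image of $R_u(\bG_\ell)$ in $\bG_{\ell,+}$ is $R_u(\bG_{\ell,+})$, since images of unipotent radicals under surjections are unipotent radicals in characteristic zero. The kernel $\bP_{\ell,+}$ is the image of $\bP_\ell$. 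Its identity component is therefore the image of $\bP_\ell^\circ=R_u(\bG_\ell)$, which is $R_u(\bG_{\ell,+})$.

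For general finitely generated $K$, spread out as in \Cref{reduction to number field}. We choose a closed point $s$ with $\bG_\ell(\fX_s)^\circ=\bG_\ell(X)^\circ$ under the étale-path identification of the cohomologies. This identification is degree-preserving, so it also matches the degree-two and even projections, their kernels, and unipotent radicals. The fibre $\fX_s$ is a hyper-Kähler variety over a number field. The only subtle point is that we need the Hilbert-irreducibility choice to work simultaneously for the total, even and degree-two monodromy groups. This holds because the even and degree-two groups are quotients of the total group, so equality for the total group forces equality for them. The statements then transfer from $\fX_s$ to $X$.

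The main obstacle is the number-field identification $\bL_\ell=\MT(X_\CC)_{\QQ_\ell}$, which is already provided by \Cref{thm:Levi rigidity number field}. What remains to check carefully is the bookkeeping for the finite kernel, in particular the parity element $\delta$ when odd cohomology is nonzero. This does not affect identity components.
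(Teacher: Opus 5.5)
Your proposal is correct and follows essentially the paper's route: Levi rigidity over a number field (\Cref{thm:Levi rigidity number field}), finiteness of the kernel of $\MT(X_\CC)\to\MT_2(X_\CC)$, and spreading out to a number-field fibre. The only difference is one of ordering. You derive $\bP_\ell^\circ=R_u(\bG_\ell(X))$ first over a number field and transport it, together with the degree-two group as the image of the common connected total group, to $X$. The paper instead transports only the rank equality, via the inequality $\rk\bG_{\ell,2}(\fX_s)\le\rk\bG_{\ell,2}(X)$, and then deduces $\bP_\ell^\circ=R_u$ from the fact that a connected group of rank zero is unipotent.
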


\begin{proof}
Ranks and identity components are unchanged after a finite extension, so we may replace $K$ by $K^{\conn}$.

Assume first that $K$ is a number field. Let $\bL_\ell$ be a Levi subgroup of $\bG_\ell(X)$. By \Cref{thm:Levi rigidity number field}, $\bL_\ell=\MT(X_{\CC})_{\QQ_\ell}$. A connected algebraic group and any Levi subgroup have the same rank. Moreover, \Cref{lem:project MT group} shows that $\MT(X_{\CC})\to\MT_2(X_{\CC})$ has finite kernel, while the degree-two Mumford--Tate conjecture is known for $X$. Hence
\[
\rk\bG_\ell(X)
=
\rk\MT(X_{\CC})
=
\rk\MT_2(X_{\CC})
=
\rk\bG_{\ell,2}(X).
\]

Now suppose that $K/\QQ$ has positive transcendence degree. Use the spreading out $\fX \to B$ from \Cref{reduction to number field}. After shrinking $B$, its relevant characteristic-zero fibers are polarized hyper-Kähler varieties. For the fixed prime $\ell$, choose a closed point $s\in B$ for which cospecialization identifies the connected total monodromy groups:
\[
\bG_\ell(\fX_s)^\circ
\simeq
\bG_\ell(X)^\circ.
\]
The degree-two monodromy group of $\fX_s$ embeds into that of the generic fiber. Applying the number-field case to $\fX_s$ gives
\[
\rk\bG_\ell(X)
=
\rk\bG_\ell(\fX_s)
=
\rk\bG_{\ell,2}(\fX_s)
\leq
\rk\bG_{\ell,2}(X).
\]
The reverse inequality follows from the degree-two quotient and \Cref{lem:rank under surjection}. Thus the total and degree-two ranks are equal. Since the even monodromy group is an intermediate quotient, its rank is the same.

Finally, apply \Cref{lem:rank under surjection} to the two degree-two projections. Rank equality shows that $\bP_\ell^\circ$ and $\bP_{\ell,+}^\circ$ have rank zero. A connected linear algebraic group of rank zero in characteristic zero is unipotent. Since the degree-two quotient is reductive by the degree-two Mumford--Tate theorem, the unipotent radical of each source lies in the corresponding kernel; conversely, each connected kernel is a normal unipotent subgroup. This proves the asserted equalities.
\end{proof}

\begin{cor}
\label{cor:Levi subgroup isogenous to G2}
For every finitely generated $K/\QQ$ and every Levi subgroup $\bL_\ell\subseteq\bG_\ell(X)^\circ$, the degree-two projection induces an isogeny
\[
\bL_\ell\longrightarrow\bG_{\ell,2}(X)^\circ.
\]
It also induces isogenies on the identity components of the centers and on the derived subgroups. The analogous statements hold for the even monodromy group.
\end{cor}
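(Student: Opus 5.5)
The plan is to read the corollary off \Cref{thm:rank of ell algebraic monodromy groups} together with the structure of Levi decompositions, using no further arithmetic input. First, we may replace $K$ by $K^{\conn}$, which changes neither identity components nor Levi subgroups. We write $G=\bG_\ell(X)^\circ$, $U=R_u(G)$ and $p\colon G\to\bG_{\ell,2}(X)^\circ$ for the degree-two projection. By \Cref{thm:Mumford--Tate conjecture HK degree 2}, the target is $\MT_2(X_\CC)_{\QQ_\ell}$, which is reductive. Hence $p(U)$ is a connected normal unipotent subgroup of a reductive group, so it is trivial and $U\subseteq\ker p$. Since $G=U\rtimes\bL_\ell$ and $p$ is surjective, the restriction $p|_{\bL_\ell}\colon\bL_\ell\to\bG_{\ell,2}(X)^\circ$ is also surjective.

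Next we bound the kernel. By \Cref{thm:rank of ell algebraic monodromy groups}, $(\ker p)^\circ=\bP_\ell^\circ=U$. Therefore $\ker(p|_{\bL_\ell})=\bL_\ell\cap\ker p$ has identity component contained in $\bL_\ell\cap U$, which is trivial because $U\rtimes\bL_\ell\to G$ is an isomorphism. So the kernel is finite. It is also normal in the connected group $\bL_\ell$, hence central. This shows that $p|_{\bL_\ell}$ is a (central) isogeny. An alternative route is a dimension count: $\dim\bL_\ell=\dim G-\dim U=\dim\bG_{\ell,2}(X)^\circ$.

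For the centers and derived subgroups, we use the standard fact that a surjection $f\colon L\to H$ of connected reductive groups with finite kernel maps $Z(L)^\circ$ onto $Z(H)^\circ$ and $L^{\der}$ onto $H^{\der}$. The image of the derived group is the derived group of the image, and the image of the central torus is a central torus whose product with $f(L^{\der})=H^{\der}$ is all of $H$, so it equals $Z(H)^\circ$. Each restriction has finite kernel, being the restriction of $f$, so both are isogenies. For the even group we repeat the argument verbatim with $\bG_{\ell,+}(X)^\circ$, $\bP_{\ell,+}$ and the second equality of \Cref{thm:rank of ell algebraic monodromy groups}.

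The only substantive input is $\bP_\ell^\circ=R_u$, which is already established. The point needing care is checking that $\bL_\ell\cap\ker p$ has trivial identity component even though $\ker p$ may be disconnected. This follows because its identity component is a connected subgroup of $(\ker p)^\circ=U$ lying in $\bL_\ell$, and $\bL_\ell\cap U=1$.
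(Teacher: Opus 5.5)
Your proof is correct and follows essentially the paper's argument. You kill the unipotent radical using reductivity of $\bG_{\ell,2}(X)^\circ=\MT_2(X_\CC)_{\QQ_\ell}$, restrict the surjection to the Levi, get finiteness of the kernel from \Cref{thm:rank of ell algebraic monodromy groups}, and finish with standard reductive structure theory. The only cosmetic difference is at the kernel step: the paper invokes the rank equality directly (a surjection of connected reductive groups of equal rank has finite kernel), whereas you use the equivalent ``moreover'' clause $\bP_\ell^\circ=R_u(\bG_\ell(X)^\circ)$ together with $\bL_\ell\cap R_u(\bG_\ell(X)^\circ)=1$.
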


\begin{proof}
The unipotent radical maps trivially to the reductive degree-two quotient, so the degree-two projection restricts to a surjection on a Levi subgroup. By \Cref{thm:rank of ell algebraic monodromy groups}, the source and target have the same rank. A surjective homomorphism of connected reductive groups of equal rank has finite kernel and hence is an isogeny. The assertions for connected centers and derived groups follow from the standard structure theory of reductive groups. The same argument applies to the even monodromy group.
\end{proof}

\section{Mumford--Tate conjecture for hyper-Kähler varieties}
\label{sec:Mumford--Tate conjecture for HK varieties}
Let $X$ be a hyper-Kähler variety over a finitely generated field $K/\QQ$. In this section, we focus on the proof of our main results for $X$. As before, we fix a field embedding $K \hookrightarrow \CC$. After the uniform finite extension of \Cref{thm:independence of connected components}, we may and do assume that $K=K^{\conn}$ for $\fh(X)$. This replacement does not change any connected algebraic monodromy group or any conclusion below. The argument is arranged in dependency order. We first compare derived monodromy in a suitable family, then identify the canonical Levi factor, and finally deduce the even splitting and the semisimple Mumford--Tate conjecture. Semisimplicity itself is treated only after these structural results are established.
\subsection{Monodromy in families of hyper-Kähler varieties}
A fruitful approach to the Mumford--Tate conjecture, inspired by \cite{Andre96} and \cite{Moonen17a}, is to reduce it to a simpler situation using a smooth family with ``large monodromy.'' We now recall several basic properties of the monodromy groups associated with families of hyper-Kähler varieties.
\subsubsection{Geometric monodromy}
Let $B$ be a geometrically connected smooth variety over $K$. Fix a field embedding $K \subset \CC$ and a $\QQ$VHS $\VV$ on $B_{\CC} = B \times_{K} \CC$. Recall that a complex point $s \in B(\CC)$ is \emph{Hodge generic} (with respect to $\VV$) if the Mumford--Tate group $\MT(\VV_{s})$ is maximal under monodromy conjugation. As a $\QQ$-local system, the \emph{algebraic monodromy group} of $\VV$ (with a base point $s \in B(\CC)$) is the identity component of the Zariski closure of
\[
\rho \colon \pi_1(B_{\CC},s) \longrightarrow \GL_{\QQ}(\VV_s)\,,
\]
denoted by $\bM(\VV)$. If one chooses a different base point, the monodromy representation changes by conjugation via parallel transport, and therefore the algebraic monodromy group remains isomorphic. In what follows, we sometimes suppress the base point when it causes no ambiguity.
\subsubsection{Comparison and normality}\label{subsub:generic Mumford-Tate group}
Let us recall some general features of $\bM(\VV)$:
\begin{itemize}[--]
	\item It is a normal subgroup $\bM(\VV)\triangleleft\MT(\VV_t)^{\der}$ if $t$ is Hodge generic in $B(\CC)$; see \cite[Theorem 1]{Andre92}. If the variation contains a CM fiber, André's fixed-part theorem identifies $\bM(\VV)=\MT(\VV_t)^{\der}$ for a Hodge-generic point $t$.
		\item If there is a $\QQ_{\ell}$-local system $\VV_{\ell}$ on $B$ such that $\VV \otimes \QQ_{\ell} \cong \VV_{\ell}^{\an}$, then it induces an isomorphism of $\QQ_{\ell}$-algebraic groups:
	\begin{equation}\label{eq:Artin comp monodromy groups}
		\bM(\VV)_{\QQ_{\ell}} \cong \bM(\VV_{\ell})\,.
	\end{equation}
		See \cite[Lemma~3.3]{Urb22} for details. Here $\bM(\VV_{\ell})$ is the identity component of the Zariski closure of the image of \[\pi_1^{\et}(B_{\overline{K}}, s) \to \GL_{\QQ_{\ell}}(\VV_s \otimes \QQ_{\ell})\,.\]
\end{itemize}
\begin{remark}
Motivic Galois groups of André motives in a family share features similar to those of Mumford--Tate groups. Let $B$ be a geometrically connected smooth variety over $K \subseteq \CC$, and let $\cM/B$ be a family of motives over $B$. Assume that the generic motivic Galois group $\bG_{\mot}(\cM/B)$ is connected. Then the algebraic monodromy group of its Betti realization satisfies
\[
\bM(\rH_B(\cM/B)) \triangleleft \bG_{\mot}(\cM/B)^{\der}
\]
as a normal subgroup. If the family contains a fiber with abelian motivic Galois group, then $\bM(\rH_B(\cM/B))=\bG_{\mot}(\cM/B)^{\der}$ for the generic motivic Galois group. See \cite[Theorem 0.6.4]{Andre96b}.
\end{remark}
\subsubsection{Galois-generic fibers}
Let $\VV_{\ell}$ be an $\ell$-adic local system on $B$. Consider the subset of points of $B$ whose $\ell$-adic algebraic monodromy group is strictly smaller than that of the generic fiber under the specialization $\eta \rightsquigarrow s$, i.e.,
\[
\Exc_{\ell} \coloneqq \Set*{s \in B \given \bG_{\ell,s} \subsetneq \bG_{\ell,\eta} }\,.
\]
The subset $\Exc_{\ell}$ is called the ($\ell$-adic) exceptional locus of $\VV_{\ell}$, and any point $s\in B\setminus\Exc_{\ell}$ is called an \emph{($\ell$-adic) Galois-generic point}. By definition, the generic point $\eta\in B$ is Galois generic for every $\ell$-adic local system on $B$.

\begin{example}\label{lem:Hodge generic iff Galois generic}
    Let $f \colon\fX \to B$ be a smooth projective family of hyper-Kähler varieties. For the degree-two primitive-cohomology local system $\VV_{\ell,2} \coloneqq R^2_{\prim}f_*\QQ_{\ell}$, a point $s \in B$ is Galois generic with respect to $\VV_{\ell,2}$ if and only if, for any field embedding $k(s) \hookrightarrow \CC$, the point $s_{\CC} \in B(\CC)$ is Hodge generic with respect to $\VV_2 \coloneqq R^2_{\prim}f_{\CC,*} \QQ$. This follows from $(\MTC_2)$ for hyper-Kähler varieties (\Cref{thm:Mumford--Tate conjecture HK degree 2}).
\end{example}
\begin{remark}
    Our definition of the exceptional locus coincides with the traditional one based on the dimension of the connected $\ell$-adic monodromy groups: a proper closed subgroup of a connected algebraic group that is itself connected has strictly smaller dimension.
\end{remark}

\begin{lemma}\label{lem:monodromy inside derived when Galois generic}
Let $f\colon\fX\to B$ be a smooth projective family of hyper-Kähler varieties, and let $\square\in\{\bullet,+\}$. Suppose that $s\in B$ is $\ell$-adic Galois generic in degree two. After identifying cohomology along a specialization path, every Levi subgroup $\bL_{\ell,\square,s}$ is contained in a Levi subgroup $\bL_{\ell,\square,\eta}$, and in fact
\[
 \bL_{\ell,\square,s}=\bL_{\ell,\square,\eta}.
\]
Moreover, the geometric monodromy group satisfies
\[
 \bM(\VV_{\ell,\square})
 \subseteq \bL_{\ell,\square,s}^{\der}
\]
as a connected normal semisimple subgroup.
\end{lemma}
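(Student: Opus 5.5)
We first compare the monodromy groups at $s$ and at the generic point $\eta$. After choosing an étale specialization path $\eta\rightsquigarrow s$, smooth proper base change identifies $\rH^\bullet_\et(\fX_{\overline s},\QQ_\ell)$ with $\rH^\bullet_\et(\fX_{\overline\eta},\QQ_\ell)$. It does so compatibly with the twisted LLV representation by \Cref{lem:LLV representation is deformation invariant}. The Galois image of $s$ is the image of a decomposition group, so it lands inside the image of $\pi_1^{\et}(B)$. Hence $\bG_{\ell,\square}(\fX_s)^\circ\subseteq\bG_{\ell,\square}(\fX_\eta)^\circ$.

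A Levi subgroup $\bL_{\ell,\square,s}$ of the smaller group is a reductive subgroup of the larger one. By Mostow's containment theorem (\Cref{lem:Mostow containment}), it therefore lies in some Levi subgroup $\bL_{\ell,\square,\eta}$. This gives the first containment.

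For equality, we compare ranks and dimensions through degree two. By Galois genericity in degree two, $\bG_{\ell,2}(\fX_s)^\circ=\bG_{\ell,2}(\fX_\eta)^\circ$. By \Cref{cor:Levi subgroup isogenous to G2}, applied at both points and using that $\fX_\eta$ is a hyper-Kähler variety over the finitely generated field $K(B)$, both $\bL_{\ell,\square,s}$ and $\bL_{\ell,\square,\eta}$ map isogenously onto this common degree-two group. Their dimensions therefore agree. A connected closed subgroup of a connected group with the same dimension equals it, so $\bL_{\ell,\square,s}=\bL_{\ell,\square,\eta}$.

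The second assertion concerns geometric monodromy. $\bM(\VV_{\ell,\square})$ is the identity component of the Zariski closure of the image of $\pi_1^{\et}(B_{\overline K})$, which is normal in $\pi_1^\et(B)$. It is therefore a connected normal subgroup of $\bG_{\ell,\square,\eta}^\circ$.

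Next we show that $\bM(\VV_{\ell,\square})$ is semisimple. By Deligne's semisimplicity theorem, the geometric local system $R^\bullet f_*\QQ_\ell|_{B_{\overline K}}$ is semisimple, so $\bM(\VV_{\ell,\square})$ is reductive. Its connected center acts on the pure weight-$i$ summands. By Deligne's Weil II argument, the connected center of geometric monodromy is trivial on each pure lisse sheaf, after the standard reduction of finite type to the finite-field case. Alternatively, we can use Betti comparison \eqref{eq:Artin comp monodromy groups} together with André's result that $\bM(\VV)\triangleleft\MT^{\der}$. Either way $\bM$ is semisimple.

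Being a connected normal reductive subgroup of $\bG^\circ_{\ell,\square,\eta}=U\rtimes\bL_{\ell,\square,\eta}$, it intersects $U$ in a finite normal unipotent group, which must be trivial. It is then contained in some Levi subgroup by Mostow. We must upgrade this to containment in the particular Levi $\bL_{\ell,\square,\eta}=\bL_{\ell,\square,s}$.

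The main obstacle is this last point, since Levi subgroups are only conjugate under $U(\QQ_\ell)$. I would argue as follows. A connected semisimple normal subgroup $N$ of $U\rtimes L$ satisfies $[U,N]\subseteq U\cap N=1$, so $N$ centralizes $U$. Then $N$ is normalized by every $U$-conjugate of $L$. Since $N=uNu^{-1}\subseteq uL'u^{-1}$ for all $u$ whenever $N\subseteq L'$, $N$ lies in every Levi subgroup, in particular in $\bL_{\ell,\square,s}$.

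Finally, $N$ is semisimple and normal in $\bL_{\ell,\square,s}$. It is therefore contained in $\bL_{\ell,\square,s}^{\der}$, because the connected center meets $N$ finitely and $N=[N,N]$. It is normal there as well.
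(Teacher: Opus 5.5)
Your proposal is correct and follows essentially the same route as the paper: cospecialization plus Mostow containment, equality of the two Levis by the dimension count through the isogeny of \Cref{cor:Levi subgroup isogenous to G2} and degree-two Galois genericity, and then the observation that a connected semisimple normal subgroup meets the unipotent radical trivially, hence lies in every Levi and in its derived subgroup. You give more detail than the paper on why $\bM(\VV_{\ell,\square})$ is semisimple and why it lies in the specific Levi; for the latter, normality of $\bM(\VV_{\ell,\square})$ together with $U(\QQ_\ell)$-conjugacy of Levis already suffices, so your centralizer step is harmless but not needed.
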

\begin{proof}
Cospecialization gives $\bG_{\ell,\square,s}\subseteq\bG_{\ell,\square,\eta}$. By \Cref{lem:Mostow containment}, the reductive subgroup $\bL_{\ell,\square,s}$ is contained in some generic Levi subgroup $\bL_{\ell,\square,\eta}$. By \Cref{cor:Levi subgroup isogenous to G2}, both Levi subgroups are isogenous to their degree-two groups. Since $s$ is degree-two Galois generic,
\[
 \dim \bL_{\ell,\square,s}=\dim\bG_{\ell,2,s}
 =\dim\bG_{\ell,2,\eta}=\dim \bL_{\ell,\square,\eta}.
\]
The inclusion is therefore an equality.

The geometric monodromy group is connected semisimple and normal in the generic arithmetic monodromy group. A connected semisimple normal subgroup lies in every Levi: its intersection with the unipotent radical is trivial, and the commutator with that radical lies in the intersection. It consequently lies in the derived subgroup of the common Levi.
\end{proof}

Let $\VV$ be a polarizable $\QQ\VHS$ on $B$. If the algebraic monodromy group $\bM(\VV) = \MT(\VV_s)^{\der}$ for Hodge generic points $s$, then we say $\VV$ has \emph{maximal monodromy}.

\begin{definition}
    Let $f \colon \fX \to B$ be a smooth projective family. We say $f$ has maximal monodromy (resp.~maximal monodromy in degree $i$) if $R^{\bullet}f_*\QQ$ (resp.~$R^if_*\QQ$) has maximal monodromy.
\end{definition}
\subsubsection{Derived monodromy at a Galois-generic fiber}
As recalled in \Cref{subsub:generic Mumford-Tate group}, the algebraic monodromy group of a polarized $\QQ$-variation of Hodge structure is a normal subgroup of the derived generic Mumford--Tate group; this follows from Deligne's theorem of the fixed part. For a family of hyper-Kähler varieties, this determines the derived $\ell$-adic monodromy group in degree $2$ at a Galois-generic fiber, using the degree-two Mumford--Tate conjecture.

By the rank estimate, the derived subgroups of Levi factors of the total and even monodromy groups are isogenous to the derived degree-two group (\Cref{cor:Levi subgroup isogenous to G2}). We now describe these maps at $\ell$-adic Galois-generic fibers.
\begin{prop}\label{prop:generic derived splitting}
Let $f \colon \fX \to B$ be a smooth projective family of hyper-Kähler varieties over a smooth geometrically connected variety $B/K$, with maximal monodromy in degree $2$, and fix a prime $\ell$. Let $s\in B\setminus\Exc_{\ell,2}$ and let $\bL_{\ell,\square,s}$ be a Levi subgroup of the monodromy group of $\fX_s$, where $\square\in\{\bullet,+\}$. Then
\[
 \MT_{\square}(\fX_{s_\CC})_{\QQ_\ell}^{\der}
 =\bM(\VV_{\ell,\square})
 =\bL_{\ell,\square,s}^{\der}
\]
inside $\bG_{\mot,\square}(\fX_s)_{\QQ_\ell}$.
\end{prop}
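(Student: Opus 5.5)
Identify all three groups with one of them by combining two inclusions with a dimension count.

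First, relate the geometric monodromy to the Mumford--Tate side. The family has maximal monodromy in degree two, so $\bM(\VV_2)=\MT_2(\fX_{t})^{\der}$ at a Hodge-generic point $t$. By \Cref{lem:Hodge generic iff Galois generic}, $s$ is Galois generic in degree two, so any complex point $s_\CC$ above $s$ is Hodge generic for $\VV_2$. I then lift from degree two to the total (resp.\ even) variation:
\begin{itemize}
\item by \Cref{lem:project MT group}, $\MT_\square(\fX_{s_\CC})\to\MT_2(\fX_{s_\CC})$ is an isogeny, so Hodge genericity in degree two implies Hodge genericity for $R^\square f_*\QQ$;
\item by André's normality theorem, $\bM(\VV_\square)$ is a connected normal subgroup of $\MT_\square(\fX_{s_\CC})^{\der}$;
\item its image in degree two is $\bM(\VV_2)=\MT_2^{\der}$, and the projection of derived groups is an isogeny.
\end{itemize}
Together these force $\bM(\VV_\square)=\MT_\square(\fX_{s_\CC})^{\der}$. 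Tensoring with $\QQ_\ell$ via \eqref{eq:Artin comp monodromy groups} gives the first equality, $\MT_\square(\fX_{s_\CC})^{\der}_{\QQ_\ell}=\bM(\VV_{\ell,\square})$. The comparison is made along the specialization path, using that the Betti and étale local systems agree.

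Second, \Cref{lem:monodromy inside derived when Galois generic} gives the inclusion $\bM(\VV_{\ell,\square})\subseteq\bL_{\ell,\square,s}^{\der}$. To upgrade this to an equality I compare dimensions.
\begin{itemize}
\item By \Cref{cor:Levi subgroup isogenous to G2}, $\bL_{\ell,\square,s}^{\der}$ is isogenous to $\bG_{\ell,2}(\fX_s)^{\circ,\der}$.
\item By \Cref{thm:Mumford--Tate conjecture HK degree 2}, $\bG_{\ell,2}(\fX_s)^{\circ,\der}=\MT_2(\fX_{s_\CC})^{\der}_{\QQ_\ell}$.
\item This last group is isogenous to $\MT_\square(\fX_{s_\CC})^{\der}_{\QQ_\ell}=\bM(\VV_{\ell,\square})$.
\end{itemize}
So $\dim\bL^{\der}_{\ell,\square,s}=\dim\bM(\VV_{\ell,\square})$. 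Since $\bL^{\der}$ is connected and contains $\bM$, the two are equal.

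The main obstacle is making the equalities hold inside the common ambient group $\bG_{\mot,\square}(\fX_s)_{\QQ_\ell}$, not merely as abstract isomorphisms. The geometric monodromy lives on the cohomology of a base point, while $\MT$ and $\bL$ live on the fiber at $s$. I would fix the base point of $\pi_1$ at $s_\CC$ and transport along the chosen étale path, so that Artin comparison identifies the Betti and étale monodromy groups as subgroups of $\GL(\rH^\bullet(\fX_{s}))$. Normality in $\MT_\square(\fX_{s_\CC})^{\der}$ uses André's theorem, which requires the base point to be Hodge generic; this is exactly what the lifting argument in the first step supplies. I would also check that André's theorem applies to the possibly nonprimitive total variation. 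This is fine, since it holds for any polarizable $\QQ$VHS.
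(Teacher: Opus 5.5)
Your proposal is correct and follows essentially the paper's argument. You get the inclusion $\bM(\VV_{\ell,\square})\subseteq\bL_{\ell,\square,s}^{\der}$ from \Cref{lem:monodromy inside derived when Galois generic}, and you upgrade it to equality by a dimension count using the degree-two isogenies of \Cref{lem:project MT group} and \Cref{cor:Levi subgroup isogenous to G2}, $(\MTC_2)$, and maximal monodromy in degree two. The only difference is one of ordering. You first prove $\bM(\VV_\square)=\MT_\square(\fX_{s_\CC})^{\der}$ explicitly, via Hodge genericity of $s_\CC$ for the total variation and André's normality theorem, and then feed it into the count. The paper instead runs the count on the monodromy side and invokes that Hodge-theoretic identification only at the end.
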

\begin{proof}
	By \Cref{lem:monodromy inside derived when Galois generic}, $\bM(\VV_{\ell,\square})$ is contained in $\bL_{\ell,\square,s}^{\der}$. Maximal monodromy in degree two and $(\MTC_2)$ give
	\[
	 \bM(\VV_{\ell,2})
	 =\MT_2(\fX_{s_\CC})_{\QQ_\ell}^{\der}
	 =\bG_{\ell,2,s}^{\der}.
	\]
	The projections from the even and total geometric monodromy groups to degree two are, respectively, an isomorphism and a central isogeny of degree at most $2$; this follows from \Cref{lem:project MT group} and Artin comparison. On the other hand, \Cref{cor:Levi subgroup isogenous to G2} makes $\bL_{\ell,\square,s}^{\der}\to\bG_{\ell,2,s}^{\der}$ an isogeny. Thus the contained connected semisimple groups have the same dimension, so
	\[
	 \bM(\VV_{\ell,\square})=\bL_{\ell,\square,s}^{\der}.
	\]
	The Hodge-theoretic maximal-monodromy equality gives the remaining identification with the derived Mumford--Tate group.
\end{proof}

Combining the (local) Torelli theorem for hyper-Kähler varieties with the descent results in \cite{Andre96}, we obtain the following family.
\begin{prop}\label{prop:to be a Galois generic point in a fiber}
Let $X$ be a hyper-Kähler variety over $K$. Fix a prime $\ell$. After a finite extension $K'/K$, there is a smooth projective family $\fX\to B$ of hyper-Kähler varieties over a smooth geometrically connected $K'$-variety, with maximal monodromy in degree two, and a point $b\in B(K')\setminus\Exc_{\ell,2}$ such that $\fX_b\cong X_{K'}$.
\end{prop}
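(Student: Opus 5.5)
We need to produce a family with maximal degree-two monodromy containing $X$ as a Galois-generic fiber. The plan is to build a local universal polarized family around $X$, show its degree-two monodromy is maximal, and then use Hilbert irreducibility to find a rational Galois-generic point isomorphic to $X$ after base change. Throughout, the case $b_2(X)=3$ is handled separately: there the polarized deformation space is zero-dimensional by local Torelli. We take $B$ a point over a number field via \Cref{lem:b2=3 descends to number field}. Then $\bM(\VV_2)$ is trivial, while $\MT_2^{\der}$ is trivial because $\MT_2$ is a one-dimensional torus, so monodromy is trivially maximal and every point is Galois generic. So assume $b_2\ge4$.

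\textbf{Step 1 (the family).} After a finite extension, fix a polarization $h$ on $X$. Let $\mathscr M$ be the moduli stack of polarized hyper-Kähler varieties of the deformation type and polarization type of $(X,h)$; it is of finite type and Deligne--Mumford, and by local Torelli it is smooth of dimension $b_2-3$. Add a level structure on $\rH^2$ (possible after a further finite extension of $K$) so that the stack becomes a scheme carrying a universal family. Take $B$ to be the connected component containing the point $b$ of $(X,h)$, and let $\fX\to B$ be the universal family; this is defined over some $K'$ with $\fX_b\cong X_{K'}$. Since $B$ contains $b$ rationally, $B$ is geometrically connected after possibly enlarging $K'$.

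\textbf{Step 2 (maximal monodromy in degree two).} Over $\CC$, the period map of $\VV_2=R^2_{\prim}f_*\QQ$ is a local isomorphism onto an open subset of the period domain for $\SO(h^\perp)$, by local Torelli. The Hodge-generic Mumford--Tate group of $h^\perp$ is then the full $\SO(h^\perp)$ up to centre: the orbit is open, so no smaller Hodge tensor survives generically. Deligne--André normality gives $\bM(\VV_2)\triangleleft\MT^{\der}$. The monodromy group is nontrivial because the period map is nonconstant, and it is Zariski dense in $\SO(h^\perp)$. For density one can invoke the arithmeticity/Zariski density of the monodromy of a connected component of the polarized moduli (André \cite{Andre96}). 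Alternatively, observe that for $b_2\ne6$ the group $\SO(h^\perp)$ is $\QQ$-simple, and that in the exceptional $\fso_4$-type case, where $h^\perp$ has dimension $4$, the open period image forces both factors to appear. This gives $\bM(\VV_2)=\MT_2^{\der}$, i.e.\ maximal monodromy. I expect this step to be the main obstacle, especially the small-$b_2$ nonsimple cases and the care needed with the level structure; the plan is to cite André's descent/density results rather than reprove them.

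\textbf{Step 3 (Galois genericity).} The point $b$ need not be Galois generic, so we change the family rather than the point. Note that $\Exc_{\ell,2}$ depends on the family through the comparison of $\bG_{\ell,2,b}$ with $\bG_{\ell,2,\eta}$. Replace $B$ by a base change along a finite étale cover, or a subvariety through $b$, such that $b$ becomes Galois generic. Concretely, use \Cref{lem:Hodge generic iff Galois generic}: Galois genericity in degree two is equivalent to Hodge genericity of $b_\CC$. If $b_\CC$ is not Hodge generic, restrict $f$ to the special subvariety $S\subseteq B$ through $b$ cut out by the Hodge tensors of $\fX_b$. This is the component of the Noether--Lefschetz/Hodge locus containing $b$, algebraic by Cattani--Deligne--Kaplan and defined over a finite extension, since it is a component of a locus of Tate classes. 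On $S$, the point $b$ is Hodge generic by construction.

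It remains to see that $f|_S$ still has maximal monodromy in degree two. The family over $S$ is a family of orthogonal Shimura-type period images, namely $\SO$ of the transcendental part, possibly a unitary group in the CM-like case. Apply André's theorem that special subvarieties of orthogonal type carry maximal monodromy when they contain a CM point (\Cref{subsub:generic Mumford-Tate group}). CM points are dense in special subvarieties, which gives $\bM=\MT^{\der}$ on $S$. Smooth $S$ by shrinking around $b$ if needed, and pass to the finite extension $K'$.
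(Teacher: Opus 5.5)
Your proposal is correct, and its Step~3 is exactly the paper's argument. Both restrict the universal family over a level-structure moduli chart with étale period map to the component through $b$ of the preimage of the smallest special subvariety containing the period of $X$; this makes $b$ Hodge generic by construction, maximal degree-two monodromy follows from a CM point and André's theorem, and $(\MTC_2)$ converts Hodge genericity into Galois genericity, with the constant family over a point when $b_2=3$. Your Step~2 is superfluous (its exceptional $\fso_4$ case is $b_2=5$, not $b_2=6$), and the descent of $S$ to a finite extension should rest, as in the paper, on André's descent theorem rather than on a ``locus of Tate classes''.
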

\begin{proof}
	If $b_2(X)=3$, take $K'=K$, $B=\Spec K$, and the constant family $\fX=X$. The degree-two Mumford--Tate group is a torus, so both its derived group and the geometric monodromy group of this constant family are trivial. Thus the family has maximal monodromy in degree two and its unique point is Galois generic.

	Assume $b_2(X)>3$. After a finite extension, choose a fine moduli space of polarized hyper-Kähler varieties with neat level structure, an étale chart through the point of $X$, and its universal family. The period map on this chart is étale by local Torelli. Let $\mathscr S_b$ be the smallest Mumford--Tate special subvariety of the orthogonal Shimura variety that contains the period of $X$, and let $B$ be the connected component through $b$ of its inverse image, shrunk if necessary.

	By minimality of $\mathscr S_b$, the point $b$ is Hodge generic for $R^2_{\prim}f_*\QQ$ on $B$. Special points are dense in each connected component of $\mathscr S_b$, so the open period image of $B$ contains a CM point. André's fixed-part theorem therefore gives
	\[
	 \bM(R^2_{\prim}f_*\QQ)
	 =\MT_2(\fX_{b,\CC})^{\der};
	\]
	thus the family has maximal monodromy in degree two. The algebraicity and descent after a finite extension are supplied by the descent theorem in \cite{Andre96}; see also \cite[Theorem 4.5.2]{Bindt21}. Finally, $(\MTC_2)$ identifies Hodge genericity with $\ell$-adic Galois genericity, so $b\notin\Exc_{\ell,2}$.
\end{proof}

\begin{cor}\label{cor:derived parts are equal}
For a hyper-Kähler variety $X/K$, the derived subgroup of every Levi $\bL_\ell\subseteq\bG_\ell(X)^\circ$ satisfies
\[
 \bL_\ell^{\der}=\MT(X_{\CC})_{\QQ_\ell}^{\der}
 \subseteq\bG_{\mot}(X)_{\QQ_\ell}.
\]
\end{cor}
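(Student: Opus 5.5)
We combine \Cref{prop:to be a Galois generic point in a fiber} with \Cref{prop:generic derived splitting}. The claimed equality concerns connected $\QQ_\ell$-subgroups, so we may pass to a finite extension $K'/K$: it changes neither $\bG_\ell(X)^\circ$, its Levi subgroups, nor $\MT(X_\CC)$ (for a compatible embedding $K'\hookrightarrow\CC$). We also replace $\bG_{\mot}(X)_{\QQ_\ell}$ by its identity component, which contains both groups.

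First, \Cref{prop:to be a Galois generic point in a fiber} provides, after such an extension, a smooth projective family $f\colon\fX\to B$ of hyper-Kähler varieties with maximal monodromy in degree two and a point $b\in B(K')\setminus\Exc_{\ell,2}$ with $\fX_b\cong X_{K'}$. Next, apply \Cref{prop:generic derived splitting} with $\square=\bullet$ and $s=b$. For any Levi subgroup $\bL_{\ell,\bullet,b}$ of the monodromy group of $\fX_b$ it gives
\[
\MT(\fX_{b,\CC})^{\der}_{\QQ_\ell}=\bM(\VV_{\ell,\bullet})=\bL_{\ell,\bullet,b}^{\der}
\]
inside $\bG_{\mot}(\fX_b)_{\QQ_\ell}$. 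Transporting this along $\fX_b\cong X_{K'}$, every Levi $\bL_\ell\subseteq\bG_\ell(X)^\circ$ is a Levi of the monodromy group of $\fX_b$. This yields $\bL_\ell^{\der}=\MT(X_\CC)^{\der}_{\QQ_\ell}$.

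The step needing the most care is the transport. Here one checks three things. First, the identification in \Cref{prop:generic derived splitting} is made inside the motivic group of the fiber. Second, the geometric monodromy group $\bM(\VV_{\ell,\bullet})$, based at $b$, sits inside $\GL(\rH^\bullet_{\et}(X_{\overline K},\QQ_\ell))$ compatibly with the Artin comparison used to embed $\MT(X_\CC)$. Third, this embedding is compatible with the isomorphism $\fX_b\cong X_{K'}$ and with the choice of complex embedding extending $K\hookrightarrow\CC$. Since both sides of the claimed equality are defined intrinsically from $\rH^\bullet(X)$, the ($\ell$-adic and Betti) comparison isomorphisms at the base point $b$ handle this. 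The statement holds for every Levi because \Cref{prop:generic derived splitting} itself holds for every Levi subgroup at $b$. Alternatively, all Levis are $U(\QQ_\ell)$-conjugate by \Cref{lem:Mostow containment}, and unipotent conjugation fixes $\bM(\VV_{\ell,\bullet})$, which lies in every Levi by \Cref{lem:monodromy inside derived when Galois generic}.

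When $b_2(X)=3$ the family is constant, and both derived groups in \Cref{prop:generic derived splitting} are trivial. It is cleaner to argue directly here. By \Cref{lem:b2=3 descends to number field} we reduce to a number field. There \Cref{thm:Levi rigidity number field} gives $\bL_\ell=\MT(X_\CC)_{\QQ_\ell}$, and taking derived subgroups finishes this case.
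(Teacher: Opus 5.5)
Your proposal is correct and follows the paper's proof: after the finite extension supplied by \Cref{prop:to be a Galois generic point in a fiber}, you apply \Cref{prop:generic derived splitting} with $\square=\bullet$ at the Galois-generic point $b$ and transport along $\fX_b\cong X_{K'}$, using that connected monodromy, Levi derived groups and $\MT(X_\CC)$ are unchanged by the extension. Your separate treatment of $b_2(X)=3$ via \Cref{lem:b2=3 descends to number field} and \Cref{thm:Levi rigidity number field} is valid but not needed, because the constant-family case of \Cref{prop:to be a Galois generic point in a fiber} already covers it (both derived groups are trivial there).
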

\begin{proof}
Apply \Cref{prop:to be a Galois generic point in a fiber} and \Cref{prop:generic derived splitting} after the finite extension supplied there. Connected monodromy, Levi derived groups, and the Mumford--Tate group are unchanged by that extension.
\end{proof}

\subsubsection{Generic monodromy of a universal family}
In this subsubsection, assume $b_2(X)>3$. Let $\fX\to B$ be the pullback of a universal polarized family to a finite étale cover of a connected component of the corresponding moduli space. Put $\Lambda_h=h^\perp\subset\rH^2(\fX_{\eta_\CC},\ZZ)$. The transcendental Hodge structure $T(\fX_{\eta_\CC})$ is simple by \cite[Theorem~1.4.1]{Zarhin83}. Moreover, local Torelli gives full generic monodromy $\SO(\Lambda_h)_\QQ$; see \cite[Corollary~3.3.3]{Andre96b}. It follows from Zarhin's description of the Hodge group \cite[Theorem~2.2.1]{Zarhin83} that
\begin{equation}\label{eq:generic transcendental endomorphisms}
 T(\fX_{\eta_\CC})\cong\Lambda_h,
 \qquad
 \End_{\Hdg}\bigl(T(\fX_{\eta_\CC})\bigr)=\QQ.
\end{equation}

\subsection{Canonical Levi factors}
We show that every Levi subgroup of the total monodromy group factors through the twisted LLV representation. For the $b_2>3$ branch below, this follows from \eqref{eq:generic transcendental endomorphisms} and specialization in a universal polarized family. An étale path from the given fiber to a geometric generic point identifies the graded $\ell$-adic cohomology spaces and gives the usual cospecialization inclusion of connected algebraic monodromy groups. Under the same identification, the twisted LLV representation is constant by \Cref{lem:LLV representation is deformation invariant}. We use only these two consequences; no claim that an arbitrarily chosen parallel-transport operator is itself a motivated correspondence is needed.

\begin{prop}\label{prop:cocharacters in Spin group}
	Keep the notation of \Cref{notation:Spin groups}. For a hyper-Kähler variety $X/K$, every Levi subgroup $\bL_\ell\subseteq\bG_\ell(X)^\circ$ satisfies
	\[
	 \bL_\ell\subseteq\rho^{\tw}(\GSpin_\ell).
	\]
	The analogous assertion holds for every Levi subgroup of $\bG_{\ell,+}(X)^\circ$.
\end{prop}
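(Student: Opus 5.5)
We split according to $b_2(X)$ and reduce everything to the number-field rigidity statement, \Cref{thm:Levi rigidity number field}, together with the derived comparison, \Cref{cor:derived parts are equal}. Since $\bL_\ell$ is reductive and connected, we write $\bL_\ell=Z(\bL_\ell)^\circ\cdot\bL_\ell^{\der}$. \Cref{cor:derived parts are equal} gives $\bL_\ell^{\der}=\MT(X_\CC)^{\der}_{\QQ_\ell}$, and by \Cref{rmk:MT inside GSpin group} this group lies in $\rho^{\tw}(\GSpin_\ell)=\bR_{\QQ_\ell}$. So the whole problem is to place the connected center $Z(\bL_\ell)^\circ$ inside $\bR_{\QQ_\ell}$. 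The even case runs identically, using $\bG_{\ell,+}$ and \Cref{lem:project MT group}(1).

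\textbf{Case $b_2=3$.} Apply \Cref{lem:b2=3 descends to number field}. After a finite extension, $X_{K'}\cong X_0\times_EK'$ with $E$ a number field, and the Galois images coincide. Hence $\bG_\ell(X)^\circ=\bG_\ell(X_0)^\circ$ under the base-change identification of cohomology, which is an isomorphism of graded algebras and therefore respects $\rho^{\tw}$. Every Levi subgroup of $\bG_\ell(X)^\circ$ is then a Levi subgroup of $\bG_\ell(X_0)^\circ$. By \Cref{thm:Levi rigidity number field} it equals $\MT(X_{0,\CC})_{\QQ_\ell}=\MT(X_\CC)_{\QQ_\ell}\subseteq\bR_{\QQ_\ell}$.

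\textbf{Case $b_2>3$, center.} We control the center through the degree-two projection. By \Cref{cor:Levi subgroup isogenous to G2}, $Z(\bL_\ell)^\circ\to Z(\bG_{\ell,2}(X))^\circ$ is an isogeny. By $(\MTC_2)$ the target is $Z(\MT_2(X_\CC))^\circ_{\QQ_\ell}$, which lies in $\bR_{2,\QQ_\ell}$.

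To lift this into $\bR$, we place $X$ in the family of \Cref{prop:to be a Galois generic point in a fiber}, or in a universal polarized family, with generic point $\eta$. We then use the following consequence of \eqref{eq:generic transcendental endomorphisms} and $(\MTC_2)$ at $\eta$. Generically, $\bG_{\ell,2,\eta}^\circ$ is $\GG_m\cdot\SO(\Lambda_h)_{\QQ_\ell}$ acting on $\QQ_\ell h\oplus\Lambda_{h,\QQ_\ell}$, and its center is just the weight torus. \Cref{lem:homotheties in algebraic monodromy group} puts the weight torus into $\bG_\ell(\fX_\eta)^\circ$, where it acts by $z^i$ in degree $i$, i.e.\ inside $\rho^{\tw}(\GG_m)$. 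Combined with the derived equality of \Cref{prop:generic derived splitting}, this gives that the generic Levi $\bL_{\ell,\eta}$ equals $\rho^{\tw}(\GG_m)\cdot\bM(\VV_\ell)\subseteq\bR_{\QQ_\ell}$. Here we use \Cref{lem:LLV representation is deformation invariant} to transport $\bR$.

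\textbf{Specialization.} Cospecialization along the étale path gives $\bG_\ell(X)^\circ\subseteq\bG_\ell(\fX_\eta)^\circ$. By \Cref{lem:Mostow containment}, the reductive group $\bL_\ell$ lies in some generic Levi subgroup, which is $u\bL_{\ell,\eta}u^{-1}$ for a unipotent $u$ in the generic unipotent radical. \Cref{thm:rank of ell algebraic monodromy groups} identifies that radical with $\bP_{\ell,\eta}^\circ$, and \Cref{lem:defect group commutes with twisted LLV} shows it centralizes $\bR$. Hence $u\bL_{\ell,\eta}u^{-1}=\bL_{\ell,\eta}\subseteq\bR_{\QQ_\ell}$, so $\bL_\ell\subseteq\bR_{\QQ_\ell}$.

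This last step is the crux and is also what controls the center, so the separate center argument above may be unnecessary. If the generic Levi identification is delicate, the first fallback is to argue on cocharacters: a maximal torus of $\bL_\ell$ consists of cocharacters whose degree-two part lies in $\bR_2$, and we would combine \Cref{prop:defect rigidity weak Hodge}-style trace rigidity with \Cref{lem:trace orthogonality}.

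The main obstacle is the precise identification of the generic Levi subgroup with a subgroup of $\bR$, in particular of its center, which uses \eqref{eq:generic transcendental endomorphisms}. Once that holds, the centralizing property of the unipotent radical makes the conjugation harmless.
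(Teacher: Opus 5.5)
Your proposal is correct and follows essentially the paper's route. For $b_2=3$ you descend to a number field and apply \Cref{thm:Levi rigidity number field}. For $b_2>3$ you identify the generic Levi in a universal polarized family as (weight torus)$\cdot$(derived Mumford--Tate group) inside $\bR$, deduce that it is unique because the generic unipotent radical $\bP_{\ell,\eta}^\circ$ centralizes $\bR$, and conclude by Mostow containment under cospecialization together with \Cref{lem:LLV representation is deformation invariant}.

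Two small points. The family must be the universal polarized one, not the special-subvariety family of \Cref{prop:to be a Galois generic point in a fiber}, because in the latter the generic connected center can be larger than the weight torus. Also, for $b_2=3$ the even case is most easily obtained, as in the paper, by projecting the total Levi and using that $\bP_{\ell,+}^\circ$ centralizes $\bR$, rather than by rerunning the number-field rigidity argument.
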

\begin{proof}
	Suppose first that $b_2(X)=3$. Apply \Cref{lem:b2=3 descends to number field}. The equality of Galois images identifies the connected monodromy groups of $X$ and $X_0$. Hence \Cref{thm:Levi rigidity number field} gives
	\[
	 \bL_\ell=\MT(X_\CC)_{\QQ_\ell}
	 \subseteq\rho^{\tw}(\GSpin_\ell)
	\]
	for every total Levi.

	Its even image is a Levi subgroup of $\bG_{\ell,+}(X)^\circ$. By the rank theorem, the unipotent radical of the latter group is $\bP_{\ell,+}^\circ$, which centralizes the twisted LLV image by \Cref{lem:defect group commutes with twisted LLV}. Since every even Levi subgroup is conjugate to the displayed one by this radical, all even Levi subgroups coincide with it. This proves both assertions when $b_2=3$.

	Assume now that $b_2(X)>3$. After a finite extension, place $X$ in a universal polarized family $\fX\to B$ and write $s$ for its point and $\eta$ for the generic point. By \Cref{lem:Mostow containment}, the central weight torus is contained in a generic Levi subgroup $\bL_{\ell,\eta}$. By \eqref{eq:generic transcendental endomorphisms}, the generic endomorphism field is $\QQ$. By \Cref{cor:derived parts are equal},
	\[
	 \bL_{\ell,\eta}^{\der}
	 =\MT(\fX_{\eta_\CC})_{\QQ_\ell}^{\der}
	 \subseteq\rho^{\tw}(\GSpin_\ell).
	\]
		Zarhin's description and $(\MTC_2)$ identify the connected center of $\bG_{\ell,2,\eta}$ with the homothety torus. Hence \Cref{cor:Levi subgroup isogenous to G2} shows that the connected center of $\bL_{\ell,\eta}$ is one-dimensional. It contains the image of the weight cocharacter and is therefore contained in the twisted LLV image. Since a connected reductive group is generated by its connected center and derived subgroup, $\bL_{\ell,\eta}\subseteq\rho^{\tw}(\GSpin_\ell)$.

	The generic unipotent radical is $\bP_{\ell,\eta}^{\circ}$, which centralizes the twisted LLV image by \Cref{lem:defect group commutes with twisted LLV}. All generic Levi subgroups are conjugate by this radical, so $\bL_{\ell,\eta}$ is the unique generic Levi subgroup. Under cospecialization, a special-fiber Levi subgroup $\bL_{\ell,s}$ is a reductive subgroup of $\bG_{\ell,\eta}$. By \Cref{lem:Mostow containment}, it is contained in a generic Levi subgroup, hence in $\bL_{\ell,\eta}$. Finally, \Cref{lem:LLV representation is deformation invariant} identifies the twisted LLV images at $s$ and $\eta$, proving the assertion.
\end{proof}
\begin{prop}\label{prop:Levi equals MT}
	Let $X/K$ be a hyper-Kähler variety. Then every Levi subgroup $\bL_\ell\subseteq\bG_\ell(X)^\circ$ satisfies
	\[
	 \bL_\ell=\MT(X_\CC)_{\QQ_\ell}
	\]
	inside $\bG_{\mot}(X)_{\QQ_\ell}$.
\end{prop}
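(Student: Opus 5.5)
The plan is to repeat the final step of the proof of \Cref{thm:Levi rigidity number field}, now over an arbitrary finitely generated $K/\QQ$. The one input that used to require a number field, namely that the Levi lies in the twisted LLV image, is now supplied by \Cref{prop:cocharacters in Spin group}. Throughout we work after the uniform finite extension with $K=K^{\conn}$. This changes neither $\bG_\ell(X)^\circ$, nor its Levi subgroups, nor $\MT(X_\CC)$.

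First, fix a Levi subgroup $\bL_\ell\subseteq\bG_\ell(X)^\circ$. By \Cref{prop:cocharacters in Spin group}, $\bL_\ell\subseteq\bR_{\QQ_\ell}=\rho^{\tw}(\GSpin_\ell)$. By \Cref{rmk:MT inside GSpin group}, $\MT(X_\CC)_{\QQ_\ell}\subseteq\bR_{\QQ_\ell}$ as well. Both inclusions are taken inside $\GL(\rH^\bullet_B(X)\otimes\QQ_\ell)$, identified with étale cohomology by Artin comparison. Since both groups lie in $\bG_{\mot}(X)_{\QQ_\ell}$, the comparison is made inside the chosen motivic realization.

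Second, we compute the degree-two images. By \Cref{thm:rank of ell algebraic monodromy groups}, the unipotent radical $R_u(\bG_\ell(X)^\circ)=\bP_\ell^\circ$ maps trivially to degree two. Here we use \Cref{thm:Mumford--Tate conjecture HK degree 2}, which makes $\bG_{\ell,2}(X)^\circ=\MT_2(X_\CC)_{\QQ_\ell}$ reductive. Since $\bG_\ell(X)^\circ=R_u\rtimes\bL_\ell$, the projection $\bL_\ell\to\bG_{\ell,2}(X)^\circ$ is surjective. Likewise $\MT(X_\CC)\to\MT_2(X_\CC)$ is surjective by definition. So both connected groups have image $\MT_2(X_\CC)_{\QQ_\ell}$ under $q\colon\bR_{\QQ_\ell}\to\bR_{2,\QQ_\ell}$.

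Third, by \Cref{lem:R to R2 finite kernel}, $q$ has finite kernel. Hence both groups are connected subgroups of $Q=(q^{-1}(\MT_2(X_\CC)_{\QQ_\ell}))^\circ$ with $\dim Q=\dim\MT_2(X_\CC)$. Each of them also has exactly this dimension, since each maps onto $\MT_2(X_\CC)_{\QQ_\ell}$ with finite kernel. Being connected subgroups of $Q$ of full dimension, both equal $Q$, and therefore $\bL_\ell=\MT(X_\CC)_{\QQ_\ell}$.

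The main obstacle is already absorbed into \Cref{prop:cocharacters in Spin group}. The only point needing care here is a compatibility check: the twisted LLV image and the Betti--étale identification used for $\bL_\ell$ must be the same as those used for $\MT(X_\CC)$, so that both inclusions take place in one ambient group. When $b_2>3$, the Levi was moved through a universal family. There I would verify that \Cref{lem:LLV representation is deformation invariant} transports the inclusion back to $X$ compatibly with Artin comparison at the fixed embedding $K\hookrightarrow\CC$. The equality is then independent of which Levi was chosen, and so it also yields uniqueness of the Levi subgroup.
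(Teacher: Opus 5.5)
Your proposal is correct and is essentially the paper's proof: both $\bL_\ell$ and $\MT(X_\CC)_{\QQ_\ell}$ are placed in $\bR_{\QQ_\ell}$ via \Cref{prop:cocharacters in Spin group} and \Cref{rmk:MT inside GSpin group}, and both surject onto $\MT_2(X_\CC)_{\QQ_\ell}$ under the finite-kernel degree-two projection $q$, so a dimension count forces both to equal $\bigl(q^{-1}(\MT_2(X_\CC)_{\QQ_\ell})\bigr)^\circ$. The compatibility caveat you raise is already handled inside the proof of \Cref{prop:cocharacters in Spin group}, which uses \Cref{lem:LLV representation is deformation invariant}, so nothing further is needed at this step.
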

\begin{proof}
	Let $q=\pi_{\ell,2}|_{\bR_{\QQ_\ell}}$. On Lie algebras,
	\[
	 dq(A+a h^{\deg})=A+2a\,\id_{\rH^2},
	\]
	so $q$ has finite kernel. By \Cref{prop:cocharacters in Spin group} and \Cref{rmk:MT inside GSpin group}, both $\bL_\ell$ and $\MT(X_\CC)_{\QQ_\ell}$ are connected closed subgroups of $\bR_{\QQ_\ell}$. Moreover,
	\[
	 q(\bL_\ell)=\bG_{\ell,2}(X)
	 =\MT_2(X_\CC)_{\QQ_\ell}
	 =q(\MT(X_\CC)_{\QQ_\ell}),
	\]
	where the middle equality is $(\MTC_2)$ and the outer equalities follow from \Cref{cor:Levi subgroup isogenous to G2,lem:project MT group}.

	Both groups lie in $Q=\bigl(q^{-1}(\bG_{\ell,2}(X))\bigr)^\circ$. Since $q$ is finite,
	\[
	 \dim Q=\dim\bG_{\ell,2}(X)=\dim \bL_\ell
	 =\dim\MT(X_\CC).
	\]
	Thus both connected subgroups equal $Q$.
\end{proof}
\begin{cor}\label{cor:Levisubgroup is unique}
There is a unique Levi subgroup $\bG_{\ell}^{\red}(X)\subseteq\bG_{\ell}(X)^\circ$. Moreover, for every $i$, its image in $\bG_{\ell,i}(X)^\circ$ is the unique degree-$i$ Levi subgroup.
\end{cor}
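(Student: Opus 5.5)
Uniqueness of the total Levi is immediate from \Cref{prop:Levi equals MT}: every Levi subgroup $\bL_\ell\subseteq\bG_\ell(X)^\circ$ equals the single subgroup $\MT(X_\CC)_{\QQ_\ell}$ of $\bG_{\mot}(X)_{\QQ_\ell}$. Levi subgroups exist by \Cref{lem:Mostow containment}, so $\bG_\ell^{\red}(X)\coloneqq\MT(X_\CC)_{\QQ_\ell}$ is the unique Levi. Equivalently, since all Levi subgroups are $R_u(\bG_\ell(X)^\circ)(\QQ_\ell)$-conjugate, the unipotent radical normalizes, and hence centralizes, this Levi; \Cref{thm:rank of ell algebraic monodromy groups} identifies the radical with $\bP_\ell^\circ$. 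I will keep this reformulation, because the degreewise statement needs it.

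For degree $i$, write $\pi=\pi_{\ell,i}\colon\bG_\ell(X)^\circ\to\bG_{\ell,i}(X)^\circ$. This map is a surjection of connected groups, since the degree-$i$ image of $G_K$ is the projection of the total image. A surjection carries the unipotent radical onto a normal connected unipotent subgroup $\pi(U)$. It carries a Levi $L$ onto a reductive subgroup $\pi(L)$ with $\bG_{\ell,i}(X)^\circ=\pi(U)\cdot\pi(L)$. Because the quotient $\bG_{\ell,i}^\circ/\pi(U)$ is a quotient of $L$, it is reductive, and therefore $\pi(U)=R_u(\bG_{\ell,i}(X)^\circ)$. The intersection $\pi(L)\cap\pi(U)$ is a normal unipotent subgroup of the reductive group $\pi(L)$. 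Its identity component is therefore trivial, and a finite unipotent group in characteristic zero is trivial. Hence $\pi(L)$ is a Levi subgroup of $\bG_{\ell,i}(X)^\circ$, namely $\pi_{\ell,i}(\bG_\ell^{\red}(X))$.

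For uniqueness in degree $i$, let $L_i$ be any Levi subgroup of $\bG_{\ell,i}(X)^\circ$. By \Cref{lem:Mostow containment}, $L_i=\pi(u)\,\pi(L)\,\pi(u)^{-1}$ for some $u\in U(\QQ_\ell)$, because every element of $R_u$ of the target lifts to $U(\QQ_\ell)$. Here I use that $U\to\pi(U)$ is a surjection of unipotent groups in characteristic zero, which is surjective on rational points via the exponential map. Since $U$ normalizes $L$ by the first paragraph, $uLu^{-1}=L$. Applying $\pi$ gives $L_i=\pi(L)$.

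The main point to check is that $U(\QQ_\ell)$ normalizes $L$. This follows directly from uniqueness, since $uLu^{-1}$ is again a Levi and hence equals $L$. The remaining step needing care is the rational-point lifting, which is standard for unipotent groups over characteristic-zero fields.
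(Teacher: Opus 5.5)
Your proof is correct. Its skeleton matches the paper's: total uniqueness comes from \Cref{prop:Levi equals MT}, and degree-$i$ Levis are conjugate under the degree-$i$ unipotent radical. That radical is the image of the total one and fixes the image of the total Levi.

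The difference is in why the radical fixes the Levi. The paper uses \Cref{lem:defect group commutes with twisted LLV}. The total Levi equals $\MT(X_\CC)_{\QQ_\ell}$, which lies in the twisted LLV image, and $\bP_\ell^\circ=R_u(\bG_\ell(X)^\circ)$ centralizes that image. Hence the image radical centralizes the image Levi as algebraic groups, and no lifting of rational points is needed. The paper also cites Hochschild for the fact that the image of the radical is the radical.

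You instead derive normalization, and hence centralization, purely formally from uniqueness of the total Levi. Your argument therefore proves a general statement: if a connected group in characteristic zero has a unique Levi, then under any surjection its image is the unique Levi of the target. This needs no further hyper-K\"ahler input, neither the LLV lemma nor the identification of the radical with $\bP_\ell^\circ$.

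The price is the lifting of $\QQ_\ell$-points from the target's radical to $U$. That step is fine, via the exponential map or the vanishing of $H^1$ for unipotent groups in characteristic zero. You could also avoid it: $U(\QQ_\ell)$ is Zariski dense in $U$, so $U$ normalizes and hence centralizes $L$ as algebraic groups. Then $\pi(U)$ centralizes $\pi(L)$, exactly as in the paper's final step.

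Your direct argument that $\pi(U)$ is the unipotent radical and $\pi(L)$ is a Levi correctly replaces the Hochschild citation.
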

\begin{proof}
    The total Levi is unique by \Cref{prop:Levi equals MT}.

    The image of the total Levi in degree $i$ is a Levi subgroup. Under the surjection $\bG_\ell(X)^\circ\to\bG_{\ell,i}(X)^\circ$, the image of the unipotent radical is the unipotent radical \cite[Chapter VIII, Theorem 4.4]{Hoc81}. It centralizes the image of the total Levi by \Cref{lem:defect group commutes with twisted LLV}. Since all degree-$i$ Levi subgroups are conjugate by that radical, the image is the unique degree-$i$ Levi subgroup.
\end{proof}
Henceforth write
\[
\bG_{\ell,i}^{\red}(X)\coloneqq
\pi_{\ell,i}\bigl(\bG_\ell^{\red}(X)\bigr),
\]
and use $\bG_{\ell,+}^{\red}(X)$ analogously for the even image.
\subsection{The even defect group}\label{subsec:connectivity}
Consider the exact sequence
\begin{equation}\label{eq:ProjExactSSAlgMonodromy}
1\longrightarrow \bP_{\ell,+}\longrightarrow\bG_{\ell,+}(X)
\xlongrightarrow{\pi_{\ell,2}}\bG_{\ell,2}(X)\longrightarrow1.
\end{equation}

\begin{theorem}\label{thm:isogeny of algebraic monodromy groups underprojection}
For every hyper-Kähler variety $X/K$, the group $\bP_{\ell,+}$ is connected. Moreover,
\[
\bG_{\ell,+}^{\red}(X)=\MT_+(X_\CC)_{\QQ_\ell}
\xrightarrow{\sim}\bG_{\ell,2}(X).
\]
In particular, $\bP_{\ell,+}=R_u(\bG_{\ell,+}(X))$.
\end{theorem}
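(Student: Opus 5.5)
The plan is to first establish the displayed chain of equalities and isomorphisms, and then deduce connectedness of $\bP_{\ell,+}$ from it. Recall the standing assumption $K=K^{\conn}$, so $\bG_{\ell,+}(X)$ and $\bG_{\ell,2}(X)$ are connected.

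\textbf{Step 1: the even Levi.} By \Cref{cor:Levisubgroup is unique}, the even image $\bG_{\ell,+}^{\red}(X)=\pi_{\ell,+}(\bG_\ell^{\red}(X))$ is the unique Levi subgroup of $\bG_{\ell,+}(X)$. By \Cref{prop:Levi equals MT}, $\bG_\ell^{\red}(X)=\MT(X_\CC)_{\QQ_\ell}$ inside $\bG_{\mot}(X)_{\QQ_\ell}$. Projecting to even cohomology carries $\MT(X_\CC)$ onto $\MT_+(X_\CC)$, by the definition of the latter as the image. This gives $\bG_{\ell,+}^{\red}(X)=\MT_+(X_\CC)_{\QQ_\ell}$.

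\textbf{Step 2: the isomorphism to degree two.} \Cref{lem:project MT group}(1) shows that $\pi_2\colon\MT_+(X_\CC)\to\MT_2(X_\CC)$ is an isomorphism. Combined with $(\MTC_2)$ (\Cref{thm:Mumford--Tate conjecture HK degree 2}), which gives $\MT_2(X_\CC)_{\QQ_\ell}=\bG_{\ell,2}(X)$, this shows that $\pi_{\ell,2}$ restricted to the even Levi is an isomorphism onto $\bG_{\ell,2}(X)$.

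\textbf{Step 3: identifying the kernel.} From the Levi decomposition $\bG_{\ell,+}(X)=R_u\rtimes\bG_{\ell,+}^{\red}(X)$, every element can be written as $g=u\,m$ with $u$ unipotent radical and $m$ in the Levi. The radical maps trivially to the reductive group $\bG_{\ell,2}(X)$; this is part of \Cref{thm:rank of ell algebraic monodromy groups}, which gives $R_u\subseteq\bP_{\ell,+}$. So $\pi_{\ell,2}(g)=\pi_{\ell,2}(m)$, and this vanishes if and only if $m=1$ by Step 2. Hence $\bP_{\ell,+}=R_u(\bG_{\ell,+}(X))$, which is connected as a unipotent group in characteristic zero. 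This also recovers $\bP_{\ell,+}^\circ=R_u$.

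\textbf{Anticipated obstacle.} The main point requiring care is that all identifications happen inside the same ambient group $\bG_{\mot,+}(X)_{\QQ_\ell}$ under the comparison isomorphism. In particular, I must confirm that the even projection of $\MT(X_\CC)$ is exactly the Mumford--Tate group of the even Hodge structure. That holds because the Mumford--Tate group of a direct summand is the image of the Mumford--Tate group of the whole.

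A second point: the connectedness argument needs the full $\bG_{\ell,+}(X)$ to be connected, not merely its identity component. That is exactly what the reduction $K=K^{\conn}$ provides. Without it, the kernel could pick up finitely many components coming from $\bG_{\ell,+}/\bG_{\ell,+}^\circ$, so I will state explicitly that the theorem is read after this uniform extension.
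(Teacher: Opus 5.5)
Your proposal is correct and follows essentially the same route as the paper. You project the total Levi identity $\bG_\ell^{\red}(X)=\MT(X_\CC)_{\QQ_\ell}$ to even cohomology, combine \Cref{lem:project MT group}(1) with $(\MTC_2)$ to get an isomorphism onto $\bG_{\ell,2}(X)$, and read off the kernel as the unipotent radical from the Levi decomposition. Your caveat that connectedness of $\bP_{\ell,+}$ depends on the standing reduction $K=K^{\conn}$ is exactly how the paper sets up that section.
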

\begin{proof}
By \Cref{thm:rank of ell algebraic monodromy groups}, $\bP_{\ell,+}^\circ=R_u(\bG_{\ell,+}(X))$. Projecting the equality of \Cref{prop:Levi equals MT} to even cohomology gives
\[
\bG_{\ell,+}^{\red}(X)=\MT_+(X_\CC)_{\QQ_\ell}.
\]
By \Cref{lem:project MT group} and $(\MTC_2)$, the degree-two projection restricts to an isomorphism from this Levi subgroup onto $\bG_{\ell,2}(X)$. In the Levi decomposition
\[
\bG_{\ell,+}(X)=\bP_{\ell,+}^\circ\rtimes
\bG_{\ell,+}^{\red}(X),
\]
the kernel of degree-two projection is therefore exactly $\bP_{\ell,+}^\circ$. Thus $\bP_{\ell,+}=\bP_{\ell,+}^\circ$.
\end{proof}

\begin{cor}\label{cor:splitting of second degree projection}
The sequence \eqref{eq:ProjExactSSAlgMonodromy} has a unique splitting
\[
\sigma_\ell\colon\bG_{\ell,2}(X)\longrightarrow\bG_{\ell,+}(X),
\]
and multiplication gives a direct product
\[
\bG_{\ell,+}(X)=
\sigma_\ell\bigl(\bG_{\ell,2}(X)\bigr)\times \bP_{\ell,+}.
\]
When $b_2(X)\ne3$, the splitting is the $\ell$-adic realization of the motivic splitting recalled in \eqref{eq:motivic decomposition}.
\end{cor}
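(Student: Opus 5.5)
The splitting will come from \Cref{thm:isogeny of algebraic monodromy groups underprojection}: set $\sigma_\ell\coloneqq(\pi_{\ell,2}|_{\bG_{\ell,+}^{\red}(X)})^{-1}$. That theorem states that $\pi_{\ell,2}$ restricts to an isomorphism $\bG_{\ell,+}^{\red}(X)=\MT_+(X_\CC)_{\QQ_\ell}\xrightarrow{\sim}\bG_{\ell,2}(X)$, so $\sigma_\ell$ is a section of \eqref{eq:ProjExactSSAlgMonodromy}.

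For the direct product, I use the Levi decomposition $\bG_{\ell,+}(X)=\bP_{\ell,+}\rtimes\bG_{\ell,+}^{\red}(X)$, with $\bP_{\ell,+}=R_u(\bG_{\ell,+}(X))$ connected by the same theorem. It then suffices to show that the two factors commute. The Levi factor $\MT_+(X_\CC)_{\QQ_\ell}$ lies in $\rho^{\tw}(\GSpin_\ell)$ by \Cref{rmk:MT inside GSpin group}, or by \Cref{prop:cocharacters in Spin group}. By \Cref{lem:defect group commutes with twisted LLV}, $\bP_{\ell,+}$ lies in $\bC_{+,\QQ_\ell}$ and centralizes the even twisted LLV image. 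Hence the semidirect product is direct, which gives
\[
\bG_{\ell,+}(X)=\sigma_\ell\bigl(\bG_{\ell,2}(X)\bigr)\times\bP_{\ell,+}.
\]

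For uniqueness, suppose $\sigma'$ is another splitting. Its image $\sigma'(\bG_{\ell,2}(X))$ is a connected reductive subgroup, being isomorphic to the reductive group $\bG_{\ell,2}(X)$ by $(\MTC_2)$. It meets $\bP_{\ell,+}$ trivially and surjects onto the quotient, so it is a complement to the unipotent radical, i.e.\ a Levi subgroup. By \Cref{cor:Levisubgroup is unique}, or directly because all Levi subgroups are $\bP_{\ell,+}$-conjugate and $\bP_{\ell,+}$ centralizes the given one, it equals $\bG_{\ell,+}^{\red}(X)$. Two sections with the same image that are both inverse to $\pi_{\ell,2}$ on that image coincide, so $\sigma'=\sigma_\ell$.

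For compatibility with the motivic splitting when $b_2\ne3$, note that the motivic splitting $\sigma$ of \eqref{eq:motivic decomposition} has image $\MT_+(X_\CC)$ and is inverse to $\pi_2$ on it. After base change to $\QQ_\ell$ it is therefore the inverse of $\pi_2|_{\MT_+(X_\CC)_{\QQ_\ell}}$. Under the comparison identification inside $\bG_{\mot,+}(X)_{\QQ_\ell}$ this is precisely $\sigma_\ell$, since $\bG_{\ell,+}^{\red}(X)=\MT_+(X_\CC)_{\QQ_\ell}$ there. The only delicate point is to keep track that all identifications take place in the chosen motivic realization, as fixed in \Cref{prop:Levi equals MT}. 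I expect this bookkeeping to be the main obstacle, rather than any group theory.
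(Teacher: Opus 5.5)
Your proposal is correct and follows the paper's proof essentially step for step. You define $\sigma_\ell$ as the inverse of $\pi_{\ell,2}$ on the canonical even Levi $\MT_+(X_\CC)_{\QQ_\ell}$. Directness comes from this Levi lying in the twisted LLV image (the paper cites \Cref{prop:cocharacters in Spin group}) combined with \Cref{lem:defect group commutes with twisted LLV}. Uniqueness comes from $\bP_{\ell,+}$-conjugacy of Levi subgroups together with the triviality of that conjugation on the canonical Levi. Compatibility with the motivic splitting comes from $\MT_+(X_\CC)\xrightarrow{\sim}\MT_2(X_\CC)$. Your added checks, that the image of another section is a Levi complement and that two sections with the same image coincide, only make explicit what the paper leaves implicit.
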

\begin{proof}
Take $\sigma_\ell$ to be the inverse of the degree-two projection on the canonical Levi. Thus
\[
\Image(\sigma_\ell)=\MT_+(X_\CC)_{\QQ_\ell}.
\]
This Levi lies in the twisted LLV image by \Cref{prop:cocharacters in Spin group}, while the defect group centralizes that image by \Cref{lem:defect group commutes with twisted LLV}. Hence the product is direct. Every other splitting has a Levi image; all Levi subgroups are conjugate by the unipotent radical, whose conjugation on the canonical Levi is trivial. Thus the splitting is unique. If $b_2(X)\ne3$, its compatibility with the motivic splitting follows from $\MT_+(X_\CC)\xrightarrow{\sim}\MT_2(X_\CC)$. No motivic splitting is asserted here when $b_2(X)=3$.
\end{proof}

\subsection{Semisimple Mumford--Tate conjecture and semisimplicity}\label{subsec:semisimple Mumford--Tate conjecture and semisimplicity}
Use the canonical Levi notation $\bG_{\ell,i}^{\red}(X)$ introduced in \Cref{cor:Levisubgroup is unique}. By \Cref{lem:Levi realizes semisimplification}, it is the connected monodromy group of a semisimplification of the degree-$i$ representation.

For $0<i<2\dim X$, consider the following two conditions:
\begin{itemize}[align=left,leftmargin=*]
    \item[(SS)] $V_{\ell,i}$ is semisimple as a $G_K$-module;
    \item[$(\SMTC)$] under the comparison isomorphism,
    \[
    \bG_{\ell,i}^{\red}(X)=\MT_i(X_\CC)_{\QQ_\ell}
    \]
    as subgroups of $\GL\bigl(\rH_B^i(X,\QQ)\otimes_\QQ\QQ_\ell\bigr)$.
\end{itemize}
We call $(\SMTC)$ the \emph{semisimple Mumford--Tate conjecture} in degree $i$.

\begin{theorem}\label{thm:semisimple Mumford--Tate conjecture even part}
Let $X$ be a hyper-Kähler variety over a finitely generated extension $K/\QQ$. For every prime $\ell$ and every $0\leq i\leq2\dim X$,
\[
\bG^{\red}_{\ell,i}(X)=\MT_i(X_\CC)_{\QQ_\ell}.
\]
\end{theorem}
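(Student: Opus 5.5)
The plan is to deduce the degreewise statement from the total equality of \Cref{prop:Levi equals MT} by projecting to degree $i$. First I would fix the compatible identifications. The chosen motivic realization $\bG_{\mot}(X)_{\QQ_\ell}$ acts on $\rH_B^\bullet(X,\QQ)\otimes_\QQ\QQ_\ell$ degree-preservingly. Under Artin comparison, both $\bG_\ell(X)^\circ$ and $\MT(X_\CC)_{\QQ_\ell}$ are closed subgroups of this group. The degree-$i$ projection $\pi_{\ell,i}$ on $\ell$-adic groups is the restriction of the degree-$i$ projection $\pi_i$ on $\bG_{\mot}(X)_{\QQ_\ell}$, and the latter is the base change of the rational projection to $\GL(\rH^i_B(X,\QQ))$. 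All of these maps are restrictions of the single projection $\prod_j\GL(\rH^j)\to\GL(\rH^i)$, so the compatibility should follow formally.

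Next I would use \Cref{cor:Levisubgroup is unique}, which gives $\bG_\ell^{\red}(X)=\MT(X_\CC)_{\QQ_\ell}$ by \Cref{prop:Levi equals MT}. By definition,
\[
\bG_{\ell,i}^{\red}(X)=\pi_{\ell,i}\bigl(\MT(X_\CC)_{\QQ_\ell}\bigr)=\pi_i\bigl(\MT(X_\CC)\bigr)_{\QQ_\ell},
\]
since the image of an algebraic group under a homomorphism defined over $\QQ$ commutes with flat base change. It therefore remains to check that $\pi_i(\MT(X_\CC))=\MT_i(X_\CC)$ as subgroups of $\GL(\rH^i_B(X,\QQ))$. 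This is the standard fact that the Mumford--Tate group of a direct summand $\rH^i$ of the graded Hodge structure $\rH^\bullet$ is the image of the total Mumford--Tate group. The image is a connected $\QQ$-subgroup whose real points contain $\pi_i\circ\phi(\DTorus)$, so it contains $\MT_i$. Conversely, $\pi_i^{-1}(\MT_i)\cap\MT$ is a $\QQ$-subgroup containing $\phi(\DTorus)$, so it equals $\MT$, and hence the image lies in $\MT_i$.

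The endpoint degrees $i=0$ and $i=4n$ are covered by the same argument; there both sides are the appropriate one-dimensional tori, or trivial after twisting. The identification is independent of the embedding $K\hookrightarrow\CC$ in the sense that \Cref{prop:Levi equals MT} holds for every embedding.

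The only potential obstacle is bookkeeping: making sure the ``chosen motivic realization'' identification used in \Cref{prop:Levi equals MT} is the same Betti--étale comparison that appears in $(\SMTC)$. Beyond that, no new input is needed.
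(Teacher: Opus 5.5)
Your proposal is correct and follows the paper's own proof, which likewise obtains the degreewise equality by projecting the total identity $\bG_\ell^{\red}(X)=\MT(X_\CC)_{\QQ_\ell}$ from \Cref{prop:Levi equals MT} to degree $i$. You additionally spell out two routine points that the paper leaves implicit: scheme-theoretic images commute with base change, and $\pi_i(\MT(X_\CC))=\MT_i(X_\CC)$.
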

\begin{proof}
By \Cref{prop:Levi equals MT}, $\bG_\ell^{\red}(X)=\MT(X_\CC)_{\QQ_\ell}$ inside the motivic Galois group. Projection to degree $i$ gives the asserted equality.
\end{proof}

\begin{cor}\label{lem:semisimple Mumford--Tate conjecture}
For a fixed prime $\ell$, the $\ell$-adic equality in $(\MTC_i)$ holds if and only if $V_{\ell,i}$ is semisimple. Consequently, $(\MTC_i)$ holds if and only if $V_{\ell,i}$ is semisimple for every prime $\ell$.
\end{cor}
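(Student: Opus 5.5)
The plan is to combine \Cref{thm:semisimple Mumford--Tate conjecture even part} with the elementary fact that a connected linear algebraic group in characteristic zero is reductive precisely when it equals one (equivalently, every) Levi subgroup. We work after the uniform finite extension, so $K=K^{\conn}$ and $\bG_{\ell,i}(X)=\bG_{\ell,i}(X)^\circ$. Both conditions are insensitive to this extension: semisimplicity of a $G_K$-representation can be checked on an open subgroup of finite index in characteristic zero (averaging/Clifford theory), and $(\MTC_i)$ concerns only the identity component.

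First suppose $V_{\ell,i}$ is semisimple. Then $\bG_{\ell,i}(X)^\circ$ is reductive: the Zariski closure of the image of a semisimple representation is reductive, since its unipotent radical acts trivially on every irreducible constituent, hence trivially on $V_{\ell,i}$, and so it is trivial by faithfulness. Hence $R_u(\bG_{\ell,i}(X)^\circ)=1$, so the unique degree-$i$ Levi $\bG_{\ell,i}^{\red}(X)$ of \Cref{cor:Levisubgroup is unique} equals $\bG_{\ell,i}(X)^\circ$. \Cref{thm:semisimple Mumford--Tate conjecture even part} then gives $\bG_{\ell,i}(X)^\circ=\MT_i(X_\CC)_{\QQ_\ell}$, which is the $\ell$-adic equality in $(\MTC_i)$.

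Conversely, assume $\bG_{\ell,i}(X)^\circ=\MT_i(X_\CC)_{\QQ_\ell}$. Mumford--Tate groups of polarizable Hodge structures are reductive, so $\bG_{\ell,i}(X)^\circ$ is reductive in characteristic zero. Its representation on $V_{\ell,i}$ is therefore semisimple, so $V_{\ell,i}$ is semisimple for the Zariski-dense subgroup $\rho_\ell(G_{K^{\conn}})$: a subspace is $G_{K^{\conn}}$-stable if and only if it is stable under the Zariski closure. Semisimplicity for $G_K$ then follows from the finite-index remark. The second statement is obtained by quantifying over all $\ell$.

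The only point needing care is the bookkeeping that the equality in \Cref{thm:semisimple Mumford--Tate conjecture even part} takes place inside the same ambient $\GL(\rH_B^i\otimes\QQ_\ell)$ via comparison as in $(\MTC_i)$. It does, because both are images of subgroups of $\bG_{\mot}(X)_{\QQ_\ell}$ under the degree-$i$ realization. I do not expect any genuine obstacle here.
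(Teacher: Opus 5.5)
Your proposal is correct and follows the paper's proof. Semisimplicity of $V_{\ell,i}$ is equivalent to reductivity of $\bG_{\ell,i}(X)^\circ$, in which case the canonical Levi $\bG_{\ell,i}^{\red}(X)$ is the whole identity component and \Cref{thm:semisimple Mumford--Tate conjecture even part} gives the equality; you simply spell out the finite-index and Zariski-density bookkeeping that the paper compresses into ``Zariski density and faithfulness.''
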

\begin{proof}
Zariski density and faithfulness give
\[
V_{\ell,i}\text{ is semisimple}
\quad\Longleftrightarrow\quad
\bG_{\ell,i}(X)^\circ\text{ is reductive}.
\]
In that case $\bG_{\ell,i}(X)^\circ=\bG_{\ell,i}^{\red}(X)$, and the result follows from \Cref{thm:semisimple Mumford--Tate conjecture even part}.
\end{proof}

\begin{remark}
Condition (SS) remains open in general. It is known for Galois representations arising from motives in $\AM_K(\sAb)$; for abelian varieties, see \cite[VI, \S3, Theorem~1(a)]{RationalPoints}. It also follows from the Tate conjecture in the form considered in \cite[Theorem~1]{Moonen19}.
\end{remark}

\begin{lemma}\label{lem:finite of P implies semisimple}
Let $X$ be a hyper-Kähler variety over a finitely generated field $K/\QQ$. Then the semisimplicity condition (SS) holds for $X$ in degree $i$ if and only if the action of $\bP_{\ell}^{\circ}$ on $\rH^{i}_{\et}(X_{\overline{K}},\QQ_{\ell})$ is trivial.
\end{lemma}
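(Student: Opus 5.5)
The plan is to reduce both conditions to a statement about the connected monodromy group $\bG_{\ell,i}(X)^\circ$ and its unipotent radical, using the structure established above. Here $\bP_\ell^\circ=R_u(\bG_\ell(X)^\circ)$ by \Cref{thm:rank of ell algebraic monodromy groups}, and $\bG_\ell(X)^\circ=\bP_\ell^\circ\rtimes\bG_\ell^{\red}(X)$. First we record, as in the proof of \Cref{lem:semisimple Mumford--Tate conjecture}, that (SS) in degree $i$ is equivalent to reductivity of $\bG_{\ell,i}(X)^\circ$. This uses Zariski density of the Galois image together with the fact that a finite-index subgroup acts semisimply if and only if the whole group does, so passing to $K^{\conn}$ is harmless.

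Next we identify the unipotent radical of the degree-$i$ group. The projection $\pi_{\ell,i}\colon\bG_\ell(X)^\circ\to\bG_{\ell,i}(X)^\circ$ is surjective. By \cite[Chapter VIII, Theorem 4.4]{Hoc81}, as already used in \Cref{cor:Levisubgroup is unique}, it carries $R_u$ onto $R_u$. Hence
\[
R_u\bigl(\bG_{\ell,i}(X)^\circ\bigr)=\pi_{\ell,i}(\bP_\ell^\circ),
\]
which is exactly the image of $\bP_\ell^\circ$ acting on $\rH^i_{\et}(X_{\overline K},\QQ_\ell)$.

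Now both directions follow. If $\bP_\ell^\circ$ acts trivially in degree $i$, the unipotent radical of $\bG_{\ell,i}(X)^\circ$ is trivial, so this group is reductive and (SS) holds. Conversely, if (SS) holds, then $\bG_{\ell,i}(X)^\circ$ is reductive. In characteristic zero a connected reductive group has trivial unipotent radical, so $\pi_{\ell,i}(\bP_\ell^\circ)=1$, meaning $\bP_\ell^\circ$ acts trivially on degree $i$. As an alternative route for this direction, a connected unipotent normal subgroup acting on a semisimple representation has nonzero fixed vectors in each irreducible summand, and these are stable under the group (Lie--Kolchin, as in \Cref{lem:Levi realizes semisimplification}), so it acts trivially.

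The only point needing care is the passage between $\bG_\ell(X)$ and its identity component, and between $K$ and $K^{\conn}$. We handle it by noting that $\bP_\ell^\circ$ and the connected groups are unchanged by finite extension, and that semisimplicity is insensitive to restriction to an open finite-index subgroup in characteristic zero. I expect this bookkeeping, rather than any substantive step, to be the main obstacle.
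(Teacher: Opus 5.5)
Your proposal is correct and follows the paper's proof. You identify $\bP_\ell^\circ$ with $R_u(\bG_\ell(X)^\circ)$ via \Cref{thm:rank of ell algebraic monodromy groups}, use Hochschild's theorem to see that its image in degree $i$ is $R_u(\bG_{\ell,i}(X)^\circ)$, and then invoke the equivalence between semisimplicity and reductivity of the connected degree-$i$ monodromy group; your bookkeeping on finite-index subgroups and the Lie--Kolchin alternative simply make explicit what the paper leaves implicit.
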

\begin{proof}
Consider the surjection $\bG_\ell\twoheadrightarrow\bG_{\ell,i}$. Since $\bP_\ell^\circ=R_u(\bG_\ell^\circ)$ by \Cref{thm:rank of ell algebraic monodromy groups}, its image is $R_u(\bG_{\ell,i}^\circ)$ (\cite[Chapter VIII, Theorem 4.4]{Hoc81}). Thus $\bP_\ell^\circ$ acts trivially on degree $i$ exactly when this unipotent radical is trivial, which is equivalent to semisimplicity.
\end{proof}

\subsection{Mumford--Tate conjecture in families}
\label{subsec:semisimple Mumford--Tate conjecture for HKs}

We first specify the terminology used for fibers. A \emph{finite-type point} of $B$ means a morphism $b\colon\Spec L\to B$ with $L/\QQ$ finitely generated, and its fiber is
\[
 \fX_b\coloneqq\fX\times_B\Spec L.
\]
If $\bar b\in B$ is the scheme-theoretic image of $b$, then $\fX_b$ is the base change of $\fX_{\bar b}$ from $k(\bar b)$ to $L$. The image of $G_L\to G_{k(\bar b)}$ is open: indeed, the relative algebraic closure of $k(\bar b)$ in $L$ is finite over $k(\bar b)$, and the remaining extension is regular. Hence this base change alters neither the connected algebraic monodromy group nor the corresponding Mumford--Tate equality. Since $B$ is of finite type over a finitely generated field, every scheme-theoretic point of $B$ has residue field finitely generated over $\QQ$. Thus this terminology includes both ordinary scheme-theoretic fibers and the field-valued fibers used in the definition of deformation equivalence.

We now apply Cadoret's results \cite{Cad15} to obtain deformation invariance of semisimplicity.
\begin{theorem}\label{thm:reductivity in family}
		Let $f\colon\fX\to B$ be a smooth projective family of hyper-Kähler varieties over a smooth geometrically connected variety over a finitely generated characteristic-zero field. Fix a prime $\ell$ and a degree $0\leq i\leq2\dim(\fX/B)$. If $\bG_{\ell,i}(\fX_{b_0})^\circ$ is reductive for one finite-type point $b_0$ of $B$, then $\bG_{\ell,i}(\fX_b)^\circ$ is reductive for every finite-type point $b$ of $B$.
\end{theorem}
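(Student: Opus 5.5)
We will show that reductivity of $\bG_{\ell,i}(\fX_b)^\circ$ is equivalent to a numerical condition that does not depend on $b$. The plan is to compare every finite-type fiber with the geometric generic fiber through cospecialization, using the structural results already proved: the canonical Levi and the identification of the reductive quotient with the Mumford--Tate group.

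First, reduce to scheme-theoretic points. By the discussion preceding the theorem, replacing $\fX_b$ by $\fX_{\bar b}$ changes no connected monodromy group. Let $\eta$ be the generic point of $B$. Fix an étale path $\eta\rightsquigarrow b$. It identifies $\rH^i_{\et}(\fX_{\overline b},\QQ_\ell)$ with $\rH^i_{\et}(\fX_{\overline\eta},\QQ_\ell)$ and gives a cospecialization inclusion $\bG_{\ell,i}(\fX_b)^\circ\subseteq\bG_{\ell,i}(\fX_\eta)^\circ$.

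The key invariant is the dimension of the connected monodromy group. By \Cref{thm:semisimple Mumford--Tate conjecture even part}, the reductive quotient satisfies
\[
\dim\bG_{\ell,i}(\fX_b)^\circ=\dim\MT_i(\fX_{b,\CC})+\dim R_u\bigl(\bG_{\ell,i}(\fX_b)^\circ\bigr).
\]
Hence reductivity at $b$ is equivalent to $\dim\bG_{\ell,i}(\fX_b)^\circ=\dim\MT_i(\fX_{b,\CC})$. This is the same as saying that the image of $\bP_\ell^\circ$ in degree $i$ is trivial, by \Cref{lem:finite of P implies semisimple}.

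The proof then splits into two directions.

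\emph{From one fiber to the generic fiber.} Suppose $b_0$ is reductive. By \Cref{lem:monodromy inside derived when Galois generic}, the Levi at $b_0$ sits inside a generic Levi after cospecialization, and the generic unipotent radical is $\bP^\circ_{\ell,\eta}$. Here I would use Cadoret's results \cite{Cad15}. Given a point $b_0$, there is a curve through $b_0$ whose generic point $\eta'$ has the same monodromy as $\eta$ (the Hilbert-irreducibility input of \Cref{reduction to number field}). Along the specialization $\eta'\rightsquigarrow b_0$, the unipotent radical of the special fiber contains, up to conjugacy, the image under specialization of the geometric part. The geometric monodromy $\bM(\VV_{\ell,i})$ is semisimple and normal, so it lies in the Levi at every point. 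What must be controlled is the arithmetic contribution $\bP_{\ell,\eta}^\circ$.

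I expect this to be the main obstacle. We must show that the generic defect $\bP^\circ_{\ell,\eta}$ cannot become trivial on $\rH^i$ at a special point unless it is already trivial generically. My first attempt would be as follows. $\bP_{\ell,\eta}^\circ$ centralizes the twisted LLV image, and hence centralizes $\bM(\VV_{\ell,i})$. The generic representation is an extension of geometric local systems, so its non-split extension classes live in $\rH^1$ of $G$ with coefficients in $\Hom(\text{quotient},\text{sub})^{\bM}$. Cadoret's specialization theorem applies at all points outside a sparse exceptional set. For the remaining points, I would use that $\Hom$-spaces of the local system $R^if_*\QQ_\ell$ are constant, and that a splitting at $b_0$ of the $\bM$-isotypic decomposition can be spread out, since $\bM$-invariants are locally constant. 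This should give triviality of $\bP^\circ_{\ell,\eta}$ on $\rH^i$.

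\emph{From the generic fiber to every fiber.} If the generic representation in degree $i$ is reductive, then $\bP_{\ell,\eta}^\circ$ acts trivially on $\rH^i$. At any $b$, the group $\bP_{\ell,b}^\circ$ is a connected unipotent subgroup of $\bG_{\ell,\eta}$ acting trivially in degree two. Since it is unipotent and normal in $\bG_{\ell,b}^\circ$, Mostow's containment lemma (\Cref{lem:Mostow containment}) and the fact that the degree-two kernel of $\bG_{\ell,\eta}$ has identity component $\bP_{\ell,\eta}^\circ$ together give $\bP_{\ell,b}^\circ\subseteq\bP_{\ell,\eta}^\circ$. Therefore $\bP_{\ell,b}^\circ$ acts trivially on $\rH^i$, and \Cref{lem:finite of P implies semisimple} concludes.
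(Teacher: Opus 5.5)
\textbf{There is a genuine gap: the direction from $b_0$ to the generic point is only sketched.}

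Your second direction (generic fiber $\Rightarrow$ every fiber) is the paper's argument. Cospecialization is compatible with the degree-two projection, so $\bP_{\ell,b}^\circ$ lands in the identity component of the generic degree-two kernel, namely $\bP_{\ell,\eta}^\circ$. Then \Cref{lem:finite of P implies semisimple} finishes; Mostow's lemma is not needed.

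The first direction is the only nontrivial step. You must show that reductivity at an arbitrary, possibly exceptional, point $b_0$ forces reductivity at $\eta$. Your text only says this ``should give'' the result, and the ingredients you invoke do not supply it:
\begin{itemize}
\item[(i)] Cadoret's specialization at non-exceptional points does not help. For non-exceptional $b_0$ the claim is trivial, and $b_0$ may well be exceptional.
\item[(ii)] The claim that the special unipotent radical ``contains the image of the geometric part'' is wrong, since $\bM(\VV_{\ell,i})$ is semisimple.
\item[(iii)] The $\bM$-isotypic decomposition is already canonical and $\pi_1(B)$-stable, so spreading out its splitting is not what is at stake. What must be spread out is a \emph{Galois} splitting.
\end{itemize}
Your opening claim that reductivity is a $b$-independent numerical condition is also never established. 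Both $\dim\MT_i(\fX_{b,\CC})$ and $\dim\bG_{\ell,i}(\fX_b)^\circ$ jump with $b$.

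\textbf{How the paper closes it, and a direct route.} The paper cites \cite{Cad15} (Theorems~1.2(3) and~3.5) for $R^if_*\QQ_\ell$: a reductive fiber forces reductive generic degree-$i$ monodromy. To prove this directly, the missing idea is a Hochschild--Serre argument.
\begin{itemize}
\item[(1)] Make $b_0$ rational by base change to $k(b_0)$. This is harmless, because reductivity of the connected monodromy group is equivalent to semisimplicity over any open subgroup in characteristic zero.
\item[(2)] Take a $\pi_1(B)$-subrepresentation $A\subseteq V=\rH^i(\fX_{\bar\eta},\QQ_\ell)$ with quotient $C$, and put $M=\Hom(C,A)$.
\item[(3)] By Deligne's semisimplicity theorem, geometric monodromy acts semisimply. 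Hence the extension class vanishes on $\pi_1(B_{\overline K})$ and comes from $\rH^1(G_K,M^{\pi_1(B_{\overline K})})$.
\item[(4)] These invariants are the trivial isotypic component of a normal subgroup acting semisimply. So they form a $\pi_1(B)$-stable direct summand of $M$.
\item[(5)] Consequently, pulling back along the section $G_K\to\pi_1(B)$ given by $b_0$ is injective on this $\rH^1$.
\item[(6)] Semisimplicity at $b_0$ splits the restricted extension, so the extension splits generically.
\end{itemize}
With this step (or the citation) supplied, your second direction completes the proof.
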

\begin{proof}
	By the preceding observation, we may replace $b_0$ and $b$ by their scheme-theoretic images. Apply \cite[Theorems~1.2(3) and~3.5]{Cad15} to the geometric motivic local system $\VV_{\ell,i}=R^i f_*\QQ_\ell$, using the reductive fiber $b_0$. The generic point lies outside the exceptional locus, so its degree-$i$ monodromy group is reductive. By \Cref{lem:finite of P implies semisimple}, the generic defect radical $\bP_{\ell,\eta}^{\circ}$ acts trivially on $\rH^i(\fX_{\bar\eta},\QQ_\ell)$.

	For a specialization path $\eta\rightsquigarrow b$, cospecialization embeds the connected total monodromy group at $b$ into the generic one, compatibly with projection to degree two. Taking kernels gives $\bP_{\ell,b}^{\circ}\hookrightarrow \bP_{\ell,\eta}^{\circ}$. Hence $\bP_{\ell,b}^{\circ}$ also acts trivially on degree $i$. Applying \Cref{lem:finite of P implies semisimple} once more proves that $\bG_{\ell,i}(\fX_b)^{\circ}$ is reductive.
\end{proof}

By \Cref{thm:semisimple Mumford--Tate conjecture even part}, the Mumford--Tate conjecture in degree $i$ for a fiber is equivalent to reductivity of its degree-$i$ monodromy group. The latter is deformation invariant by \Cref{thm:reductivity in family}, giving the following corollary.
\begin{cor}\label{cor:MTC in families}
		Let $f \colon \fX \to B$ be a smooth projective family of hyper-Kähler varieties over a smooth geometrically connected variety $B/K$, where $K/\QQ$ is finitely generated. Fix $0\leq i\leq2\dim(\fX/B)$. If $(\MTC_i)$ holds for one finite-type fiber $\fX_{b_0}$, then $(\MTC_i)$ holds for every finite-type fiber $\fX_b$.
\end{cor}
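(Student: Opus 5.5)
The plan is to combine the degreewise equivalence of \Cref{lem:semisimple Mumford--Tate conjecture} with the deformation invariance of reductivity in \Cref{thm:reductivity in family}, applied one prime at a time. Fix $i$. Assume $(\MTC_i)$ holds for the finite-type fiber $\fX_{b_0}$. Then for every prime $\ell$ the group $\bG_{\ell,i}(\fX_{b_0})^\circ$ equals $\MT_i(\fX_{b_0,\CC})_{\QQ_\ell}$. In particular it is reductive, because Mumford--Tate groups of polarizable Hodge structures are reductive.

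The first point to check is that each finite-type fiber is a hyper-Kähler variety over a finitely generated extension of $\QQ$, so that the results of \S\ref{sec:Mumford--Tate conjecture for HK varieties} apply to it. This holds because $\fX_b$ is defined over a field $L$ that is finitely generated over $\QQ$ by definition of finite-type points, and the fibers of $f$ are hyper-Kähler by hypothesis. The second point is that the fiber is identified with its scheme-theoretic image up to an open subgroup of Galois. As explained before \Cref{thm:reductivity in family}, this changes neither connected monodromy nor Mumford--Tate groups.

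Now fix $\ell$. \Cref{thm:reductivity in family}, applied with the reductive fiber $b_0$, shows that $\bG_{\ell,i}(\fX_b)^\circ$ is reductive for every finite-type point $b$. By Zariski density this means $V_{\ell,i}(\fX_b)$ is semisimple. \Cref{lem:semisimple Mumford--Tate conjecture} then gives $\bG_{\ell,i}(\fX_b)^\circ=\bG_{\ell,i}^{\red}(\fX_b)$. \Cref{thm:semisimple Mumford--Tate conjecture even part} identifies this group with $\MT_i(\fX_{b,\CC})_{\QQ_\ell}$ for every complex embedding. Since $\ell$ was arbitrary, $(\MTC_i)$ holds for $\fX_b$.

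The endpoint degrees $i=0$ and $i=2\dim(\fX/B)$ are trivial. In those degrees both groups are the weight torus acting by scalars, so both sides agree directly: use \Cref{lem:homotheties in algebraic monodromy group} together with the Tate twist.

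The main obstacle is not in this corollary itself but in the input \Cref{thm:reductivity in family}. There one must make sure that Cadoret's theorem really yields reductivity at the generic point starting from a single special fiber. One must also check that the cospecialization inclusion $\bP^\circ_{\ell,b}\hookrightarrow\bP^\circ_{\ell,\eta}$ is compatible with the degree-two projections. Since that theorem is stated earlier, the corollary reduces to the chain of equivalences above. The remaining care is only to verify that the hypotheses of each cited result are met by every finite-type fiber, and to note that the embedding $K\hookrightarrow\CC$ does not matter, because of the phrase ``for every embedding'' in \Cref{thm:semisimple Mumford--Tate conjecture even part}.
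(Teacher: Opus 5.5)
Your proposal is correct and follows the paper's proof: you reduce, via \Cref{thm:semisimple Mumford--Tate conjecture even part}, to reductivity of $\bG_{\ell,i}(\fX_b)^\circ$, obtain reductivity at $b_0$ from $(\MTC_i)$, propagate it with \Cref{thm:reductivity in family}, and then let $\ell$ vary. Your separate treatment of the endpoint degrees is harmless but unnecessary, since \Cref{thm:semisimple Mumford--Tate conjecture even part} already covers every $0\leq i\leq 2\dim X$.
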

\begin{proof}
	Fix a prime $\ell$. By \Cref{thm:semisimple Mumford--Tate conjecture even part}, it is sufficient to prove that $\bG_{\ell,i}(\fX_b)^{\circ}$ is reductive. This holds at $b_0$ by the $\ell$-adic equality in $(\MTC_i)$, and hence at every $b$ by \Cref{thm:reductivity in family}. Since this argument applies to every prime $\ell$, $(\MTC_i)$ holds for every finite-type fiber.
\end{proof}

\subsection{Semisimplicity for known examples}\label{subsec:furtherremarks}

For hyper-Kähler varieties belonging to the four established deformation types, semisimplicity can be deduced directly from their cohomological structure, without using the full motivic information.

Using \Cref{thm:reductivity in family}, we may therefore restrict our attention to a concrete construction representing each of these deformation types.

\subsubsection{Hilbert schemes of points}
Let $S$ be a smooth projective surface over a finitely generated field $K$. By classical results of Nakajima \cite{Nak97} and Grojnowski \cite{Gro96} (see also \cite[Chapter~8]{Nak99}), the direct sum of the $\ell$-adic étale cohomology groups
\[
\bigoplus_{n \geq 0} \rH^{4n- \bullet}_{\et}(S^{[n]}_{\overline{K}},\QQ_{\ell})
\]
is an irreducible representation of the Heisenberg superalgebra generated by the shifted (co)homology
\[
\rH^{4-\bullet}_{\et}(S_{\overline K},\QQ_\ell)
\]
of $S$. Moreover, the highest weight vector is the class $[\Spec(K)]$ in $\rH^{0}_{\et}(S_{\overline K}^{[0]},\QQ_\ell)$. The action of the Heisenberg superalgebra is given by some classes
\[
P_\alpha[i] = \varpi_*\left(\Pi^*\alpha \cap [P[i]]\right)\in \rH_{\et}((S^{[n-i]} \times S^{[n]})_{\overline{K}},\QQ_{\ell}),
\]
where $\alpha \in \rH^{\bullet}_{\et}(S_{\overline{K}},\QQ_{\ell})$, $i \in \ZZ$, $[P[i]]$ is an algebraic $(2n-i+1)$-cycle on $S^{[n-i]} \times S^{[n]} \times S$, and
\[
\begin{tikzcd}[row sep= small, column sep = small]
        & S^{[n-i]} \times S^{[n]} \times S \ar[ld,"\varpi"] \ar[rd,"\Pi"'] & \\
      S^{[n-i]} \times S^{[n]} & & S
\end{tikzcd}
\]
are the respective projections. From this, we can deduce that, for any $0 \leq k \leq 4n$, \[
\rH_{\et}^k(S_{\overline{K}}^{[n]},\QQ_{\ell}) \in \langle \rH^{\bullet}_{\et}(S_{\overline{K}},\QQ_{\ell}) \rangle^{\otimes}
\]
as a $G_K$-representation after a finite field extension.

If $S$ is a K3 or abelian surface, then the $G_K$-representation \(\rH^{\bullet}_{\et}(S_{\overline{K}},\QQ_{\ell})\) is semisimple by \cite{Del72}. It follows that $\rH_{\et}^k(S_{\overline{K}}^{[n]},\QQ_{\ell})$ is semisimple for all $n \geq 1$.

\begin{remark}
    In fact, motivic decompositions for Hilbert schemes of points on a smooth algebraic surface \cite{dMM} show that the motive $\fh(S^{[n]})$ is generated from $\fh(S)$ by taking direct sums and subquotients. Thus the semisimplicity of the Galois representations of $S^{[n]}$ follows from that of $S$. This motivic decomposition was also used in \cite{Sol22} and \cite{FFZ21} to establish the abelianicity of the André motives of K3$^{[n]}$-type varieties and $\Kum_n$-varieties.
\end{remark}

\begin{remark}
   We thank Floccari for pointing out that, for the Hilbert scheme of $n$ points $X=S^{[n]}$ on a K3 surface $S$, the $\fg_0$-equivariant decomposition defined by Markman in \cite[\S 4.2]{Mar08} can be used to establish the semisimplicity of $X$ (see also \cite[Theorem~6.2]{Flo22}). In fact, the full cohomology $\rH^{\bullet}_{\et}(X_{\overline{K}},\QQ_{\ell})$ is generated as a $\QQ_{\ell}$-algebra by a subspace
\[
    C_{\ell}^{\bullet} \coloneqq \bigoplus_{i \geq 1} C^{i}_{\ell},
\]
where $C_{\ell}^{i} \subseteq \rH^{i}_{\et}(X_{\overline{K}},\QQ_{\ell})$ is a $\fg_{0,\ell}$-subrepresentation. For $X=S^{[n]}$, the odd cohomology vanishes, and the irreducible factors in each $C_{\ell}^{2k}$ (as an $\SO(\rH^2)$-module) appear with multiplicity at most one. Arguments similar to those in the proof of \Cref{prop:semisimple if multiplicity one} imply that the Galois action on $C_{\ell}^{\bullet}$ is semisimple. Therefore, $X$ satisfies the semisimplicity conjecture.

\end{remark}

\subsubsection{Generalized Kummer varieties}
If $S=A$ is an abelian surface, semisimplicity for the generalized Kummer variety $K_n(A)$ follows from the motivic decomposition and abelian-motive results used in \cite{FFZ21,Sol22}. Merely observing that $K_n(A)$ is a fiber of the isotrivial summation fibration $A^{[n+1]}\to A$ does not by itself give a Galois-equivariant direct summand and is therefore insufficient as a proof.

\subsubsection{$\OG6$-type varieties}
As in \cite[\S 4.2]{Sol22}, semisimplicity for a hyper-Kähler variety of $\OG6$-type follows from the corresponding result for varieties of K3$^{[3]}$-type, using a dominant generically finite rational map from a K3$^{[3]}$-type variety constructed in \cite{MRS}.

\subsubsection{The LLV multiplicity-one criterion}
We can also use the LLV representation to study the semisimplicity of Galois representations attached to hyper-Kähler varieties, especially those of $\OG10$-type.

Since the LLV algebra $\fg \cong \fso(\widetilde{\rH}(X))$ is semisimple, one may consider the decomposition of the $\fg$-module $\rH^*(X)$ into irreducible $\fg$-modules
\begin{equation}\label{eq:whole-coh-decomp}
	\rH^*(X) \otimes_E \overline{E} = \bigoplus_{\mu \in \Delta^+_r} V_{\mu}^{\oplus m_{\mu}}\,,
\end{equation}
where $r = \lfloor b_2(X)/2 \rfloor$, $\Delta_r^+$ is the set of dominant weights of $\fg$, and $\mu = \sum_{i=0}^r\mu_i \epsilon_i \in \Delta_r^+$ is the highest weight of the irreducible representation $V_{\mu}$. Here $\{\pm \epsilon_i\}$ is the set of weights of the standard representation $\widetilde{\rH}(X)$, and the decomposition \eqref{eq:whole-coh-decomp} is called the \emph{LLV decomposition} of $X$.

\begin{example}
    According to the Weyl construction, $V_{(n)}$ is the ``largest'' irreducible subrepresentation of $\mathrm{Sym}^nV$. Moreover, Verbitsky has shown that, for a hyper-Kähler variety $X$ of dimension $2n$, the subalgebra $\SH(X) \subset \rH^*(X)$ generated by $\rH^2(X)$ is isomorphic to the irreducible $\fg$-module $V_{(n)} \subset \Sym^nV$ with highest weight $\mu = (n)$. The summand $\SH(X) \cong V_{(n)}$ is called the \emph{Verbitsky component} of $\rH(X)$. By construction, $\SH(X) \subseteq \rH^{+}(X)$.

\end{example}
    Since $\mathfrak{g}$ is semisimple, the cohomology admits a $\mathfrak{g}$-module decomposition
\begin{equation}
	\rH(X) = V_{(n)} \oplus V' \,.
\end{equation}
For an arbitrary hyper-Kähler manifold $X$, the multiplicity of $V_{(n)}$ in $\rH^*(X)$ is one; that is, $V'$ does not contain an irreducible $\fg$-module of highest weight $\mu = (n)$. Moreover, every $\mu$ that appears in $V'$ satisfies $|\mu|\le n-1$; see \cite[Proposition~2.34]{GKLR22}. In \locc, the authors compute the LLV decompositions for K3$^{[n]}$-type, $\Kum_n$-type, $\OG6$-type, and $\OG10$-type varieties.
\begin{example}\label{ex:OG10}
	According to \cite[Theorem 3.26]{GKLR22}, the LLV decomposition of $\OG10$ is
\[
\rH(X) = V_{(5)} \oplus V_{(2,2)}\,,
\]
i.e., $V' = V_{(2,2)}$ is irreducible. For $\OG6$-type, we have
\[
V' = V_{(1,1,1)} \oplus V^{\oplus 135} \oplus E^{\oplus 240}
\]
	by \cite[Theorem~3.39]{GKLR22}.
\end{example}

\begin{prop}\label{prop:semisimple if multiplicity one}
	If all irreducible factors in the LLV decomposition \eqref{eq:whole-coh-decomp} of $X$ have multiplicity $\leq 1$, then the Galois representation on $\rH^{\bullet}_{\et}(X_{\overline{K}},\QQ_{\ell})$ is semisimple.
\end{prop}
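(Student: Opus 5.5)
The plan is to reduce the statement to a vanishing of the unipotent radical and then use Schur's lemma. By \Cref{lem:finite of P implies semisimple}, semisimplicity of $\rH^i_{\et}(X_{\overline K},\QQ_\ell)$ in a given degree $i$ is equivalent to triviality of the action of $\bP_\ell^\circ$ on that degree. It therefore suffices to show that $\bP_\ell^\circ$ is trivial. Equivalently, by \Cref{thm:rank of ell algebraic monodromy groups}, it suffices to show that $R_u(\bG_\ell(X)^\circ)=1$, so that $\bG_\ell(X)^\circ$ is reductive and the total representation is semisimple. Throughout I will replace $K$ by $K^{\conn}$. This is harmless: passing to an open subgroup of finite index changes neither identity components nor semisimplicity in characteristic zero.

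The first step is to show that $\bP_\ell$ commutes with the full LLV algebra $\fg$, not merely with its degree-zero part $\fg_0^{\tw}$. By \Cref{lem:defect group commutes with twisted LLV}, after Betti--étale comparison $\bP_\ell\subseteq\bC_{\QQ_\ell}$. The proof of that lemma shows that every $c\in\bC(\overline{\QQ}_\ell)$ satisfies $cL_xc^{-1}=L_x$, since $c$ fixes $x\in\rH^2$ and respects cup product. It also satisfies $c\Lambda_xc^{-1}=\Lambda_x$, by uniqueness of the $\mathfrak{sl}_2$-triple completing $(L_x,h)$, given that $c$ commutes with $h$. Because $\fg$ is generated by the operators $L_x$ and $\Lambda_x$ (\Cref{def:LLV}), every element of $\bP_\ell(\overline{\QQ}_\ell)$ is a $\fg$-module automorphism of $\rH^\bullet_{\et}(X_{\overline K},\overline{\QQ}_\ell)$.

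The second step applies Schur's lemma to the LLV decomposition \eqref{eq:whole-coh-decomp} over $\overline{\QQ}_\ell$. I take the hypothesis as holding for the $\ell$-adic realization; this is legitimate because the comparison $\fg(X)_{\QQ}\otimes\QQ_\ell\cong\fg(X)_{\QQ_\ell}$ is compatible with the actions. When every $m_\mu\le1$, the algebra $\End_{\fg}(\rH^\bullet\otimes\overline{\QQ}_\ell)$ is $\prod_\mu\overline{\QQ}_\ell$, acting by scalars on each $V_\mu$. Consequently $\bP_{\ell,\overline{\QQ}_\ell}$ embeds into the torus $\prod_\mu\GG_m$. But $\bP_\ell^\circ$ is unipotent by \Cref{thm:rank of ell algebraic monodromy groups}, and a unipotent subgroup of a torus is trivial. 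Hence $\bP_\ell^\circ=1$, and by \Cref{lem:finite of P implies semisimple} every degree, and therefore the total representation, is semisimple.

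I expect the main obstacle to be the first step. \Cref{lem:defect group commutes with twisted LLV} is stated only for the twisted LLV image, that is, for $\fg_0^{\tw}$. The decomposition \eqref{eq:whole-coh-decomp}, however, is taken with respect to the full algebra $\fg$, whose operators $L_x$ and $\Lambda_x$ shift degree. If $\bP_\ell$ commuted only with $\fg_0$, the multiplicity-one hypothesis for $\fg$ would not directly yield scalar endomorphisms. So the argument must carefully re-run the $\mathfrak{sl}_2$-uniqueness argument for the degree-shifting operators.

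A second point to verify is that $\bP_\ell$, rather than only the motivic defect group, acts by graded algebra automorphisms that are trivial on $\rH^2$. This holds because Galois acts by cup-algebra automorphisms, and $\bP_\ell$ is by definition the kernel of projection to degree two.
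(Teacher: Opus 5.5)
Your proposal is correct and follows the paper's proof essentially verbatim: the rank theorem makes $\bP_\ell^\circ$ the unipotent radical, and the centralizer argument in the proof of \Cref{lem:defect group commutes with twisted LLV} shows that $\bP_\ell$ commutes with the full LLV algebra $\fg$ (the paper uses exactly this, even though the lemma is phrased for the twisted image). Multiplicity one and Schur's lemma then force a unipotent group acting by scalars on each $V_\mu$ to be trivial, and your embedding of $\bP_{\ell,\overline{\QQ}_\ell}$ into the torus $\prod_\mu\GG_m$ simply repackages the paper's ``scalar unipotent elements are trivial'' step.
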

\begin{proof}
By \Cref{thm:rank of ell algebraic monodromy groups}, $\bP_\ell^\circ=R_u(\bG_\ell(X)^\circ)$. After extending scalars to $\overline\QQ_\ell$, \Cref{lem:defect group commutes with twisted LLV} shows that $\bP_\ell^\circ$ centralizes the LLV algebra. Multiplicity one therefore forces it to preserve each irreducible summand $V_\mu$ individually, and Schur's lemma says that it acts on $V_\mu$ through scalars. A scalar element of a connected unipotent group is the identity. Hence $\bP_\ell^\circ$ acts trivially on every $V_\mu$, and therefore on total cohomology. The defining representation of $\bG_\ell(X)$ is faithful, so $\bP_\ell^\circ=1$. Thus $\bG_\ell(X)^\circ$ is reductive, which is equivalent to semisimplicity of the Galois representation.

\end{proof}

As we have seen in \Cref{ex:OG10}, the factors in the LLV decomposition of an $\OG10$-variety have multiplicity $\leq 1$. Thus we obtain the Mumford--Tate conjecture for $\OG10$-varieties.
\begin{cor}\label{cor:MTC OG10}
Let $X$ be an $\OG10$-variety over $K$. Then the Galois representation $\rH^{\bullet}_{\et}(X_{\overline{K}},\QQ_{\ell})$ is semisimple and the Mumford--Tate conjecture holds for $X$.
\end{cor}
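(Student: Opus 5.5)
The plan is to combine \Cref{prop:semisimple if multiplicity one} with the LLV decomposition recorded in \Cref{ex:OG10}, and then pass from semisimplicity to the Mumford--Tate conjecture via \Cref{lem:semisimple Mumford--Tate conjecture}.

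\emph{Step 1: the multiplicity-one hypothesis.} By \cite[Theorem 3.26]{GKLR22}, for an $\OG10$-variety one has
\[
\rH(X)\otimes_E\overline E=V_{(5)}\oplus V_{(2,2)}
\]
as $\fg$-modules. The two highest weights are distinct: $(5)$ has $|\mu|=5$, while $(2,2)$ has $|\mu|=4\le n-1$. Hence every irreducible factor occurs with multiplicity one. This decomposition depends only on the graded cohomology algebra, and hence only on the deformation type. It therefore holds for $\rH^\bullet_{\et}(X_{\overline K},\QQ_\ell)\otimes\overline\QQ_\ell$, since the LLV algebra is defined over $\QQ$ and compatible with base change to $\QQ_\ell$ under comparison, as recalled after \Cref{def:LLV}.

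\emph{Step 2: semisimplicity.} \Cref{prop:semisimple if multiplicity one} now applies directly. The identity component $\bP_\ell^\circ$ equals $R_u(\bG_\ell(X)^\circ)$ by \Cref{thm:rank of ell algebraic monodromy groups}, and it centralizes the LLV action by \Cref{lem:defect group commutes with twisted LLV}. By Schur's lemma it acts on each $V_\mu$ by scalars, and a unipotent scalar is trivial. Hence $\bG_\ell(X)^\circ$ is reductive, so the total representation is semisimple for every $\ell$. Each degree-$i$ summand is then semisimple as well, being a direct summand.

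\emph{Step 3: Mumford--Tate.} By \Cref{lem:semisimple Mumford--Tate conjecture}, semisimplicity of $V_{\ell,i}$ for every $\ell$ is equivalent to $(\MTC_i)$. Equivalently, one may cite \Cref{thm:semisimple Mumford--Tate conjecture even part}: once $\bG_{\ell,i}(X)^\circ$ is reductive, it equals $\bG^{\red}_{\ell,i}(X)=\MT_i(X_\CC)_{\QQ_\ell}$. For the total group, \Cref{prop:Levi equals MT} gives $\bG_\ell(X)^\circ=\MT(X_\CC)_{\QQ_\ell}$ directly.

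The only point needing care is Step 1. One must check that the multiplicity statement, quoted from the Betti side for complex $\OG10$ manifolds, transfers to the $\ell$-adic realization over $\overline\QQ_\ell$. This follows because the Artin comparison is an isomorphism of graded algebras, so it intertwines the LLV actions. Everything else is a direct citation of earlier results.
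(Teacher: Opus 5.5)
Your proposal is correct and is exactly the paper's argument: the multiplicity-one LLV decomposition $V_{(5)}\oplus V_{(2,2)}$ from \Cref{ex:OG10} gives semisimplicity via \Cref{prop:semisimple if multiplicity one}, and \Cref{lem:semisimple Mumford--Tate conjecture} (equivalently \Cref{thm:semisimple Mumford--Tate conjecture even part}) turns that into $(\MTC_i)$ in every degree. You also note that the Artin comparison, as an isomorphism of graded algebras, carries the Betti LLV decomposition over to $\overline{\QQ}_\ell$-coefficients; the paper leaves this step implicit.
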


\begin{remark}
    The argument in \Cref{prop:semisimple if multiplicity one} is similar to that in \cite[Proposition~6.1]{Flo22} for the motivic Mumford--Tate conjecture for $\OG10$-varieties. For this reason, \Cref{cor:MTC OG10} does not give an essentially new proof of the Mumford--Tate conjecture for $\OG10$-varieties.
\end{remark}

\section{Applications}\label{sec:applications}
\subsection{Arithmetic Nagai conjecture}\label{subsec:Nagai}

Recall that a hyper-Kähler variety $X$ over a $p$-adic local field $K_v$ is of \emph{Type I reduction} if
\[
\rH^2_{\et}(X_{\overline{K}_v},\QQ_{\ell'})
\]
is potentially unramified for \emph{some} prime $\ell' \neq p$. It is then potentially unramified for every prime different from $p$: for $b_2(X)>3$ this is \cite[Corollary~5.3]{IIKTZ25}, while for $b_2(X)=3$ it follows from \cite[Lemma~5.5]{IIKTZ25}, since the rank-two primitive orthogonal Lie algebra contains no nonzero nilpotent element.

The semisimple Mumford--Tate conjecture enables us to extend \cite[Corollary~4.3]{IIKTZ25}, which was originally established for the four known deformation types, to every projective hyper-Kähler variety. Consequently, we can establish the \emph{arithmetic Nagai conjecture} (Conjectures~1.3 and~1.4 in \locc) in this general framework.

\begin{theorem}\label{thm:Nagai for Type I}
	Let $X$ be a hyper-Kähler variety over a $p$-adic local field $K_v$ with Type I reduction. Then $\rH^{i}_{\et}(X_{\overline{K}_v},\QQ_{\ell})$ is potentially unramified for every $0\leq i\leq4n$ and every prime $\ell\neq p$.
\end{theorem}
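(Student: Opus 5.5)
The plan is to reduce to a global situation and then use the canonical Levi factor together with Grothendieck's monodromy theorem. First I would globalize. Choose a number field $F\subseteq K_v$, dense in $K_v$, and a hyper-Kähler variety $X_F/F$ whose base change to $K_v$ is $p$-adically close to $X$. By continuity of étale cohomology in $p$-adic families (Krasner-type/Kisin–Huber results), the inertia representation of $X_F$ at the place $v$ is isomorphic to that of $X$. It therefore suffices to treat $X$ defined over a number field $F$ with a place $v\mid p$. After a finite extension we may also assume:
\begin{itemize}
\item $F=F^{\conn}$;
\item by Type I reduction together with \cite[Corollary~5.3, Lemma~5.5]{IIKTZ25}, the inertia group $I_v$ acts trivially on $\rH^2_{\et}(X_{\overline F},\QQ_\ell)$ for every $\ell\neq p$;
\item by Grothendieck's quasi-unipotence theorem, $I_v$ acts unipotently on all $\rH^i$, through $\exp(t_\ell(\sigma)N)$ for a nilpotent $N\in\fg_\ell(X)$.
\end{itemize}

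The key step is to show that $N$ lies in the Lie algebra $\mathfrak p_\ell=\Lie\bP_\ell$ and commutes with a Frobenius-type element lying in the Levi factor. Since $I_v$ acts trivially in degree two, $N$ maps to zero in $\fg_{\ell,2}$, so $N\in\Lie\bP_\ell^\circ=\Lie R_u(\bG_\ell(X))$ by \Cref{thm:rank of ell algebraic monodromy groups}. Now choose a Frobenius lift $\phi\in G_{F_v}$ and let $\phi^{ss}$ be its semisimple part in $\bG_\ell(X)(\QQ_\ell)$. By \Cref{lem:Mostow containment}, after replacing $\phi$ by a power, the torus generated by $\phi^{ss}$ lies in some Levi subgroup. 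By \Cref{cor:Levisubgroup is unique} that Levi is the canonical one, $\bG_\ell^{\red}(X)=\MT(X_\CC)_{\QQ_\ell}$ (\Cref{prop:Levi equals MT}), which is contained in $\bR_{\QQ_\ell}=\rho^{\tw}(\GSpin_\ell)$.

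From there the weight argument runs as follows:
\begin{itemize}
\item \Cref{lem:defect group commutes with twisted LLV} gives that $\bP_\ell$ centralizes $\bR$, so $\mathrm{Ad}(\phi^{ss})$ acts trivially on $N$.
\item On the other hand, the relation $\phi N\phi^{-1}=q_v^{-1}N$ holds.
\item Since $N$ lies in the unipotent radical, $\mathrm{Ad}(\phi)N=\mathrm{Ad}(\phi^{ss})\mathrm{Ad}(\phi^{u})N$, where $\phi^u$ is unipotent.
\item Comparing eigenvalues of $\mathrm{Ad}(\phi)$, which are $1$ on $N$ from the semisimple part but must equal $q_v^{-1}\neq1$, forces $N=0$.
\end{itemize}
A cleaner formulation of the last point: on the Lie algebra of the unipotent radical, filter by the lower central series. $\mathrm{Ad}(\phi)$ acts on the graded pieces through $\phi^{ss}$, hence trivially, so every eigenvalue of $\mathrm{Ad}(\phi)$ on $\mathfrak p_\ell$ is $1$. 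An eigenvector with eigenvalue $q_v^{-1}$ is then impossible, so $N=0$. This shows $I_v$ acts through a finite quotient on every $\rH^i$, which is potential unramifiedness, for every $\ell\neq p$.

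The main obstacle is the globalization step. We need to ensure simultaneously that the approximating $X_F$ is still hyper-Kähler and that its local inertia action matches that of $X$ for all $\ell$. I would try first to spread $X$ out over a finitely generated subfield of $K_v$ and use the density of number-field points in the $p$-adic topology. An alternative that avoids this is to work with $K$ finitely generated, though the results above are stated over finitely generated $K$ embedded in $K_v$, which may be enough directly. A secondary subtlety is that $\phi^{ss}$ is only defined inside $\bG_\ell(X)$ up to the component group; this is handled by passing to a power of $\phi$ and using $F=F^{\conn}$.
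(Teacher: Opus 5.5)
Your argument is essentially the paper's. Type~I puts $N_\ell$ in the Lie algebra of the global defect group. Mostow places a torus generated by a power of the semisimple part of a Frobenius lift in the unique Levi $\MT(X_\CC)_{\QQ_\ell}\subseteq\rho^{\tw}(\GSpin_\ell)$, which centralizes that defect group, so the Weil--Deligne relation forces $N_\ell=0$.

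The one change to make is in globalization. Drop the $p$-adic approximation by a number-field model: it is unnecessary and would need a separate local-constancy argument. Instead, descend $X$ by finite presentation to a hyper-K\"ahler model over a finitely generated subfield $K\subseteq K_v$, exactly as the paper does. This is enough because \Cref{prop:Levi equals MT}, \Cref{cor:Levisubgroup is unique} and \Cref{lem:defect group commutes with twisted LLV} all hold over finitely generated fields.
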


First, we note that \Cref{thm:semisimple Mumford--Tate conjecture even part} controls the semisimple part of a Frobenius lift through a Levi subgroup of the local algebraic monodromy group.

\begin{prop}\label{prop:padic monodromy group in MT group}
	Let $X$ be a hyper-Kähler variety over a $p$-adic local field $K_v$. Choose a finitely generated subfield $K\subseteq K_v$, a hyper-Kähler variety $X_0/K$ such that $X_0\times_KK_v\cong X$, and an embedding $K\hookrightarrow\CC$. Then every Levi subgroup $\bL_{\ell,v}$ of the identity component of the local $\ell$-adic monodromy group satisfies
	\[
	 \bL_{\ell,v}\subseteq\MT((X_0)_{\CC})_{\QQ_\ell}.
	\]
\end{prop}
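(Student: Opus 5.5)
The plan is to reduce to the global statement by restricting along the decomposition group. Embed $\overline K\subseteq\overline K_v$, so $G_{K_v}\to G_K$ is injective (the decomposition group at $v$), and smooth proper base change identifies $\rH^\bullet_{\et}(X_{\overline K_v},\QQ_\ell)$ with $\rH^\bullet_{\et}((X_0)_{\overline K},\QQ_\ell)$ as representations restricted along this map. Write $\bG_{\ell,v}$ for the Zariski closure of the local image. It is a closed subgroup of $\bG_\ell(X_0)$, and $\bG_{\ell,v}^\circ\subseteq\bG_\ell(X_0)^\circ$. Through the Artin comparison attached to $K\hookrightarrow\CC$, both groups sit inside $\bG_{\mot}(X_0)_{\QQ_\ell}$, and this is where the asserted inclusion is to be read.

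Let $\bL_{\ell,v}$ be a Levi subgroup of $\bG_{\ell,v}^\circ$. It is a connected reductive $\QQ_\ell$-subgroup of the connected group $\bG_\ell(X_0)^\circ$. By Mostow's containment theorem (\Cref{lem:Mostow containment}) there is a Levi subgroup $\bL_\ell\subseteq\bG_\ell(X_0)^\circ$ with $\bL_{\ell,v}\subseteq\bL_\ell$; in fact any fixed Levi works after conjugating by an element of the unipotent radical. By \Cref{cor:Levisubgroup is unique} and \Cref{prop:Levi equals MT}, however, the Levi subgroup of $\bG_\ell(X_0)^\circ$ is unique and equals $\MT((X_0)_\CC)_{\QQ_\ell}$ inside $\bG_{\mot}(X_0)_{\QQ_\ell}$. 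Hence
\[
\bL_{\ell,v}\subseteq\bL_\ell=\MT((X_0)_{\CC})_{\QQ_\ell},
\]
which is the claim.

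The only point needing care is the bookkeeping. First, $X_0$ must be a hyper-Kähler variety over a finitely generated field; the hypotheses provide this, and such a model always exists by spreading out the equations of $X$. Second, replacing $K$ by $K^{\conn}$, which one may wish to do before invoking the earlier results, changes neither $\bG_\ell(X_0)^\circ$ nor the Mumford--Tate group. Finally, the local group's identity component must land in the global identity component, which holds because a connected subgroup lies in the identity component. No genuine obstacle is expected. The substance is entirely in the uniqueness of the global Levi, which makes the choice of conjugating unipotent element irrelevant; without uniqueness one would only get $\bL_{\ell,v}\subseteq u\,\MT((X_0)_\CC)_{\QQ_\ell}u^{-1}$ for some $u$.
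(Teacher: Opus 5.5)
Your proof is correct and follows the paper's argument essentially verbatim. In both, the local monodromy group is a closed subgroup of the global group $\bG_\ell(X_0)$; Mostow's containment theorem (\Cref{lem:Mostow containment}) places $\bL_{\ell,v}$ inside a global Levi subgroup; and \Cref{cor:Levisubgroup is unique} together with \Cref{prop:Levi equals MT} identifies that Levi with $\MT((X_0)_{\CC})_{\QQ_\ell}$.
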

\begin{proof}
	After choosing compatible geometric points, the representation of $G_{K_v}$ is the pullback of the representation of $G_K$. Hence the local algebraic monodromy group is a closed subgroup of the global group $\bG_\ell(X_0)$. Its Levi $\bL_{\ell,v}$ is therefore a reductive subgroup of the global group. By \Cref{lem:Mostow containment}, it is contained in a global Levi subgroup. The global Levi is unique by \Cref{cor:Levisubgroup is unique} and equals $\MT((X_0)_\CC)_{\QQ_\ell}$ by \Cref{prop:Levi equals MT}. This proves the claimed containment. Notice that no natural injection between two independently chosen Levi quotients is being asserted.
\end{proof}
\begin{remark}
	In contrast to the global field case, the $\ell$-adic monodromy group $\bG_{\ell}(X)$ for $\ell\neq p$ is not expected to be reductive over a local field $K_v$. Indeed, the inertia representation $\rho_{\ell}|_{I_v}$ may contribute a normal subgroup through the short exact sequence
	\[
	1 \longrightarrow I_v \longrightarrow G_{K_v} \longrightarrow G_{\FF_q} \longrightarrow 1
	\]
	when the total $\ell$-adic cohomology of $X$ is not potentially unramified; the inertia action is quasi-unipotent by Grothendieck's monodromy theorem.

		For this reason, a Levi subgroup $\bL_{\ell,v}$ is not in general the $\ell$-adic algebraic monodromy group of some Frobenius semisimplification.
\end{remark}

\begin{proof}[Proof of \Cref{thm:Nagai for Type I}]
	Choose a finitely generated subfield $K\subseteq K_v$ and a hyper-Kähler model $X_0/K$ as in the preceding proposition; such a descent exists because $X$ is of finite presentation. Fix $\ell\neq p$. After a finite extension of $K_v$, inertia acts unipotently and the local algebraic monodromy group is connected. Let $N_\ell$ be the monodromy operator on total cohomology. Type I means that its degree-two projection $N_{\ell,2}$ vanishes; hence $N_\ell\in\Lie(\bP_{\ell,0})$, where
	\[
	 \bP_{\ell,0}:=\ker\bigl(\bG_\ell(X_0)\longrightarrow
	 \bG_{\ell,2}(X_0)\bigr)
	\]
	is the global defect group of $X_0$.

	Let $\Phi_v$ be a lift of \emph{geometric} Frobenius, let $q_v$ be the cardinality of the residue field, and put $s_v=\rho_\ell(\Phi_v)^{\semisimple}$. The Zariski closure $D_v=\overline{\langle s_v\rangle}^{\Zar}$ is diagonalizable. Choose $m\geq1$ that annihilates the finite component group $D_v/D_v^\circ$, and put
	\[
	 T_v=\overline{\langle s_v^m\rangle}^{\Zar}=D_v^\circ.
	\]
	Thus $T_v$ is a torus. By \Cref{lem:Mostow containment}, it is contained in a local Levi subgroup, which lies in $\MT((X_0)_\CC)_{\QQ_\ell}$ by \Cref{prop:padic monodromy group in MT group}. The Mumford--Tate factor centralizes the global defect group by \Cref{lem:defect group commutes with twisted LLV}; consequently
	\[
	 \Ad(s_v^m)(N_\ell)=N_\ell.
	\]
	On the other hand, the Weil--Deligne relation for geometric Frobenius is
	\[
	 \Ad(s_v^m)(N_\ell)=q_v^{-m}N_\ell.
	\]
	Since $q_v^m\neq1$, one has $N_\ell=0$. Grothendieck's monodromy theorem then shows that inertia has finite image in every degree, i.e., all the representations are potentially unramified.
\end{proof}

\subsection{Maximality of Galois action}

In this final subsection, we establish \Cref{conj:maximality of Galois action} for hyper-Kähler varieties, using the notation of \S\ref{sss:Maximality}.
\begin{theorem}\label{thm:maximality of Galois action}
	Let $X$ be a hyper-Kähler variety over a finitely generated field $K$. Then the $\ell$-adic Lie group $\Gamma_{\ell}^{\simplyconnected}$ is a hyperspecial maximal compact subgroup of $\bG_{\ell}^{\simplyconnected}(\QQ_{\ell})$ for all sufficiently large primes $\ell$.
\end{theorem}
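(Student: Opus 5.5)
The plan is to reduce to the degree-two statement of Hui--Larsen \cite[Theorem~1.3]{HL20}, using the canonical Levi factor and the isogeny $\bL_\ell\to\bG_{\ell,2}(X)$. The definition of $\Gamma_\ell^{\simplyconnected}$ only involves $\bG_\ell^{\mathrm{ss}}=\bG_\ell^{\red}/Z(\bG_\ell^{\red})^\circ$, and the reductive quotient $\bG_\ell^{\red}$ of $\bG_\ell(X)^\circ$ is canonically isomorphic to the Levi $\bG_\ell^{\red}(X)=\MT(X_\CC)_{\QQ_\ell}$ of \Cref{prop:Levi equals MT}. So we work with this Levi.

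\textbf{Step 1: identify the simply connected covers.} Replace $K$ by $K^{\conn}$; this is a uniform finite extension by \Cref{thm:independence of connected components}. It changes $\Gamma_\ell^{\simplyconnected}$ only by a subgroup of index bounded independently of $\ell$. Hyperspecial maximality is insensitive to this once we show the image already lies in, and is open in, the hyperspecial group; more precisely, we will show the full $\Gamma_\ell^{\simplyconnected}$ for $K$ equals the degree-two one. By \Cref{cor:Levi subgroup isogenous to G2}, the degree-two projection $\pi_{\ell,2}\colon \bG_\ell^{\red}(X)\to\bG_{\ell,2}(X)^\circ$ is a central isogeny. By \Cref{lem:project MT group}, its kernel is at most $\{1,\delta\}$. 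A central isogeny induces an isomorphism of adjoint groups, hence of simply connected covers. Thus $\bG_\ell^{\mathrm{ss}}$ and $\bG_{\ell,2}^{\mathrm{ss}}$ differ by a central isogeny, and $\bG_\ell^{\simplyconnected}\cong\bG_{\ell,2}^{\simplyconnected}$ compatibly with $\pi_{\ell,2}$.

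\textbf{Step 2: compare the Galois images.} The Galois image $\Gamma_\ell\cap\bG_\ell^\circ(\QQ_\ell)$ maps to $\bG_\ell^{\mathrm{ss}}(\QQ_\ell)$ through the reductive quotient, which kills the unipotent radical $\bP_\ell^\circ$ (\Cref{thm:rank of ell algebraic monodromy groups}). Composing with the isogeny $\bG_\ell^{\mathrm{ss}}\to\bG_{\ell,2}^{\mathrm{ss}}$ gives the degree-two image. Taking inverse images in the common simply connected group shows that $\Gamma_\ell^{\simplyconnected}$ for total cohomology is contained in the degree-two $\Gamma_{\ell,2}^{\simplyconnected}$. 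The reverse inclusion holds up to the kernel of $\bG_\ell^{\mathrm{ss}}(\QQ_\ell)\to\bG_{\ell,2}^{\mathrm{ss}}(\QQ_\ell)$, which is finite central of order $\le 2$. Both groups contain the preimage of the center of $\bG^{\simplyconnected}$, because the definition takes full inverse images. So the degree-two image of an element determines its class in $\bG_{\ell,2}^{\mathrm{ss}}(\QQ_\ell)$. A preimage in $\bG^{\simplyconnected}$ of that class also maps into the total image modulo the central kernel, and the full inverse image absorbs this ambiguity. Hence the two groups $\Gamma_\ell^{\simplyconnected}$ coincide.

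\textbf{Step 3: conclude.} Hui--Larsen's theorem then gives hyperspecial maximality for $\ell\gg0$ in degree two. The isomorphism of simply connected groups transports the smooth $\ZZ_\ell$-model $\mathscr G_\ell^{\simplyconnected}$, which proves the statement.

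The main obstacle is Step 2. One has to check carefully that the extra central element $\delta$ does not create a discrepancy between the two inverse images. One also has to check that $\QQ_\ell$-points of an isogeny need not be surjective, so the cokernel $H^1(\QQ_\ell,\ker)$ must not matter. I would handle both by working directly with full inverse images in $\bG^{\simplyconnected}(\QQ_\ell)$, where central ambiguity disappears. If that fails, I would instead use Hui--Larsen's criterion that the property depends only on the image in the adjoint group.
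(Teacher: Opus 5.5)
Your route is the paper's. You pass to $K^{\conn}$, use the isogeny from the canonical Levi $\MT(X_\CC)_{\QQ_\ell}$ onto $\bG_{\ell,2}(X)$ to identify the simply connected groups, transport $\Gamma_\ell^{\simplyconnected}$ to the degree-two group of $X_{K^{\conn}}$, and invoke Hui--Larsen. The gap is in Step~2, which you rightly flag as the crux but do not resolve.

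\textbf{Step 2 as written fails.} Your claim that both groups contain the preimage of the center of $\bG^{\simplyconnected}$ is false in general. Write $p\colon\bG_\ell^{\simplyconnected}\to\bG_\ell^{\mathrm{ss}}$ and let $I$ be the image of the Galois group in $\bG_\ell^{\mathrm{ss}}(\QQ_\ell)$. The full inverse image $p^{-1}(I)$ contains only $(\ker p)(\QQ_\ell)$, and $\bG_\ell^{\mathrm{ss}}$ need not be adjoint; for example, $\SO(T)$ with $\dim T$ even occurs in degree two. Suppose the induced map $f\colon\bG_\ell^{\mathrm{ss}}\to\bG_{\ell,2}^{\mathrm{ss}}$ really had kernel $\mu_2$. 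Then the transported degree-two group would be $p^{-1}\bigl(f^{-1}(f(I))\bigr)$, and $\Gamma_\ell^{\simplyconnected}=p^{-1}(I)$ could have index two in it. In that case $\Gamma_\ell^{\simplyconnected}$ would not be maximal. So nothing ``absorbs'' the ambiguity, and the fallback to an adjoint-group criterion is not an argument either.

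\textbf{The missing observation: there is no ambiguity.} By \Cref{lem:project MT group}, the kernel of $\MT(X_\CC)\to\MT_2(X_\CC)$ is trivial or $\{1,\delta\}$ with $\delta=\omega(-1)$. The weight cocharacter has connected central image, so $\delta\in Z(\bG_\ell^{\red})^\circ$. A surjection of connected reductive groups maps the connected center onto the connected center. Hence the preimage of $Z(\bG_{\ell,2}^\circ)^\circ$ in $\bG_\ell^{\red}$ is exactly $Z(\bG_\ell^{\red})^\circ$, and $f$ is an \emph{isomorphism}, not merely a central isogeny. Then $\Gamma_\ell^{\simplyconnected}$ and $\Gamma_{\ell,2}^{\simplyconnected}(X_{K^{\conn}})$ are inverse images of the same subgroup of the same group. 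Neither the $\delta$ issue nor non-surjectivity on $\QQ_\ell$-points arises. This is what justifies the equality $\Gamma_\ell^{\simplyconnected}=(\pi_{\ell,2}^{\simplyconnected})^{-1}\bigl(\Gamma_{\ell,2}^{\simplyconnected}(X_{K'})\bigr)$, which the paper states directly.

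\textbf{Two smaller corrections.}
\begin{enumerate}
\item Passing to $K^{\conn}$ does not change $\Gamma_\ell^{\simplyconnected}$ at all, since $\Gamma_\ell\cap\bG_\ell^\circ(\QQ_\ell)=\rho_\ell(G_{K^{\conn}})$. Also, hyperspecial maximality is \emph{not} insensitive to passing to finite-index subgroups. What matters is that Hui--Larsen is applied to $X_{K^{\conn}}$, which is legitimate because $K^{\conn}$ is independent of $\ell$.
\item The paper treats $b_2(X)=3$ separately before invoking Hui--Larsen, and you should too. There it is trivial: $\bG_{\ell,2}(X)^\circ$ is a torus, so both simply connected groups are trivial.
\end{enumerate}
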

\begin{proof}
	If $b_2(X)=3$, then $\bG_{\ell,2}(X)^\circ$ is a torus by \Cref{thm:Mumford--Tate conjecture HK degree 2}. The isogeny of \Cref{cor:Levi subgroup isogenous to G2} therefore makes the semisimple quotient of the total connected monodromy group trivial. Both $\bG_\ell^{\simplyconnected}$ and $\Gamma_\ell^{\simplyconnected}$ are trivial, so the assertion is automatic. Assume henceforth that $b_2(X)>3$.

	Let $K'/K$ be the uniform finite extension corresponding to the preimage of $\bG_\ell(X)^\circ$ in $G_K$; it is independent of $\ell$ by \Cref{thm:independence of connected components}. By definition, $\Gamma_\ell^{\simplyconnected}$ is obtained from the image of $G_{K'}$ in the semisimple quotient.

	Use the reductive quotient and the groups $\bG_\ell^{\mathrm{ss}}$ and $\Gamma_\ell^{\simplyconnected}$ defined in \S\ref{sss:Maximality}. The isogeny from a compatible total Levi to $\bG_{\ell,2}$ in \Cref{cor:Levi subgroup isogenous to G2} induces an isomorphism of simply connected semisimple groups
	\[
	\pi_{\ell,2}^{\simplyconnected} \colon \bG_{\ell}^{\simplyconnected} \isomto \bG_{\ell,2}^{\simplyconnected}\,.
	\]
	Projecting the representation of the same group $G_{K'}$ gives $\Gamma_{\ell}^{\simplyconnected}= (\pi_{\ell,2}^{\simplyconnected})^{-1} (\Gamma_{\ell,2}^{\simplyconnected}(X_{K'}))$. By \cite[Theorem~1.3(b)]{HL20}, the latter degree-two group for $X_{K'}$ is hyperspecial maximal compact for all sufficiently large $\ell$. Transport through the displayed isomorphism proves the result.
\end{proof}

\bibliographystyle{amsalpha}
\bibliography{MTHK}

\end{document}